\newtheorem*{rep@theorem}{\rep@title}
\newcommand{\newreptheorem}[2]{%
\newenvironment{rep#1}[1]{%
 \def\rep@title{#2 \ref{##1}}%
 \begin{rep@theorem}}%
 {\end{rep@theorem}}}
\newtheorem*{rep@cor}{\rep@title}
\newcommand{\newrepcor}[2]{%
\newenvironment{rep#1}[1]{%
 \def\rep@title{#2 \ref{##1}}%
 \begin{rep@cor}}%
 {\end{rep@cor}}}
\newtheorem*{rep@prop}{\rep@title}
\newcommand{\newrepprop}[2]{%
\newenvironment{rep#1}[1]{%
 \def\rep@title{#2 \ref{##1}}%
 \begin{rep@prop}}%
 {\end{rep@prop}}}
\newtheorem{cor}{Corollary}[section]
\newtheorem{corx}{Corollary}
\newtheorem{theorem}[cor]{Theorem}
\newtheorem{thmx}[corx]{Theorem}
\newtheorem{prop}[cor]{Proposition}
\newtheorem{lemma}[cor]{Lemma}
\theoremstyle{definition}
\newtheorem{defi}[cor]{Definition}
\theoremstyle{remark}
\newtheorem{remark}[cor]{Remark}
\newtheorem*{remark*}{Remark}
\newtheorem{example}[cor]{Example}
\newtheorem*{notation*}{Notation}
\newlist{steps}{enumerate}{1}
\setlist[steps, 1]{itemsep=8pt,leftmargin=0cm,itemindent=.5cm,labelwidth=\itemindent,labelsep=0cm,align=left,label = \textbf{\emph{Step \arabic*}:\,}}
\newcommand{\R}{{\mathbb R}}
\newcommand{\Z}{{\mathbb Z}}
\newcommand{\hol}{\mathrm{hol}}
\newcommand{\Hyp}{\mathbb{H}}
\newcommand{\Sph}{\mathbb{S}}
\newcommand{\SO}{\mathrm{SO}}
\newcommand{\G}[1]{\mathcal G(\Hyp^{#1})}
\newcommand{\gs}[1]{g_{T^1\Hyp^{#1}}}
\newcommand{\JJ}{\mathbbm{J}}
\newcommand{\GG}{\mathbbm{G}}
\newcommand{\ddt}{\left.\frac{d}{dt}\right|_{t=0}}
\newcommand{\Isom}{\mathrm{Isom}^+}
\newcommand{\Ham}{\mathrm{Ham}}
\newcommand{\Flux}{\mathrm{Flux}}
\newcommand{\V}{\mathcal{V}}
\newcommand{\VP}{\mathcal{V}^0}
\newcommand{\HH}{\mathcal{H}}
\newcommand{\HP}{\mathcal{H}^0}
\newcommand{\I}{\mathrm{I}}
\newcommand{\II}{\mathrm{I}\hspace{-0.04cm}\mathrm{I}}
\newcommand{\III}{\mathrm{I}\hspace{-0.04cm}\mathrm{I}\hspace{-0.04cm}\mathrm{I}}
\newcommand{\arctanh}{\mathrm{arctanh}}
\newcommand{\inner}[1] {\langle #1 \rangle}
\newcommand{\changelocaltocdepth}[1]{%
  \addtocontents{toc}{\protect\setcounter{tocdepth}{#1}}%
  \setcounter{tocdepth}{#1}%
}
\begin{document}

\setcounter{secnumdepth}{2}
\setcounter{tocdepth}{1}

\title[On the Gauss map of equivariant immersions in $\Hyp^{n}$]{On the Gauss map of equivariant immersions in hyperbolic space}

\author[Christian El Emam]{Christian El Emam}
\address{Christian El Emam: Department of Mathematics, University of Luxembourg, 6 avenue de la Fonte, L-4364 Esch-Sur-Alzette, Luxembourg.} \email{christian.elemam@uni.lu}

\author[Andrea Seppi]{Andrea Seppi}
\address{Andrea Seppi: Institut Fourier, UMR 5582, Laboratoire de Math\'ematiques,
Universit\'e Grenoble Alpes, CS 40700, 38058 Grenoble cedex 9, France.} \email{andrea.seppi@univ-grenoble-alpes.fr}


\thanks{The authors are members of the national research group GNSAGA. The first author has been partially supported by Blue Sky Research project "Analytic and geometric
properties of low-dimensional manifolds" and by the FNR OPEN grant CoSH
(O20/14766753/CoSH)}

\maketitle

\begin{abstract}
Given an oriented immersed hypersurface in hyperbolic space $\mathbb H^{n+1}$, its  Gauss map is  defined with values in the space of oriented geodesics of $\mathbb H^{n+1}$, which is endowed with a natural para-K\"ahler structure. In this paper we address the question of whether an immersion $G$ of the universal cover of an $n$-manifold $M$, equivariant for some group representation of $\pi_1(M)$ in $\mathrm{Isom}(\Hyp^{n+1})$, is the Gauss map of an equivariant immersion in $\Hyp^{n+1}$. We fully answer this question for immersions with principal curvatures in $(-1,1)$: while the only local obstructions are the conditions that $G$ is Lagrangian and Riemannian, the global obstruction is more subtle, and we provide two characterizations, the first in terms of the Maslov class, and the second (for $M$ compact) in terms of the action of the group of compactly supported Hamiltonian symplectomorphisms.
\end{abstract}

\tableofcontents

\section{Introduction}

The purpose of the present paper is to study immersions of hypersurfaces in the hyperbolic space $\Hyp^{n+1}$, in relation with the geometry of their Gauss maps in the space of oriented geodesics of $\Hyp^{n+1}$. We will mostly restrict to immersions having principal curvatures in $(-1,1)$, and our main aim is to study immersions of $\widetilde M$ which are equivariant with respect to some group representation $\rho:\pi_1(M)\to\Isom(\Hyp^{n+1})$, for $M$ a $n$-manifold. The two main results in this direction are Theorem \ref{teorema hol H baby} and Theorem \ref{thm:second char ham}: the former holds for any $M$, while the latter under the assumption that $M$ is closed.

\subsection{Context in literature}
 In the groundbreaking paper \cite{zbMATH03791301}, Hitchin observed the existence of a natural complex structure on the space of oriented geodesics in Euclidean three-space.  A large interest has then grown on the geometry of the space of oriented (maximal unparametrized) geodesics of Euclidean space of any dimension (see \cite{zbMATH02228497,zbMATH02232144,zbMATH05530808,zbMATH06300570}) and of several other Riemannian and pseudo-Riemannian manifolds (see \cite{zbMATH05988505,zbMATH06268759,zbMATH06828623,MR3888623,zbMATH07050784}). In this paper, we are interested in the case of hyperbolic $n$-space $\Hyp^n$, whose  space of oriented geodesics is denoted here by $\G{n}$. The  geometry of $\G{n}$ has been addressed in \cite{zbMATH05187891} and, for $n=3$, in \cite{zbMATH05821527,zbMATH05779824,zbMATH06173011,zbMATH06481975}.  For the purpose of this paper, the most relevant geometric structure on $\G{n}$ is a natural \emph{para-K\"ahler structure} $(\GG, \JJ, \Omega)$ (introduced in \cite{zbMATH05988505,zbMATH06268759}), a notion which we will describe in Section \ref{sec intro 3} of this introduction and more in detail in Section \ref{sec:parakahler metric GG}.  A particularly relevant feature of such para-K\"ahler structure is the fact that the Gauss map of an oriented immersion $\sigma:M\to\Hyp^n$, which is defined as the map that associates to a point of $M$ the orthogonal geodesic of $\sigma$ endowed with the compatible orientation, is a Lagrangian immersion in $\G{n}$. We will come back to this important point in Section \ref{sec intro 1}. Let us remark here that, as a consequence of the geometry of the hyperbolic space $\Hyp^n$, an oriented geodesic in $\Hyp^n$ is characterized, up to orientation preserving reparametrization, by the ordered couple of its ``endpoints'' in the visual boundary $\partial \Hyp^n$: this gives an identification 
$
\G{n}\cong\partial \Hyp^n\times\partial \Hyp^n \setminus \Delta
$. Under this identification the Gauss map $G_\sigma$ of an immersion $\sigma:M\to\Hyp^n$ corresponds to a pair of  \emph{hyperbolic Gauss maps} $G_\sigma^\pm:M\to\partial \Hyp^n$.

A parallel research direction, originated by the works of Uhlenbeck \cite{zbMATH03840752} and Epstein \cite{Epstein:1986aa,zbMATH03948676,zbMATH04015637}, concerned the study of immersed hypersurfaces in $\Hyp^n$, mostly in dimension $n=3$. These works highlighted the relevance of hypersurfaces satisfying the geometric condition {for which principal curvatures are everywhere}  different from $\pm 1$, sometimes called \emph{horospherically convexity}: this is the condition that ensures that the hyperbolic Gauss maps $G_\sigma^\pm$ are locally invertible.  On the one hand, Epstein developed this point of view to give a description ``from infinity'' of horospherically convex hypersurfaces as envelopes of horospheres. This approach has been pursued by many authors by means of analytic techniques, see for instance \cite{zbMATH01773370,zbMATH05119776,zbMATH05279031}, and permitted to obtain remarkable classification results often under the assumption that the principal curvatures are larger than 1 in absolute value (\cite{zbMATH04113461,zbMATH04190632,zbMATH00431673,zbMATH05608949,zbMATH05815812,zbMATH06441147,zbMATH06788668,zbMATH06987704}). On the other hand, Uhlenbeck considered the class of so-called \emph{almost-Fuchsian manifolds}, which has been largely studied in \cite{zbMATH05200423,zbMATH05851259,zbMATH06050482,zbMATH06204974,zbMATH06460565,zbMATH06663964,zbMATH06653236} afterwards. These are complete hyperbolic manifolds diffeomorphic to $S\times\R$, for $S$ a closed orientable surface of genus $g\geq 2$, containing a minimal surface with principal curvatures different from $\pm 1$. These surfaces lift on the universal cover to immersions $\sigma:\widetilde S\to\Hyp^3$ which are equivariant for a quasi-Fuchsian representation $\rho:\pi_1(S)\to\Isom(\Hyp^3)$ and, by the Gauss-Bonnet formula, have principal curvatures in $(-1,1)$, a condition to which we will refer as having \emph{small principal curvatures}.

\subsection{Integrability of immersions in $\G n$}\label{sec intro 1}

One of the main goals of this paper is to discuss when an immersion $G\colon M^n\to \G{n+1}$ is \emph{integrable}, namely when it is the Gauss map of an immersion $M\to \Hyp^{n+1}$, in terms of the geometry of $\G{n+1}$. We will distinguish three types of integrability conditions, which we list from the weakest to the strongest: \begin{itemize}
	\item An immersion $G\colon M\to \G {n+1}$ is \emph{locally integrable} if for all $p\in M$ there exists a neighbourhood $U$ of $p$ such that $G|_{U}$ is the Gauss map of an immersion $U\to \Hyp^{n+1}$;
	\item An immersion $G\colon M \to \G{n+1}$ is \emph{globally integrable} if it is the Gauss map of an immersion $M\to \Hyp^{n+1}$;
	\item Given a representation $\rho\colon \pi_1(M)\to \Isom (\Hyp^{n+1})$, a $\rho$-equivariant immersion $G\colon \widetilde M\to \G{n+1}$ is \emph{$\rho$-integrable} 
	if it is the Gauss map of a $\rho$-equivariant immersion $\widetilde M \to \Hyp^{n+1}$.
\end{itemize}

Let us clarify here that, since the definition of Gauss map requires to fix an orientation on $M$ (see Definition \ref{defi:lift and gauss}),  the above three definitions of integrability have to be interpreted as: ``there exists an orientation on $U$ (in the first case) or $M$ (in the other two) such that $G$ is the Gauss map of an immersion in $\Hyp^{n+1}$ with respect to that orientation''.

We will mostly restrict to immersions $\sigma$ with small principal curvatures, which is equivalent to the condition that the Gauss map $G_\sigma$ is Riemannian, meaning that the pull-back by $G_\sigma$ of the ambient pseudo-Riemannian metric of $\G{n+1}$ is positive definite, hence a Riemannian metric (Proposition \ref{prop: small curv sse riemannian}). 

\subsubsection{Local integrability} As it was essentially observed in \cite[Theorem 2.10]{zbMATH06268759}, local integrability admits a very simple characterization in terms of the symplectic geometry of $\G{n+1}$.
\begin{reptheorem}{cor: local integrability}
Let $M^n$ be a manifold and $G\colon M\to \G {n+1}$ be an immersion. Then $G$ is locally integrable if and only if it is Lagrangian.
\end{reptheorem}
 The methods of this paper easily provide a proof of Theorem \ref{cor: local integrability}, which is independent from the content of \cite{zbMATH06268759}.
{Let us denote by $T^1\Hyp^{n+1}$  the unit tangent bundle of $\Hyp^{n+1}$ and {by}
\begin{equation}\label{defi p intro}
\mathrm p\colon T^1\Hyp^{n+1}\to \G{n+1}~,
\end{equation} the map such that $\mathrm{p}(x,v)$ is the oriented geodesic of $\Hyp^{n+1}$ tangent to $v$ at $x$. Then, if $G$ is Lagrangian, we prove that one can locally construct maps $\zeta:U\to T^1\Hyp^{n+1}$ (for $U$ a simply connected open set) such that $\mathrm p\circ\zeta=G$. 
Up to restricting the domain again, one can find such a $\zeta$ so that it projects to an immersion $\sigma$ in $\Hyp^{n+1}$ (Lemma \ref{lemma:desingularize for small t}), and the Gauss map of $\sigma$ is $G$ by construction.} 
 
 Our next results are, to our knowledge, completely new and give characterizations of global integrability and $\rho$-integrability under the assumption of small principal curvatures.

\subsubsection{Global integrability} The problem of global integrability is in general more subtle than local integrability. As a matter of fact, in Example \ref{ex: Lagrangian not globally integrable} we construct an example of a locally integrable immersion $G:(-T,T)\to\G{2}$ that is not globally integrable. By taking a cylinder on this curve, one easily sees that the same phenomenon occurs in any dimension. We stress that in our example $M=(-T,T)$ (or the product $(-T,T)\times\R^{n-1}$ for $n>2$) is simply connected: {the key point in our example is that one can find globally defined maps $\zeta:M\to T^1\Hyp^{n+1}$ such that $G=\mathrm p\circ\zeta$, but no such $\zeta$ projects to an immersion in $\Hyp^{n+1}$. }

Nevertheless, we show that this issue does not occur for Riemannian immersions $G$. In this case any immersion $\sigma$ whose Gauss map is $G$ (if it exists) necessarily has small principal curvatures. We will always restrict  to this setting hereafter. In summary, we have the following characterization of global integrability for $M$ simply connected:
\begin{reptheorem}{prop: riemannian global integrability}
Let $M^n$ be a simply connected manifold and $G\colon M\to \G {n+1}$ be a Riemannian immersion. Then $G$ is globally integrable if and only if it is Lagrangian.
\end{reptheorem}

We give a characterization of global integrability for $\pi_1(M)\ne \{1\}$ in Corollary \ref{cor hol H baby}, which is a direct consequence of our first characterization of $\rho$-integrability (Theorem \ref{teorema hol H baby}). Anyway, we remark that if a Riemannian and Lagrangian immersion  $G\colon M\to \G {n+1}$ is also complete (i.e. has complete first fundamental form), then $M$ is necessarily simply connected:

\begin{reptheorem}{cor G complete}
Let $M^n$ be a manifold. If $G\colon M\to \G {n+1}$ is a complete Riemannian and Lagrangian immersion, then $M$ is diffeomorphic to $\R^n$ and $G$ is the Gauss map of a proper embedding $\sigma:M\to\Hyp^{n+1}$.
\end{reptheorem}

In Theorem \ref{cor G complete} the conclusion that $G=G_\sigma$ for $\sigma$ a proper embedding follows from the fact that $\sigma$ is complete, which is an easy consequence of Equation \eqref{eq:fff gauss} relating the first fundamental forms of $\sigma$ and $G_\sigma$, and the non-trivial fact that complete immersions in $\Hyp^{n+1}$ with small principal curvatures are proper embeddings (Proposition \ref{prop injectivity}).

\subsubsection{$\rho$-integrability}  {Let us first observe that the problem of $\rho$-integrability presents some additional difficulties than global integrability}. Assume $G\colon \widetilde M \to \G{n+1}$ is a Lagrangian, Riemannian and $\rho$-equivariant immersion for some representation $\rho\colon \pi_1(M^n)\to \Isom(\Hyp^{n+1})$.
Then, by Theorem \ref{prop: riemannian global integrability}, there exists $\sigma\colon \widetilde M\to\Hyp^{n+1}$ with Gauss map $G$, {but the main issue is that such a $\sigma$ will not be}  $\rho$-equivariant in general, as one can see in Examples \ref{ex: global integrable non equivariant} and \ref{ex: global integrable non equivariant2}.

Nevertheless, $\rho$-integrability of Riemannian immersions into $\G{n+1}$ can still be characterized in terms of their extrinsic geometry. Let $\overline {\mathrm H}$ be the mean curvature vector of $G$, defined as the trace of the second fundamen{tal form, and $\Omega$ the symplectic form of $\G{n+1}$. Since $G$ is $\rho$-equivariant, the $1$-form $G^*(\Omega(\overline {\mathrm H},\cdot))$ on $\widetilde M$ is invariant under the action of $\pi_1(M)$, so it descends to a $1$-form on $M$. 
	One can prove that such $1$-form on $M$ is closed (Corollary \ref{cor:maslov closed}):
	we will denote its cohomology class in $H^1_{dR}(M,\R)$ with $\mu_G$ and we will call it the \emph{Maslov class} of $G$, in accordance with some related interpretations of the Maslov class in other geometric contexts (see among others \cite{zbMATH03730903,zbMATH00704931,zbMATH01523513,zbMATH01786838}). 
	The Maslov class encodes the existence of equivariantly integrating immersions, in the sense stated in the following theorem.

	\begin{reptheorem}{teorema hol H baby}
		Let $M^n$ be an {orientable} manifold, $\rho\colon \pi_1(M) \to \Isom(\Hyp^{n+1})$ be a representation and $G\colon \widetilde M \to \G{n+1}$ be a $\rho$-equivariant Riemannian and Lagrangian {immersion}. Then $G$   is $\rho$-integrable if and only if the Maslov class $\mu_G$ vanishes.
	\end{reptheorem}

Applying Theorem \ref{teorema hol H baby} to a trivial representation, we immediately obtain a characterization of global integrability for Riemannian immersions, thus extending Theorem \ref{prop: riemannian global integrability} to the case $\pi_1(M)\neq \{1\}$.

\begin{repcor}{cor hol H baby}
Let $M^n$ be an {orientable} manifold and $G\colon M \to \G{n+1}$ be a Riemannian and Lagrangian immersion. Then $G$ is globally integrable if and only if its Maslov class $\mu_G$ vanishes.
\end{repcor}

\subsection{Nearly-Fuchsian representations}\label{sec intro 2}

Let us now focus on the case of $M$ a closed oriented manifold. Although our results apply to any dimension, we borrow the terminology from the three-dimensional case (see \cite{zbMATH06204974}) and say that a representation $\rho\colon \pi_1(M)\to \Isom(\Hyp^{n+1})$ is \emph{nearly-Fuchsian} if there exists a $\rho$-equivariant immersion $\sigma\colon\widetilde M\to \Hyp^{n+1}$ with small principal curvatures. We show (Proposition \ref{prop:action free prop disc0}) that the action of a nearly-Fuchsian representation on $\Hyp^{n+1}$ is free, properly discontinuously and convex cocompact; the quotient of $\Hyp^{n+1}$ by $\rho(\pi_1(M))$ is called \emph{nearly-Fuchsian manifold}. 

Moreover, the action of $\rho(\pi_1(M))$ extends to a free and {properly} discontinuous action on the complement of a topological $(n-1)$-sphere $\Lambda_\rho$ (the \emph{limit set} of $\rho$) in the visual boundary $\partial\Hyp^{n+1}$. Such complement is the disjoint union of two topological $n$-discs which we denote by $\Omega_+$ and $\Omega_-$.  It follows that there exists a maximal open region of $\G {n+1}$ over which a nearly-Fuchsian representation $\rho$ acts freely and properly discontinuously. This region is defined as the subset of $\G{n+1}$ consisting of oriented geodesics having either final endpoint in $\Omega_+$ or initial endpoint in $\Omega_-$. The quotient of this open region via the action of $\rho$, that we denote with $\mathcal G_\rho$, inherits a para-K\"ahler structure.

Let us first state a uniqueness result concerning nearly-Fuchsian representations. A consequence of Theorem \ref{teorema hol H baby} and the definition of Maslov class is that if $G$ is a $\rho$-equivariant, Riemannian and Lagrangian immersion which is furthermore \emph{minimal}, i.e. with $\overline {\mathrm{H}}=0$, then it is $\rho$-integrable. Together with an application of a maximum principle in the corresponding nearly-Fuchsian manifold, we prove:

\begin{repcor}{cor:uniqueness min lag}
	Given a closed orientable manifold $M^n$ and a representation $\rho:\pi_1(M)\to\Isom(\Hyp^{n+1})$, there exists at most one  $\rho$-equivariant {Riemannian} minimal Lagrangian immersion $G:\widetilde M\to\G{n+1}$ up to reparametrization. If such a $G$ exists, then $\rho$ is nearly-Fuchsian and $G$ induces a {minimal Lagrangian} embedding of $M$ in $\mathcal G_\rho$.
\end{repcor}

In fact, for any $\rho$-equivariant immersion $\sigma:\widetilde M\to\Hyp^{n+1}$ with small principal curvatures, the hyperbolic Gauss maps $G_\sigma^\pm$ are equivariant diffeomorphisms between $\widetilde M$ and $\Omega_\pm$. Hence up to changing the orientation of $M$, which corresponds to swapping the two factors {$\partial \Hyp^{n+1}$} in the identification $\G{n+1}\cong\partial\Hyp^{n+1}\times\partial\Hyp^{n+1}\setminus\Delta$, the Gauss map of $\sigma$ takes values in the maximal open region defined above, and induces an embedding of $M$ in $\mathcal G_\rho$. 

This observations permits to deal (in the cocompact case) with embeddings in $\mathcal G_\rho$ instead of $\rho$-equivariant embeddings  in $\G{n+1}$. In analogy with the definition of $\rho$-integrability defined above, we will say that a $n$-dimensional
 submanifold $\mathcal L\subset \mathcal G_\rho$ is $\rho$-\emph{integrable} if it is the image in the quotient of a $\rho$-integrable embedding in $\G{n+1}$. Clearly such $\mathcal L$ is necessarily Lagrangian by Theorem \ref{cor: local integrability}. We are now ready to state our second characterization result for $\rho$-integrability .

\begin{reptheorem}{thm:second char ham}
	Let $M$ be a closed orientable $n$-manifold, $\rho:\pi_1(M)\to\Isom(\Hyp^{n+1})$ be a nearly-Fuchsian representation and $\mathcal L\subset\mathcal G_\rho$ a Riemannian $\rho$-integrable submanifold. Then a Riemannian submanifold $\mathcal L'$ is $\rho$-integrable if and only if there exists $\Phi\in \mathrm{Ham}_c(\mathcal G_\rho,\Omega)$ such that $\Phi(\mathcal L)=\mathcal L'$.
\end{reptheorem}

{In Theorem \ref{thm:second char ham} we denoted} by $\Ham_c(\mathcal G_\rho,\Omega)$ the group of compactly-supported \emph{Hamiltonian symplectomorphisms} of $\mathcal G_\rho$ with respect to its symplectic form $\Omega$. (See Definition \ref{defi Hamc}). The proof of Theorem \ref{thm:second char ham} in fact shows that if $\mathcal L$ is $\rho$-integrable and $\mathcal L'=\Phi(\mathcal L)$ for $\Phi\in \mathrm{Ham}_c(\mathcal G_\rho,\Omega)$, then $\mathcal L'$ is integrable as well, even without the hypothesis that $\mathcal L$ and $\mathcal L'$ are Riemannian submanifolds.

If $\rho$ admits an equivariant Riemannian minimal Lagrangian immersion, then Theorem \ref{thm:second char ham} can be restated by saying that a Riemannian and Lagrangian submanifold $\mathcal L'$ is $\rho$-integrable if and only if it is in the $\Ham_c(\mathcal G_\rho,\Omega)$-orbit of \emph{the} minimal Lagrangian submanifold $\mathcal L\subset\mathcal G_\rho$, which is unique by Theorem \ref{cor:uniqueness min lag}.

\subsection{The geometry of $\G{n}$ and $T^1\Hyp^n$}
\label{sec intro 3}

Let us now discuss more deeply the geometry of the space of oriented geodesics of $\Hyp^{n}$ and some of the tools involved in the proofs. In this paper we give an alternative construction of the para-K\"ahler structure of $\G{n}$ with respect to the previous literature (\cite{zbMATH05187891,zbMATH05779824,zbMATH05988505,zbMATH06268759}), which is well-suited for the problem of (equivariant) integrability.
The geodesic flow induces a natural principal $\R$-bundle structure {whose total space is $T^1\Hyp^{n+1}$ and whose bundle map is $\mathrm{p}:T^1\Hyp^{n+1}\to \G{n+1}$ defined in Equation \eqref{defi p intro}}, and acts by isometries of the \emph{para-Sasaki metric} $g$, which is a pseudo-Riemannian version of the classical Sasaki metric on $T^1\Hyp^{n+1}$. Let us denote by $\chi$ the infinitesimal generator of the geodesic flow, which is a vector field on $T^1\Hyp^{n+1}$ tangent to the fibers of $\mathrm p$. The idea is to define each element that constitutes the para-K\"ahler structure of $\G{n+1}$ (see the items below) by push-forward of certain tensorial quantities defined on the $g$-orthogonal complement of $\chi$, showing that the push-forward is well defined by invariance under the action of the geodesic flow. More concretely:

\begin{itemize}
	\item The \emph{pseudo-Riemannian metric} $\GG$ of $\G{n+1}$ (of signature $(n,n)$) is defined as push-forward of the restriction of $g$ to $\chi^\perp$;
	\item The \emph{para-complex structure} $\JJ$ (that is, a $(1,1)$ tensor whose square is the identity and whose $\pm 1$-eigenspaces are integrable distributions of the same dimension) is obtained from an endomorphism $J$ of $\chi^\perp$, invariant under the geodesic flow, which essentially switches {the} horizontal and vertical {distributions} in $T^1\Hyp^{n+1}$;
	\item The \emph{symplectic form} $\Omega$ arises from a similar construction on $\chi^\perp$, in such a way that $\Omega(X,Y)=\GG(X,\mathbb J Y)$.
\end{itemize}

It is worth mentioning  that in dimension 3, the pseudo-Riemannian metric $\GG$ of $\G 3$ can be seen as the real part of a holomorphic Riemannian manifold of constant curvature $-1$, see \cite{bee}.

The symplectic geometry of $\G{n+1}$ has a deep relation with the structure of $T^1\Hyp^{n+1}$. Indeed 
the total space of $T^1\Hyp^{n+1}$ is endowed with a connection form 
 $\omega$, whose kernel consists precisely of $\chi^\perp$ (See Definition \ref{defi connection form}). In Proposition \ref{prop:identity curvature form} we prove the following fundamental relation between the curvature of $\mathrm p$ and the symplectic form $\Omega$:
\begin{equation}\label{eq omega Omega intro}
d\omega=\mathrm p^*\Omega~.
\end{equation}
This identity is an essential point in the proofs of our main results, which we now briefly outline.

\subsection{Overview of the proofs}
Let us start by Theorem \ref{cor: local integrability}, namely the equivalence between locally integrable and Lagrangian. Given a locally integrable immersion $G:M\to\G {n+1}$, the corresponding (local) immersions $\sigma:U\to\Hyp^{n+1}$ provide \emph{flat} sections  of the principal $\R$-bundle obtained by pull-back of the bundle $\mathrm p:T^1\Hyp^{n+1}\to\G{n+1}$ by $G$. Hence the obstruction to local integrability is precisely the curvature of the pull-back bundle  $G^*\mathrm p$. By Equation \eqref{eq omega Omega intro}, it follows that the vanishing of $G^*\Omega$ is precisely the condition that characterizes local integrability of $G$.

Moreover, $\rho$-integrability of a $\rho$-equivariant Lagrangian immersion $G:\widetilde M\to\G {n+1}$ can be characterized by the condition that the quotient of the bundle  $G^*\mathrm p$ by the action of $\pi_1(M)$ induced by $\rho$ is a \emph{trivial} flat bundle over $M$, meaning that it admits a \emph{global} flat section.  
Once these observations are established, Theorem \ref{teorema hol H baby} will be deduced as a consequence of Theorem \ref{Teorema hol H} which states that $\mu_G$ is dual, in the sense of de Rham Theorem, to the holonomy of such flat bundle over $M$. In turn, Theorem \ref{Teorema hol H} relies on the important expression (proved in Proposition \ref{Prop: formula H in G}):
\begin{equation}\label{eq maslov intro}
G_\sigma^*(\Omega (\overline {\mathrm H}, \cdot ) )=df_\sigma~,
\end{equation} where $G_\sigma$ is the Gauss map of an immersion $\sigma$ in $\Hyp^{n+1}$ {and $f_\sigma$ is the function defined by} 
\begin{equation}\label{eq fsigma intro}
f_\sigma=\frac{1}{n}\sum_{i=1}^n\arctanh\lambda_i~.
\end{equation}
where $\lambda_1,\ldots,\lambda_n$ are the principal curvatures of $\sigma$.
\vspace{5pt}

Let us move on to a sketch of the proof of Theorem \ref{thm:second char ham}, which again relies on the reformulation of $\rho$-integrability in terms of triviality of flat bundles. Assuming {that} $\mathcal L$ is a $\rho$-integrable submanifold of $\mathcal G_\rho$ and {that} we have a Lagrangian isotopy connecting  $\mathcal L$ to another Lagrangian submanifold $\mathcal L'$, Proposition \ref{prop: flux = diff holonomy} states that the holonomy of the flat bundle associated to $\mathcal L'$ is dual, again in the sense of de Rham Theorem, to the cohomology class of a 1-form which is built out of the Lagrangian isotopy, by a variant for Lagrangian submanifolds of the so-called \emph{flux homomorphism}. This variant has been developed in \cite{solomon} and applied in \cite{zbMATH07050784} for a problem in the Anti-de Sitter three-dimensional context which is to some extent analogous to those studied here. However, in those works stronger topological conditions are assumed which are not applicable here, and therefore our proof of  Theorem \ref{thm:second char ham} uses independent methods.

To summarize the proof, one implication is rather straightforward: if there exists a compactly supported Hamiltonian symplectomorphism $\Phi$ mapping $\mathcal L$ to $\mathcal L'$, then a simple computation shows that the flux homomorphism vanishes along the Hamiltonian isotopy connecting the identity to $\Phi$. This implication does not even need the assumption that $\mathcal L$ and $\mathcal L'$ are Riemannian submanifolds. The most interesting implication is the converse one: assuming that both $\mathcal L$ and $\mathcal L'$ are Riemannian and integrable, we use a differential geometric construction in $\Hyp^{n+1}$ to produce an interpolation between the corresponding hypersurfaces in the nearly-Fuchsian manifold associated to $\rho$. For technical reasons, we need to arrange such interpolation by convex hypersurfaces (Lemma \ref{lemma:convex interpolation}). An extension argument then provides the time-depending Hamiltonian function whose time-one flow is the desired symplectomorphism $\Phi\in\Ham_c(\mathcal G_\rho,\Omega)$ such that $\Phi(\mathcal L)=\mathcal L'$.

\subsection{Relation with geometric flows}

Finally, in Appendix \ref{app:geometric flows} we apply these methods to study the relation between evolutions by geometric flows in $\Hyp^{n+1}$ and in $\G{n+1}$. More precisely, suppose that $\sigma_\bullet:M\times(-\epsilon,\epsilon)\to\Hyp^{n+1}$ is a smoothly varying family of Riemannian immersions that satisfy:
$$\frac{d}{dt} \sigma_t  = f_t \nu_t$$
where $\nu_t$ is the normal vector of $\sigma_t$ and $f_\bullet:M\times(-\epsilon,\epsilon)\to\R$ is a smooth function. Then the variation of the Gauss map $G_t$ of $\sigma_t$ is given, up to a tangential term, by the normal term  $-\JJ(dG_t({\overline\nabla}{}^t f_t))$, where ${\overline \nabla}{}^t f_t$ denotes the gradient with respect to the first fundamental form of $G_t$, that is, the Riemannian metric $G_t^*\GG$. 

Let {us} consider the special case of the function
$f_t:=f_{\sigma_t}$, as defined in Equation \eqref{eq fsigma intro}, namely the sum of hyperbolic inverse tangent of the principal curvatures. The study of the associated flow has been suggested in dimension three in \cite{zbMATH01789966}, by analogy of a similar flow on surfaces in the three-sphere. 
Combining the aforementioned result of Appendix \ref{app:geometric flows} with
Equation \eqref{eq maslov intro}, we obtain that such flow in $\Hyp^{n+1}$ induces the Lagrangian mean curvature flow in $\G{n+1}$ up to tangential diffeomorphisms. A similar result has been obtained in Anti-de Sitter space (in dimension three) in \cite{zbMATH06182655}.

\subsection*{Organization of the paper}

The paper is organized as follows. In Section \ref{sec space geodesics} we introduce the space of geodesics $\G{n+1}$ and its natural para-K\"ahler structure. In Section \ref{sec:gauss map} we study the properties of the Gauss map and provide useful examples. Section \ref{sec:small princ curv} focuses on immersions with small principal curvatures and prove several properties. In Section \ref{sec:local int}  we study the relations with the geometry of flat principal bundles, in particular Equation \eqref{eq omega Omega intro} (Proposition \ref{prop:identity curvature form}, that relies the symplectic form in the space of geodesics to the curvature of principal bundles), and we prove the statements concerning local integrability and global integrability of Riemannian immersions in $\G{n+1}$, including Theorem \ref{cor: local integrability}, Theorem \ref{prop: riemannian global integrability} and Theorem \ref{cor G complete}. In Section \ref{sec:equivariant integrability} we discuss the problem of equivariant integrability in relation with the Maslov class of the immersion: we prove Theorem \ref{teorema hol H baby} (more precisely, the stronger version given in Theorem  \ref{Teorema hol H}) and deduce Corollaries \ref{cor hol H baby} and \ref{cor:uniqueness min lag}. Finally, in Section \ref{sec:hamiltonian}, focusing on nearly-Fuchsian representations, we prove Theorem \ref{thm:second char ham}.

\subsection*{Acknowledgements}
We are grateful to Francesco Bonsante for many related discussion and for his interest and encouragement in this work. Part of this work has been done during visits of the first author to the University of Luxembourg in November 2018 and to Grenoble Alpes University in April-May 2019; we are thankful to these Institutions for the hospitality. We would like to thank an anonymous referee for several useful comments.

\changelocaltocdepth{2}

\section{The space of geodesics of hyperbolic space}\label{sec space geodesics}

In this section, we introduce the hyperbolic space $\Hyp^n$ and its space of (oriented {maximal unparametrized}) geodesics, which will be endowed with a natural para-K\"ahler structure by means of a construction on the unit tangent bundle $T^1\Hyp^n$.

\subsection{Hyperboloid model}

In this paper we will mostly use the hyperboloid model of $\Hyp^n$. Let us denote by $\R^{n,1}$ the $(n+1)$-dimensional Minkowski space, namely the vector space $\R^{n+1}$ endowed with the standard bilinear form of signature $(n,1)$:
$$\langle x,y\rangle=x_1y_1+\ldots+x_ny_n-x_{n+1}y_{n+1}~.$$
The \emph{hyperboloid model} of hyperbolic space is
$$\Hyp^n=\{x\in\R^{n,1}\,|\,\langle x,x\rangle =-1,\,x_{n+1}>0\}~.$$
Then the tangent space at a point $x$ is identified to its orthogonal subspace:
$$T_x\Hyp^n\cong x^\perp=\{v\in\R^{n,1}\,|\,\langle x,v\rangle =0\}~.$$
The \emph{unit tangent bundle} of $\Hyp^n$ is the bundle of unit tangent vectors, and therefore has the following model:
\begin{equation}\label{eq:modelT}
T^1\Hyp^n=\{(x,v)\in \R^{n,1}\times \R^{n,1}\,|\,x\in\Hyp^n,\,\langle x,v\rangle=0,\,\langle v,v\rangle=1\}~,
\end{equation}
where the obvious projection map is simply 
$$\pi:T^1\Hyp^n\to\Hyp^n\qquad \pi(x,v)=x~.$$
In this model, we can give a useful description of the tangent space of $T^1\Hyp^n$ at a point $(x,v)$, namely:
\begin{equation}\label{eq:modelTT}
T_{(x,v)}T^1\Hyp^n=\{(\dot x,\dot v)\in\R^{n,1}\times \R^{n,1}\,|\,\langle x,\dot x\rangle=\langle v,\dot v\rangle=\langle x,\dot v\rangle+\langle v,\dot x\rangle=0\}~,
\end{equation}
where the relations $\langle x,\dot x\rangle=0$ and $\langle v,\dot v\rangle=0$ arise by differentiating $\langle x,x\rangle=-1$ and $\langle v,v\rangle=1$, while the relation $\langle x,\dot v\rangle+\langle v,\dot x\rangle=0$ is the linearized version of $\langle x,v\rangle=0$.

Finally, let us denote by $\G{n}$ the \emph{space of (maximal, oriented, unparameterized) geodesics} of $\Hyp^n$, namely, the space of non-constant geodesics $\gamma:\R\to\Hyp^n$ up to monotone increasing reparameterizations. Recalling that an oriented geodesic is uniquely determined by its two (different) endpoints in the visual boundary $\partial\Hyp^n$, we have the following identification of this space:
$$\G{n}\cong \partial\Hyp^n\times\partial\Hyp^n\setminus \Delta~,$$
where $\Delta$ represents the diagonal. We recall that, in the hyperboloid model, $\partial\Hyp^n$ can be identified to the projectivization of the null-cone in Minkowski space:
\begin{equation}\label{eq:bdy hyperboloid model}
\partial\Hyp^n=\faktor{\{x\in\R^{n,1}\,|\,\langle x,x\rangle =0,\,x_{n+1}>0\}}{\R_{>0}}~.
\end{equation}
It will be of fundamental importance in the following to endow $T^1\Hyp^n$ with \emph{another} bundle structure, a \emph{principal} bundle structure, over $\G{n}$. 
For this purpose, recall that the \emph{geodesic flow} is the $\R$-action over $T^1\Hyp^n$ given by:
$$t\cdot (x,v)=\varphi_t(x,v)= (\gamma(t),\gamma'(t))~,$$
where $\gamma$ is the unique parameterized geodesic such that $\gamma(0)=x$ and $\gamma'(0)=v$. {In the hyperboloid model, the flow $\varphi_t\colon T^1\Hyp^n\to T^1\Hyp^n$ can be written explicitly as}
\begin{equation}\label{eq:geodflow}
\varphi_t(x,v)=(\cosh(t)x+\sinh(t)v,\sinh(t)x+\cosh(t)v)~.
\end{equation}

Then $T^1\Hyp^n$ is naturally endowed with a principal $\R$-bundle structure:
$$\mathrm p:T^1\Hyp^n\to\G n\qquad \mathrm p(x,v)=\gamma~,$$
for $\gamma$ the geodesic defined as above, that is, $\mathrm p(x,v)$ is the element of $\G{n}$ going through $x$ with speed $v$.

Finally, recall that the group of orientation-preserving isometries of  $\Hyp^n$, which we will denote by $\Isom(\Hyp^n)$, is identified to $\SO_0(n,1)$, namely the connected component of the identity in the group of linear isometries of $\R^{n,1}$. Clearly $\Isom(\Hyp^n)$ acts both on $T^1\Hyp^n$ and on $\G n$, in the obvious way, and moreover the two projection maps $\pi:T^1\Hyp^n\to\Hyp^n$ and $\mathrm p:T^1\Hyp^n\to\G n$ are equivariant with respect to these actions. In the next sections, we will introduce some additional structures on $T^1\Hyp^n$ and $\G n$ that are \emph{natural} in the sense that they are preserved by the action of $\Isom(\Hyp^n)$.

\subsection{Para-Sasaki metric on the unit tangent bundle}

We shall now introduce a pseudo-Riemannian metric on $T^1\Hyp^n$. For this purpose, let us first recall the construction of the {horizontal} and {vertical} lifts and distributions in the unit tangent bundle of a Riemannian manifold, which for simplicity we only recall for $\Hyp^n$.  Given $(x,v)\in T\Hyp^n$,  the \emph{vertical subspace} at $(x,v)$ is defined as:
$$\VP_{(x,v)}=T_{(x,v)} \big(\pi^{-1}(x)\big)\cong v^\perp\subset T_x\Hyp^n$$ 
since $\pi^{-1}(x)$ is naturally identified to the sphere of unit vectors in the vector space $T_x\Hyp^n$.\footnote{We use here $\VP$ to distinguish with the vertical subspace in the full tangent bundle $T\Hyp^n$, which is usually denoted by $\V$.} 
Hence given a vector $w\in T_x\Hyp^n$ orthogonal to $v$, we can define its \emph{vertical lift}
$w^\V\in \VP_{(x,v)}$, and 
vertical lifting gives a map from $v^\perp$ to $\VP_{(x,v)}\subset T_{(x,v)}T^1\Hyp^n$ which is simply the identity map under the above identification. More concretely, in the model for $T_{(x,v)}T^1\Hyp^n$ introduced in \eqref{eq:modelTT}, we have
$$w^\V=(0,w)\in \R^{n,1}\times \R^{n,1}~.$$

Let us move to the horizontal lift. This is defined as follows. Given $u\in T_x\Hyp^n$, let us consider the parameterized geodesic $\gamma:\R\to\Hyp^n$ with $\gamma(0)=x$ and $\gamma'(0)=u$, and let $v(t)$ be the parallel transport of $v$ along $\gamma$. Then $u^\HH$ is defined as the derivative of {$(\gamma(t),v(t))$} at time $t=0$. This gives an injective linear map from $T_x\Hyp^n$ to $T_{(x,v)}T^1\Hyp^n$, whose image is the \emph{horizontal subspace} $\HH_{(x,v)}$. Let us compute this map {in the hyperboloid model} by distinguishing two different cases.

First, let us consider the case of $u=w\in v^\perp\subset T_x\Hyp^n$. In the model \eqref{eq:modelTT}, using that the image of the parameterized geodesic $\gamma$ is the intersection of $\Hyp^n$ with a plane in $\R^{n,1}$ {orthogonal to $v$}, the parallel transport of $v$ along $\gamma$ is the  vector field constantly equal to $v$, and therefore
$$w^\HH=\ddt(\gamma(t),v)=(w,0)\in \R^{n,1}\times \R^{n,1}~.$$ 
We shall denote by $\HP_{(x,v)}$ the subspace of horizontal lifts of this form, which is therefore a horizontal subspace in $T_{(x,v)}T^1\Hyp^n$ isomorphic to $v^\perp$.

There remains to understand the case of $u=v$. 

\begin{lemma} \label{lemma:generator geoflow hor lift}
	Given $(x,v)\in T^1\Hyp^n$, the horizontal lift $v^\HH$ coincides with the infinitesimal generator $\chi_{(x,v)}$ of the geodesic flow, and has the expression:
	$$\chi_{(x,v)}=(v,x)\in\R^{n,1}\times\R^{n,1}~.$$
\end{lemma}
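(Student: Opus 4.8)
The plan is to compute the two quantities $\chi_{(x,v)}$ and $v^\HH$ separately and observe that they agree. For the infinitesimal generator, I would simply differentiate the explicit expression of the geodesic flow in \eqref{eq:geodflow}: since $\varphi_t(x,v)=(\cosh(t)x+\sinh(t)v,\sinh(t)x+\cosh(t)v)$, computing $\chi_{(x,v)}=\ddt\varphi_t(x,v)$ yields $\chi_{(x,v)}=(v,x)$, which at once establishes the claimed formula. As a consistency check, one verifies via the model \eqref{eq:modelTT} that $(v,x)$ is indeed tangent to $T^1\Hyp^n$ at $(x,v)$: the three defining relations reduce to $\langle x,v\rangle=0$, $\langle v,x\rangle=0$, and $\langle x,x\rangle+\langle v,v\rangle=-1+1=0$, all of which hold.

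For the identification $v^\HH=\chi_{(x,v)}$, the key observation is conceptual rather than computational. By definition, $v^\HH$ is the derivative at $t=0$ of $(\gamma(t),v(t))$, where $\gamma$ is the parameterized geodesic with $\gamma(0)=x$ and $\gamma'(0)=v$, and $v(t)$ is the parallel transport of $v$ along $\gamma$. But the velocity field $\gamma'(t)$ of a geodesic is itself parallel along $\gamma$, and satisfies $\gamma'(0)=v$; hence by uniqueness of parallel transport $v(t)=\gamma'(t)$. Therefore $(\gamma(t),v(t))=(\gamma(t),\gamma'(t))=\varphi_t(x,v)$, and differentiating at $t=0$ gives exactly $v^\HH=\ddt\varphi_t(x,v)=\chi_{(x,v)}$.

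I do not expect a substantial obstacle here: the statement unwinds to the elementary fact that the velocity of a geodesic is its own parallel transport. The only point requiring a little care is bookkeeping on which lift is meant, since $v^\HH$ is the horizontal lift of the vector $v$ itself (the direction along which the geodesic flow moves), so it lies in $\HH_{(x,v)}$ but not in the subspace $\HP_{(x,v)}$ of lifts of vectors orthogonal to $v$ treated just before. Once this is clear, the chain of identifications $v^\HH=\ddt\varphi_t(x,v)=\chi_{(x,v)}=(v,x)$ completes the proof.
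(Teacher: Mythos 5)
Your proof is correct and follows essentially the same route as the paper's: both rest on the observation that the velocity field of a geodesic is its own parallel transport, so the curve $(\gamma(t),v(t))$ defining $v^\HH$ is exactly $\varphi_t(x,v)$, after which differentiating \eqref{eq:geodflow} at $t=0$ yields $(v,x)$. Your added consistency check that $(v,x)$ satisfies the tangency relations of \eqref{eq:modelTT} is a harmless extra the paper omits.
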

\begin{proof}
	Since the tangent vector to a parameterized geodesic is parallel along the geodesic itself, $\varphi_t(x,v)$ also equals {$(\gamma(t),v(t))$, for $v(t)$ the vector field} used to define the horizontal lift. 
	Hence clearly 
	$$v^\HH=\chi_{(x,v)}=\ddt \varphi_t(x,v)~.$$ 
	Differentiating Equation \eqref{eq:geodflow} at $t=0$ we obtain the desired expression.
\end{proof}

In conclusion, we have the direct sum decomposition:
\begin{equation}\label{eq:direct sum}
T_{(x,v)}T^1\Hyp^n=\HH_{(x,v)}\oplus\VP_{(x,v)}=\mathrm{Span}(\chi_{(x,v)})\oplus\HP_{(x,v)}\oplus\VP_{(x,v)}~.
\end{equation}
{We are now able to} introduce the para-Sasaki metric on the unit tangent bundle.

\begin{defi}\label{defi:parasasaki}
	The \emph{para-Sasaki metric} on $T^1\Hyp^n$ is the pseudo-Riemannian metric $\gs n$ defined by
	$$\gs{n}(X_1,X_2)=\begin{cases} 
	+\langle u_1,u_2\rangle & \text{if }X_1,X_2\in\HH_{(x,v)}\text{ and }X_i=u_i^\HH \\
	-\langle w_1,w_2\rangle & \text{if }X_1,X_2\in\VP_{(x,v)}\text{ and }X_i=w_i^\V \\
	0 & \text{if }X_1\in\HH_{(x,v)}\text{ and }X_2\in\VP_{(x,v)}~.
	\end{cases}$$
\end{defi}

The metric  $\gs n$ is immediately seen to be non-degenerate of signature $(n,n-1)$. It is also worth observing that, from Definition \ref{defi:parasasaki} and Lemma \ref{lemma:generator geoflow hor lift},
\begin{equation}\label{eq:generator geoflow is unit}
\gs{n}(\chi_{(x,v)},\chi_{(x,v)})=1~,
\end{equation}
and that $\chi_{(x,v)}$ is orthogonal to both $\VP_{(x,v)}$ and $\HP_{(x,v)}$.

The para-Sasaki metric, together with the decomposition \eqref{eq:direct sum} will be essential in our definition of the para-K\"ahler metric on $\G{n}$ and in several other constructions. 

Before that, we need an additional  observation. Clearly the obvious action of the isometry group $\Isom(\Hyp^n)$ on $T^1\Hyp^n$ preserves the para-Sasaki metric, since all ingredients involved in the definition are invariant by isometries. The same is also true for the action of the geodesic flow, and this fact is much more peculiar of the choice we made in Definition \ref{defi:parasasaki}.

\begin{lemma}\label{lemma:geodflow isometric}
	The $\R$-action of the geodesic flow on $T^1\Hyp^n$ is isometric for the para-Sasaki metric, and commutes with the action of $\Isom(\Hyp^n)$.
\end{lemma}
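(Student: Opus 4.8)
The plan is to exploit the explicit formula \eqref{eq:geodflow}, which realizes $\varphi_t$ as the restriction to $T^1\Hyp^n$ of the linear map $L_t$ on $\R^{n,1}\times\R^{n,1}$ sending $(x,v)$ to $(\cosh(t)x+\sinh(t)v,\,\sinh(t)x+\cosh(t)v)$. Since $L_t$ is linear, its differential coincides with $L_t$ itself; thus for a tangent vector $(\dot x,\dot v)\in T_{(x,v)}T^1\Hyp^n$ in the model \eqref{eq:modelTT} one has $d\varphi_t(\dot x,\dot v)=(\cosh(t)\dot x+\sinh(t)\dot v,\,\sinh(t)\dot x+\cosh(t)\dot v)$. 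The commutation with $\Isom(\Hyp^n)$ is then immediate: an orientation-preserving isometry acts as $A\in\SO_0(n,1)$ diagonally, $(x,v)\mapsto(Ax,Av)$, and $A$ commutes with $L_t$ by linearity, whence $\varphi_t\circ A=A\circ\varphi_t$.

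For the isometric statement, I would compute the action of $d\varphi_t$ on the three summands of \eqref{eq:direct sum} at $(x,v)$, expressing the result via the same splitting at the image point $\varphi_t(x,v)=(x',v')$. Using Lemma \ref{lemma:generator geoflow hor lift} and the explicit lifts $w^\HH=(w,0)$, $w^\V=(0,w)$, and observing that for $w\in v^\perp$ the relations $\langle w,x'\rangle=\langle w,v'\rangle=0$ hold, so that $w\in (v')^\perp$ as well, one finds
$$d\varphi_t(\chi_{(x,v)})=\chi_{(x',v')},\quad d\varphi_t(w^\HH)=\cosh(t)\,w^\HH+\sinh(t)\,w^\V,\quad d\varphi_t(w^\V)=\sinh(t)\,w^\HH+\cosh(t)\,w^\V,$$
where the lifts on the right are taken at $(x',v')$. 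The first identity merely records that the flow commutes with itself; the other two show that $d\varphi_t$ preserves the subbundle $\HP\oplus\VP$ and acts on it by a hyperbolic rotation.

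Substituting these into Definition \ref{defi:parasasaki} and using bilinearity, the verification collapses to the identity $\cosh^2(t)-\sinh^2(t)=1$: on a pair of horizontal lifts the metric scales by $\cosh^2(t)-\sinh^2(t)$, on a pair of vertical lifts by the same factor, and every mixed term cancels precisely because the horizontal part contributes $+\langle\cdot,\cdot\rangle$ while the vertical part contributes $-\langle\cdot,\cdot\rangle$. Together with $\gs{n}(\chi,\chi)=1$ from \eqref{eq:generator geoflow is unit} and the fact that $\chi$ stays orthogonal to $\HP\oplus\VP$ (since it is fixed by $d\varphi_t$ and that subbundle is preserved), this gives $\varphi_t^*\gs{n}=\gs{n}$.

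The content here is conceptual rather than technical: the sign discrepancy between the horizontal and vertical blocks in the para-Sasaki metric is exactly what produces the cancellation, via $\cosh^2-\sinh^2=1$. For the ordinary Riemannian Sasaki metric, where both blocks carry $+\langle\cdot,\cdot\rangle$, the identical computation would instead yield the factor $\cosh^2(t)+\sinh^2(t)\neq 1$ and the geodesic flow would fail to be isometric. This is the peculiar feature of the choice made in Definition \ref{defi:parasasaki} anticipated before the statement, and it is the only point requiring care.
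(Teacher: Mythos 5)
Your proof is correct and follows essentially the same route as the paper's: linearize the explicit flow \eqref{eq:geodflow}, compute its action on the splitting \eqref{eq:direct sum} in the three cases $w^\HH$, $w^\V$, $\chi_{(x,v)}$, and reduce the isometry check to $\cosh^2(t)-\sinh^2(t)=1$, with the opposite signs of the horizontal and vertical blocks in Definition \ref{defi:parasasaki} cancelling the mixed terms exactly as in the paper. The only (harmless) deviation is the commutation with $\Isom(\Hyp^n)$, which you verify algebraically via $A\circ L_t=L_t\circ A$ for $A\in\SO_0(n,1)$ acting diagonally, where the paper simply observes that isometries send parameterized geodesics to parameterized geodesics.
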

\begin{proof}
	Let us first consider the differential of $\varphi_t$, for a given $t\in\R$. Since the expression for $\varphi_t$ from Equation \eqref{eq:geodflow} is linear in $x$ and $v$, we have: 
	\begin{equation}\label{eq:geodflow linearized}
	d\varphi_t(\dot x,\dot v)=(\cosh(t)\dot x+\sinh(t)\dot v,\sinh(t)\dot x+\cosh(t)\dot v)~,
	\end{equation}
	for $X=(\dot x,\dot v)$ as in \eqref{eq:modelTT}.
	Let us distinguish three cases. 
	
	If $X=w^\HH=(w,0)$ for $w\in v^\perp\subset T_x\Hyp^n$, then
	\begin{equation}\label{eq:diff geoflow1}
	d\varphi_t(w^\HH)=(\cosh(t)w,\sinh(t)w)=\cosh(t)w^\HH+\sinh(t)w^\V~.
	\end{equation}
	For $X=w^\V=(0,w)$ a completely analogous computation gives
	\begin{equation}\label{eq:diff geoflow2}
	d\varphi_t(w^\V)=(\sinh(t)w,\cosh(t)w)=\sinh(t)w^\HH+\cosh(t)w^\V~.
	\end{equation}
	Finally, for $X=\chi_{(x,v)}$, by constriction 
	\begin{equation}\label{eq:diff geoflow3}
	d\varphi_t(\chi_{(x,v)})=\chi_{\varphi_t(x,v)}~.
	\end{equation}
	Now using \eqref{eq:diff geoflow1} and \eqref{eq:diff geoflow2}, and Definition \ref{defi:parasasaki}, we can check that that 
	$$\gs{n}(d\varphi_t(w_1^\HH),d\varphi_t(w_2^\HH))=(\cosh^2(t)-\sinh^2(t))\langle w_1,w_2\rangle=\langle w_1,w_2\rangle=\gs{n}(w_1^\HH,w_2^\HH)~.$$
	A completely analogous computation shows  that
	$$\gs{n}(d\varphi_t(w_1^\V),d\varphi_t(w_2^\V))=-\langle w_1,w_2\rangle=\gs{n}(w_1^\V,w_2^\V)~$$ and that
	$$\gs{n}(d\varphi_t(w_1^\HH),d\varphi_t(w_2^\V))=0=\gs{n}(w_1^\HH,w_2^\V)~.$$
	By  \eqref{eq:generator geoflow is unit} and \eqref{eq:diff geoflow3}, the norm of vectors proportional to $\chi_{(x,v)}$ is preserved. Together with \eqref{eq:diff geoflow1} and \eqref{eq:diff geoflow2}, vectors of the form $d\varphi_t(w^\HH)$ and $d\varphi_t(w^\V)$ are orthogonal to $d\varphi_t(\chi_{(x,v)})=\chi_{\varphi_t(x,v)}$.
	This concludes the first part of the statement.
	
	Finally, since isometries map parameterized geodesics to parameterized geodesics, it is straightworward to see that the $\R$-action commutes with $\Isom(\Hyp^n)$.
\end{proof}

\subsection{A para-K\"ahler metric on the space of geodesics}\label{sec:parakahler metric GG}
Let us start by quickly recalling the basic definitions of para-complex and para-K\"ahler geometry. First introduced by Libermann in \cite{libermann}, the reader can refer to the survey \cite{zbMATH00894673} for more details on para-complex geometry. 

Given a manifold $\mathcal M$ of dimension $2n$,  an \emph{almost para-complex structure} on $\mathcal M$ is a tensor $\JJ$ of type $(1,1)$ (that is, a smooth section of the bundle of endomorphisms of $T\mathcal M$) such that $\JJ^2=\mathbbm 1$ and that at every point $p\in \mathcal M$ the eigenspaces {$T_p^\pm \mathcal M=\ker(\JJ\mp \mathbbm 1)$} have dimension $n$. The almost para-complex structure $\JJ$ is a \emph{para-complex structure} if the distributions $T_p^\pm \mathcal M$ are integrable. 

A \emph{para-K\"ahler structure} on $\mathcal M$ is the datum of a para-complex structure $\JJ$ and a pseudo-Riemannian metric $\GG$ such that $\JJ$ is $\GG$-skewsymmetric, namely
\begin{equation}\label{eq:JGcompatible}
\GG(\JJ X,Y)=-\GG(X,\JJ Y)
\end{equation}
for every $X$ and $Y$, and such that the \emph{fundamental form}, namely the 2-form
\begin{equation}
\label{eq: Definizione Omega}
\Omega(X,Y):=\GG(X,\JJ Y),
\end{equation} 
is closed.

Observe that Equation \eqref{eq:JGcompatible} is equivalent to the condition that $\JJ$ is anti-isometric for $\GG$, namely:
\begin{equation}\label{eq:JGantiisometry}
\GG(\JJ X,\JJ Y)=-\GG(X,Y)
\end{equation}
which implies immediately that the metric of $\GG$ is necessarily neutral (that is, its signature is $(n,n)$).

\vspace{10pt}

Let us start to introduce the  para-K\"ahler structure on the space of geodesics $\G{n+1}$, whose dimension is $2n$. Recalling the $\R$-principal bundle structure $\mathrm p:T^1\Hyp^{n+1}\to \G{n+1}$, we will introduce the defining objects on $T^1\Hyp^{n+1}$, and show that they can be pushed forward to $\G{n+1}$. More precisely, given a point $(x,v)\in T^1\Hyp^{n+1}$, the decomposition \eqref{eq:direct sum} 
shows that the tangent space $T_\ell \G{n+1}$ identifies to $\chi_{(x,v)}^\perp=\HP_{(x,v)}\oplus\VP_{(x,v)}$, where $\ell\in \G{n+1}$ is the oriented unparameterized geodesic going through $x$ with speed $v$, and the orthogonal subspace is taken with respect to the para-Sasaki metric $\gs n$.
Indeed, the kernel of the projection $\mathrm p$ equals the subspace generated by $\chi_{(x,v)}$, and therefore the differential of $\mathrm p$ induces a vector space isomorphism 
\begin{equation}                                                                  \label{eq dp ristretto e' iso}
d\mathrm p|_{\chi^\perp_{(x,v)}}\colon \chi_{(x,v)}^\perp \xrightarrow{\sim} T_\ell \G{n+1}~.
\end{equation}

Now, let us define $J\in\mathrm{End}(\chi_{(x,v)}^\perp)$ by the following expression:
$$J(\dot x,\dot v)=(\dot v,\dot x)~.$$
In other words, recalling that $\HP_{(x,v)}$ consists of the vectors of the form $(w,0)$, and $\VP_{(x,v)}$ of those of the form $(0,w)$, for $w\in v^\perp$, $J$ is defined by
\begin{equation}\label{eq:defiJ2}
J(w^\HH)=w^\V\qquad \text{and}\qquad J(w^\V)=w^\HH~.
\end{equation}

\begin{lemma}\label{lemma:paracomplex structure}
The endomorphism $J$ induces an almost para-complex structure $\JJ$ on $T_\ell \G{n+1}$, which does not depend on the choice of $(x,v)\in \mathrm p^{-1}(\ell)$.
\end{lemma}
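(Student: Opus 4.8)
The plan is to carry $J$ over to $T_\ell\G{n+1}$ through the linear isomorphism $d\mathrm p|_{\chi^\perp_{(x,v)}}$ of \eqref{eq dp ristretto e' iso}, and then to verify the two defining properties of an almost para-complex structure; the only substantial point will be that the resulting endomorphism is independent of the chosen lift of $\ell$. Concretely, for a fixed $(x,v)\in\mathrm p^{-1}(\ell)$ I would define
$$\JJ:=d\mathrm p|_{\chi^\perp_{(x,v)}}\circ J\circ\big(d\mathrm p|_{\chi^\perp_{(x,v)}}\big)^{-1}\in\mathrm{End}\big(T_\ell\G{n+1}\big)~.$$

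Since $\JJ$ is conjugate to $J$, its algebraic properties reduce to those of $J$. The identity $\JJ^2=\mathbbm 1$ follows from $J^2=\mathbbm 1$, which is immediate from \eqref{eq:defiJ2} (or from the coordinate expression $J(\dot x,\dot v)=(\dot v,\dot x)$). For the eigenspaces, the $\pm 1$-eigenspaces of $J$ are spanned by the vectors $w^\HH\pm w^\V$ with $w\in v^\perp$, hence each has dimension $n=\dim v^\perp$; applying the isomorphism, the eigenspaces $\ker(\JJ\mp\mathbbm 1)$ of $\JJ$ have dimension $n$ as well, so $\JJ$ is an almost para-complex structure on $T_\ell\G{n+1}$.

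The core of the statement is the independence of the base point $(x,v)$ in the fiber. As $\mathrm p$ is a principal $\R$-bundle for the geodesic flow, any other point of $\mathrm p^{-1}(\ell)$ has the form $\varphi_t(x,v)$, so it suffices to compare the endomorphisms induced at $(x,v)$ and at $\varphi_t(x,v)$. Differentiating $\mathrm p\circ\varphi_t=\mathrm p$ gives $d\mathrm p|_{\varphi_t(x,v)}\circ d\varphi_t=d\mathrm p|_{(x,v)}$, and $d\varphi_t$ preserves $\chi^\perp$ by Lemma \ref{lemma:geodflow isometric}; combining these, the two induced endomorphisms agree provided that $d\varphi_t$ intertwines $J$ along the fiber, namely $d\varphi_t\circ J=J\circ d\varphi_t$ on $\chi^\perp_{(x,v)}$. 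I would check this last identity directly from \eqref{eq:diff geoflow1}--\eqref{eq:diff geoflow2}: applying $d\varphi_t$ to $J(w^\HH)=w^\V$ yields $\sinh(t)w^\HH+\cosh(t)w^\V$, while applying $J$ to $d\varphi_t(w^\HH)=\cosh(t)w^\HH+\sinh(t)w^\V$ yields the same vector, and symmetrically starting from $w^\V$. I expect this intertwining to be the only place requiring genuine care: the point is that the geodesic flow acts on $\HP\oplus\VP$ as a hyperbolic rotation mixing the two summands in exactly the way that commutes with the swap $J$, so that the push-forward is well defined. The remaining properties are purely formal consequences of $J^2=\mathbbm 1$ and of $d\mathrm p|_{\chi^\perp_{(x,v)}}$ being an isomorphism.
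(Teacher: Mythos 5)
Your proposal is correct and follows essentially the same route as the paper: both reduce the fiber-independence to the identity $d\varphi_t\circ J=J\circ d\varphi_t$ on $\chi^\perp$ (you verify it in the basis $w^\HH,w^\V$ via \eqref{eq:diff geoflow1}--\eqref{eq:diff geoflow2}, the paper equivalently in the coordinates $(\dot x,\dot v)$ via \eqref{eq:geodflow linearized}), and both obtain the eigenspace dimensions from the vectors $w^\HH\pm w^\V$, i.e.\ $(w,\pm w)$. No gaps to report.
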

\begin{proof}
By definition of the $\R$-principal bundle structure $\mathrm p:T^1\Hyp^{n+1}\to \G{n+1}$ and of the geodesic flow $\varphi_t$, $\mathrm p\circ \varphi_t=\mathrm p$ for every $t\in\R$. Moreover $\varphi_t$ preserves the infinitesimal generator $\chi$ (Equation \eqref{eq:diff geoflow3}) and acts isometrically on $T^1\Hyp^{n+1}$ by Lemma \ref{lemma:geodflow isometric}, hence it preserves the orthogonal complement of $\chi$.
Therefore, for all given vectors $X,Y\in T_\ell \G{n+1}$, any two lifts of $X$ and $Y$ on $T^1\Hyp^{n+1}$ orthogonal to $\mathrm p^{-1}(\ell)$ differ by push-forward by $\varphi_t$.

However, it is important to stress that the differential of $\varphi_t$ does \emph{not} preserve the distributions $\HP$ and $\VP$ {individually}  (see Equations \eqref{eq:diff geoflow2} and \eqref{eq:diff geoflow3}).
Nevertheless, by Equation \eqref{eq:geodflow linearized}, {we get that}
\begin{align*}(\varphi_t)_*(J(\dot x,\dot v))=(\varphi_t)_*(\dot v,\dot x)&=(\cosh(t)\dot v+\sinh(t)\dot x,\sinh(t)\dot v+\cosh(t)\dot x) \\
&=J(\sinh(t)\dot v+\cosh(t)\dot x,\cosh(t)\dot v+\sinh(t)\dot x)=J(\varphi_t)_*(\dot x,\dot v)~,
\end{align*}
which shows that {the geodesic flow preserves $J$, and therefore that $J$ induces a well-defined tensor} $\JJ$ on $T_\ell \G{n+1}$. It is clear from the expression of $J$ that $\JJ^2=\mathbbm 1$, and moreover that the $\pm 1$-eigenspaces of $\JJ$ both have dimension $n$, since the eigenspaces of $J$ consist precisely of the vectors of the form $(w,w)$ (resp. $(w,-w)$) for $w\in v^\perp\subset T_x\Hyp^{n+1}$.
\end{proof}

Let us now turn our attention to the construction of the neutral metric $\GG$, which will be defined by a similar construction. In fact, given $(x,v)\in \mathrm p^{-1}(\ell)$, we simply define $\GG$ on $T_\ell \G{n+1}$ as the push-forward of the restriction $\gs{n+1}|_{\chi^\perp_{(x,v)}}$ by the isomorphism in Equation \eqref{eq dp ristretto e' iso}. 

Well-posedness of this definition follows immediately from Equation \eqref{eq:diff geoflow3} and Lemma \ref{lemma:geodflow isometric}.

\begin{lemma}
The restriction of $\gs{n+1}$ to $\chi^\perp_{(x,v)}$ induces a neutral metric $\GG$ on $T_\ell \G{n+1}$, which does not depend on the choice of $(x,v)\in \mathrm p^{-1}(\ell)$, such that $\JJ$ is $\GG$-skewsymmetric.
\end{lemma}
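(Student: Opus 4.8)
The plan is to prove three assertions: that the push-forward is well-defined (independent of the lift), that it produces a neutral metric, and that $\JJ$ is $\GG$-skewsymmetric. As the text already notes, the first claim follows from the invariance of $\gs{n+1}$ under the geodesic flow. First I would observe that any two lifts of $\ell$ differ by $\varphi_t$ for some $t\in\R$, and that $d\varphi_t$ maps $\chi_{(x,v)}^\perp$ isomorphically onto $\chi_{\varphi_t(x,v)}^\perp$ (since $\varphi_t$ preserves $\chi$ and is isometric by Lemma \ref{lemma:geodflow isometric}). Because $\mathrm p\circ\varphi_t=\mathrm p$, the two isomorphisms $d\mathrm p|_{\chi^\perp}$ from these two lifts are intertwined precisely by $d\varphi_t$; since $d\varphi_t$ preserves $\gs{n+1}$, the two candidate push-forwards of $\gs{n+1}|_{\chi^\perp}$ coincide. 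This is the formal content of ``well-posedness follows from Equation \eqref{eq:diff geoflow3} and Lemma \ref{lemma:geodflow isometric}''.

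Next I would verify that $\GG$ is neutral, i.e.\ of signature $(n,n)$. This is immediate from Definition \ref{defi:parasasaki}: the restriction $\gs{n+1}|_{\chi^\perp}$ is positive definite of dimension $n$ on $\HP_{(x,v)}$ (where it equals $+\langle\cdot,\cdot\rangle$, positive definite since $v^\perp\subset T_x\Hyp^{n+1}$ is spacelike) and negative definite of dimension $n$ on $\VP_{(x,v)}$ (where it equals $-\langle\cdot,\cdot\rangle$), with the two subspaces $\gs{n+1}$-orthogonal to each other. Since $\chi^\perp=\HP_{(x,v)}\oplus\VP_{(x,v)}$ by \eqref{eq:direct sum} and $d\mathrm p|_{\chi^\perp}$ is a linear isomorphism onto $T_\ell\G{n+1}$, the push-forward $\GG$ inherits signature $(n,n)$, which has dimension $2n$ as required.

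Finally, for $\GG$-skewsymmetry of $\JJ$, I would transport the identity \eqref{eq:JGcompatible} back to $\chi^\perp$ via the isomorphism $d\mathrm p|_{\chi^\perp}$ and check it for $J$ on $\HP\oplus\VP$. It suffices to verify $\gs{n+1}(JX,Y)=-\gs{n+1}(X,JY)$ for $X,Y$ ranging over horizontal and vertical lifts. Writing $X=u_1^\HH+w_1^\V$ and $Y=u_2^\HH+w_2^\V$ with $u_i,w_i\in v^\perp$, one computes using \eqref{eq:defiJ2} that $JX=w_1^\HH+u_1^\V$ and $JY=w_2^\HH+u_2^\V$; then Definition \ref{defi:parasasaki} gives $\gs{n+1}(JX,Y)=\langle w_1,u_2\rangle-\langle u_1,w_2\rangle$, while $\gs{n+1}(X,JY)=\langle u_1,w_2\rangle-\langle w_1,u_2\rangle$, and these are indeed negatives of each other. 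Since $J$ commutes with $d\varphi_t$ (shown in Lemma \ref{lemma:paracomplex structure}) and the push-forwards are well-defined, this identity descends to $\GG(\JJ X,Y)=-\GG(X,\JJ Y)$ on $T_\ell\G{n+1}$.

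I do not expect a genuine obstacle here: all three claims reduce to the two key invariance facts already established (Lemma \ref{lemma:geodflow isometric} and the commutation $(\varphi_t)_*J=J(\varphi_t)_*$), plus a direct computation from Definition \ref{defi:parasasaki}. The only point requiring mild care is bookkeeping the signs and the splitting into $\HP$ and $\VP$ components, since $d\varphi_t$ does not preserve $\HP$ and $\VP$ individually; but the argument only uses that $d\varphi_t$ preserves $\chi^\perp$ and commutes with $J$, so this causes no difficulty.
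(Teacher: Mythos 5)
Your proof is correct and takes essentially the same approach as the paper: well-posedness follows from the invariance of $\gs{n+1}$ and $\chi$ under the geodesic flow (Lemma \ref{lemma:geodflow isometric} and Equation \eqref{eq:diff geoflow3}), and the compatibility with $\JJ$ is a direct computation from Definition \ref{defi:parasasaki} and \eqref{eq:defiJ2}. The only cosmetic differences are that the paper verifies the equivalent anti-isometry identity $\gs{n+1}(JX,JY)=-\gs{n+1}(X,Y)$ whereas you check skewsymmetry directly, and that you spell out the neutrality and well-posedness steps that the paper disposes of in a sentence.
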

\begin{proof}
It only remains to show the $\GG$-skewsymmetry, namely Equation \eqref{eq:JGcompatible}. The latter is indeed equivalent to Equation \eqref{eq:JGantiisometry}, which {simply follows from observing that,
as a consequence} of Definition \ref{defi:parasasaki} and of the definition of $J$ in \eqref{eq:defiJ2}, {one has  $$\gs{n+1}(JX,JY)=-\gs{n+1}(X,Y)$$ for all $X,Y\in \chi^\perp$}
\end{proof}

There is something left to prove in order to conclude that the constructions of $\JJ$ and $\GG$ induce a para-K\"ahler structure on $\G{n+1}$, but we defer the remaining checks to the following sections: in particular, we are left to prove that the almost para-complex structure $\JJ$ is integrable (it will be a consequence of Example \ref{ex:horospheres}) and that the  2-form $\Omega=\GG(\cdot,\JJ\cdot)$ is closed (which is the content of  Corollary \ref{cor:omega closed}).

{
	\begin{remark}
		\label{rmk: Isom preserva Omega e J}
		The group $\Isom(\Hyp^{n})$ acts naturally on $\G{n}$ and the map $\mathrm p\colon T^1 \Hyp^n\to \G{n}$ is equivariant, namely $\mathrm p (\psi\cdot (x,v))= \psi \cdot \mathrm p(x,v)$ for all $\psi\in \Isom(\Hyp^n)$. As a result, by construction of $\GG$ and $\JJ$, the action of $\Isom(\Hyp^n)$ on $\G{n}$ preserves  $\GG$, $\JJ$ and $\Omega$.
\end{remark}}

\begin{remark}\label{rmk other metric1}
Of course some choices have been made in the above construction, in particular in the expression of the para-Sasaki metric of Definition \ref{defi:parasasaki}, which has a fundamental role when introducing the metric $\GG$. The essential properties we used are the naturality with respect to the isometry group of $\Hyp^{n+1}$ and to the action of the geodesic flow (Lemma \ref{lemma:geodflow isometric}).

Some alternative definitions for $\gs{n+1}$ would produce the same expression for $\GG$.
For instance one can define for all $c\in \R^+$ a metric $g_c$ on $T^1 \Hyp^{n+1}$ so that, with respect to the direct sum decomposition \eqref{eq:direct sum}:
\begin{itemize}
	\item $g_c(w_1^\HH,w_2^\HH)=-g_c(w_1^\V,w_2^\V)=\langle w_1,w_2\rangle$ for any $w_1,w_2\in v^\perp\subset T_x\Hyp^{n+1}$,
	\item $g_c(\chi_{(x,v)},\chi_{(x,v)})=c$,
	\item $\mathrm{Span}(\chi_{(x,v)})$, $\HP_{(x,v)}$ and $\VP_{(x,v)}$ are mutually $g_c$-orthogonal.
\end{itemize}

Replacing $\gs{n+1}$ with such a $g_c$, one would clearly obtain the same metric $\GG$ since it only depends on the restriction of $g_c$ to the orthogonal complement of $\chi$.
{Moreover, $g_c$ is invariant under the action of $\Isom(\Hyp^n)$ and under the geodesic flow.}

\end{remark}
\begin{remark}\label{rmk other metric2}
It will  be convenient to use Remark \ref{rmk other metric1} in the following, by considering $T^1\Hyp^{n}$ as a submanifold of $\R^{n,1}\times \R^{n,1}$, and taking the metric given by the Minkowski product on the first factor, and its opposite on the second factor, restricted to $T^1\Hyp^{n}$, i.e.
\begin{equation}\label{eq:metric minkxmink}
\widehat g_{T^1 \Hyp^n}((\dot x_1,\dot v_1),(\dot x_2,\dot v_2))=\langle \dot x_1,\dot x_2\rangle-\langle \dot v_1,\dot v_2\rangle~.
\end{equation}
In fact, it is immediate to check that $\widehat g_{T^1 \Hyp^n}(w_1^\HH,w_2^\HH)=\widehat g_{T^1 \Hyp^n}((w_1,0),(w_2,0))=\langle w_1,w_2\rangle$ for $w_i\in v^\perp$, that similarly $\widehat g_{T^1 \Hyp^n}(w_1^\V,w_2^\V)=-\langle w_1,w_2\rangle$, and that 
$$\widehat g_{T^1 \Hyp^n}(\chi_{(x,v)},\chi_{(x,v)})=\widehat g_{T^1 \Hyp^n}((v,x),(v,x))=\langle v,v\rangle-\langle x,x\rangle =2~.$$
Finally elements of the three types are mutually orthogonal, and therefore
$\widehat g_{T^1 \Hyp^n}=g_2$ with $g_2$ as in Remark \ref{rmk other metric1}.
\end{remark}

\section{The Gauss map of hypersurfaces in $\Hyp^{n+1}$}\label{sec:gauss map}
In this section we will focus on the construction of the Gauss map of an immersed hypersurface, its relation with the normal evolution and the geodesic flow action on the unit tangent bundle, and provide several examples of great importance for the rest of this work.

\subsection{Lift to the unit tangent bundle}
Let us introduce the notions of lift to the unit tangent bundle and Gauss map for an immersed hypersurface in hyperbolic space, and start discussing some properties.

\begin{defi}\label{defi:lift and gauss}
Let $M$ be an oriented $n$-dimensional manifold, let $\sigma:M\to\Hyp^{n+1}$ be an immersion, and let $\nu$ be the unit normal vector field of $\sigma$ compatible with the orientations of $M$ and $\Hyp^{n+1}$. Then we define the \emph{lift} of $\sigma$ as 
$$\zeta_\sigma:M\to T^1\Hyp^{n+1}\qquad \zeta_\sigma(p)=(\sigma(p),\nu(p))~.$$
The \emph{Gauss map} of $\sigma$ is then the map
$$G_\sigma:M\to\G{n+1}\qquad G_\sigma=\mathrm{p}\circ \zeta_\sigma~.$$
\end{defi}

In other words, the Gauss map of $\sigma$ is the map which associates to $p\in M$ the geodesic $\ell$ of $\Hyp^{n+1}$ orthogonal to the image of $d_p\sigma$ at $\sigma(p)$, oriented compatibly with the orientations of $M$ and $\Hyp^{n+1}$.

{Also {recall that} the \emph{shape operator} $B$ of $\sigma$ {is defined} as the $(1,1)$-tensor on $M$ defined by 
\begin{equation}\label{eq:shape}
d\sigma\circ B(W)=-D_{W} \nu~,
\end{equation} for $D$ the Levi-Civita connection of $\Hyp^{n+1}$ and $W\in TM$.}

\begin{prop}\label{prop:gauss immersion}
Given an oriented manifold $M^n$ and an immersion $\sigma:M\to\Hyp^{n+1}$, the lift of $\sigma$ is an immersion orthogonal to the fibers of $\mathrm{p}:T^1\Hyp^{n+1}\to \G{n+1}$. As a consequence $G_\sigma$ is an immersion.
\end{prop}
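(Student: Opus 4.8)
The plan is to compute the differential $d\zeta_\sigma$ explicitly in the hyperboloid model and read off both assertions from its horizontal--vertical decomposition. Fix $p\in M$ and $W\in T_pM$, and set $x=\sigma(p)$, $v=\nu(p)$. Since $\zeta_\sigma=(\sigma,\nu)$ as a map into $\R^{n+1,1}\times\R^{n+1,1}$, its differential is $d\zeta_\sigma(W)=(d\sigma(W),d\nu(W))$, where both components are ordinary ambient derivatives. The first component is just $d\sigma(W)$. For the second, I would first observe that the $\R^{n+1,1}$-valued derivative $d\nu(W)$ is automatically tangent to $\Hyp^{n+1}$: its component along the position vector $x=\sigma(p)$ vanishes because $\langle \nu,\sigma\rangle\equiv 0$ and $\langle \nu,d\sigma(W)\rangle=0$. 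Hence $d\nu(W)$ coincides with the Levi-Civita covariant derivative $D_W\nu$, which by the definition of the shape operator in \eqref{eq:shape} equals $-d\sigma(B(W))$.

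The key structural observation is then that both entries of $d\zeta_\sigma(W)$ lie in $v^\perp$: the vector $d\sigma(W)$ is tangent to the hypersurface, hence orthogonal to $\nu=v$, and likewise $d\sigma(B(W))$. Comparing with the descriptions of $\HP_{(x,v)}$ (vectors $(w,0)$ with $w\in v^\perp$) and $\VP_{(x,v)}$ (vectors $(0,w)$ with $w\in v^\perp$), this yields the clean expression
$$d\zeta_\sigma(W)=(d\sigma(W))^\HH-(d\sigma(B(W)))^\V\in\HP_{(x,v)}\oplus\VP_{(x,v)}=\chi_{(x,v)}^\perp.$$
In particular $d\zeta_\sigma(W)$ has no component along $\chi_{(x,v)}$, which is exactly the assertion that the image of $d\zeta_\sigma$ is orthogonal to the fibers of $\mathrm{p}$, since $\ker d\mathrm{p}=\mathrm{Span}(\chi_{(x,v)})$ by the decomposition \eqref{eq:direct sum}.

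To see that $\zeta_\sigma$ is an immersion, I would suppose $d\zeta_\sigma(W)=0$; as the horizontal and vertical summands are independent, its horizontal part $(d\sigma(W))^\HH$ must vanish, forcing $d\sigma(W)=0$ and hence $W=0$ because $\sigma$ is an immersion. Finally, for $G_\sigma=\mathrm p\circ\zeta_\sigma$ I would invoke \eqref{eq dp ristretto e' iso}: the restriction $d\mathrm p|_{\chi^\perp}$ is a linear isomorphism onto $T_\ell\G{n+1}$, and since $d\zeta_\sigma(W)\in\chi_{(x,v)}^\perp$, the composite $d(G_\sigma)=d\mathrm p\circ d\zeta_\sigma$ inherits the injectivity of $d\zeta_\sigma$, so $G_\sigma$ is an immersion. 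The only delicate point in the whole argument is the identification of the second component of $d\zeta_\sigma(W)$ with $-d\sigma(B(W))$ lying in the vertical distribution; once the ambient derivative of $\nu$ is seen to be tangent to $\Hyp^{n+1}$ and the shape operator enters cleanly, both orthogonality and the immersion property follow immediately from the direct-sum decomposition.
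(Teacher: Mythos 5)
Your proposal is correct and takes essentially the same route as the paper's proof: the same hyperboloid-model computation $d_p\zeta_\sigma(W)=(d_p\sigma(W),-d_p\sigma(B(W)))=d_p\sigma(W)^\HH-d_p\sigma(B(W))^\V$, the same conclusion that the image lies in $\chi^\perp$, and the same appeal to $d\mathrm p|_{\chi^\perp}$ being an isomorphism to deduce that $G_\sigma$ is an immersion. The only (harmless) difference is that you justify in slightly more detail why the ambient derivative of $\nu$ is tangent to $\Hyp^{n+1}$ and hence equals $D_W\nu$, where the paper states this more briefly.
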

\begin{proof}
By a direct computation in the hyperboloid model, the differential of $\zeta_\sigma$ has the expression 
\begin{equation}
\label{eq: differential of sigma tilde}
d_p\zeta_\sigma (W)=(d_p\sigma(W),d_p\nu(W))=(d_p\sigma(W),-d_p\sigma(B(W)))~.
\end{equation} Indeed, the ambient derivative  in $\R^{n+1,1}$ of the vector field $\nu$ equals the covariant derivative with respect to $D$, since {$d_p\nu(W)$} is orthogonal to $\sigma(p)$ as a consequence of the condition $\langle \sigma(p),\nu(p)\rangle=0$. 
 
As both $d_p\sigma(W)$ and $d_p\sigma(B(W))$ are tangential to the image of $\sigma$ at $p$, hence orthogonal to $\nu(p)$, $d_p\zeta_\sigma (W)$ can be written as:
\begin{equation}\label{eq:differential lift}
d_p\zeta_\sigma(W)=d_p\sigma(W)^\HH-d_p\sigma(B(W))^\V~.
\end{equation}
Therefore, for every $W\neq 0$, $d_p \zeta_\sigma(W)$ is a non-zero vector orthogonal to $\chi_{\zeta_\sigma(p)}$ by Definition \ref{defi:parasasaki}. Since the differential of $\mathrm p$ is a vector space isomorphism between $\chi_{\zeta_\sigma(p)}^\perp$ and $T_{G_\sigma(p)}\G{n+1}$, the Gauss map $G_\sigma$ is also an immersion. 
\end{proof}

As a consequence of Proposition \ref{prop:gauss immersion}, we can compute the first fundamental form of the Gauss map $G_\sigma$, that is, the pull-back metric $G_\sigma^*\GG$, which we denote by $\overline\I$. Since the lift $\zeta_\sigma$ is orthogonal to $\chi$, it suffices to compute the pull-back metric of $\gs{n+1}$ by $\zeta_\sigma$. By Equation \eqref{eq:differential lift}, we obtain:
\begin{equation}\label{eq:fff gauss}
\overline\I=\I - \III~,
\end{equation}
where $\I=\sigma^*g_{\Hyp^{n+1}}$ is the first fundamental form of $\sigma$, and $\III=\I(B\cdot,B\cdot)$ its third fundamental form in $\Hyp^{n+1}$.

Let us now see that the orthogonality to the generator of the geodesic flow essentially characterizes the lifts of immersed hypersurfaces in $\Hyp^{n+1}$, in the following sense.

\begin{prop}\label{prop:gauss immersion converse}
Let $M^n$ be an orientable manifold and $\zeta:M\to T^1\Hyp^{n+1}$ be an immersion orthogonal to the fibers of $\mathrm{p}:T^1\Hyp^{n+1}\to \G{n+1}$. If $\sigma:=\pi\circ\zeta$ is an immersion, then $\zeta$ is the lift of $\sigma$ with respect to an orientation of $M$.
\end{prop}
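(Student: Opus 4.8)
The plan is to write $\zeta=(\sigma,\nu)$ in the hyperboloid model \eqref{eq:modelT}, where $\sigma=\pi\circ\zeta$ and $\nu$ denotes the second component, and to show that the fiber-orthogonality hypothesis forces $\nu$ to be a smooth unit normal field along the immersion $\sigma$. Writing $x=\sigma(p)$ and $v=\nu(p)$, the condition $\zeta(p)\in T^1\Hyp^{n+1}$ gives $\langle x,x\rangle=-1$, $\langle x,v\rangle=0$ and $\langle v,v\rangle=1$, so that $\nu(p)$ is a unit vector of $\sigma(p)^\perp=T_{\sigma(p)}\Hyp^{n+1}$; moreover $\nu$ is smooth since $\zeta$ is. The statement will then follow by equipping $M$ with the orientation for which $\nu$ is the compatible unit normal, so that $\zeta=\zeta_\sigma$ in the sense of Definition \ref{defi:lift and gauss}.

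The heart of the argument is to unwind the requirement that $d_p\zeta(W)$ be orthogonal to the fiber direction for every $W\in T_pM$. The fiber through $\zeta(p)$ is the geodesic flow orbit, tangent to $\chi_{\zeta(p)}=(\nu(p),\sigma(p))$ by Lemma \ref{lemma:generator geoflow hor lift}. Differentiating gives $d_p\zeta(W)=(d_p\sigma(W),d_p\nu(W))$, and I would decompose this vector along the splitting \eqref{eq:direct sum}. Using that the horizontal lift of $u\in T_x\Hyp^{n+1}$ is $(u,\langle u,v\rangle x)$ (which follows from the horizontal lift of vectors in $v^\perp$ being of the form $(w,0)$ and from $v^\HH=\chi$), the component of $d_p\zeta(W)$ along $\chi_{\zeta(p)}$ equals $\langle d_p\sigma(W),\nu(p)\rangle$. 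Since $\chi$ is $\gs{n+1}$-orthogonal to $\HP\oplus\VP$ and $\gs{n+1}(\chi,\chi)=1$, this yields
\begin{equation*}
\gs{n+1}\big(d_p\zeta(W),\chi_{\zeta(p)}\big)=\langle d_p\sigma(W),\nu(p)\rangle~,
\end{equation*}
so the fiber-orthogonality hypothesis is exactly the condition that $\langle d_p\sigma(W),\nu(p)\rangle=0$ for all $W$.

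To conclude, I would observe that since $\sigma$ is an immersion of an $n$-manifold into the $(n+1)$-dimensional Riemannian manifold $\Hyp^{n+1}$, the orthogonal complement of the image of $d_p\sigma$ inside $T_{\sigma(p)}\Hyp^{n+1}$ is a line carrying a positive definite induced metric; the previous step shows that the unit vector $\nu(p)$ lies in this line, whence $\nu$ is a globally defined smooth unit normal field along $\sigma$. Together with the fixed orientation of $\Hyp^{n+1}$, the field $\nu$ co-orients $M$ and thereby singles out an orientation of $M$ (which exists since $M$ is orientable) for which $\nu$ is the compatible unit normal. With respect to this orientation, $\zeta=(\sigma,\nu)=\zeta_\sigma$ by Definition \ref{defi:lift and gauss}.

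I expect the only genuine bookkeeping to be the computation of the $\chi$-component of $d_p\zeta(W)$ in the second step; this is, however, an immediate consequence of the lift formulas already established and is really the converse packaging of the computation in Proposition \ref{prop:gauss immersion}. The remaining point, that the co-orientation determined by $\nu$ promotes to a genuine orientation of $M$, is elementary, so I anticipate no substantial obstacle.
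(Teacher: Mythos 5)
Your proposal is correct and takes essentially the same route as the paper's proof: both write $\zeta=(\sigma,\nu)$ and combine the tangency relations of the model \eqref{eq:modelTT} with orthogonality to $\chi_{\zeta(p)}=(\nu(p),\sigma(p))$ to conclude that $\langle\nu(p),d_p\sigma(W)\rangle=0$ for all $W$, then use injectivity of $d_p\sigma$ to identify $\nu$ as a global unit normal field singling out the required orientation. Your explicit decomposition of $d_p\zeta(W)$ along $\mathrm{Span}(\chi)\oplus\HP\oplus\VP$ is merely a repackaging of the paper's two-line manipulation of the same identities, so there is nothing to add.
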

\begin{proof}
Let us write $\zeta=(\sigma,\nu)$. Choosing the orientation of $M$ suitably, we only need to show that the unit vector field $\nu$ is normal  to the immersion $\sigma$. By differentiating, $d\zeta=(d\sigma,\mathrm d\nu)$ and by \eqref{eq:modelTT} we obtain 
$$\langle \nu(p),d\sigma(W)\rangle+\langle \sigma(p),\mathrm d\nu(W)\rangle=0$$ 
 for all  $W\in T_p M$. By the orthogonality hypothesis and the expression $\chi_{\zeta(p)}=(\nu(p),\sigma(p))$ (Lemma \ref{lemma:generator geoflow hor lift}) we obtain
 $$\langle \nu(p),d\sigma(W)\rangle-\langle \sigma(p),d\nu(W)\rangle=0~,$$
 hence $\langle \nu(p),d\sigma(W)\rangle=0$ for all $W$. Since by hypothesis the differential of $\sigma$ is injective, $\nu(p)$ is uniquely determined up to the sign, and is a unit normal vector to the immersion $\sigma$. 
 \end{proof}

In relation with Proposition \ref{prop:gauss immersion converse}, it is important to remark that there are (plenty of) immersions in $T^1\Hyp^{n+1}$ which are orthogonal to $\chi$ but are \emph{not} the lifts of immersions in $\Hyp^{n+1}$, meaning that they become singular when post-composed with the projection $\pi:T^1\Hyp^{n+1}\to \Hyp^{n+1}$. 
{Some examples of this phenomenon (Example \ref{ex:spheres}), and more in general of the Gauss map construction, are presented   in Section \ref{subsec:examples} below.}

\subsection{Geodesic flow and normal evolution} We develop here the construction of normal evolution, starting from an immersed hypersurface in $\Hyp^{n+1}$.

\begin{defi}\label{defi normal evolution}
Given an oriented manifold $M^n$ and an immersion $\sigma:M\to\Hyp^{n+1}$, the \emph{normal evolution} of $\sigma$ is the map $\sigma_t:M\to\Hyp^{n+1}$ defined by
$$\sigma_t(p)=\exp_{\sigma(p)}(t\nu(p))~,$$
where $\nu$ is the unit normal vector field of $\sigma$ compatible with the orientations of $M$ and $\Hyp^{n+1}$.
\end{defi}

The relation between the normal evolution and the geodesic flow is encoded in the following proposition. 

\begin{prop}
\label{prop: flusso geod e flusso normale}
Let $M^n$ be an orientable manifold  and $\sigma:M\to \Hyp^{n+1}$ be an immersion. Suppose $\sigma_t$ is an immersion for some $t\in\R$. Then $\zeta_{\sigma_t}=\varphi_t\circ \zeta_\sigma$. 
\end{prop}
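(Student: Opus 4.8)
The plan is to verify the identity pointwise in the hyperboloid model by a direct computation, reading off both sides from the explicit formula for the geodesic flow in \eqref{eq:geodflow}. First I would compute the right-hand side: in the hyperboloid model the geodesic issued from $\sigma(p)$ with unit speed $\nu(p)$ is $s \mapsto \cosh(s)\sigma(p) + \sinh(s)\nu(p)$, so by Definition \ref{defi normal evolution} one has $\sigma_t(p) = \cosh(t)\sigma(p) + \sinh(t)\nu(p)$. Applying \eqref{eq:geodflow} to $\zeta_\sigma(p) = (\sigma(p),\nu(p))$ gives
$$\varphi_t(\zeta_\sigma(p)) = \big(\cosh(t)\sigma(p) + \sinh(t)\nu(p),\ \sinh(t)\sigma(p) + \cosh(t)\nu(p)\big) = (\sigma_t(p),\ \widetilde\nu_t(p)),$$
where I set $\widetilde\nu_t(p) := \sinh(t)\sigma(p) + \cosh(t)\nu(p)$. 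Thus the first components of $\varphi_t\circ\zeta_\sigma$ and of $\zeta_{\sigma_t}$ agree automatically, and everything reduces to identifying $\widetilde\nu_t$ with the unit normal field $\nu_t$ of $\sigma_t$ defining $\zeta_{\sigma_t}$.

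Next I would check that $\widetilde\nu_t$ is a unit normal to $\sigma_t$. Membership $\varphi_t(\zeta_\sigma(p)) \in T^1\Hyp^{n+1}$ is automatic since $\varphi_t$ preserves $T^1\Hyp^{n+1}$, so by \eqref{eq:modelT} the vector $\widetilde\nu_t(p)$ is a unit vector in $T_{\sigma_t(p)}\Hyp^{n+1}$. For the normality, differentiate $\sigma_t(p) = \cosh(t)\sigma(p) + \sinh(t)\nu(p)$ to get $d_p\sigma_t(W) = \cosh(t)\,d_p\sigma(W) + \sinh(t)\,d_p\nu(W)$; since $d_p\nu(W) = -d_p\sigma(B(W))$ by \eqref{eq:shape} (recall from the proof of Proposition \ref{prop:gauss immersion} that the ambient derivative of $\nu$ coincides with $D_W\nu$), both summands lie in $d_p\sigma(T_pM)$ and are therefore orthogonal to both $\sigma(p)$ and $\nu(p)$. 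Expanding $\inner{\widetilde\nu_t(p),\, d_p\sigma_t(W)}$ by bilinearity, every term pairs $\sigma(p)$ or $\nu(p)$ against a tangent vector and hence vanishes; so $\widetilde\nu_t(p) \perp d_p\sigma_t(T_pM)$ and $\widetilde\nu_t$ is a unit normal field of the immersion $\sigma_t$.

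The remaining, and most delicate, point is the sign: I must confirm that $\widetilde\nu_t$ is the normal compatible with the fixed orientations, i.e. $\widetilde\nu_t = \nu_t$ rather than $-\nu_t$. Writing $d_p\sigma_t = d_p\sigma \circ A_t$ with $A_t = \cosh(t)\,\mathrm{Id} - \sinh(t)\,B$, and using the orientation convention in which a frame of $T_x\Hyp^{n+1}$ is positive exactly when prefixing it with the position vector $x$ yields a positive frame of $\R^{n+1,1}$, compatibility of $\widetilde\nu_t$ amounts to the sign of
$$\det\big(\sigma_t(p)\mid d_p\sigma(A_t e_1)\mid \cdots \mid d_p\sigma(A_t e_n)\mid \widetilde\nu_t(p)\big)$$
for a positively oriented frame $e_1,\dots,e_n$ of $T_pM$. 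Factoring $\det A_t$ out of the middle columns and expanding the first and last columns in $\{\sigma(p),\nu(p)\}$, the identity $\cosh^2(t)-\sinh^2(t)=1$ collapses this to $\det A_t \cdot \det\big(\sigma(p)\mid d_p\sigma(e_1)\mid\cdots\mid d_p\sigma(e_n)\mid \nu(p)\big)$, whose second factor is positive because $\nu$ is the compatible normal of $\sigma$.

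Hence $\widetilde\nu_t = \nu_t$ precisely when $\det A_t = \prod_i\big(\cosh(t) - \sinh(t)\lambda_i\big) > 0$, where $\lambda_1,\dots,\lambda_n$ are the principal curvatures of $\sigma$. This is the step I expect to require care, and it is automatically satisfied in the regime of interest for the rest of the paper: whenever the principal curvatures lie in $(-1,1)$ each factor $\cosh(t) - \sinh(t)\lambda_i = \cosh(t)\big(1 - \tanh(t)\lambda_i\big)$ is positive. With the sign settled, the two maps coincide at every $p$, giving $\zeta_{\sigma_t} = \varphi_t\circ\zeta_\sigma$.
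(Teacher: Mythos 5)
Your proof follows the same route as the paper's: the explicit hyperboloid-model computation, the agreement of the first coordinates by the definition of the geodesic flow, and the orthogonality argument showing that $\widetilde\nu_t=\sinh(t)\sigma+\cosh(t)\nu$ is a unit normal field of $\sigma_t$ are exactly the paper's steps. Where you diverge is at the final orientation-compatibility step, and there your treatment is in fact \emph{more} careful than the paper's: the paper simply asserts that injectivity of $d_p\sigma_t$ forces $\gamma'(t)$ to be the normal compatible with the orientations, whereas you isolate the sign as $\det A_t=\det\big(\cosh(t)\,\mathrm{id}-\sinh(t)B\big)=\prod_{i}\big(\cosh(t)-\sinh(t)\lambda_i\big)$ (your determinant factorization, using $\cosh^2(t)-\sinh^2(t)=1$, is correct) and observe that this is positive whenever the principal curvatures lie in $(-1,1)$.

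Your caution at this step is warranted, because positivity of $\det A_t$ does not follow from the hypotheses as stated, and without it the conclusion genuinely fails. Take $n=1$ and $\sigma$ a geodesic circle of radius $r$ in $\Hyp^{2}$, with $M=S^1$ oriented so that the compatible normal $\nu$ is the inward one; then $B=\coth(r)\,\mathrm{id}$, and for $t>r$ the map $\sigma_t$ is again an immersion (a circle of radius $t-r$), yet $\cosh(t)-\sinh(t)\coth(r)=-\sinh(t-r)/\sinh(r)<0$, so the second coordinate of $\varphi_t\circ\zeta_\sigma$ is $-\nu_t$ rather than $\nu_t$, and $\zeta_{\sigma_t}\neq\varphi_t\circ\zeta_\sigma$. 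So the identity requires either your positivity condition --- automatic for small principal curvatures, which is the setting of essentially all of the paper's applications of this proposition (e.g.\ Corollary \ref{cor:equidistant is immersed}, Lemma \ref{lemma:evolution fsigma}, Theorem \ref{Teorema hol H}) --- or the alternative hypothesis that $\sigma_s$ is an immersion for \emph{every} $s$ between $0$ and $t$, in which case $\det A_s$ never vanishes on $[0,t]$ and stays positive by continuity from $\det A_0=1$. In short, your restriction is not a defect of your argument relative to the paper: it supplies precisely the justification that the last sentence of the paper's proof omits, at the (necessary) cost of strengthening the hypotheses of the statement. Note only that the first-coordinate identity $\pi\circ\varphi_t\circ\zeta_\sigma=\sigma_t$ holds unconditionally; the sign ambiguity affects only the normal, and hence also Corollary \ref{prop:gauss map invariant normal evo}, where without the positivity assumption one obtains $G_\sigma$ only up to reversal of the orientation of the geodesics.
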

\begin{proof}
The claim is equivalent to showing that, if the differential of $\sigma_t$ is injective at $p$, then $(\sigma_t(p),\nu_t(p))=\varphi_t(\sigma(p),\nu(p))$, where $\nu_t$ is the normal vector of $\sigma_t$. The equality on the first coordinate holds by definition of the geodesic flow, since $t\mapsto \gamma(t)=\sigma_t(p)$ is precisely the parameterized geodesic such that $\gamma(0)=\sigma(p)$ with speed $\gamma'(0)=\nu(p)$. It thus remains to check that the normal vector of $\sigma_t(p)$ equals $\gamma'(t)$.

 By the usual expression of the exponential map in the hyperboloid model of $\Hyp^{n+1}$, the normal evolution is:
\begin{equation}\label{eq:normal evolution hyperboloid}
\sigma_t(p)=\cosh(t)\sigma(p)+\sinh(t)\nu(p)~,
\end{equation}
and therefore 
\begin{equation}\label{eq:normal evolution hyperboloid2}
d\sigma_t(V)=d\sigma\circ(\cosh(t)\mathrm{id}-\sinh(t)B)(V)
\end{equation}
for $V\in T_p M$.
It is then immediate to check that
$$\gamma'(t)=\sinh(t)\sigma(p)+\cosh(t)\nu(p)$$
is orthogonal to $d\sigma_t(V)$
for all $V$.
If $d\sigma_t$ is injective, this implies that $\gamma'(t)$ is the unique unit vector normal to the image of $\sigma$ and compatible with the orientations, hence it equals $\nu_t(p)$. This concludes the proof.
\end{proof}

A straightforward consequence, recalling that the Gauss map is defined as $G_\sigma=\mathrm{p}\circ\zeta_\sigma$ and that $\mathrm p\circ\varphi_t=\mathrm p$, is the following: 

\begin{cor}\label{prop:gauss map invariant normal evo}
The Gauss map of an immersion $\sigma:M^n\to \Hyp^{n+1}$ is invariant under the normal evolution, namely $G_{\sigma_t}=G_\sigma$, as long as $\sigma_t$ is an immersion.
\end{cor}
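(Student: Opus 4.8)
The plan is to deduce the statement directly from the previous proposition by post-composing with the bundle projection $\mathrm p$. First I would recall that, by Definition \ref{defi:lift and gauss}, the Gauss map is the composition $G_\sigma=\mathrm p\circ \zeta_\sigma$; applying the same definition to the evolved immersion $\sigma_t$, wherever the latter is an immersion, gives $G_{\sigma_t}=\mathrm p\circ \zeta_{\sigma_t}$. Note that this identity makes sense precisely under the stated hypothesis that $\sigma_t$ is an immersion, which is exactly what guarantees that $\zeta_{\sigma_t}$, and hence $G_{\sigma_t}$, are defined.

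The key input is Proposition \ref{prop: flusso geod e flusso normale}, which identifies the lift of the evolved immersion with the geodesic flow applied to the original lift, namely $\zeta_{\sigma_t}=\varphi_t\circ \zeta_\sigma$ (again valid where $\sigma_t$ is an immersion). Substituting this into the expression above yields
$$G_{\sigma_t}=\mathrm p\circ \varphi_t\circ \zeta_\sigma~.$$
I would then invoke the defining property of the principal $\R$-bundle structure $\mathrm p:T^1\Hyp^{n+1}\to\G{n+1}$: since $\varphi_t$ moves each unit tangent vector along the oriented geodesic it determines, it preserves the fibers of $\mathrm p$, that is, $\mathrm p\circ \varphi_t=\mathrm p$ for every $t\in\R$. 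Combining the last two displays gives $G_{\sigma_t}=\mathrm p\circ \zeta_\sigma=G_\sigma$, which is the claim.

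There is no genuine obstacle in this argument, as it is a formal composition of maps; the only point requiring care is that the identity $G_{\sigma_t}=G_\sigma$ is asserted only where $\sigma_t$ is an immersion, which is precisely the regime in which both Proposition \ref{prop: flusso geod e flusso normale} applies and $G_{\sigma_t}$ is well defined.
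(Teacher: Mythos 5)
Your proof is correct and is essentially identical to the paper's argument: the corollary is stated there as a direct consequence of $G_\sigma=\mathrm p\circ\zeta_\sigma$, Proposition \ref{prop: flusso geod e flusso normale} giving $\zeta_{\sigma_t}=\varphi_t\circ\zeta_\sigma$, and the bundle identity $\mathrm p\circ\varphi_t=\mathrm p$. Your extra remark that the hypothesis that $\sigma_t$ is an immersion is precisely what makes $\zeta_{\sigma_t}$ (hence $G_{\sigma_t}$) well defined is a correct and welcome clarification, but it does not change the route.
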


\begin{remark}
It follows from Equation \eqref{eq:normal evolution hyperboloid2} that, for any immersion $\sigma:M^n\to \Hyp^{n+1}$, the differential of $d\sigma_t$ at a point $p$ is injective for small $t$. However, in general  $\sigma_t$ might fail to be a global immersion for all $t\neq 0$.
In the next section we will discuss the condition of small principal curvatures for $\sigma$, which is a sufficient condition to ensure that $\sigma_t$ remains an immersion for all $t$.

As a related phenomenon, it is possible to construct examples of immersions $\zeta:M^n\to T^1\Hyp^{n+1}$ {which are orthogonal to the fibers of $\mathrm p$ but} such that $\pi\circ\varphi_t\circ\zeta$ fails to be an immersion for all $t\in\R$. We will discuss this problem later on, {and} such an example is {exhibited} in Example \ref{ex: Lagrangian not globally integrable}.
\end{remark}

\subsection{Fundamental examples}\label{subsec:examples}

It is now useful to describe several explicit examples. All of {them} will actually play a role in some of the proofs in the next sections.

\begin{example}[Totally geodesic hyperplanes] \label{ex:totally geodesic}
Let us consider a totally geodesic hyperplane $\mathcal P$ in $\Hyp^{n+1}$, and let $\sigma:\mathcal P\to \Hyp^{n+1}$ be the inclusion map. Since in this case the shape operator vanishes everywhere, from Equation \eqref{eq:fff gauss} the Gauss map is an isometric immersion (actually, an embedding) into $\G{n+1}$ with respect to the first fundamental form of $\sigma$. Totally geodesic immersions are in fact the only cases for which this occurs.

A remark that will be important in the following is that the lift $\zeta_\sigma$ is horizontal: that is, by Equation \eqref{eq:differential lift}, $d\zeta_\sigma(w)$ equals the horizontal lift of $w$ for every vector $w$ tangent to $\mathcal P$ at $x$. Therefore for every $x\in \mathcal P$, the image of $d\zeta_\sigma$ at $x$ is exactly the horizontal subspace $\HP_{(x,\nu(x))}$, for $\nu(x)$ the unit normal vector of $\mathcal P$. 
\end{example}

\begin{example}[Spheres in tangent space] \label{ex:spheres}
A qualitatively opposite example is the following. Given a point $x\in\Hyp^{n+1}$, let us choose an isometric identification of $T_x \Hyp^{n+1}$ with the $(n+1)$-dimensional Euclidean space, and consider the $n$-sphere $\Sph^n$ as a hypersurface in $T_x \Hyp^{n+1}$ by means of this identification. Then we can define the map 
$$\zeta:\Sph^n\to T^1 \Hyp^{n+1}\qquad \zeta(v)=(x,v)~.$$
The differential of $\zeta$ reads $d\zeta_v(w)=(0,w)=w^\V$ for every $w\in T_v\Sph^n\cong v^\perp$, hence $\zeta$ is an immersion, which is orthogonal to the fibers of $\mathrm{p}$. Actually,  $\zeta$ is vertical: this means that $d\zeta_v(w)$ is the vertical lift of $w$ for every $w\in v^\perp$, and therefore  $d_v\zeta(T_v \Sph^n)$ is exactly the vertical subspace $\VP_{(x,v)}$. 

Clearly we are not in the situation of Propositions \ref{prop:gauss immersion} and \ref{prop:gauss immersion converse}, as $\pi\circ \zeta$ is a constant map. On the other hand, $\mathrm{p}\circ \zeta$ has image in $\G{n+1}$ consisting of all the (oriented) geodesics $\ell$ going through $x$. {However, when post-composing $\zeta$ with the geodesic flow, $\varphi_t\circ \zeta$ projects to an immersion in $\Hyp^{n+1}$} {for all $t\ne 0$} {and is in fact an embedding with image a geodesic sphere of $\Hyp^{n+1}$ of radius $|t|$ centered at $x$.}
As a final remark, the first fundamental form of $\mathrm{p}\circ \zeta$,  is \emph{minus} the spherical metric of $\Sph^n$, since by Definition \ref{defi:parasasaki} $\gs{n+1}(w^\V,w^\V)=-\langle w,w\rangle$. 
\end{example}

The previous two examples can actually be seen as special cases of a more general construction, which will be very useful in the next section. 
\begin{example}[A mixed hypersurface in the unit tangent bundle] \label{ex:mixed}
Let us consider a totally geodesic $k$-dimensional submanifold $Q$ of $\Hyp^{n+1}$, for $0\leq k\leq n$. Consider the unit normal bundle 
$$\mathrm N^1 Q=\{(x,v)\in T^1\Hyp^{n+1}\,|\,x\in Q,v\text{ orthogonal to }Q  \}~,$$
which is an $n$-dimensional submanifold of $T^1\Hyp^{n+1}$, and 
let $\iota$ be the obvious inclusion map of $\mathrm N^1 Q$ in $T^1\Hyp^{n+1}$. Observe that $\pi\circ \iota$ is nothing but the bundle map $\mathrm N^1 Q\to Q$, hence not an immersion unless $k=n$ which is the case we discussed in Example \ref{ex:totally geodesic}. The map $\mathrm{p}\circ\iota$ has instead image in $\G{n+1}$ which consists of all the oriented geodesics $\ell$ orthogonal to $Q$.  See Figure \ref{fig:cylinder}.
Let us {focus on} its geometry in $\G{n+1}$.  

Given  $(x,v)\in N^1 Q$, take an orthonormal basis $\{w_1,\ldots,w_k\}$ of $T_x Q$. Clearly the $w_i${'s} are orthogonal to $v$, and let us complete them to an orthonormal basis $\{w_1,\ldots,w_n\}$ of $v^\perp\subset T_x \Hyp^{n+1}$. Then $\{w_1,\ldots,w_n\}$ identifies to a basis of $T_{(x,v)}\mathrm N^1 Q$. By repeating the arguments of the previous two examples, $d\iota_{(x,v)}(w_i)$ is the \emph{horizontal} lift of $w_i$ at $(x,v)$ if $1\leq i\leq k$, and is the \emph{vertical} lift if $i>k$. In particular they are all orthogonal to $\chi_{(x,v)}$, and therefore the induced metric on $\mathrm N^1 Q$ by the metric $\gs{n+1}$ coincides with the first fundamental form of $\mathrm{p}\circ\iota$. This metric has signature $(k,n-k)$, and $\{w_1,\ldots,w_n\}$ is an orthonormal basis, for which $w_1,\ldots,w_k$ are positive directions and $w_{k+1},\ldots,w_n$ negative directions. 

{Similarly to the previous example, for all $t\ne 0$ the map $\varphi_t\circ\iota$ has the property that, when post-composed with the projection $\pi$, it gives an embedding with image the equidistant ``cylinder'' around $Q$.}
\end{example}

\begin{figure}[htbp]
\centering
\includegraphics[height=4.5cm]{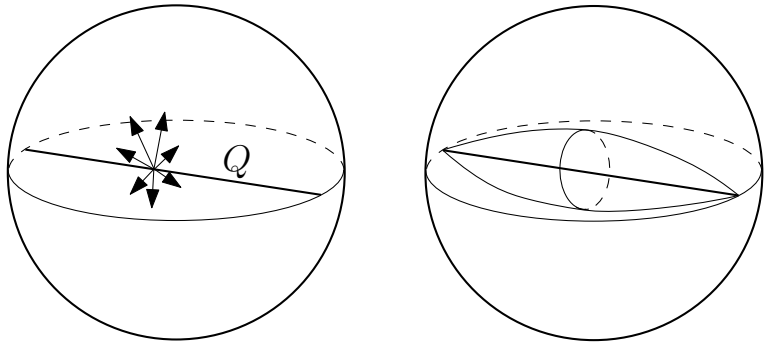} 

\caption{The normal bundle $N^1Q$ of a $k$-dimensional totally geodesic submanifold $Q$ in $\Hyp^{n+1}$ (here $k=1$ and $n=2$). On the right: after composing with the geodesic flow $\varphi_t$ for $t\neq 0$, one obtains an equidistant cylinder.}\label{fig:cylinder}
\end{figure}

Let us now consider a final example, which allows also to {prove} the integrability of the almost para-complex structure $\JJ$ of $\G{n+1}$ we introduced in Lemma \ref{lemma:paracomplex structure}.

\begin{example}[Horospheres] \label{ex:horospheres}
Let us consider a horosphere $H$ in $\Hyp^{n+1}$, and apply the Gauss map construction of Definition \ref{defi:lift and gauss} to the inclusion $\sigma:H\to \Hyp^{n+1}$.
	
It is known that {the shape operator of $H$ is $\pm\mathrm{id}$ (the sign varies according to the choice of the normal vector field, or equivalently by the choice of orientation on $H$)}, a result we will also deduce later on from our arguments in Remark \ref{rmk princ curv horosphere}.  Define $\zeta_\pm$ as the lift of $\sigma$ induced by the choice of the normal vector field for which the shape operator is $\pm \mathrm{id}$.

Now, by Proposition \ref{prop:gauss immersion}, the lift $\zeta_{\pm}$ is orthogonal to the fibers of $\mathrm{p}$, and moreover, by Equation \eqref{eq:differential lift}, $d\zeta_{\pm}(w)=w^\HH\pm w^\V$. As a result, by Equation \eqref{eq:defiJ2}, one has in fact that the image of $d_x\zeta_{\pm}$ is the whole $(\pm 1)$-eigenspace {of $J$} in $T_{\zeta_\pm(x)}T^1 \Hyp^n$.
\end{example}

A direct application of Example \ref{ex:horospheres} shows that the almost para-complex structure $\JJ$ is integrable:

\begin{cor}
The $(1,1)$-tensor $\JJ$ is a para-complex structure on $\G{n+1}$.
\end{cor}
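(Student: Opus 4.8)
The plan is to prove that $\JJ$ is integrable, i.e.\ that both eigendistributions $T^\pm\G{n+1}=\ker(\JJ\mp\mathbbm 1)$ are integrable, by exhibiting through every point of $\G{n+1}$ an $n$-dimensional integral submanifold of each of them. By the Frobenius theorem, the existence of an integral submanifold through each point of a rank-$n$ distribution is equivalent to its involutivity, hence to its integrability. The required integral submanifolds will be produced as Gauss map images of horospheres, using precisely the computation of Example \ref{ex:horospheres}.

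First I would record the key consequence of Example \ref{ex:horospheres}. For a horosphere $H\subset\Hyp^{n+1}$ equipped with the orientation for which the shape operator equals $\pm\mathrm{id}$, the lift satisfies $d\zeta_\pm(w)=w^\HH\pm w^\V$, so the image of $d_x\zeta_\pm$ is exactly the $(\pm 1)$-eigenspace of $J$ inside $\chi^\perp_{\zeta_\pm(x)}$. Since by the very definition of $\JJ$ (Lemma \ref{lemma:paracomplex structure}) the isomorphism $d\mathrm p|_{\chi^\perp}$ of Equation \eqref{eq dp ristretto e' iso} intertwines $J$ with $\JJ$, the Gauss map $G_H=\mathrm p\circ\zeta_\pm$, which is an immersion by Proposition \ref{prop:gauss immersion}, has the property that at every point the image of $dG_H$ is exactly the $(\pm 1)$-eigenspace of $\JJ$. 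In other words, $G_H(H)$ is an $n$-dimensional immersed integral submanifold of $T^\pm\G{n+1}$.

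Next I would verify that such integral submanifolds pass through every $\ell\in\G{n+1}$. Writing $\ell$ as the oriented geodesic with initial and final endpoints $\xi_-,\xi_+\in\partial\Hyp^{n+1}$, I would use that the geodesics orthogonal to a horosphere centered at a point $\eta\in\partial\Hyp^{n+1}$ are precisely those having $\eta$ as an endpoint, and that orienting them toward (resp.\ away from) $\eta$ selects the sign of the shape operator. Concretely, under the identification $\G{n+1}\cong\partial\Hyp^{n+1}\times\partial\Hyp^{n+1}\setminus\Delta$, the Gauss map image of a horosphere centered at $\xi_+$ (oriented toward $\xi_+$) is the slice $(\partial\Hyp^{n+1}\setminus\{\xi_+\})\times\{\xi_+\}$, while that of a horosphere centered at $\xi_-$ (oriented away from $\xi_-$) is $\{\xi_-\}\times(\partial\Hyp^{n+1}\setminus\{\xi_-\})$; both contain $\ell=(\xi_-,\xi_+)$, and the two choices realize the two opposite signs. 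Hence through $\ell$ there pass an integral submanifold of $T^+\G{n+1}$ and one of $T^-\G{n+1}$. Alternatively, one could produce the integral submanifolds through a single basepoint and transport them, using that $\Isom(\Hyp^{n+1})$ acts transitively on $\G{n+1}$ and preserves $\JJ$ by Remark \ref{rmk: Isom preserva Omega e J}.

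Finally I would invoke Frobenius: the existence of an $n$-dimensional integral submanifold through every point of the rank-$n$ distribution $T^\pm\G{n+1}$ forces it to be involutive, hence integrable, and since both eigendistributions are integrable, $\JJ$ is a para-complex structure. I expect the only genuinely delicate step to be the bookkeeping in the third paragraph, namely correctly matching the two orientation choices of a horosphere (equivalently, the sign of its shape operator) with the two eigendistributions, and checking that $\ell$ indeed meets the relevant horosphere orthogonally so that it lies in the Gauss map image; once the identification of the Gauss map images with the coordinate slices is in place, this becomes transparent.
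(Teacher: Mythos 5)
Your proof is correct and follows essentially the same route as the paper: the paper likewise takes the two horospheres tangent at a point of the geodesic, notes via Example \ref{ex:horospheres} that their Gauss map images are integral submanifolds of the eigendistributions $T^\pm\G{n+1}$, and concludes integrability. Your additional details (the identification of the Gauss map images with slices of $\partial\Hyp^{n+1}\times\partial\Hyp^{n+1}\setminus\Delta$, modulo your opposite ordering convention for the endpoints relative to Equation \eqref{eq:identification space geodesics}, and the explicit appeal to Frobenius) are just an elaboration of that same argument.
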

\begin{proof}
Given $(x,v)\in T^1\Hyp^{n+1}$, consider the two horospheres $H^\pm$ containing $x$ with normal vector $v$ at $x$. The vector $v$ points to the convex side of one of them, and to the concave side of the other. Let us orient them in such a way that $v$ is compatible with the ambient orientation. Then Example \ref{ex:horospheres} shows that the Gauss maps of the horospheres $H^\pm$ have image integral submanifolds for the distributions $T^\pm\G{n+1}$ associated to the almost para-complex structure $\JJ$, which is therefore integrable. 
\end{proof}

\section{Immersions with small principal curvatures}\label{sec:small princ curv}

In this section we define and study the properties of immersed hypersurfaces in $\Hyp^{n+1}$ with small principal curvatures and their Gauss map.

\subsection{Extrinsic geometry of hypersurfaces}\label{sec:extr geom small princ curv}
Let us start by defining our condition of small principal curvatures. Recall that the principal curvatures of an immersion of a hypersurface in a Riemannian manifold (in our case the ambient manifold is $\Hyp^{n+1}$) are the eigenvalues of the shape operator, which was defined in \eqref{eq:shape}.

\begin{defi}
An immersion $\sigma\colon M^n \to \Hyp^{n+1}$ has \emph{small principal curvatures} if its principal curvatures at every point {lie in $(-1,1)\subset \R$}. 
\end{defi}

As a consequence of Equation \eqref{eq:fff gauss}, we have a direct characterization of immersions with small principal curvatures in terms of their Gauss map:

\begin{prop}\label{prop: small curv sse riemannian}
Given an oriented manifold $M^n$ and an immersion $\sigma:M\to\Hyp^{n+1}$, $\sigma$ has small principal curvatures if and only if its Gauss map $G_\sigma$ is a Riemannian immersion.
\end{prop}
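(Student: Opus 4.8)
The plan is to reduce the statement to a pointwise computation in linear algebra, relying on the formula \eqref{eq:fff gauss} for the first fundamental form $\overline\I=G_\sigma^*\GG$ of the Gauss map, which has already been established. Since Proposition \ref{prop:gauss immersion} guarantees that $G_\sigma$ is an immersion regardless of the principal curvatures, being a \emph{Riemannian} immersion amounts precisely to the positive-definiteness of $\overline\I$ at every point. Thus it suffices to show that, at each $p\in M$, the symmetric bilinear form $\overline\I=\I-\III$ on $T_pM$ is positive definite if and only if all principal curvatures of $\sigma$ at $p$ lie in $(-1,1)$.

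The key observation is that the shape operator $B$ is self-adjoint with respect to the first fundamental form $\I$. Hence at each point $p$ I would diagonalize $B$, choosing an $\I$-orthonormal basis $e_1,\dots,e_n$ of $T_pM$ consisting of eigenvectors, with $B e_i=\lambda_i e_i$, where $\lambda_1,\dots,\lambda_n$ are the principal curvatures. In this basis one has $\I(e_i,e_j)=\delta_{ij}$ and, since $\III=\I(B\cdot,B\cdot)$, also $\III(e_i,e_j)=\lambda_i\lambda_j\,\delta_{ij}=\lambda_i^2\,\delta_{ij}$. Therefore $\overline\I$ is diagonal in this basis, with diagonal entries $1-\lambda_i^2$.

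The conclusion is then immediate: $\overline\I$ is positive definite exactly when $1-\lambda_i^2>0$ for every $i$, that is, when $|\lambda_i|<1$ for all $i$, which is precisely the condition of small principal curvatures. I do not expect a genuine obstacle here, since the entire geometric content is already packaged in the previously proven formula \eqref{eq:fff gauss}; the only points deserving care are the initial reduction, namely recording that $G_\sigma$ is automatically an immersion by Proposition \ref{prop:gauss immersion} so that the Riemannian condition is equivalent to positive-definiteness of $\overline\I$, and the fact that the simultaneous diagonalization of $\I$ and $\III$ is legitimate because $B$ is $\I$-self-adjoint.
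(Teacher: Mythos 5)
Your proof is correct and follows essentially the same route as the paper: both reduce via Equation \eqref{eq:fff gauss} to the pointwise positive-definiteness of $\overline\I=\I-\III$, and both exploit the $\I$-self-adjointness of $B$ to conclude that this holds exactly when all principal curvatures lie in $(-1,1)$. The paper merely phrases the final step as the equivalence $\|B(W)\|^2<\|W\|^2$ for all $W\neq 0$ rather than diagonalizing $B$ explicitly, which is the same linear-algebra fact.
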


We recall that an immersion into a pseudo-Riemannian manifold is \emph{Riemannian} if the pull-back of the ambient pseudo-Riemannian metric, which in our case is {the first fundamental form  $\overline\I=G_\sigma^*\GG$}, is a Riemannian metric.

\begin{proof}[Proof of Proposition \ref{prop: small curv sse riemannian}]
The condition that the Gauss map is a Riemannian immersion is equivalent to ${\overline\I}(W,W)>0$ for every $W\neq 0$. By Equation \eqref{eq:fff gauss}, this is equivalent to $\| B(W)\|^2<\|W\|^2$ for the norm on $M$ induced by $\I$, and this is equivalent to the eigenvalues of $B$ (that is, the principal curvatures) being strictly smaller than 1 in absolute value. 
\end{proof}

\begin{remark} \label{rmk: negative curv}
Let us observe that a consequence of the hypothesis of small principal curvatures is that the first fundamental form of $\sigma$ has negative sectional curvature. Indeed, if $V,W$ is a pair of orthonormal vectors on $T_p M$, then by the Gauss' equation the sectional curvature of the plane spanned by $V$ and $W$ is:
$$K_{\mathrm{Span}(V,W)}=-1+\II(V,V)\II(W,W)-\II(V,W)^2~.$$
Since the principal curvatures of $\sigma$ are less than one in absolute value, we have $\| B(V)\|<\|V\|$ and the same for $W$. Moreover $V$ and $W$ are unit vectors, hence both $|\II(V,V)|$ and $|\II(W,W)|$ are less than one and the sectional curvature is negative.
\end{remark}

Recall that we introduced in Definition \ref{defi normal evolution} the normal evolution $\sigma_t$ of an immersion $\sigma:M\to\Hyp^{n+1}$, for $M$ an oriented $n$-manifold. An immediate consequence of Proposition \ref{prop: small curv sse riemannian} is the following:

\begin{cor}\label{cor:equidistant is immersed}
Given an oriented manifold $M^n$ and an immersion $\sigma:M\to\Hyp^{n+1}$ with small principal curvatures, for all $t\in\R$ the normal evolution $\sigma_t$ is an immersion with small principal curvatures.
\end{cor}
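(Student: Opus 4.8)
The plan is to proceed in two steps: first to verify that $\sigma_t$ is an immersion for every $t\in\R$, and then to deduce the smallness of its principal curvatures almost for free from the invariance of the Gauss map under normal evolution.

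For the first step, I would start from formula \eqref{eq:normal evolution hyperboloid2}, which gives $d\sigma_t(V)=d\sigma\circ(\cosh(t)\,\mathrm{id}-\sinh(t)B)(V)$. Since $\sigma$ is an immersion, $d\sigma$ is injective, so $\sigma_t$ is an immersion precisely when the endomorphism $\cosh(t)\,\mathrm{id}-\sinh(t)B$ of $TM$ is invertible. Its eigenvalues are $\cosh(t)-\sinh(t)\lambda_i$, where $\lambda_1,\dots,\lambda_n\in(-1,1)$ are the principal curvatures of $\sigma$. Using that $\cosh(t)>|\sinh(t)|$ for all $t$ and that $|\lambda_i|<1$, one estimates
\[
\cosh(t)-\sinh(t)\lambda_i\ \geq\ \cosh(t)-|\sinh(t)|\,|\lambda_i|\ \geq\ \cosh(t)-|\sinh(t)|\ >\ 0~,
\]
so the endomorphism is positive definite, in particular invertible. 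Hence $d\sigma_t$ is injective for every $t$ and $\sigma_t$ is an immersion (inducing the same orientation as $\sigma$).

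For the second step, I would invoke Corollary \ref{prop:gauss map invariant normal evo}: since $\sigma_t$ is an immersion, its Gauss map satisfies $G_{\sigma_t}=G_\sigma$. By hypothesis $\sigma$ has small principal curvatures, so Proposition \ref{prop: small curv sse riemannian} ensures that $G_\sigma$ is a Riemannian immersion; therefore $G_{\sigma_t}$ is a Riemannian immersion as well. Applying the converse implication of Proposition \ref{prop: small curv sse riemannian} to $\sigma_t$ then shows that $\sigma_t$ has small principal curvatures, which concludes the argument.

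I do not expect a serious obstacle here: the only genuine computation is the positivity estimate above, while the subtle point — the global invariance of the Gauss map along the normal evolution — has already been established in Corollary \ref{prop:gauss map invariant normal evo}. As an alternative to the second step one could instead compute the shape operator of $\sigma_t$ directly and verify that its eigenvalues are $\lambda_i(t)=\tanh\!\big(\arctanh\lambda_i-t\big)$, which plainly stay in $(-1,1)$; this is more computational but makes transparent the link with the function $f_\sigma=\tfrac1n\sum_i\arctanh\lambda_i$ of \eqref{eq fsigma intro}. I would nonetheless favour the Gauss-map route, since it reuses the structure already developed and avoids any direct curvature computation.
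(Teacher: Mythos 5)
Your proof is correct and follows essentially the same route as the paper: the paper likewise reads off from Equation \eqref{eq:normal evolution hyperboloid2} that the small principal curvature condition makes $\cosh(t)\,\mathrm{id}-\sinh(t)B$ invertible, and then combines the invariance of the Gauss map (Corollary \ref{prop:gauss map invariant normal evo}) with Proposition \ref{prop: small curv sse riemannian} to conclude that $\sigma_t$ has small principal curvatures. Your explicit eigenvalue estimate and the alternative via $\lambda_i(t)=\tanh(\arctanh\lambda_i-t)$ (which is exactly the paper's Lemma \ref{lemma:evolution fsigma}) are both sound, so there is nothing to correct.
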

\begin{proof}
It follows from Equation \eqref{eq:normal evolution hyperboloid2} that $\sigma_t$ is an immersion if the shape operator $B$ of $\sigma$ satisfies $\| B(W)\|^2<\|W\|^2$ for every $W\neq 0$, that is, if $\sigma$ has small principal curvatures. Since the Gauss map is invariant under the normal evolution by Corollary \ref{prop:gauss map invariant normal evo}, $\sigma_t$ has small principal curvatures for all $t$ as a consequence of Proposition \ref{prop: small curv sse riemannian}.
\end{proof}

It will be useful to describe more precisely, under the hypothesis of small principal curvatures, the behaviour of the principal curvatures under the normal evolution.

\begin{lemma} \label{lemma:evolution fsigma}
Given an oriented manifold $M^n$ and  an immersion $\sigma:M\to\Hyp^{n+1}$ with small principal curvatures, let $f_\sigma:M\to\R$ be the function
\begin{equation} \label{eq:aux function}
 f_\sigma(p)=\frac{1}{n}\sum_{i=1}^n\arctanh(\lambda_i(p))~,
\end{equation}
where $\lambda_1(p),\ldots,\lambda_n(p)$ denote the principal curvatures of $\sigma$ at $p$. Then $f_{\sigma_t}=f_\sigma-t$ for every $t\in \R$.
\end{lemma}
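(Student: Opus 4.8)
The plan is to track how the shape operator evolves under the normal evolution $\sigma_t$, and then to compute the effect on the principal curvatures directly. The key tool is Equation \eqref{eq:normal evolution hyperboloid2}, which gives $d\sigma_t = d\sigma\circ(\cosh(t)\,\mathrm{id}-\sinh(t)B)$, together with the analogous formula for the evolution of the normal vector $\nu_t = \gamma'(t) = \sinh(t)\sigma+\cosh(t)\nu$ extracted from the proof of Proposition \ref{prop: flusso geod e flusso normale}. First I would compute the derivative $d\nu_t$ in the hyperboloid model: since the ambient derivative of $\nu_t$ agrees with the covariant derivative $D$ (as the correction term is proportional to $\sigma_t(p)$), differentiating $\nu_t$ in the direction $W$ yields $d\nu_t(W) = \sinh(t)\,d\sigma(W)+\cosh(t)\,d\nu(W) = d\sigma\circ(\sinh(t)\,\mathrm{id}-\cosh(t)B)(W)$.

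The shape operator $B_t$ of $\sigma_t$ is defined by $d\sigma_t\circ B_t(W) = -D_W\nu_t = -d\nu_t(W)$. Combining the two expressions above gives the relation
\begin{equation*}
d\sigma\circ(\cosh(t)\,\mathrm{id}-\sinh(t)B)\circ B_t = d\sigma\circ(\cosh(t)B-\sinh(t)\,\mathrm{id})~.
\end{equation*}
Since $d\sigma$ is injective and $(\cosh(t)\,\mathrm{id}-\sinh(t)B)$ is invertible under the small-principal-curvature hypothesis (its eigenvalues are $\cosh(t)-\sinh(t)\lambda_i$, which are positive because $|\lambda_i|<1$), I can solve for $B_t$ as a rational function of $B$. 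In an eigenbasis of $B$ with eigenvalue $\lambda_i$, the operators commute, so $B_t$ is diagonal in the same basis with eigenvalues
\begin{equation*}
\lambda_i(t) = \frac{\cosh(t)\lambda_i-\sinh(t)}{\cosh(t)-\sinh(t)\lambda_i}~.
\end{equation*}

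The final step is the arithmetic of the hyperbolic tangent. I would recognize the right-hand side as the subtraction formula $\tanh(a-b) = (\tanh a - \tanh b)/(1-\tanh a\tanh b)$: writing $\lambda_i = \tanh(\arctanh\lambda_i)$ and dividing numerator and denominator by $\cosh(t)$ gives exactly $\tanh(\arctanh(\lambda_i)-t)$. Hence $\arctanh(\lambda_i(t)) = \arctanh(\lambda_i)-t$ for each $i$. Summing over $i$ and dividing by $n$ yields $f_{\sigma_t} = f_\sigma - t$, as claimed. I expect the main point requiring care to be the bookkeeping in deriving the expression for $B_t$ — in particular verifying that $B$ and $B_t$ are simultaneously diagonalizable (which follows since $B_t$ is a rational function of $B$) so that the eigenvalue computation is legitimate; the trigonometric identification at the end is then a routine manipulation.
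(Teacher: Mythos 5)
Your proof is correct and follows essentially the same route as the paper: both derive the evolved shape operator $B_t=(\mathrm{id}-\tanh(t)B)^{-1}\circ(B-\tanh(t)\,\mathrm{id})$ from the hyperboloid-model formulas for $\sigma_t$ and $\nu_t$, read off the eigenvalues, and conclude via the $\tanh$ subtraction formula $\tanh(\mu_i-t)$ with $\mu_i=\arctanh(\lambda_i)$. Your added checks (invertibility of $\cosh(t)\,\mathrm{id}-\sinh(t)B$ under $|\lambda_i|<1$ and simultaneous diagonalizability of $B$ and $B_t$) are details the paper leaves implicit, and they are verified correctly.
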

\begin{proof}
We showed in the proof of Proposition \ref{prop: flusso geod e flusso normale} that in the hyperboloid model of $\Hyp^{n+1}$ the normal vector of $\sigma_t$, compatible with the orientations of $M$ and $\Hyp^{n+1}$, has the expression 
$$\nu_t(p)=\sinh(t)\sigma(p)+\cosh(t)\nu(p)~,$$
where $\nu=\nu_0$ is the normal vector for $\sigma=\sigma_0$. Using also Equation \eqref{eq:normal evolution hyperboloid2}, the shape operator $B_t$ of $\sigma_t$, whose defining condition is 
$d\sigma_t\circ B_t(W)=-D_{W} \nu_t$ as in Equation \eqref{eq:shape}, is:
\begin{equation}\label{eq:shape normal evo}
B_t=(\mathrm{id}-\tanh(t)B)^{-1}\circ(B-\tanh(t)\mathrm{id})~.
\end{equation}
First, Equation \eqref{eq:shape normal evo} shows that if $V$ is a principal direction (i.e. an eigenvalue of the shape operator) for $\sigma$, then it is also for $\sigma_t$. Second, if $\lambda_i$ is a principal curvature of $\sigma$, then the corresponding principal curvature for $\sigma_t$ is 
{\begin{equation}\label{eq della discordia}
\frac{\lambda_i-\tanh(t)}{1-\tanh(t)\lambda_i}=\tanh(\mu_i-t)~,
\end{equation}} where $\mu_i=\arctanh(\lambda_i)$. The formula $f_{\sigma_t}=f_\sigma-t$ then follows. 
\end{proof}

\begin{remark}\label{rmk fsigma smooth}
Although the principal curvatures of $\sigma$ are not smooth functions, the function $f_\sigma$ defined in \eqref{eq:aux function} is smooth as long as $\sigma$ has small principal curvatures. Indeed, using the expression of the inverse hyperbolic tangent in terms of the elementary functions, we may express:
$$f_\sigma(p)=\frac{1}{2n}\left(\sum_{i=1}^n\log\left(\frac{1+\lambda_i(p)}{1-\lambda_i(p)}\right)\right)=\frac{1}{2n}\log\left(\frac{\prod_{i=1}^n(1+\lambda_i(p))}{\prod_{i=1}^n(1-\lambda_i(p))}\right)=\frac{1}{2n}\log\left(\frac{\det(\mathrm {id}+B)}{\det(\mathrm{id}-B)}\right)~,$$
where $B$ is the shape operator of $\sigma$ as usual.
This proves the smoothness of $f_\sigma$, which is implicitely used in Proposition \ref{Prop: formula H in G}.
\end{remark}

\subsection{Comparison horospheres}

Our next goal is to discuss global injectivity of immersions with small principal curvatures (Proposition \ref{prop injectivity}) and {of} their Gauss maps (Proposition \ref{prop:gauss maps diffeo onto image}), {under the following completeness assumption.}
\begin{defi}
An immersion $\sigma\colon M^n \to \Hyp^{n+1}$  is \emph{complete} if the first fundamental form $\I$ is a complete Riemannian metric.
\end{defi}

Here we provide some preliminary steps.

\begin{defi}\label{defi:convex immersion}
{Given an oriented manifold $M^n$ and  an immersion $\sigma:M\to\Hyp^{n+1}$, let $B=-D\nu$ be its shape operator with respect to the unit normal vector field $\nu$ compatible with the orientations of $M$ and $\Hyp^{n+1}$. We say that $\sigma$} is (\emph{strictly}) \emph{convex}  if {$B$} is negative semi-definite (resp. definite), and, conversely, that it is (\emph{strictly}) \emph{concave} if {$B$} is positive semi-definite (resp. definite).
\end{defi}
{When $\sigma$ is an embedding, we refer to its image as a (strictly) convex/concave hypersurface.} Clearly reversing the orientation (and therefore the normal vector field) of a (strictly) convex hypersurface it becomes (strictly) concave, and viceversa.

A classical fact is 
	that a properly embedded strictly convex hyperurface in $\Hyp^{n+1}$ disconnects it into two connected components and that exactly one of them is geodesically convex (the one towards which $-\nu$ is pointing): we denote the closure of this connected component as the \emph{convex side} of the hypersurface, and denote as the \emph{concave side} the closure of the other one.

{We need another definition before stating the next Lemma.} We say that a smooth curve $\gamma:[a,b]\to\Hyp^n$ parameterized by arclength has \emph{small acceleration} if $\| D_{\gamma'(t)}\gamma'(t)\|<1$ for all $t$, where $D$ denotes the Levi-Civita connection of $\Hyp^n$ as usual.

\begin{lemma}\label{lemma curve small acc} \label{rmk:strictly concave side} 
Let $\gamma:[a,b]\to\Hyp^n$ be a smooth curve of small acceleration. Then the image of $\gamma$ lies on the concave side of any horosphere tangent to $\gamma$.
{More precisely, $\gamma$ lies in the interior of the concave side except for the tangency point. }
\end{lemma}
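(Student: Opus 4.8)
The plan is to write the horosphere as a level set of a linear functional and reduce the statement to the strict concavity of a single real function of the arclength parameter. In the hyperboloid model a horosphere centered at the point of $\partial\Hyp^n$ represented by a null vector $\xi\in\R^{n,1}$ (with $\xi_{n+1}>0$) is a level set $H=\{x\in\Hyp^n\,:\,\langle x,\xi\rangle=c\}$; here $c<0$ since $\langle x,\xi\rangle<0$ for every $x\in\Hyp^n$. Setting $g(x)=\langle x,\xi\rangle$, along any geodesic $\eta$ one has $\tfrac{d^2}{ds^2}g(\eta(s))=g(\eta(s))<0$, so $g$ is strictly concave; hence its superlevel set $\{g\ge c\}$, the horoball bounded by $H$, is the geodesically convex side, and the concave side of $H$ is $\{g\le c\}$. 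Writing $h(t)=\langle\gamma(t),\xi\rangle$, the claim is therefore that $h(t)\le c$ with equality only at the tangency point. Tangency of $H$ to $\gamma$ at $\gamma(t_0)$ means $h(t_0)=c$ and $\gamma'(t_0)\in T_{\gamma(t_0)}H=\ker dg$, the latter being equivalent to $h'(t_0)=\langle\gamma'(t_0),\xi\rangle=0$.

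Next I would show that $h$ is strictly concave. Differentiating $\langle\gamma,\gamma\rangle=-1$ twice and using that $\gamma$ is parametrized by arclength gives $\langle\gamma'',\gamma\rangle=-1$, so the ambient second derivative splits as $\gamma''=D_{\gamma'}\gamma'+\gamma$, the last summand being the component normal to $\Hyp^n$. Hence
\[
h''(t)=\langle\gamma''(t),\xi\rangle=\langle D_{\gamma'}\gamma',\xi\rangle+h(t)~.
\]
To bound the first term, decompose $\xi=-h(t)\,\gamma(t)+w$ with $w\in T_{\gamma(t)}\Hyp^n=\gamma(t)^\perp$; nullity of $\xi$ forces $\langle w,w\rangle=h(t)^2$, i.e.\ $\|w\|=-h(t)>0$ in the positive-definite metric of $T_{\gamma(t)}\Hyp^n$. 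Since $D_{\gamma'}\gamma'\in\gamma(t)^\perp$, we have $\langle D_{\gamma'}\gamma',\xi\rangle=\langle D_{\gamma'}\gamma',w\rangle$, and Cauchy--Schwarz on the spacelike subspace $\gamma(t)^\perp$ together with the small-acceleration hypothesis $\|D_{\gamma'}\gamma'\|<1$ gives $|\langle D_{\gamma'}\gamma',\xi\rangle|\le\|D_{\gamma'}\gamma'\|\,\|w\|<-h(t)$. Substituting yields $h''(t)<-h(t)+h(t)=0$ for all $t$.

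Since $h$ is strictly concave and $h'(t_0)=0$, the point $t_0$ is the unique global maximum of $h$ on $[a,b]$, so $h(t)<c$ for $t\ne t_0$ while $h(t_0)=c$; equivalently $\gamma$ lies in the interior of the concave side of $H$ except at the tangency point, as claimed. The one point that requires care is the strictness of the final inequality, which depends on having $w\ne 0$: this holds precisely because $h<0$ on $\Hyp^n$ (the curve never reaches the center at infinity), making the Cauchy--Schwarz estimate strict. I expect this to be the only real subtlety; the remaining steps are direct computations, and since the argument uses only that $\xi$ is null and that the tangency condition reads $h'(t_0)=0$, it applies uniformly to every horosphere tangent to $\gamma$.
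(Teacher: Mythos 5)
Your proof is correct, and it takes a genuinely different route from the paper's. The paper argues in the upper half-space model: after normalizing the tangency point and the horosphere to $\{x_n=1\}$, it computes the Christoffel symbol $\Gamma_{11}^n=1$ to obtain the purely local statement $\gamma_n''(0)<0$, and then globalizes by contradiction — if the curve returned to the horosphere, the height coordinate would have an interior minimum, at which the curve would be tangent to a lower horosphere $\{x_n=m\}$ on the wrong side, violating the local step (this is the two-stage argument illustrated in Figure \ref{fig:tangenthorospheres}). You instead work in the hyperboloid model and reduce everything to the single scalar function $h(t)=\langle\gamma(t),\xi\rangle$, proving its strict concavity in one stroke via the splitting $\gamma''=D_{\gamma'}\gamma'+\gamma$ (your computation $\langle\gamma'',\gamma\rangle=-1$ is right) and the Cauchy--Schwarz bound $|\langle D_{\gamma'}\gamma',\xi\rangle|\le\|D_{\gamma'}\gamma'\|\,\|w\|<-h(t)$ on the spacelike subspace $\gamma(t)^\perp$; you correctly identify the one point where strictness must be checked, namely $w\ne 0$, which holds because $h<0$ on all of $\Hyp^n$. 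Strict concavity together with $h'(t_0)=0$ then gives the global conclusion directly, with no separate local-to-global contradiction step, and as a bonus shows the tangency point is unique. What the paper's presentation buys is that its argument is phrased so that Remark \ref{rmk:convex side caps} can repeat it verbatim with horospheres replaced by $r$-caps or metric spheres; but your argument adapts equally well — replacing the null vector $\xi$ by a spacelike or timelike one changes the normalization to $\|w\|^2=h(t)^2+\langle\xi,\xi\rangle$, and the same estimate yields $h''<0$ in the relevant range of $h$ — so nothing is lost, and the scalar-concavity formulation is arguably cleaner.
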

\begin{proof}
Up to reparametrization we can assume that the tangency point is $\gamma(0)$, and we shall prove that $\gamma(t)$ lies on the concave side of any horosphere tangent to $\gamma'(0)$ for every $t>0$. Recall that we are also assuming that $\gamma$ is parameterized by arclength. We will use the upper half-space model of $\Hyp^n$, namely, $\Hyp^n$ is the region $x_n>0$ in $\R^n$ endowed with the metric $(\frac 1 {x_n^2})(dx_1^2+\ldots+dx_n^2)$. Up to isometry, we can assume that $\gamma(0)=(0,\ldots,0,1)$, $\gamma'(0)=(1,0,\ldots,0)$ and that the tangent horosphere is $\{x_n=1\}$. 

{Let us first} show that {$\gamma$ lies on the concave side of the horosphere for small $t$, namely, denoting $\gamma(t)=(\gamma_1(t),\ldots,\gamma_n(t))$, that} $\gamma_n(t)<1$ for small $t$. Since $\gamma_n(0)=1$ and $\gamma_n'(0)=0$, it will be sufficient to check that $\gamma_n''(0)<0$. Using the assumption on $\gamma'(0)$ and a direct computation of the Christoffel symbols $\Gamma_{11}^n=1$, we get
$$(D_{\gamma'}\gamma')_n(0)=\gamma_n''(0)+1~.$$
Since by hypothesis $\gamma$ has small acceleration, and at $\gamma(0)$ the metric of the upper half-space model coincides with the standard metric $dx_1^2+\ldots+dx_n^2$, $|(D_{\gamma'}\gamma')_n(0)|<1$ and therefore $\gamma_n''(0)<0$. {We conclude that, for suitable $\epsilon>0$, $\gamma_n(t)< 1$ for all $t\in (-\epsilon, \epsilon)\setminus \{0\}$.}

Let us now show that $\gamma(t)$  {lies in the interior of} the concave side of the tangent horosphere $\{x_n=1\}$ for all {$t\ne 0$}, that is, that {$\gamma_n(t)< 1$ for all $t\ne 0$}. Suppose by contradiction that $\gamma_n(t_0)=1$ for some {$t_0\geq \epsilon$}. Then $\gamma_n$ has a minimum point {$t_{\min}$} in $(0,t_0)$, with minimum value $m<1$. The horosphere $\{x_n=m\}$ is then tangent to $\gamma$ at {$\gamma(t_{\min})$} and $\gamma_n(t)\geq m$ for $t$ in a neighbourhood of $t_{\min}$. By re-applying the argument of the previous part of the proof, this gives a contradiction. See Figure \ref{fig:tangenthorospheres}.
\end{proof}

	\begin{figure}[htbp]
\centering
\includegraphics[height=5.5cm]{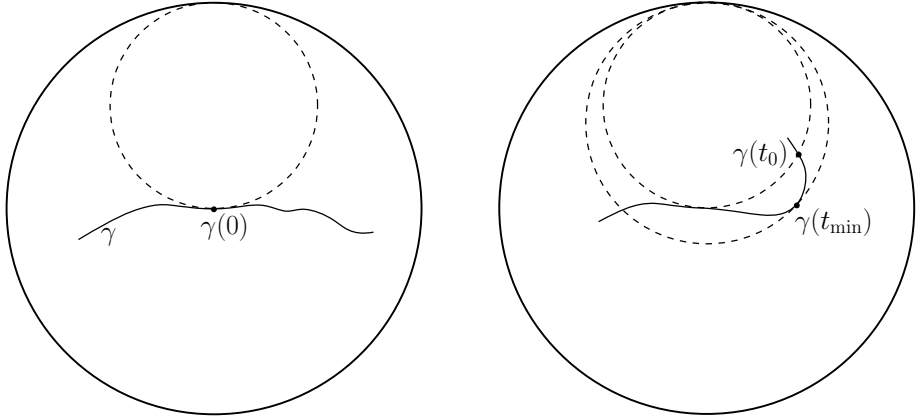} 

\caption{A schematic picture of the argument in the proof of Lemma \ref{lemma curve small acc}. On the left, for $t\in(-\epsilon,\epsilon)$ the image of the curve $\gamma(t)$ lies in the concave side of the horosphere tangent to $\gamma$ at $t=0$. On the right, the same holds in fact for every $t$, for otherwise one would obtain a contradiction with the first part of the proof at the minimum point $t_{\min}$.}\label{fig:tangenthorospheres}
\end{figure}

\begin{remark}\label{rmk princ curv horosphere}
Given an immersion $\sigma:M^n\to \Hyp^{n+1}$ (or in a general Riemannian manifold), a curve $\gamma:[a,b]\to M$ is a geodesic for the first fundamental form {of $\sigma$} (in short, {it} is an \emph{intrinsic} geodesic) if and only if $D_{(\sigma\circ\gamma)'}(\sigma\circ\gamma)'$ is orthogonal to the image of $\sigma$. In this case we have indeed
\begin{equation}\label{eq:acceleration intrinsic geodesic}
D_{(\sigma\circ\gamma)'}(\sigma\circ\gamma)'=\II(\gamma'(t),\gamma'(t))\nu(\gamma(t))
\end{equation}
where $\nu$ is the unit normal vector of the immersion with respect to the chosen orientations.

By applying this remark to an intrinsic geodesic for the horosphere $\{x_n=1\}$, which has the form $\gamma(t)=(a_1t,\ldots,a_{n-1}t,1)$ (here $\sigma$ is simply the inclusion), and repeating  the same computation of the proof of Lemma \ref{lemma curve small acc}, we see that the second fundamental form of a horosphere equals the first fundamental form. Hence the principal curvatures of a horosphere are all identically equal to $1$ for the choice of inward normal vector, and therefore the shape operator is the identity at every point, a fact we have already used in Example \ref{ex:horospheres}. 
\end{remark}

An immediate  consequence of Lemma \ref{lemma curve small acc} is the following:
\begin{lemma} \label{lemma concave side horospheres}
Given a {complete}  immersion $\sigma:M^n\to\Hyp^{n+1}$ with small principal curvatures, the image of $\sigma$ lies strictly on the concave side of any tangent horosphere. That is, for every $p\in M$, $\sigma(M\setminus \{p\})$ lies  in the interior of the concave side of each of the two horospheres tangent to $\sigma$ at $\sigma(p)$.
\end{lemma}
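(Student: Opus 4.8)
The plan is to reduce the statement to Lemma \ref{lemma curve small acc} by joining $\sigma(p)$ to any other point $\sigma(q)$ through the image under $\sigma$ of an intrinsic geodesic, and then checking that this image curve has small acceleration. Concretely, I would fix $p\in M$ and an arbitrary $q\in M\setminus\{p\}$. Since $\sigma$ is complete, the first fundamental form $\I$ is a complete Riemannian metric on $M$, so by the Hopf--Rinow theorem there is a geodesic $\gamma\colon[0,\ell]\to M$ for $\I$, parameterized by arclength, with $\gamma(0)=p$ and $\gamma(\ell)=q$. Setting $c=\sigma\circ\gamma$, the curve $c$ is parameterized by arclength in $\Hyp^{n+1}$, because $\gamma$ is parameterized by $\I$-arclength and $\I=\sigma^* g_{\Hyp^{n+1}}$.

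Next I would verify that $c$ has small acceleration. As $\gamma$ is an intrinsic geodesic, Remark \ref{rmk princ curv horosphere} gives $D_{c'}c'=\II(\gamma',\gamma')\,\nu(\gamma)$, whence $\|D_{c'}c'\|=|\II(\gamma',\gamma')|$ since $\nu$ is a unit vector. Writing $\II(\gamma',\gamma')=\I(B\gamma',\gamma')$, using that $\gamma'$ is $\I$-unit, and invoking the small principal curvatures condition $\|BW\|<\|W\|$ (established in the proof of Proposition \ref{prop: small curv sse riemannian}), the Cauchy--Schwarz inequality yields $|\II(\gamma',\gamma')|\leq\|B\gamma'\|<1$. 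Hence $\|D_{c'}c'\|<1$, i.e.\ $c$ has small acceleration.

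Finally, the two horospheres tangent to $\sigma$ at $\sigma(p)$, being tangent to the hyperplane $d_p\sigma(T_pM)$, are in particular tangent to $c$ at its initial point $c(0)=\sigma(p)$, because $c'(0)=d_p\sigma(\gamma'(0))$ lies in that hyperplane. Applying Lemma \ref{lemma curve small acc} with tangency point $c(0)$, the whole curve $c$, and in particular $\sigma(q)=c(\ell)$, lies in the interior of the concave side of each of these two horospheres. Since $q\in M\setminus\{p\}$ was arbitrary, this proves the claim.

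I do not expect any serious obstacle: the geometric content is entirely supplied by Lemma \ref{lemma curve small acc}, and the only points that need care are the use of completeness via Hopf--Rinow to produce the connecting geodesic, and the observation that the horospheres tangent to $\sigma$ at $\sigma(p)$ are among those tangent to the curve $c$ at $c(0)$, so that the lemma indeed applies to them.
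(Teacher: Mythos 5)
Your proof is correct and follows essentially the same route as the paper's: completeness via Hopf--Rinow produces an intrinsic geodesic from $p$ to $q$, the small principal curvatures condition bounds $|\II(\gamma',\gamma')|<1$ so that $\sigma\circ\gamma$ has small acceleration, and Lemma \ref{lemma curve small acc} concludes. Your added remarks (the Cauchy--Schwarz estimate and the observation that horospheres tangent to the hypersurface at $\sigma(p)$ are tangent to the curve at $c(0)$) merely make explicit what the paper leaves implicit.
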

\begin{proof}

Let us fix $p\in M$ and let $q\in M$, with $p\ne q$. By completeness
 there exists an intrinsic geodesic $\gamma$ on $M$ joining $p$ and $q$, which we assume to be parameterized by arclength. Applying Equation \eqref{eq:acceleration intrinsic geodesic} as in Remark \ref{rmk princ curv horosphere}, we have 
$$\| D_{(\sigma\circ\gamma)'}(\sigma\circ\gamma)'\|=|\II(\gamma'(t),\gamma'(t))|<\I(\gamma'(t),\gamma'(t))=\|(\sigma\circ\gamma)'(t)\|^2=1~,$$
hence $\sigma\circ\gamma$ has small acceleration. The conclusion follows from Lemma \ref{lemma curve small acc}.
\end{proof}

	\begin{remark}\label{rmk: tangent metric spheres}
		Observe that any metric sphere in $\Hyp^{n+1}$ is contained in the convex side of any tangent horosphere. As a result, a hypersurface with small principal curvatures lies in the complementary of any metric ball of {$\Hyp^{n+1}$} whose boundary is tangent to the hypersurface. See Figure \ref{fig:tangentsurfaces}.
	\end{remark}

\begin{remark}\label{rmk:convex side caps}
A $r$-\emph{cap} in the hyperbolic space is the hypersurface at (signed) distance $r$ from a totally geodesic plane. By a simple computation (for instance using Equation \eqref{eq:shape normal evo}), $r$-caps are umbilical hypersurfaces with principal curvatures  {identically equal to $-\tanh(r)$, computed with respect to the unit normal vector pointing to the side where $r$ is increasing}. Now, if $\sigma:M\to\Hyp^{n+1}$ is  an immersion with principal curvatures smaller that $\epsilon=\tanh(r){\in (0,1)}$ in absolute value, then one can repeat wordly the proofs of Lemma \ref{lemma curve small acc} and Lemma \ref{lemma concave side horospheres}, by replacing horospheres with $r$-caps, and conclude that the image of $\sigma$ lies strictly on the {concave} side of every tangent $r$-cap {for $r=\arctanh(\epsilon)$}.  See Figure \ref{fig:tangentsurfaces}.
A similar conclusion (which is however not interesting for the purpose of this paper) could of course be obtained  under the assumption that $\sigma$ has principal curvatures bounded by some constant $\epsilon>1$, in terms of tangent {metric} spheres with curvature greater than $\epsilon$ in {absolute value}.
\end{remark}

\begin{figure}[htbp]
\centering
\includegraphics[height=5.5cm]{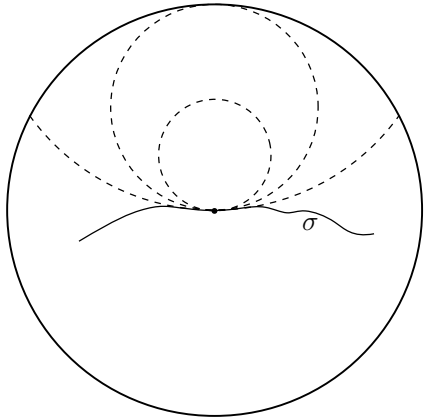} 

\caption{Schematically, an immersion $\sigma$ tangent at one point to a metric sphere (whose principal curvatures are larger than $1$), a horosphere (equal to $1$) and a $r$-cap (smaller than $1$). The image of $\sigma$ is contained in the concave side of the three of them.}\label{fig:tangentsurfaces}
\end{figure}

\subsection{Injectivity results}
Having established these preliminary results, let us finally discuss the global injectivity of $\sigma$ and $G_\sigma$ under the hypothesis of completeness.
{Before that, we relate the completeness assumption for $\sigma$ to some topological conditions}.

	\begin{remark} \label{rmk:proper implies complete}
	Let us observe that proper immersions $\sigma\colon M\to \Hyp^{n+1}$ are complete.  
	{Indeed, i}f $p,q\in M$ have distance {at most} $r$ for the first fundamental form $\I$, then, by definition of distance on a Riemannian manifold,  $dist_{\Hyp^{n+1}}(\sigma(p),\sigma(q))\le r$: as a result, 
	\[
	\sigma(B_{\I}(x,r)) \subset B_{\Hyp^{n+1}}(x,r).
	\]
	Assuming $\sigma$ is proper, $\sigma^{-1}(\overline{B_{\Hyp^{n+1}}(x,r)})$ is a compact subspace of $M$ containing $B_{\I}(x,r)$, therefore $\overline {B_{\I}(x,r)}$ is compact. We conclude that $\I$ is complete by Hopf-Rinow Theorem.
\end{remark}

{A less trivial result is that Remark \ref{rmk:proper implies complete} can be reversed for immersions with small principal curvatures: in fact,} for immersions with small principal curvatures, being properly immersed, properly embedded and complete are all equivalent conditions

\begin{prop} \label{prop injectivity}
Let $M^n$ be a manifold and $\sigma:M\to\Hyp^{n+1}$ be a {complete}  immersion  with small principal curvatures. Then $\sigma$ is a proper embedding and $M$ is diffeomorphic to $\R^n$.
\end{prop}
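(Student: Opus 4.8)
The plan is to prove injectivity first, since everything else will follow from it, and then to handle the diffeomorphism type of $M$ and the properness of $\sigma$ by separate arguments.

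\emph{Injectivity.} The key input is Lemma \ref{lemma concave side horospheres}: for every $p\in M$ the set $\sigma(M\setminus\{p\})$ lies in the \emph{interior} of the concave side of each of the two horospheres tangent to $\sigma$ at $\sigma(p)$. Suppose $\sigma(p)=\sigma(q)$ with $p\ne q$. Choosing one of the two horospheres $H$ tangent to $\sigma$ at $\sigma(p)$, we have $\sigma(p)\in H$, whereas Lemma \ref{lemma concave side horospheres} forces $\sigma(q)\in\sigma(M\setminus\{p\})$ to lie strictly inside the concave side, hence off $H$. Since $\sigma(q)=\sigma(p)$, this is a contradiction, so $\sigma$ is injective. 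I stress that this argument uses only completeness and small principal curvatures, so it applies verbatim to any such immersion.

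\emph{$M$ is diffeomorphic to $\R^n$.} I would pass to the universal cover $\pi\colon\widetilde M\to M$ and set $\tilde\sigma=\sigma\circ\pi$. Then $\tilde\sigma$ is again an immersion with small principal curvatures (a local condition) whose first fundamental form $\pi^*\I$ is complete, so by the injectivity argument above $\tilde\sigma$ is injective. If $\pi$ were not injective there would be $\tilde p\ne\tilde q$ with $\pi(\tilde p)=\pi(\tilde q)$, whence $\tilde\sigma(\tilde p)=\tilde\sigma(\tilde q)$, contradicting injectivity of $\tilde\sigma$. Hence $\pi$ is a bijective covering and $M$ is simply connected. By Remark \ref{rmk: negative curv} the metric $\I$ has negative sectional curvature, and it is complete by hypothesis; the Cartan--Hadamard theorem then gives that $M$ is diffeomorphic to $\R^n$.

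\emph{Properness.} It remains to upgrade the injective immersion $\sigma$ to a proper embedding; since a proper injective immersion is automatically an embedding onto a closed subset, it suffices to prove that $\sigma$ is proper. The bound $\|B\|<1$ means the unit normal rotates at rate $<1$ along unit-speed intrinsic curves, so there is a uniform $r_0>0$ for which $\sigma$ restricts to an embedded graph over the totally geodesic tangent hyperplane on every intrinsic ball $B_M(p,r_0)$. Assuming $\sigma$ is not proper, completeness and Hopf--Rinow produce a sequence $p_k$ with $d_M(p_0,p_k)\to\infty$ but $\sigma(p_k)\to y\in\Hyp^{n+1}$; passing to a subsequence I may assume $\nu(p_k)\to\nu_\infty$, so that for large $k$ all the sheets $\Sigma_k:=\sigma(B_M(p_k,r_0))$ are graphs over the common hyperplane $P$ through $y$ orthogonal to $\nu_\infty$, over a fixed small disk $D\subset P$. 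Choosing $k\ne l$ with $d_M(p_k,p_l)>2r_0$ (possible since the distances diverge), the balls $B_M(p_k,r_0)$ and $B_M(p_l,r_0)$ are disjoint, so by injectivity $\Sigma_k$ and $\Sigma_l$ are \emph{disjoint} connected graphs over $D$, and one lies strictly above the other along $\nu_\infty$; say $\Sigma_l$ lies above $\Sigma_k$. Taking the horosphere $H_k^+$ tangent to $\Sigma_k$ at $\sigma(p_k)$ whose horoball is on the $\nu(p_k)\approx\nu_\infty$ (``upper'') side, Lemma \ref{lemma concave side horospheres} forces $\Sigma_l$ strictly below $H_k^+$; but over the projection of $\sigma(p_k)$ the graph of $H_k^+$ meets that of $\Sigma_k$ by tangency, so $\Sigma_l$ is strictly below $\Sigma_k$ there, contradicting the choice of $\Sigma_l$ above $\Sigma_k$. (If instead $\Sigma_l$ lies below $\Sigma_k$, the symmetric argument at $\sigma(p_l)$ with $H_l^+$ gives the contradiction.) Hence $\sigma$ is proper, and therefore a proper embedding.

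The main obstacle is this last step: injectivity and Cartan--Hadamard dispose of the rest quickly, but ruling out the accumulation of infinitely many sheets near an interior point of $\Hyp^{n+1}$ requires combining the uniform graphical neighborhoods coming from $\|B\|<1$ with the one-sided horosphere confinement of Lemma \ref{lemma concave side horospheres} and with the injectivity already established. The delicate points to verify carefully are the uniform graphical radius $r_0$ and the consistency of the notion of ``above/below'' as the sheets and their normals converge to $\nu_\infty$.
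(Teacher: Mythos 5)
Your proposal is correct, and while the first two steps track the paper closely, the properness step follows a genuinely different route. For injectivity you invoke Lemma \ref{lemma concave side horospheres} directly: the tangency point $\sigma(p)$ lies \emph{on} the tangent horosphere while every other image point lies strictly inside the concave side, so a double point is impossible. This is cleaner than the paper's argument, which runs the geodesic $\gamma$ from $p$ to $q$, takes the maximal metric sphere centered at the double point $y_0$, and combines Remark \ref{rmk: tangent metric spheres} with Lemma \ref{lemma curve small acc}; in fact your version is exactly the mechanism the paper itself deploys later for the injectivity of the hyperbolic Gauss maps in Proposition \ref{prop:gauss maps diffeo onto image}, so there is no circularity. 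Your simple-connectedness argument via the universal cover and Cartan--Hadamard coincides with the paper's. For properness the two proofs diverge: the paper fixes an accumulation point $y_0$ of the image, proves that the normal direction extends continuously to $y_0$ (a subsequence-plus-horosphere confinement argument in the upper half-space model), writes $\sigma(M)$ near $y_0$ as a graph with bounded gradient over an open subset $V$ of a small ball in a hyperplane, and uses completeness to force $V$ to be the whole ball (a proper subset would yield a finite-length path escaping $M$), whence $y_0\in\sigma(M)$ and preimages of nearby points stay at bounded intrinsic distance. You instead derive a contradiction from two intrinsically far-apart sheets stacked as disjoint ordered graphs over a common disk, and kill the ordering with the one-sided horosphere barrier of Lemma \ref{lemma concave side horospheres} at a tangency point; this bypasses the paper's continuous extension of the normal (your subsequential limit $\nu_\infty$ suffices) and the full-ball graph argument, at the cost of the uniform graphical lemma: the claim that $\|B\|<1$ gives a uniform radius $r_0$ with embedded graphs and uniform gradient bounds over tangent hyperplanes is a standard bounded-geometry fact (normal rotation rate $<1$, quantitative local invertibility of the projection on complete balls, control of the discrepancy between parallel transport and the Fermi fibration), but, as you yourself flag, it is the one ingredient that would need to be written out. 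Your height comparison at the tangency is legitimate because a small ball meets a horoball in a convex set, so within that ball ``above the horosphere graph'' is precisely the horoball, which Lemma \ref{lemma concave side horospheres} forbids. Both arguments rest on the same barrier lemmas; the paper's is more self-contained, yours is shorter and more modular once the uniform graph lemma is granted.
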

\begin{proof}
To show that $\sigma$ is injective, let us suppose {by contradiction} that $\sigma(p)=\sigma(q)=y_0$ for $p\neq q$. Let $\gamma:[a,b]\to M$ be an intrinsic {$\I$-}geodesic joining $p$ and $q$ {parametrized by arclength}, which exists because $\I$ is complete. As in Lemma \ref{lemma concave side horospheres}, $\sigma\circ\gamma$ has small acceleration. Let 
$$r_0:=\max_{t\in [a,b]}d(y_0,\sigma\circ\gamma(t))~.$$
Then $\sigma\circ\gamma$ is tangent at some point $\sigma\circ\gamma(t_0)$ to the metric sphere in {$\Hyp^{n+1}$} centered at $y_0$ of radius $r_0$, and contained in its convex side. {By Remark \ref{rmk: tangent metric spheres}, $\sigma\circ\gamma$ lies in the convex side of the horosphere tangent to the hypersurface at $\sigma\circ\gamma(t_0)$.} This contradicts Lemma \ref{lemma curve small acc} and shows that $\sigma$ is an injective immersion. See Figure \ref{fig:tangenthorospheres2}.

\begin{figure}[htbp]
\centering
\includegraphics[height=5.5cm]{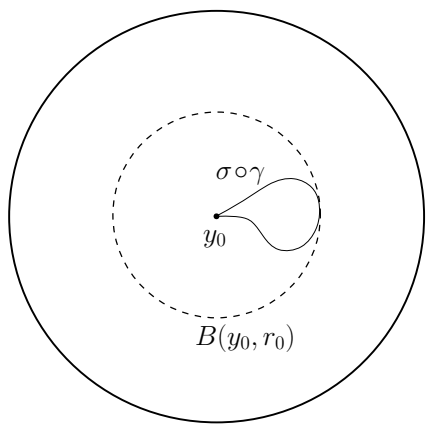} 

\caption{A sketch of the proof of the first part of Proposition \ref{prop injectivity}, namely the injectivity of $\sigma$. If $\sigma(p)=\sigma(q)=y_0$ for $p\neq q$, then the image $\sigma\circ\gamma$ of a $\I$-geodesic connecting $p$ and $q$ would be tangent to a metric ball centered at $y_0$, which contradicts the assumption that $\sigma$ has small principal curvatures.}\label{fig:tangenthorospheres2}
\end{figure}

It follows that $M$ is simply connected. Indeed, let $c:\widetilde M\to M$ be a universal covering. If $M$ were not simply connected, then $c$ would not be injective, hence $\sigma\circ c$ would give a non-injective immersion in $\Hyp^{n+1}$ with small principal curvatures, contradicting the above part of the proof. Since the first fundamental form is a {complete} negatively curved Riemannian metric on $M$ (Remark \ref{rmk: negative curv}), $M$ is {diffeomorphic to $\R^n$} {by the Cartan-Hadamard theorem}. 
	
Let us now show that $\sigma$ is proper, which also implies that it is a homeomorphism onto its image and thus an embedding. As a first step, suppose $y_0\in\Hyp^{n+1}$ is in the closure of the image of $\sigma$. We claim that the normal direction of $\sigma$ extends to $y_0$, meaning that there exists a vector $\nu_0\in T^1_{x_0}\Hyp^{n+1}$ such that  $[\nu(p_n)]\to [\nu_0]$ for every sequence $p_n\in M$ satisfying $\sigma(p_n)\to y_0$, where $\nu(p)$ denotes the unit normal vector of $\sigma$ at $p$ and $[\cdot]$ denotes the equivalence class up to multiplication by $\pm 1$. By compactness of unit tangent spheres, if $\sigma(p_n)\to y_0$ then one can extract a  subsequence $\nu(p_n)$ converging to (say) $\nu_0$. Observe that by Lemma \ref{lemma concave side horospheres}, the image of $\sigma$ lies in the concave side of any horosphere orthogonal to $\nu(p_n)$ at $\sigma(p_n)$. By a continuity argument, it lies also on the concave side of each of the two horospheres orthogonal to $\nu_0$ at $y_0$. The claim follows by a standard subsequence argument once we show that there can be no limit other than $\pm\nu_0$ along any subsequence. 

{We will assume hereafter, in the upper half-space model 
$$(\{(x_1,\ldots,x_{n+1})\in\R^{n+1}\,|\,x_{n+1}>0\},\frac 1 {x_{n+1}^2}(dx_1^2+\ldots+dx_{n+1}^2))~,$$ 
that $y_0=(0,\ldots,0,1)$ and $\nu_0=(0,\ldots,0,1)$. {See Figure \ref{fig:graphs} on the left.} In this model, horospheres are either horizontal hyperplanes $\{x_{n+1}=c\}$ or spheres with south pole on $\{x_{n+1}=0\}$. By Lemma \ref{lemma concave side horospheres}, the image of $\sigma$ is contained in the concave side of both horospheres orthogonal to $\nu_0$, hence it lies in the region defined by $0<x_{n+1}\leq 1$ and $x_1^2+\ldots+x_{n}^2+(x_{n+1}-\frac{1}{2})^2\geq \frac{1}{4}$. Now, if $\nu_1\neq \pm\nu_0$ were a subsequential limit of $\nu(q_n)$ for some sequence $q_n$ with $\sigma(q_n)\to y_0$, then the image of $\sigma$ would lie on the concave side of some sphere with south pole on $\{(x_1,\ldots,x_{n+1})\in\R^{n+1}\,|\,x_{n+1}=0,(x_1,\ldots,x_{n})\neq(0,\ldots,0)\}$. But then $\sigma$ would either enter the region $x_{n+1}>1$ or the region $x_1^2+\ldots+x_{n}^2+(x_{n+1}-\frac{1}{2})^2< \frac{1}{4}$ in a neighbourhood of $y_0$, which gives a contradiction.}

Having established the convergence of the normal direction to $[\nu_0]$, we can now find a neighbourhood $U$ of $y_0$ of the form $B(0,\epsilon)\times(\frac{1}{2},\frac{3}{2})$, where $B(0,\epsilon)$ is the ball of Euclidean radius $\epsilon$ centered at the origin in $\{x_{n+1}=0\}\subset\R^{n+1}$, such that if $\sigma(p)\in U$, then the vertical projection from the tangent space of $\sigma$ at $\sigma(p)$ to $\{x_{n+1}=0\}$ is a linear isomorphism. By the implicit function theorem, $\sigma(M)\cap U$ is locally a graph over $\R^n$. Up to taking a smaller $\epsilon$, we can {arrange $U$ so that $\sigma(M)\cap U$ is a global graph over some open set of  $B(0,\epsilon)\subset\R^n$. Indeed as long as the normal vector $\nu$ is in a small neighbourhood of $\pm\nu_0$, the vertical lines over points in $B(0,\epsilon)$ may intersect the image of $\sigma$ in at most one point as a consequence of Lemma \ref{lemma concave side horospheres}. Let us denote $V\subseteq B(0,\epsilon)$ the image of the vertical projection from $\sigma(M)\cap U$ to $\R^n$, so that $\sigma(M)\cap U$ is 
the graph of some function 
 $h:V\to(\frac{1}{2},\frac{3}{2})$ satisfying $h(0)=1$}.  Since the gradient of $h$ converges to $0$ at $0$, {up to restricting $U$ again, there is a constant $C>0$ such that the Euclidean norm of the gradient of $h$ in $U$ is bounded by $C$.}

We shall now apply again the hypothesis that $\sigma$ is complete to show that in fact $V=B(0,\epsilon)$. For this purpose, we assume that $V$ is a proper (open) subset of $B(0,\epsilon)$ and we will derive a contradiction. Under the assumption $V\neq B(0,\epsilon)$ we would find a Euclidean segment $c:[0,1]\to\R^n$ such that $c(s)\in V$ for $s\in[0,1)$ and $c(1)\in B(0,\epsilon)\setminus V$. The path $s\mapsto (c(s),h\circ c(s))$ is contained in $\sigma(M)$;  
using $h\geq\frac{1}{2}$ and the bound on the gradient, we obtain that its hyperbolic length is less than $2\sqrt{1+C^2}$ times the Euclidean length of $c$, hence is finite. This contradicts completeness of $\sigma$. 
In summary,  $\sigma(M)\cap U$ is the graph of a function globally defined on $B(0,\epsilon)$, and clearly contains the point $y_0$.	{See Figure \ref{fig:graphs} on the right.}

{We are now ready to complete the proof of the fact that $\sigma$ is proper. Indeed, let $p_n\in M$ be a sequence such that $\sigma(p_n)\to y_0$. We showed above that $y_0$ is in the image of $\sigma$ (say $y_0=\sigma(p_0)$) and that $p_n$ is definitively in $\sigma^{-1}(U)$, whose image is a graph over $B(0,\epsilon)$. Hence $p_n$ is at bounded distance from $p_0$ for the first fundamental form of $\sigma$, and therefore admits a subsequence $p_{n_k}$ converging to $p_0$. In conclusion, $\sigma$ is a proper embedding.}
\end{proof}

\begin{figure}[htbp]
\centering
\includegraphics[height=5.5cm]{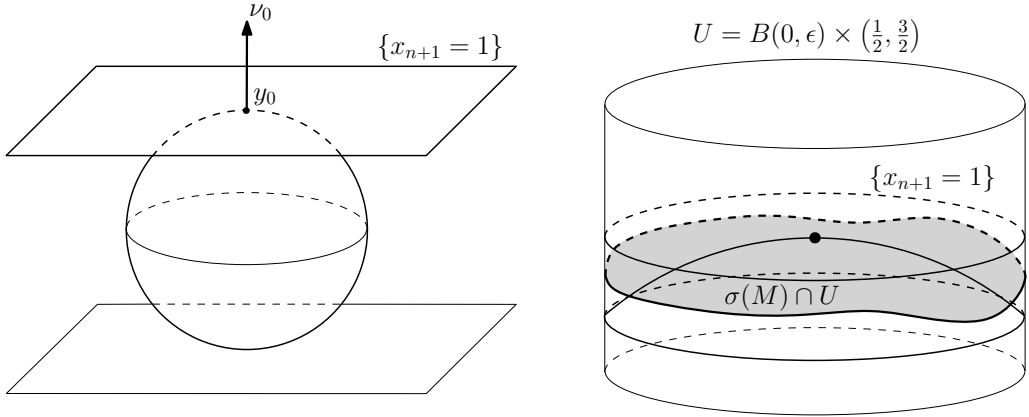} 

\caption{The setting of the proof that $\sigma$ is proper in Proposition \ref{prop injectivity}: in the upper half-plane model, the image of $\sigma$ is contained below the horosphere $\{x_{n+1}=1\}$ and in the outer side of the horosphere $x_1^2+\ldots+x_{n}^2+(x_{n+1}-\frac{1}{2})^2= \frac{1}{4}$. On the right, the neighbourhood $U$ of $y_0$, where the image of $\sigma$ is proved to be the graph of a function $h:B(0,\epsilon)\to\R$.}\label{fig:graphs}
\end{figure}

{By an application of Lemma \ref{lemma concave side horospheres} one can easily show that the Gauss map $G_\sigma:M\to \G{n+1}$ is injective as well {if $\sigma$ is complete and has small principal curvatures}. However, we will prove here (Proposition \ref{prop:gauss maps diffeo onto image}) a stronger property of the Gauss map.}

Recall that the space of oriented geodesics of $\Hyp^{n+1}$ has the natural identification 
$$\G{n+1}\cong \partial\Hyp^{n+1}\times\partial\Hyp^{n+1}\setminus \Delta~,$$
 for $\Delta$ the diagonal, given by mapping an oriented geodesic $\ell$ to its endpoints at infinity according to the orientation: {as a consequence, the map $G_\sigma$ can be seen as a pair of maps with values in the boundary of $\Hyp^n$}. More precisely, if we denote by $\gamma:\R\to\Hyp^{n+1}$ a parameterized geodesic, then the above identification reads:
\begin{equation}\label{eq:identification space geodesics}
\gamma\mapsto\left(\lim_{t\to+\infty}\gamma(t),\lim_{t\to-\infty}\gamma(t)\right)~.
\end{equation}

Given an immersion of an oriented manifold $M^n$
into $\Hyp^{n+1}$, composing the Gauss map $G_\sigma:M\to\G{n+1}$ with the above map \eqref{eq:identification space geodesics} with values in $\partial\Hyp^{n+1}\times\partial\Hyp^{n+1}$ and projecting on each factor, we obtain the so-called \emph{hyperbolic Gauss maps} $G^\pm_\sigma:M\to\partial\Hyp^{n+1}$. They are explicitely expressed by
$$G^\pm_\sigma(p)=\lim_{t\to\pm\infty}\exp_{\sigma(p)}(t\nu(p))\in \partial\Hyp^{n+1}~.$$
The following proposition states their injectivity property under the small principal curvatures assumption, which will be applied in Proposition \ref{prop:action free prop disc},

\begin{prop}\label{prop:gauss maps diffeo onto image}
Let $M^n$ be an oriented manifold and $\sigma:M\to\Hyp^{n+1}$ be a complete  immersion  with small principal curvatures. Then both hyperbolic Gauss maps $G^\pm_\sigma:M\to\partial\Hyp^{n+1}$ are diffeomorphisms onto their images. {In particular, the Gauss map $G_\sigma$ is an embedding.}
\end{prop}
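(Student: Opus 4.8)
The plan is to pass to the hyperboloid model, obtain an explicit description of the two hyperbolic Gauss maps, and then prove separately that each $G^\pm_\sigma$ is a local diffeomorphism (using only small principal curvatures) and that each is injective (this is where completeness enters). An injective local diffeomorphism is a diffeomorphism onto its open image, which gives the main statement; injectivity of the components will then immediately upgrade $G_\sigma$ to an embedding. First I would record the explicit formula: by Equation \eqref{eq:normal evolution hyperboloid} the normal geodesic is $t\mapsto\cosh(t)\sigma(p)+\sinh(t)\nu(p)$, so dividing by $\cosh(t)$, letting $t\to\pm\infty$, and using the description \eqref{eq:bdy hyperboloid model} of $\partial\Hyp^{n+1}$ as the projectivized null cone, one finds $G^\pm_\sigma(p)=[\sigma(p)\pm\nu(p)]$, where $\sigma(p)\pm\nu(p)$ is null because $\langle\sigma,\sigma\rangle=-1$, $\langle\nu,\nu\rangle=1$ and $\langle\sigma,\nu\rangle=0$.

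Second, I would show each $G^\pm_\sigma$ is a local diffeomorphism. By Equation \eqref{eq: differential of sigma tilde} one has $d\nu(W)=-d\sigma(B(W))$, hence $d(\sigma\pm\nu)(W)=d\sigma((\mathrm{id}\mp B)(W))$. Identifying $T_{[\eta]}\partial\Hyp^{n+1}$ with $\eta^\perp/\R\eta$ for the null vector $\eta=\sigma(p)\pm\nu(p)$, the differential of $G^\pm_\sigma$ sends $W$ to the class of $d\sigma((\mathrm{id}\mp B)(W))$. This class vanishes only if $d\sigma((\mathrm{id}\mp B)(W))\in\R\eta$; but $d\sigma(\,\cdot\,)$ is orthogonal to both $\sigma$ and $\nu$, whereas $\langle\eta,\sigma\rangle=-1\neq 0$, so $\R\eta\cap\mathrm{im}(d\sigma)=\{0\}$. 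Thus $dG^\pm_\sigma(W)=0$ forces $(\mathrm{id}\mp B)(W)=0$, and since $\pm 1$ is not an eigenvalue of $B$ by the small principal curvature hypothesis and $d\sigma$ is injective, $W=0$. As $M$ and $\partial\Hyp^{n+1}$ both have dimension $n$, the inverse function theorem gives that $G^\pm_\sigma$ are local diffeomorphisms.

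Third, I would prove injectivity of $G^+_\sigma$, the case of $G^-_\sigma$ being symmetric; this is the step that genuinely uses completeness, through Lemma \ref{lemma concave side horospheres}. Suppose $G^+_\sigma(p)=G^+_\sigma(q)=:\xi$ with $p\neq q$. The forward normal geodesics from $\sigma(p)$ and from $\sigma(q)$ both limit to $\xi$, so $\nu$ points toward $\xi$ at both points, and the horosphere centered at $\xi$ through $\sigma(p)$ (resp. through $\sigma(q)$) is tangent to $\sigma$ there. Let $b_\xi$ be a Busemann function at $\xi$, normalized to increase away from $\xi$, so that the concave side of a horosphere centered at $\xi$, i.e.\ the complement of its horoball, is a superlevel set of $b_\xi$. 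By Lemma \ref{lemma concave side horospheres}, $\sigma(q)$ lies in the interior of the concave side of the tangent horosphere at $\sigma(p)$, giving $b_\xi(\sigma(q))>b_\xi(\sigma(p))$; exchanging the roles of $p$ and $q$ gives $b_\xi(\sigma(p))>b_\xi(\sigma(q))$, a contradiction. Hence $G^+_\sigma$ is injective.

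Combining the last two steps, each $G^\pm_\sigma$ is an injective local diffeomorphism, hence a diffeomorphism onto its open image, which is the first assertion. For the last claim, $G_\sigma$ is an immersion by Proposition \ref{prop:gauss immersion}, and under the identification \eqref{eq:identification space geodesics} its first component is $G^+_\sigma$, which is injective, so $G_\sigma$ is injective; moreover, composing $(G^+_\sigma)^{-1}$ with the projection onto the first factor of $\partial\Hyp^{n+1}\times\partial\Hyp^{n+1}$ produces a continuous inverse of $G_\sigma$ on its image, so $G_\sigma$ is a homeomorphism onto its image, i.e.\ an embedding. I expect the injectivity step to be the main obstacle: it is the only place requiring completeness, and its crux is the observation that the two tangent horospheres at $\sigma(p)$ and $\sigma(q)$ share the \emph{same} center $\xi$, so that the two ``concave side'' conclusions become contradictory inequalities for a single Busemann function. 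The local diffeomorphism and embedding parts are then essentially formal.
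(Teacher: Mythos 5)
Your proposal is correct and follows essentially the same route as the paper: the local diffeomorphism step rests on the same computation $dG^\pm_\sigma = d\sigma\circ(\mathrm{id}\mp B)$ (you perform it invariantly in the projectivized null cone, the paper via a normalized limit after applying an isometry), and injectivity uses Lemma \ref{lemma concave side horospheres} applied to the two tangent horospheres sharing the point at infinity $\xi$, exactly as in the paper. Your Busemann-function phrasing of the contradiction is just a quantitative restatement of the paper's argument that the two horospheres must coincide and then $p=q$ by strictness.
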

\begin{proof}
Let us first show that $G_\sigma^\pm$ are local diffeomorphisms. Recalling the definition of $\sigma_t$ (Definition \ref{defi normal evolution}) and its expression in the hyperboloid model of $\Hyp^{n+1}$ (Equation \eqref{eq:normal evolution hyperboloid}), $G_\sigma^\pm$ is the limit for $t\to \pm\infty$ in $\partial\Hyp^{n+1}$ of 
$$\sigma_t(p)=\cosh(t)\sigma(p)\pm\sinh(t)\nu(p)~.$$

{Recalling the definition of the boundary of $\Hyp^{n+1}$ as the projectivization of the null-cone (Equation \eqref{eq:bdy hyperboloid model})}, we will consider the boundary at infinity of $\Hyp^{n+1}$ {as the slice of the null-cone defined by $\{x_{n+2}=1\}$}. Given $p_0\in M$, up to isometries we can assume that $\sigma(p_0)=(0,\ldots,0,1)$ and $\nu(p_0)=(1,0,\ldots,0)$, 
so that $G_\sigma^\pm(p_0)=(\pm 1,0,\ldots,0,1)$ and the tangent spaces to the image of $\sigma$ at $p_0$ and of $\partial\Hyp^{n+1}$ at $G_\sigma^\pm(p)$ are identified to the same subspace {$\{x_1=x_{n+2}=0\}$ in $\R^{n,1}$}.

To compute the differential of $G_\sigma^\pm$ at $p_0$, we must differentiate the maps
$$p\mapsto \lim_{t\to \pm\infty}\frac{\sigma_t(p)}{|\langle \sigma_t(p),\sigma(p_0)\rangle|}=\frac{\sigma(p)\pm\nu(p)}{|\langle \sigma(p)\pm\nu(p),\sigma(p_0)\rangle|}$$
at $p=p_0$. Under these identifications, a direct computation for $V\in T_{p_0}M$ gives:
$$dG_\sigma^\pm(V)=d\sigma\circ (\mathrm{id}\mp B)(V)~.$$
Hence both differentials of $G_\sigma^\pm$ are invertible at $p_0$ if the eigenvalues of $B$ are always different from $1$ and $-1$, as in our hypothesis. This shows that $G_\sigma^+$ and $G_\sigma^-$ are local diffeomorphisms. 

To see that $G_\sigma^\pm$ is injective, suppose that $G_\sigma^\pm(p)=G_\sigma^\pm(q)$. This means that $\sigma$ is orthogonal at $p$ and $q$ to two geodesics having a common point at infinity. Hence $\sigma$ is tangent at $p$ and $q$ to two horospheres $H_p$ and $H_q$ having the same point at infinity. By Lemma \ref{lemma concave side horospheres} the image of $\sigma$ must lie in the concave side of both $H_p$ and $H_q$, hence the two horospheres must coincide. But by Lemma \ref{lemma concave side horospheres} again, $\sigma(M\setminus \{p\})$ lies strictly in the concave side of $H_p$, hence necessarily $p=q$. See Figure \ref{fig:tangenthorospheres3}.

{By the invariance of the domain, $G_\sigma^\pm$ are diffeomorphisms onto their images. Under the identification between $\G{n+1}$ and $\partial\Hyp^{n+1}\times\partial\Hyp^{n+1}\setminus \Delta$ the Gauss map $G_\sigma$ corresponds to $(G_\sigma^+,G_\sigma^-)$, and it follows that $G_\sigma$ is an embedding.}
\end{proof}

	\begin{figure}[htbp]
\centering
\includegraphics[height=5.5cm]{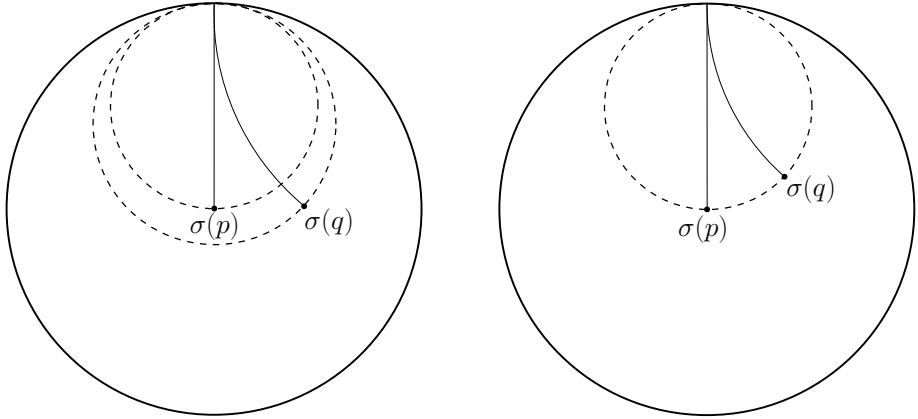} 

\caption{The proof of the injectivity of $G_\sigma^+$. Suppose two orthogonal geodesics share the final point, hence the image of $\sigma$ is tangent at $\sigma(p)$ and $\sigma(q)$ have the same point at infinity. As a consequence of Lemma \ref{lemma concave side horospheres}, this is only possible if the two tangent horospheres coincide, and therefore if $p=q$. Replacing horospheres by metric spheres, the same argument proves that the orthogonal geodesics at different points are disjoint (See Proposition \ref{prop:action free prop disc0}).}\label{fig:tangenthorospheres3}
\end{figure}

\subsection{Nearly-Fuchsian manifolds}\label{sec:nearly fuchsian}
Taking advantage of the results so far established in this section, we now introduce nearly-Fuchsian representations and manifolds. These will appear again at the end of Section \ref{sec:equivariant integrability} and in Section \ref{sec:hamiltonian}.

\begin{defi}\label{defi:nearly fuch rep}
Let $M^n$ be a closed orientable manifold. A representation $\rho:\pi_1(M)\to\Isom(\Hyp^{n+1})$ is called \emph{nearly-Fuchsian}  if there exists a $\rho$-equivariant immersion $\widetilde\sigma:\widetilde M\to\Hyp^{n+1}$ with small principal curvatures. 
\end{defi}

We recall that an immersion $\widetilde\sigma\colon \widetilde M \to \Isom(\Hyp^{n+1})$ is \emph{$\rho$-equivariant} if
 \begin{equation}\label{eq:equivariant immersion}
 \widetilde\sigma\circ\alpha= \rho(\alpha)\circ\widetilde \sigma~.\end{equation}
for all $\alpha\in \pi_1(M)$. Let us show that the action of nearly-Fuchsian representations is ``good'' on $\Hyp^{n+1}$ {(Proposition \ref{prop:action free prop disc0})} and also on a  region in $\partial\Hyp^{n+1}$ which is the disjoint union of two topological discs {(Proposition \ref{prop:action free prop disc})}.

\begin{prop}\label{prop:action free prop disc0}
Let $M^n$ be a closed orientable manifold and $\rho:\pi_1(M)\to\Isom(\Hyp^{n+1})$ be a nearly-Fuchsian representation. Then $\rho$ gives a free and properly discontinous action of $\pi_1(M)$ on $\Hyp^{n+1}$. Moreover $\rho$ is convex cocompact, namely there exists a $\rho$-invariant geodesically convex subset $\mathcal C\subset\Hyp^{n+1}$ such that the quotient $\faktor{\mathcal C}{\rho(\pi_1(M))}$ is compact.
\end{prop}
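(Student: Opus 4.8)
The plan is to realize $\Hyp^{n+1}$ as the total space of the normal exponential flow of the equivariant hypersurface, and to read off both assertions from the resulting product structure. First I would upgrade the hypothesis to completeness: since $\rho$ acts by isometries and $\widetilde\sigma$ is $\rho$-equivariant, the first fundamental form $\I$ on $\widetilde M$ is $\pi_1(M)$-invariant, hence descends to a metric on the closed manifold $M$; this quotient metric is complete, and as $\widetilde M\to M$ is a local isometry, $\I$ is complete on $\widetilde M$. Thus Proposition \ref{prop injectivity} applies and $\widetilde\sigma$ is a proper embedding onto a closed, $\rho$-invariant hypersurface $\Sigma$, with $\widetilde M\cong\R^n$. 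The same argument shows that every normal evolution $\widetilde\sigma_t$ is a complete immersion with small principal curvatures (Corollary \ref{cor:equidistant is immersed}), hence again a proper embedding. I then consider the normal exponential map
\[
\Psi\colon \widetilde M\times\R\to\Hyp^{n+1},\qquad \Psi(p,t)=\exp_{\widetilde\sigma(p)}(t\nu(p))=\widetilde\sigma_t(p),
\]
which is $\rho$-equivariant (acting trivially on the $\R$-factor) and a local diffeomorphism, since at $(p,t)$ the $\partial_t$-direction gives the unit normal $\nu_t(p)$ of $\widetilde\sigma_t$ while the $\widetilde M$-directions span $d\widetilde\sigma_t(T_p\widetilde M)$, the orthogonal complement of $\nu_t(p)$. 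Surjectivity of $\Psi$ is immediate: $\Sigma$ is closed, so any $y\in\Hyp^{n+1}$ has a nearest point $\widetilde\sigma(p_0)\in\Sigma$, and the minimizing geodesic from $y$ to $\widetilde\sigma(p_0)$ meets $\Sigma$ orthogonally, placing $y$ on the orthogonal geodesic through $\widetilde\sigma(p_0)$.

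The heart of the proof, and the step I expect to be the main obstacle, is the injectivity of $\Psi$, equivalently the disjointness of the geodesics orthogonal to $\Sigma$ at distinct points; this is the metric-sphere refinement of the horosphere argument used for Proposition \ref{prop:gauss maps diffeo onto image}. Suppose the orthogonal geodesics at $p\neq q$ met at a point $y$, and set $r_p=d(y,\widetilde\sigma(p))$, $r_q=d(y,\widetilde\sigma(q))$. The sphere $\partial B(y,r_p)$ is tangent to $\Sigma$ at $\widetilde\sigma(p)$, so by Remark \ref{rmk: tangent metric spheres} we have $\Sigma\cap B(y,r_p)=\emptyset$; since $\widetilde\sigma(q)\in\Sigma$ this forces $r_q\geq r_p$, and symmetrically $r_p\geq r_q$, whence $r_p=r_q=:r$ and $\widetilde\sigma(p),\widetilde\sigma(q)\in\Sigma\cap\partial B(y,r)$. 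Choosing the horosphere $H$ tangent to $\Sigma$ at $\widetilde\sigma(p)$ whose convex side contains $B(y,r)$ (again Remark \ref{rmk: tangent metric spheres}), Lemma \ref{lemma concave side horospheres} guarantees that $\widetilde\sigma(q)$, being distinct from $\widetilde\sigma(p)$, lies strictly in the concave side of $H$; but $\widetilde\sigma(q)\in\overline{B(y,r)}$ lies in the convex side, a contradiction. Since $t\mapsto\widetilde\sigma_t(p)$ is an injective unit-speed geodesic along each fixed orthogonal geodesic, $\Psi$ is injective, hence an equivariant diffeomorphism. Transporting the free and properly discontinuous product action of $\pi_1(M)$ on $\widetilde M\times\R$ (deck transformations on the first factor, trivial on the second) through $\Psi$ shows that $\rho$ acts freely and properly discontinuously on $\Hyp^{n+1}$.

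It remains to produce the convex core. By compactness of $M$ the principal curvatures of $\widetilde\sigma$ are bounded away from $\pm 1$, so $\mu_i:=\arctanh(\lambda_i)$ satisfy $|\mu_i|\leq C_0$ for some $C_0$; fix any $C>C_0$. By Equation \eqref{eq della discordia} the principal curvatures of $\widetilde\sigma_t$ are $\tanh(\mu_i-t)$, so $\widetilde\sigma_C$ is strictly convex (all principal curvatures $<0$) and $\widetilde\sigma_{-C}$ is strictly concave. Each is a properly embedded strictly convex hypersurface, hence bounds a geodesically convex side; concretely, the convex side of $\widetilde\sigma_C$ is $\Psi(\widetilde M\times(-\infty,C])$ and the convex side of $\widetilde\sigma_{-C}$ (with reversed orientation) is $\Psi(\widetilde M\times[-C,+\infty))$. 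I then set
\[
\mathcal C:=\Psi\big(\widetilde M\times[-C,C]\big),
\]
which is the intersection of these two geodesically convex regions, hence geodesically convex, and is $\rho$-invariant because the $\R$-coordinate of $\Psi$ is $\rho$-invariant. Finally, since $\Psi$ is an equivariant diffeomorphism, $\faktor{\mathcal C}{\rho(\pi_1(M))}\cong M\times[-C,C]$ is compact, so $\rho$ is convex cocompact.
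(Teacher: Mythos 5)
Your proposal is correct and takes essentially the same approach as the paper: the paper likewise establishes that the geodesics orthogonal to $\widetilde\sigma(\widetilde M)$ foliate $\Hyp^{n+1}$ (disjointness via the metric-sphere variant of the horosphere argument, using Remark \ref{rmk: tangent metric spheres} and Lemma \ref{lemma concave side horospheres}, and surjectivity via a nearest-point tangency), and obtains the convex set as the region between a convex and a concave equidistant hypersurface $\widetilde\sigma_{t_\pm}$, diffeomorphic to $\widetilde M\times[t_-,t_+]$. Your additions --- packaging the foliation as the explicit equivariant diffeomorphism $\Psi$, deriving completeness from cocompactness, and closing the sphere-tangency contradiction by passing to a tangent horosphere --- are precisely the details the paper leaves to the reader.
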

\begin{proof}
{Let $\widetilde\sigma$ be an equivariant immersion as in Definition \ref{defi:nearly fuch rep}. }We claim that the family of geodesics orthogonal to $\widetilde\sigma(\widetilde M)$ gives a foliation of $\Hyp^{n+1}$.
Observing that the action of $\pi_1(M)$ on $\widetilde M$ is free and properly discontinous, this immediately implies that the action of $\pi_1(M)$ on $\Hyp^{n+1}$ induced by $\rho$ is free and properly discontinous.
 
By repeating the same argument that shows, in the proof of  Proposition \ref{prop:gauss maps diffeo onto image}, the injectivity of $G_{\widetilde\sigma}^\pm$, replacing horospheres with {metric spheres of $\Hyp^{n+1}$} {and using Remark \ref{rmk: tangent metric spheres}}, one can prove that  two geodesics orthogonal to $\widetilde\sigma(\widetilde M)$ at different points are disjoint.

To show that the orthogonal geodesics give a foliation of $\Hyp^{n+1}$, it remains to show that  every point $x\in\Hyp^{n+1}$ is contained in a geodesic of this family (which is necessarily unique). Of course we can assume $x\notin \widetilde\sigma(\widetilde M)$. {By cocompactness, $\widetilde\sigma$ is complete, hence it is a proper embedding by Proposition \ref{prop injectivity}. Then} the map that associates to each element of $\widetilde\sigma(\widetilde M)$ its distance from $x$ attains its minimum: this implies that there exists $r>0$ such that the metric sphere of radius $r$ centered at $x$ is tangent to $\widetilde\sigma(\widetilde M)$ at some point $p$. Hence $x$ is on the geodesic through $p$. See Figure \ref{fig:limit2}.
 
Let us now prove that $\rho$ is also convex-cocompact. To show this, we claim that there exists $t_+,t_-\in\R$ such that $\widetilde\sigma_{t_+}$ is a convex embedding, and $\widetilde\sigma_{t_-}$ a concave one.  
Indeed in the proof of Lemma \ref{lemma:evolution fsigma} we showed that the principal curvatures of the normal evolution $\widetilde\sigma_t$ are equal to $\tanh(\mu_i-t)$, where $\mu_i$ is the hyperbolic arctangent of the corresponding principal curvature of $\widetilde\sigma$. Hence taking $t\ll0$ (resp. $t\gg 0$) one can make sure that the principal curvatures of $\widetilde\sigma_t$ are all negative (resp. positive), hence $\widetilde\sigma_t$ is convex (resp. concave). The region bounded by the images of $\widetilde\sigma_{t_+}$ and $\widetilde\sigma_{t_-}$ is then {geodesically} convex and diffeomorphic to $\widetilde M\times [t_-,t_+]$. Under this diffeomorphism, the action of $\pi_1(M)$ corresponds to the action by deck transformations on $\widetilde M$ and the trivial action on the second factor. Hence its  quotient is compact, being diffeomorphic to $M\times [t_-,t_+]$.
\end{proof}

{This implies that in dimension three, nearly-Fuchsian manifolds are quasi-Fuchsian. }

\begin{remark}
There is another important consequence of Proposition \ref{prop:action free prop disc0}.
Given $\widetilde\sigma$ an equivariant immersion as in Definition \ref{defi:nearly fuch rep}, it follows from the cocompactness of the action of $\rho$ on the geodesically convex region $\mathcal C$ that $\widetilde \sigma$ is a quasi-isometric embedding {in the sense of metric spaces}. By cocompactness and Remark \ref{rmk: negative curv}, $\widetilde M$ is a complete simply connected Riemannian manifold of negative sectional curvature, hence its visual boundary $\partial\widetilde M$ in the sense of Gromov is homeomorphic to $S^{n-1}$.
By \cite[Proposition 6.3]{zbMATH01496599}, $\widetilde\sigma$  extends to a continuous injective map $\partial\widetilde\sigma$ from the visual boundary $\partial\widetilde M$ of $\widetilde M$ to $\partial\Hyp^{n+1}$. By compactness of $\partial\widetilde M$,
 the extension of $\widetilde\sigma$  is a homeomorphism onto its image.

Since any two $\rho$-equivariant embeddings $\widetilde\sigma_1,\widetilde\sigma_2:\widetilde M\to\Hyp^{n+1}$ are at bounded distance from  each other by cocompactness, the extension $\partial\widetilde\sigma$ does not depend on $\widetilde\sigma$, but only on the representation $\rho$. In conclusion, the image of $\partial\widetilde\sigma$ is a topological $(n-1)$-sphere $\Lambda_\rho$ in $\partial\Hyp^{n+1}$, called the \emph{limit set} of the representation $\rho$. We remark that the limit set is equivalently defined as the set of accumulation points of the orbit of any point with respect to the action of $\rho$ on $\Hyp^{n+1}$. See Figure \ref{fig:limit}.
\end{remark}

\begin{figure}[htbp]
\centering
\includegraphics[height=4.8cm]{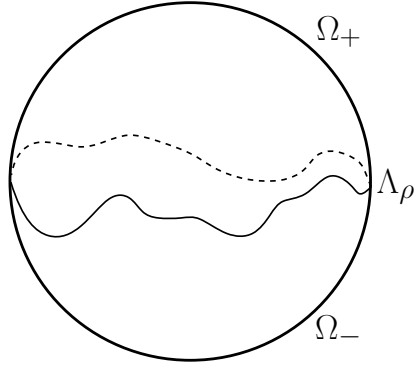} 

\caption{A picture of the limit set $\Lambda_\rho$, which is a topological $(n-1)$-sphere  and disconnects $\partial\Hyp^{n+1}$ in two connected components $\Omega_+$ and $\Omega_-$, which are homeomorphic to $n$-discs.}\label{fig:limit}
\end{figure}

The following proposition is well-known. We provide a proof for the sake of completeness.

\begin{prop} \label{prop:action free prop disc}
Let $M^n$ be a closed orientable manifold, $\rho:\pi_1(M)\to\Isom(\Hyp^{n+1})$ be a nearly-Fuchsian representation and $\Lambda_\rho$ be its limit set. Then {the action of $\rho$ extends to} a free and properly discontinous action on $\partial\Hyp^{n+1}\setminus\Lambda_\rho$, which is the disjoint union of two topological $n$-discs.
\end{prop}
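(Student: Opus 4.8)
The plan is to identify $\partial\Hyp^{n+1}\setminus\Lambda_\rho$ with the disjoint union of the images of the two hyperbolic Gauss maps of a fixed $\rho$-equivariant immersion $\widetilde\sigma\colon\widetilde M\to\Hyp^{n+1}$ with small principal curvatures, and then to transport the free and properly discontinuous deck action from $\widetilde M$ through these identifications. Note first that, by cocompactness, $\widetilde\sigma$ is complete, hence a proper embedding with $\widetilde M\cong\R^n$ by Proposition \ref{prop injectivity}. For the extension of the action, I would simply recall that every element of $\Isom(\Hyp^{n+1})$ acts continuously on the visual boundary, and that $\Lambda_\rho$ is $\rho$-invariant (being the set of accumulation points of any $\rho$-orbit, equivalently the image of the equivariant map $\partial\widetilde\sigma$); therefore the action preserves $\partial\Hyp^{n+1}\setminus\Lambda_\rho$.

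The heart of the argument is an analysis of the hyperbolic Gauss maps $G^\pm:=G^\pm_{\widetilde\sigma}\colon\widetilde M\to\partial\Hyp^{n+1}$ by means of Busemann functions. By Proposition \ref{prop:gauss maps diffeo onto image} both are diffeomorphisms onto their images, and by invariance of domain the images $\Omega_\pm:=G^\pm(\widetilde M)$ are open. For $\xi\in\partial\Hyp^{n+1}$ let $b_\xi$ be the Busemann function centered at $\xi$, whose level sets are the horospheres centered at $\xi$, with $b_\xi(x)\to-\infty$ as $x\to\xi$ and $b_\xi(x)\to+\infty$ as $x$ tends to any other boundary point. The key observation is that $\widetilde\sigma(p)$ is a critical point of $b_\xi\circ\widetilde\sigma$ precisely when $\xi$ is an endpoint of the geodesic orthogonal to $\widetilde\sigma$ at $\widetilde\sigma(p)$, i.e. $\xi\in\{G^+(p),G^-(p)\}$; moreover Lemma \ref{lemma concave side horospheres} forces any such critical point to be a \emph{strict} global minimum of $b_\xi\circ\widetilde\sigma$, since $\widetilde\sigma$ lies strictly on the concave side of the tangent horosphere centered at $\xi$.

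Three consequences would then follow. \emph{(i)} $\Omega_+\cap\Omega_-=\emptyset$: if $G^+(p)=G^-(q)=\xi$, then $\widetilde\sigma(p)$ and $\widetilde\sigma(q)$ are both strict global minima of $b_\xi\circ\widetilde\sigma$, forcing $p=q$ and hence $G^+(p)=G^-(p)$, which is impossible since the two endpoints of a geodesic are distinct. \emph{(ii)} $\Omega_\pm\cap\Lambda_\rho=\emptyset$: if $\xi=G^+(p)\in\Lambda_\rho$, there is a sequence with $\widetilde\sigma(q_n)\to\xi$, whence $b_\xi(\widetilde\sigma(q_n))\to-\infty$, contradicting that $b_\xi\circ\widetilde\sigma$ attains its minimum at $\widetilde\sigma(p)$. \emph{(iii)} $\Omega_+\cup\Omega_-\supseteq\partial\Hyp^{n+1}\setminus\Lambda_\rho$: for $\xi\notin\Lambda_\rho$, since $\widetilde\sigma(\widetilde M)$ accumulates only on $\Lambda_\rho\not\ni\xi$ and $b_\xi\to+\infty$ towards every boundary point different from $\xi$, the function $b_\xi\circ\widetilde\sigma$ is proper and bounded below, so it attains a minimum, which is a critical point, giving $\xi\in\Omega_+\cup\Omega_-$. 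Combining, $\partial\Hyp^{n+1}\setminus\Lambda_\rho=\Omega_+\sqcup\Omega_-$ is a disjoint union of two open sets, each diffeomorphic to $\widetilde M\cong\R^n$ and hence a topological $n$-disc; being connected, they are exactly the two connected components.

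Finally, freeness and proper discontinuity would be transported through the $G^\pm$. By naturality of the hyperbolic Gauss map under the isometry group together with the $\rho$-equivariance of $\widetilde\sigma$, each $G^\pm\colon\widetilde M\to\Omega_\pm$ is a $\rho$-equivariant diffeomorphism intertwining the deck action of $\pi_1(M)$ on $\widetilde M$ with the $\rho$-action on $\Omega_\pm$; in particular every $\rho(\alpha)$ preserves each $\Omega_\pm$. Since the deck action is free and properly discontinuous, so is the $\rho$-action on each $\Omega_\pm$, and therefore on $\partial\Hyp^{n+1}\setminus\Lambda_\rho=\Omega_+\sqcup\Omega_-$, the two pieces being invariant and relatively clopen. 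The main obstacle is the Busemann-function analysis in steps \emph{(ii)} and \emph{(iii)}, namely showing that the endpoints of the orthogonal geodesics sweep out exactly the complement of the limit set; once this is in place, the separation and equivariance statements are essentially formal.
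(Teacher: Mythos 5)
Your overall architecture (identify $\partial\Hyp^{n+1}\setminus\Lambda_\rho$ with $\Omega_+\sqcup\Omega_-$, then transport the deck action through the equivariant diffeomorphisms $G^\pm$) matches the paper, and several of your steps are correct and even slightly cleaner than the printed proof: your step \emph{(i)} proves disjointness of $\Omega_+$ and $\Omega_-$ directly from uniqueness of the strict global minimum of $b_\xi\circ\widetilde\sigma$, where the paper instead invokes Jordan--Brouwer separation; and your step \emph{(iii)} is a sound Busemann-function formalization of the paper's tangent-horosphere argument (properness and boundedness below of $b_\xi\circ\widetilde\sigma$ both hold for the reasons you give, since accumulation points of $\widetilde\sigma(\widetilde M)$ at infinity lie in $\Lambda_\rho\not\ni\xi$). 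The equivariance and transport of the free, properly discontinuous action at the end is also fine.

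However, step \emph{(ii)} contains a genuine gap: the implication ``$\widetilde\sigma(q_n)\to\xi$, whence $b_\xi(\widetilde\sigma(q_n))\to-\infty$'' is false. Convergence to $\xi$ in the visual compactification does not control $b_\xi$: in the upper half-space model with $\xi=\infty$ and $b_\xi(x)=-\log x_{n+1}$, the sequence $(k,0,\dots,0,1)$ converges to $\xi$ while $b_\xi$ stays constant, and $(k,0,\dots,0,1/k)$ converges to $\xi$ with $b_\xi\to+\infty$. Worse, the conclusion of \emph{(ii)} cannot be recovered from Lemma \ref{lemma concave side horospheres} at all: the concave side of a horosphere centered at $\xi$ has closure at infinity containing $\xi$ itself (sequences along the horosphere converge to $\xi$), so the barrier ``$\widetilde\sigma$ lies strictly on the concave side of the tangent horosphere'' is perfectly compatible with $\xi\in\Lambda_\rho$. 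This is exactly where the paper uses cocompactness in an essential way: compactness of $M$ gives a \emph{uniform} bound $|\lambda_i|\leq\epsilon<1$ on the principal curvatures, so by Remark \ref{rmk:convex side caps} the image of $\widetilde\sigma$ lies on the concave side of every tangent $r$-cap with $r=\arctanh(\epsilon)$, and---unlike a horoball---the concave side of an $r$-cap has closure at infinity missing an open neighbourhood of the endpoint $x$ of the orthogonal geodesic; hence $x\notin\overline{\widetilde\sigma(\widetilde M)}\cap\partial\Hyp^{n+1}=\Lambda_\rho$. Note that the pointwise condition $|\lambda_i|<1$ alone (without the uniform $\epsilon$) would not suffice, which is why no purely Busemann/horosphere comparison can close this step; to repair your proof, replace \emph{(ii)} by the paper's $r$-cap tangency argument, or equivalently by the analogous analysis with the signed-distance function to the totally geodesic hyperplane orthogonal to the geodesic $\ell$ through $\widetilde\sigma(p)$, whose sublevel regions do separate $\xi$ from the closure at infinity of the concave side.
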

\begin{proof}
{Since the action of $\pi_1(M)$ on $\widetilde M$ is free and properly discontinuous, and $G_{\widetilde\sigma}^\pm$ are diffeomorphisms onto their image by Proposition \ref{prop:gauss maps diffeo onto image}, it follows that the action of $\rho(\pi_1(M))$} is free and properly discontinuous on $G_{\widetilde\sigma}^+(\widetilde M)$ and $G_{\widetilde\sigma}^-(\widetilde M)$, which are topological discs in $\Hyp^{n+1}$ since $\widetilde M$ is diffeomorphic to $\R^n$. We claim that 
{$$G_{\widetilde\sigma}^+(\widetilde M)\cup G_{\widetilde\sigma}^-(\widetilde M)=\partial\Hyp^{n+1}\setminus\Lambda_\rho~.$$} 
Observe that, by the Jordan-Brouwer separation Theorem, the complement of $\Lambda_\rho$ has two connected components, hence the claim will also imply that $G_{\widetilde\sigma}^+(\widetilde M)$ and $G_{\widetilde\sigma}^-(\widetilde M)$ are disjoint because they are both connected.

In order to show that $\partial\Hyp^{n+1}\setminus\Lambda_\rho\subseteq G_{\widetilde\sigma}^+(\widetilde M)\cup G_{\widetilde\sigma}^-(\widetilde 
	M)$, one can repeat the same argument as Proposition \ref{prop:action free prop disc0}, now using horospheres, to see that every $x$ in the complement of $\Lambda_\rho$ is the endpoint of some geodesic orthogonal to ${\widetilde\sigma}(\widetilde M)$. See Figure \ref{fig:limit2}.

{It only remains to show the other inclusion. By continuity, it suffices to show that every $x\in\Lambda_\rho$ is not on the image of $G_{\widetilde\sigma}^\pm$. Observe that by cocompactness the principal curvatures of $\widetilde\sigma$ are bounded by some constant $\epsilon<1$ in absolute value. Now,} if $x\in \partial \Hyp^{n+1}$ is the endpoint of an orthogonal line $\ell$, then, for all $r$, one would be able to construct a $r$-cap tangent to $\ell \cap \widetilde \sigma(\widetilde M)$ such that $x$ lies in the convex side of the $r$-cap: since, by Remark \ref{rmk:convex side caps}, for some $r$ the image of $\widetilde \sigma$ lies in the concave side of the $r$-cap, $x$ cannot lie in $\partial \widetilde \sigma(\widetilde M)= \Lambda_\rho$.
See Figure \ref{fig:limit2} again.
\end{proof}

\begin{figure}[htbp]
\centering
\includegraphics[height=4.8cm]{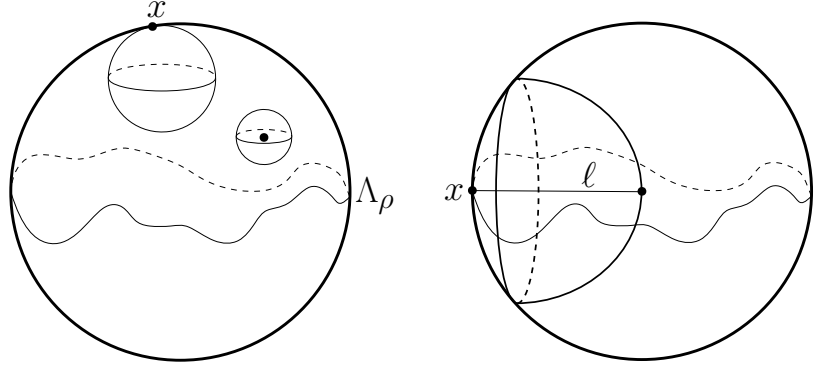} 

\caption{The arguments in the proof of Proposition \ref{prop:action free prop disc}. On the left, since $\widetilde\sigma$ is proper and extends to $\Lambda_\rho$ in $\partial\Hyp^{n+1}$, from every point $x\notin\Lambda_\rho$ one can find a horosphere with point at infinity $x$ tangent to the image of $\widetilde\sigma$. The same argument works for an interior point $x$, using metric balls, which is the argument of Proposition \ref{prop:action free prop disc0}. On the right, a $r$-cap orthogonal to a geodesic $\ell$ with endpoint $x$. Since $\widetilde\sigma$ lies on the concave side of tangent $r$-caps for large $r$, $x$ cannot be in the image of $G_{\widetilde\sigma}^\pm$. The same argument is used in Lemma \ref{lemma:convex gauss map diffeo}, under the convexity assumption, in which case it suffices to use tangent hyperplanes instead of $r$-caps.}\label{fig:limit2}
\end{figure}

As a consequence of Proposition \ref{prop:action free prop disc0}, if $\rho:\pi_1(M)\to \Isom(\Hyp^{n+1})$ is a nearly-Fuchsian representation, then the quotient $\faktor{\Hyp^{n+1}}{\rho(\pi_1(M))}$ is a complete hyperbolic manifold {diffeomorphic to $M\times\R$}. This motivates the following definition.

\begin{defi}
A hyperbolic manifold of dimension $n+1$ is \emph{nearly-Fuchsian}  if it is isometric to the quotient $\faktor{\Hyp^{n+1}}{\rho(\pi_1(M))}$, for $M$ a closed orientable $n$-manifold and $\rho:\pi_1(M)\to\Isom(\Hyp^{n+1})$ a nearly-Fuchsian representation.  
\end{defi}

\begin{remark}\label{rmk:embedding in the quotient}

If $\widetilde\sigma:\widetilde M\to\Hyp^{n+1}$ is a $\rho$-equivariant embedding with small principal curvatures, then $\widetilde \sigma$ descends to the quotient defining a smooth injective map $\sigma\colon M \to  \faktor{\Hyp^{n+1}}{\rho(\pi_1(M))}$. Moreover, since $\widetilde\sigma$ is a $\rho$-equivariant homeomorphism with its image, $\sigma$ is a homeomorphism with its image as well {hence its image is an embedded hypersurface}.
\end{remark}

We conclude this section with a final definition which appears in the statement of Theorem \ref{thm:second char ham}. As a preliminary remark, recall from Propositions \ref{prop:gauss maps diffeo onto image} and \ref{prop:action free prop disc}  that if $\widetilde\sigma$ is a $\rho$-equivariant embedding with small principal curvatures, then each of the Gauss maps $G_{\widetilde\sigma}^\pm$ of ${\widetilde\sigma}$ is a diffeomorphism between $\widetilde M$ and a connected component of $\partial\Hyp^{n+1}\setminus \Lambda_\rho$.  Let us denote these connected components by $\Omega_\pm$ {as in Figure \ref{fig:limit}}, so that: 
$$\partial\Hyp^{n+1}\setminus \Lambda_\rho=\Omega_+\sqcup \Omega_-\qquad G_{\widetilde\sigma}^+(\widetilde M)=\Omega_+\qquad G_{\widetilde\sigma}^-(\widetilde M)=\Omega_-~.$$
{with the representation $\rho$ inducing an action of $\pi_1(M)$ on both $\Omega_+$ and $\Omega_-$.  Recalling the identification 
$$\G{n+1}\cong \partial\Hyp^{n+1}\times\partial\Hyp^{n+1}\setminus \Delta~,$$
given by 
$$\gamma\mapsto\left(\lim_{t\to+\infty}\gamma(t),\lim_{t\to-\infty}\gamma(t)\right)~,$$
{the following definition is well-posed}.

\begin{defi}\label{defi quotient Grho}
Given a closed oriented $n$-manifold $M$ and a nearly-Fuchsian representation $\rho:\pi_1(M)\to\Isom(\Hyp^{n+1})$, we define
$\mathcal G_\rho$ the quotient:
$$\faktor{\{\gamma\in \G{n+1}\,|\,\lim_{t\to+\infty}\gamma(t)\in\Omega_+\text{ or }\lim_{t\to-\infty}\gamma(t)\in\Omega_-\}}{\rho(\pi_1(M))}~.$$
\end{defi}

Observe that, since the  action of $\rho(\pi_1(M))$ on $\partial\Hyp^{n+1}$ is free and properly discontinuous on both $\Omega_+$ and $\Omega_-$, it is also free and properly discontinuous on the region of $\G{n+1}$ consisting of geodesics having either final point in $\Omega_+$ or initial point in $\Omega_-$. Moreover, such region is simply connected, because it is the union of $\Omega_+\times \partial\Hyp^{n+1}\setminus \Delta$ and $\partial\Hyp^{n+1}\times\Omega_-\setminus \Delta$, both of which are simply connected (they retract on $\Omega_+\times\{\star\}$ and $\{\star\}\times\Omega_-$ respectively) and whose intersection $\Omega_+\times\Omega_-$ is again simply connected. 
We conclude that $\mathcal G_\rho$ is a $2n$-manifold {whose fundamental group is isomorphic to $\pi_1(M)$}, and is endowed with a natural para-K\"ahler structure induced from that of  $\G{n+1}$ (which is preserved by the action of $\Isom(\Hyp^{n+1})$). 

It is important to stress once more that $\Omega_+$ and $\Omega_-$ only depend on $\rho$ and not on ${\widetilde\sigma}$. We made here a choice in the labelling of $\Omega_+$ and $\Omega_-$ which only depends on the orientation of $M$. The other choice of labelling would result in a ``twin'' region, which differs from $\mathcal G_\rho$ by switching the roles of initial and final endpoints.

 A consequence of this construction, which is implicit in the statement of Theorem \ref{thm:second char ham}, is the following:

\begin{cor} \label{cor:embedding in Grho}
Let $M^n$ be a closed orientable manifold, $\rho:\pi_1(M)\to\Isom(\Hyp^{n+1})$ be a nearly-Fuchsian representation and ${\widetilde\sigma}:\widetilde M\to\Hyp^{n+1}$ be a $\rho$-equivariant embedding  of small principal curvatures. For a suitable choice of an orientation on $M$,  the Gauss map of $\widetilde \sigma$ takes values in $\Omega_+\times\Omega_-$, and induces an embedding of $M$ in $\mathcal G_\rho$ 
\end{cor}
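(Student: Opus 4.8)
The plan is to assemble the facts already proved: Proposition \ref{prop:gauss maps diffeo onto image} (each hyperbolic Gauss map $G_{\widetilde\sigma}^\pm$ is a diffeomorphism onto its image and $G_{\widetilde\sigma}$ is an embedding), Proposition \ref{prop:action free prop disc} (the splitting $\partial\Hyp^{n+1}\setminus\Lambda_\rho=\Omega_+\sqcup\Omega_-$), and the naturality of the Gauss map construction under $\Isom(\Hyp^{n+1})$. First I would fix the orientation of $M$ according to the labelling convention described before Definition \ref{defi quotient Grho}, so that under the identification $\G{n+1}\cong\partial\Hyp^{n+1}\times\partial\Hyp^{n+1}\setminus\Delta$ the Gauss map $G_{\widetilde\sigma}$ corresponds to the pair $(G_{\widetilde\sigma}^+,G_{\widetilde\sigma}^-)$ with $G_{\widetilde\sigma}^+(\widetilde M)=\Omega_+$ and $G_{\widetilde\sigma}^-(\widetilde M)=\Omega_-$. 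Since for every $p\in\widetilde M$ one has $G_{\widetilde\sigma}^+(p)\in\Omega_+$ and $G_{\widetilde\sigma}^-(p)\in\Omega_-$, the Gauss map takes values in $\Omega_+\times\Omega_-$, which is contained in the region $\{\gamma : \lim_{t\to+\infty}\gamma(t)\in\Omega_+\}$ appearing in Definition \ref{defi quotient Grho}. This settles the first assertion and explains the role of the orientation: the opposite choice would swap the two factors, landing in $\Omega_-\times\Omega_+$, which is disjoint from the defining region of $\mathcal G_\rho$.

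Next I would record that $G_{\widetilde\sigma}$ is $\rho$-equivariant. Indeed $\rho(\alpha)\in\Isom(\Hyp^{n+1})$ sends the oriented geodesic orthogonal to $\widetilde\sigma$ at $\widetilde\sigma(p)$ to the oriented geodesic orthogonal to $\rho(\alpha)\circ\widetilde\sigma(p)=\widetilde\sigma(\alpha\cdot p)$, compatibly with the orientations; by the naturality of $\mathrm p$ recalled in Remark \ref{rmk: Isom preserva Omega e J} this gives $G_{\widetilde\sigma}\circ\alpha=\rho(\alpha)\circ G_{\widetilde\sigma}$ for all $\alpha\in\pi_1(M)$. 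Because the action of $\rho(\pi_1(M))$ on the region defining $\mathcal G_\rho$ is free and properly discontinuous, $G_{\widetilde\sigma}$ descends to a well-defined smooth map $\overline{G}\colon M\to\mathcal G_\rho$ making the square with the covering projections $c\colon\widetilde M\to M$ and $q\colon(\text{region})\to\mathcal G_\rho$ commute, i.e. $q\circ G_{\widetilde\sigma}=\overline{G}\circ c$.

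Finally I would verify that $\overline{G}$ is an embedding. It is an immersion: $G_{\widetilde\sigma}$ is an immersion by Proposition \ref{prop:gauss immersion}, $q$ is a local diffeomorphism, so $q\circ G_{\widetilde\sigma}=\overline{G}\circ c$ is an immersion, and since $c$ is a local diffeomorphism $\overline{G}$ is too. For injectivity, if $\overline{G}([p])=\overline{G}([q])$ then $G_{\widetilde\sigma}(q)=\rho(\alpha)\cdot G_{\widetilde\sigma}(p)=G_{\widetilde\sigma}(\alpha\cdot p)$ for some $\alpha\in\pi_1(M)$, whence $q=\alpha\cdot p$ by the injectivity of $G_{\widetilde\sigma}$ (Proposition \ref{prop:gauss maps diffeo onto image}), so $[p]=[q]$. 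Since $M$ is closed, hence compact, and $\mathcal G_\rho$ is Hausdorff (being a manifold, by the proper discontinuity of the action), the continuous injection $\overline{G}$ is a homeomorphism onto its image, and together with being an immersion it is an embedding. The only genuinely delicate point is the interplay between equivariance and injectivity in the descent step; once that is in place, the conclusion is a routine compactness argument, so I do not expect a substantial obstacle here beyond carefully tracking the identifications.
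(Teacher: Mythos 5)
Your proposal is correct and follows essentially the same route as the paper, which simply observes that everything except the embedding claim was established in the discussion preceding Definition \ref{defi quotient Grho} (the labelling of $\Omega_\pm$ via Propositions \ref{prop:gauss maps diffeo onto image} and \ref{prop:action free prop disc}) and then invokes the descent argument of Remark \ref{rmk:embedding in the quotient}. Your only deviation is cosmetic: you close the argument with the compact-domain/Hausdorff-target trick instead of noting, as in Remark \ref{rmk:embedding in the quotient}, that a $\rho$-equivariant homeomorphism onto its image descends to a homeomorphism onto its image, and both work (your injectivity step via equivariance and the injectivity of $G_{\widetilde\sigma}$ is exactly the point the paper delegates to that remark).
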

\begin{proof}
The only part of the statement which is left to prove is that the induced immersion of $M$ in $\mathcal G_\rho$ is an embedding, but the proof follows with the same argument as in Remark \ref{rmk:embedding in the quotient}. 
 \end{proof}

\section{Local and global integrability of immersions into $\G{n+1}$}
\label{sec:local int} 

In this section we introduce a connection on the principal $\R$-bundle $\mathrm{p}$ and relate its curvature with the symplectic geometry of the space of geodesics. As a consequence, we characterize, in terms of the Lagrangian condition, the immersions in the space of geodesics which can be locally seen as Gauss maps of immersions into $\Hyp^n$.

\subsection{Connection on the $\R$-principal bundle}
Recall that a \emph{connection} on a principal $G$-bundle $P$ is a $\mathfrak g$-valued 1-form $\omega$ on $P$ such that:
\begin{itemize}
\item $\mathrm{Ad}_g(R_g^*\omega)=\omega$;
\item for every $\xi\in G$, $\omega(X_\xi)=\xi$ where $X_\xi$ is the vector field associated to $\xi$ by differentiating the action of $G$.
\end{itemize}
In the special case $G=\R$ which we consider in this paper, {$\omega$ is a real-valued 1-form and }the first property simply means that $\omega$ is invariant under the $\R$-action.

Let us now introduce the connection that we will concretely use.
\begin{defi}\label{defi connection form}
We define the connection form on $\mathrm{p}:T^1\Hyp^{n}\to \G{n}$ as 
$$\omega(X)=\gs{n}(X,\chi_{(x,v)})$$
for $X\in T_{(x,v)}T^1\Hyp^{n}$, where $\chi$ is the infinitesimal generator of the geodesic flow.
\end{defi}

The 1-form $\omega$ indeed satisfies the two properties of a connection: the invariance  under the $\R$-action follows immediately from the invariance of $\gs{n}$ (Lemma \ref{lemma:geodflow isometric}) and of $\chi$ (Equation \eqref{eq:diff geoflow3}); the second property follows from Equation \eqref{eq:generator geoflow is unit}, namely $\omega(\chi_{(x,v)})=\gs{n}(\chi_{(x,v)},\chi_{(x,v)})=1$.

The connection $\omega$ is defined in such a way that the associated \emph{Ehresmann connection}, which we recall being a distribution of $T^1\Hyp^n$ in direct sum with the tangent space of the fibers of $\mathrm{p}$, is simply the distribution orthogonal to $\chi$. Indeed the Ehresmann connection associated to $\omega$ is the kernel of $\omega$. The subspaces in the Ehresmann connections defined by the kernel of $\omega$ are usually called \emph{horizontal}; we will avoid this term here since it might be confused with the horizontal distribution $\HH$ with respect to the other bundle structure of $T^1\Hyp^n$, namely the unit tangent bundle $\pi:T^1\Hyp^n\to\Hyp^n$. 

Now, a connection on a principal $G$-bundle is \emph{flat} if the Ehresmann distribution is integrable, namely, every point admits a local section tangent to the kernel of $\omega$. We will simply refer to such a section as a \emph{flat} section. The bundle is a \emph{trivial flat} principal $G$-bundle if it has a global flat section.

Having introduced the necessary language, the following statement is a direct reformulation of Proposition \ref{prop:gauss immersion}.

\begin{prop}\label{prop pullback flat}
Given an oriented manifold $M^n$ and an immersion $\sigma:M\to\Hyp^{n+1}$, the $G_\sigma$-pull-back of $\mathrm{p}:T^1\Hyp^{n+1}\to\G{n+1}$ is a trivial flat bundle.
 \end{prop}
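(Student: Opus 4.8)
The plan is to recognize that the lift $\zeta_\sigma$ itself provides the desired global flat section of the pull-back bundle, so that the proposition becomes a direct translation of Proposition \ref{prop:gauss immersion} through the definition of the connection $\omega$.

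First I would make the pull-back bundle explicit. Its total space is
$$E=\{(p,\eta)\in M\times T^1\Hyp^{n+1}\,|\,G_\sigma(p)=\mathrm p(\eta)\}~,$$
with projection $(p,\eta)\mapsto p$ and tautological bundle map $\widehat G\colon E\to T^1\Hyp^{n+1}$, $(p,\eta)\mapsto\eta$, covering $G_\sigma$. By definition the connection on $E$ is the pull-back $\widehat G^*\omega$, whose Ehresmann distribution is its kernel.

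Next I would exhibit the global flat section. Since $\mathrm p\circ\zeta_\sigma=G_\sigma$ by Definition \ref{defi:lift and gauss}, the map $s\colon M\to E$ given by $s(p)=(p,\zeta_\sigma(p))$ is a global section, and it satisfies $\widehat G\circ s=\zeta_\sigma$. Therefore $s^*(\widehat G^*\omega)=\zeta_\sigma^*\omega$, and by Definition \ref{defi connection form} one has, for every $W\in T_pM$,
$$(\zeta_\sigma^*\omega)(W)=\gs{n+1}\big(d_p\zeta_\sigma(W),\chi_{\zeta_\sigma(p)}\big)~.$$
Here Proposition \ref{prop:gauss immersion} does all the work: it asserts precisely that $\zeta_\sigma$ is orthogonal to the fibers of $\mathrm p$, i.e.\ $d_p\zeta_\sigma(W)\in\chi^\perp_{\zeta_\sigma(p)}=\ker\omega$ for all $W$. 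Hence $\zeta_\sigma^*\omega=0$, so $s$ is a global section tangent to $\ker(\widehat G^*\omega)$, that is, a global flat section.

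The only point requiring a word beyond this restatement is the passage from the existence of one horizontal global section to the flatness and triviality of the bundle; this I would settle using the $\R$-invariance of $\omega$ (the first defining property of a connection, which for $G=\R$ reads $R_t^*\omega=\omega$). It implies that each translate $R_t\circ s$ is again tangent to $\ker(\widehat G^*\omega)$, so as $t$ ranges over $\R$ these sections foliate $E$ by horizontal leaves; the Ehresmann distribution is thus integrable and $(p,t)\mapsto R_t(s(p))$ trivializes $E$ as a trivial flat $\R$-bundle. There is no real obstacle in this argument, as anticipated by the remark that the statement is a reformulation of Proposition \ref{prop:gauss immersion}.
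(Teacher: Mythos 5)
Your proof is correct and takes essentially the same approach as the paper, whose entire argument is that $\zeta_\sigma$, being orthogonal to $\chi$ by Proposition \ref{prop:gauss immersion}, induces a global flat section of the pull-back bundle via $G_\sigma=\mathrm p\circ\zeta_\sigma$. Your closing verification that a single horizontal section yields both flatness and triviality (by letting the $\R$-translates $R_t\circ s$ foliate the total space, using the $\R$-invariance of $\omega$) is a correct filling-in of a step the paper leaves implicit.
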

\begin{proof}
The lift $\zeta_\sigma:M\to T^1\Hyp^{n+1}$ is orthogonal to $\chi$ by Proposition \ref{prop:gauss immersion} and therefore induces a global flat section of the pull-back bundle {via $G_\sigma= \mathrm p \circ \zeta_\sigma$}.
\end{proof}

\subsection{Curvature of the connection}
The purpose of this section is to compute the curvature of the connection $\omega$, which is simply the $\R$-valued 2-form $d\omega$. (The general formula for the curvature of a connection on a principal $G$-bundle is $d\omega+ \frac1 2 \omega\wedge\omega$, but the last term vanishes in our case $G=\R$.)

\begin{remark}\label{rmk:curvature and bracket}
It is known that the curvature of $\omega$ is the obstruction to the existence of local flat sections. In particular in the next proposition we will use extensively that, given $X,Y\in \chi_{(x,v)}^\perp\subset T_{(x,v)}T^1\Hyp^{n+1}$, if there exists an embedding {$ \zeta:M\to T^1\Hyp^{n+1}$ such that $\zeta(p)=(x,v)$, that $X,Y\in d\zeta(T_p M)$ and that $d\zeta(T_pM ) \subset \chi^\perp$,} then $d\omega(X,Y)=0$.

This can be easily seen by a direct argument: if we now denote by $X$ and $Y$ some extensions tangential to the image of $\zeta$, one has 
$$
d\omega(X,Y)=\partial_X(\omega(Y))-\partial_Y(\omega(X))-\omega([X,Y])=0~,
$$
since $\omega(X)=\omega(Y)=0$ by the hypothesis that $X$ and $Y$ are orthogonal to $\chi$, whence $\omega(X)=\gs{n}(X,\chi)=0$, and moreover $\omega([X,Y])=0$ since $[X,Y]$ remains tangential to the image of $\sigma$. 

The argument can in fact be reversed to see that $d\omega$ is exactly the obstruction to the existence of a flat section, by the Frobenius theorem. 
\end{remark}

The following proposition represents an essential step to relate the curvature of $\omega$ and the symplectic geometry of the space of geodesics.

\begin{prop}\label{prop:identity curvature form}
The following identity holds for the connection form $\omega$ on the principal $\R$-bundle $\mathrm{p}:T^1\Hyp^{n+1}\to\G{n+1}$ and the symplectic form $\Omega$ of $\G{n+1}$:
$$d\omega=\mathrm{p}^*\Omega~.$$

\end{prop}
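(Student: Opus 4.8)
The plan is to reduce the identity to a pointwise linear-algebra comparison on the subbundle $\chi^\perp$, and then to carry out that comparison by computing $\omega$ explicitly in the hyperboloid model. First I would observe that both sides are \emph{basic} forms for the $\R$-bundle $\mathrm p$, so that it suffices to compare them on $\chi^\perp_{(x,v)}=\HP_{(x,v)}\oplus\VP_{(x,v)}$, which $d\mathrm p$ identifies with $T_\ell\G{n+1}$ by \eqref{eq dp ristretto e' iso}. For $\mathrm p^*\Omega$ basicness is automatic, being a pull-back. For $d\omega$ it follows from two facts: $\omega$ is invariant under the geodesic flow (Lemma \ref{lemma:geodflow isometric} together with the invariance of $\chi$), hence so is $d\omega$; and Cartan's formula gives $\iota_\chi d\omega=\mathcal L_\chi\omega-d(\iota_\chi\omega)=0-d(1)=0$, using the flow-invariance of $\omega$ and $\omega(\chi)=1$ from \eqref{eq:generator geoflow is unit}. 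An invariant $2$-form killed by $\iota_\chi$ descends through $\mathrm p$, so $d\omega=\mathrm p^*\beta$ for a unique $\beta$ on $\G{n+1}$, and it remains to check $\beta=\Omega$ on $\chi^\perp$.

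Next I would produce a global closed form for $\omega$. Decomposing a tangent vector $X=(\dot x,\dot v)$ along the splitting \eqref{eq:direct sum} as $X=\alpha\,\chi_{(x,v)}+a^\HH+b^\V$ with $a,b\in v^\perp$, one finds $\alpha=\langle\dot x,v\rangle$; since $\chi=v^\HH$ is a unit horizontal vector orthogonal to $\HP$ and $\VP$, Definition \ref{defi:parasasaki} gives $\omega(X)=\gs{n+1}(X,\chi)=\alpha=\langle\dot x,v\rangle$. Regarding $T^1\Hyp^{n+1}$ as a submanifold of $\R^{n+1,1}\times\R^{n+1,1}$ with tautological $\R^{n+1,1}$-valued coordinate maps $\mathbf x,\mathbf v$, this says $\omega=\langle\mathbf v,d\mathbf x\rangle$. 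Differentiating and using $d(d\mathbf x)=0$, the curvature is the constant-coefficient $2$-form $d\omega=\langle d\mathbf v, d\mathbf x\rangle$, whose value on $X=(\dot x_1,\dot v_1)$ and $Y=(\dot x_2,\dot v_2)$ is the Minkowski pairing $\langle\dot v_1,\dot x_2\rangle-\langle\dot x_1,\dot v_2\rangle$.

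On the other side, for $X=a^\HH+b^\V$ and $Y=c^\HH+d^\V$ in $\chi^\perp$ I would expand $\mathrm p^*\Omega(X,Y)=\Omega(d\mathrm p\,X,d\mathrm p\,Y)=\GG(d\mathrm p\,X,\JJ\,d\mathrm p\,Y)$ using \eqref{eq: Definizione Omega}; by the definition of $\GG$ as the push-forward of $\gs{n+1}|_{\chi^\perp}$ and of $\JJ$ as induced by $J$, this equals $\gs{n+1}(X,JY)$, and with $JY=d^\HH+c^\V$ from \eqref{eq:defiJ2} and Definition \ref{defi:parasasaki} it reduces to a Minkowski pairing of the horizontal and vertical components of $X$ and $Y$. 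Substituting the hyperboloid expressions $a^\HH+b^\V=(a,b)$ and $c^\HH+d^\V=(c,d)$ into the value of $d\omega$ from the previous step, both sides are then expressed through the same products $\langle a,d\rangle$ and $\langle b,c\rangle$, and matching the conventions yields the identity $d\omega=\mathrm p^*\Omega$. As a consistency check, Remark \ref{rmk:curvature and bracket} applied to Examples \ref{ex:totally geodesic}, \ref{ex:spheres} and \ref{ex:horospheres} forces $d\omega$ to vanish on $\HP\times\HP$, on $\VP\times\VP$, and on each eigendistribution of $\JJ$ respectively; the same vanishings hold for $\mathrm p^*\Omega$, so all of the content of the identity is concentrated in the mixed pairing $d\omega(a^\HH,d^\V)$.

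The step I expect to demand the most care is the sign bookkeeping: the orientation of $\chi$ in $\omega(X)=\gs{n+1}(X,\chi)$, the opposite signs of $\gs{n+1}$ on $\HP$ and $\VP$ in Definition \ref{defi:parasasaki}, the placement of $\JJ$ in \eqref{eq: Definizione Omega} together with the skew-symmetry \eqref{eq:JGcompatible}, and the exterior-derivative and wedge conventions must all be tracked consistently so that the two computations agree on the nose rather than up to an overall sign. Once these are pinned down, the basicness reduction and the global formula $\omega=\langle\mathbf v,d\mathbf x\rangle$ turn the verification into the short linear-algebra comparison sketched above.
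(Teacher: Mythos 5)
Your proposal is correct in substance but follows a genuinely different route from the paper's. For the reduction to $\chi^\perp$, the paper shows $d\omega(X,\chi)=0$ by hand, extending $X$ along a surface swept out by the geodesic flow and evaluating the exterior-derivative formula; your use of Cartan's formula, $\iota_\chi d\omega=\mathcal L_\chi\omega-d(\iota_\chi\omega)=0$, is a cleaner packaging of the same step. The real divergence is in the comparison on $\chi^\perp$: the paper verifies $d\omega(X,Y)=\gs{n+1}(X,JY)$ case by case on the basis $\{w_i^\HH,w_j^\V\}$, killing the three off-diagonal cases by exhibiting flat sections through its geometric examples (totally geodesic hyperplanes, tangent spheres, the normal bundle $\mathrm N^1 Q$ of a geodesic) via Remark \ref{rmk:curvature and bracket}, and computing the remaining term $d\omega(w^\HH,w^\V)$ through an explicit Lie bracket of Lorentzian cross-product vector fields in $\R^{2,1}\times\R^{2,1}$. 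Your identification of $\omega$ with the tautological (Liouville-type) form $\langle\mathbf v,d\mathbf x\rangle$ replaces all of this at once: $d\omega=\langle d\mathbf v\wedge d\mathbf x\rangle$ has constant coefficients in the ambient $\R^{n+1,1}\times\R^{n+1,1}$, and every component is read off by linear algebra, with no need for model hypersurfaces. What the paper's route buys is reusable machinery (the examples and Remark \ref{rmk:curvature and bracket} recur elsewhere); what yours buys is brevity and the conceptual point that $\omega$ is the canonical contact-type form of the unit tangent bundle.

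There is, however, one concrete issue at exactly the step you flagged but did not carry out. With your formulas, for $X=w^\HH=(w,0)$ and $Y=w^\V=(0,w)$ one gets $d\omega(X,Y)=\langle\dot v_1,\dot x_2\rangle-\langle\dot v_2,\dot x_1\rangle=-\langle w,w\rangle=-1$, whereas $\gs{n+1}(X,JY)=\gs{n+1}(w^\HH,w^\HH)=+1$. So, taking Definition \ref{defi:parasasaki} and $\Omega=\GG(\cdot,\JJ\cdot)$ at face value, together with the standard conventions $d\omega(X,Y)=X\omega(Y)-Y\omega(X)-\omega([X,Y])$ and $[X,Y]=(\mathrm{Jac}\,Y)X-(\mathrm{Jac}\,X)Y$, your method lands on $d\omega=-\mathrm p^*\Omega$, not $+$. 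This is not a defect of your approach: the paper's fourth case reaches $+1$ via the bracket formula $[X,Y]=\mathrm{Jac}_XY-\mathrm{Jac}_YX$, which is the negative of the standard one (the direct computation gives $D_XY=(0,(x\boxtimes v)\boxtimes v)=(0,x)$ and $D_YX=(x\boxtimes(x\boxtimes v),0)=(-v,0)$, hence $[X,Y]=+\chi$ with the usual convention). The mismatch is an overall conventional sign — repaired by adopting the paper's bracket convention or by defining $\Omega=\GG(\JJ\cdot,\cdot)$ — and nothing downstream (Lagrangian $\Leftrightarrow$ locally integrable, closedness of $\Omega$, vanishing of the Maslov class) is sensitive to it. But your closing claim that "matching the conventions yields the identity" is precisely the assertion that does not come out on the nose; a complete writeup must pin the convention down explicitly rather than assume the signs reconcile.
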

\begin{proof}
We shall divide the proof in several steps. 

First, let us show that $d\omega$ is the pull-back of some 2-form on $\G{n+1}$. Since $d\omega$ is obviously invariant under the geodesic flow, we only need to show that $d\omega(X,\chi_{(x,v)})=0$ for all $X\in T_{(x,v)}T^1\Hyp^{n+1}$. Clearly, it suffices to check this for $X\in \chi^\perp$. To apply the exterior derivative formula for $d\omega$, we consider $\chi$ as a globally defined vector field and we shall extend $X$ {around} $(x,v)$. For this purpose, take a curve $c:(-\epsilon,\epsilon)\to T^1\Hyp^{n+1}$ such that $c'(0)=X$ and $c'(s)$ is tangent to $\chi^\perp$ for every $s$. Then define the map $f(s,t)=\varphi_t(c(s))$ and observe that $\chi=\partial_t f$. We thus set $X=\partial_s f$, which is the desired extension {along a two-dimensional submanifold}. Then we have:
$$d\omega(X,\chi)=\partial_X(\omega(\chi))-\partial_\chi(\omega(X))-\omega[X,\chi]=0~,$$
where we have used that $\omega(\chi)\equiv 1$, that $\omega(X)=0$ along the curves $t\mapsto \varphi_t(x,v)$ (since the curve $c$ is taken to be in the distribution $\chi^\perp$ and the geodesic flow preserves both $\chi$ and its orthogonal complement), and finally that $[X,\chi]=0$ since {$\chi=\partial_t f$ and $X=\partial_s f$ are coordinate vector fields} for a submanifold in neighborhood of $(x,v)$. 

Having proved this, it is now sufficient to show that $d\omega$ and $\mathrm{p}^*\Omega$ agree when restricted to $\chi^\perp$. Recall that $\Omega$ is defined as $\GG(\cdot,\JJ\cdot)$, where $\GG$ and $\JJ$ are the push-forward to $T_{\mathrm{p}(x,v)}\G{n+1}$, by means of the differential of $\mathrm{p}$, of the metric $\gs{n+1}$ and of the para-complex structure $J$ on $\chi^\perp$. Thus we must equivalently  show that 
\begin{equation}\label{eq:sufficient}
d\omega(X,Y)=\gs{n+1}(X,JY)\qquad\text{for all }X,Y\in \chi_{(x,v)}^\perp~.
\end{equation}

To see this, take an orthonormal basis $\{w_1,\ldots,w_n\}$ for $v^\perp\subset T_x\Hyp^{n+1}$, and observe that $\{w_1^\HH,\ldots,w_n^\HH,w_1^\V,\ldots,w_n^\V\}$ is a $\gs{n+1}$-orthonormal basis of $\chi^\perp$. It is sufficient to check \eqref{eq:sufficient} for $X,Y$ {distinct elements of} this basis. We distinguish several cases. 

\begin{itemize}
\item First, consider the case $X=w_i^\HH$ and $Y=w_j^\HH$, for $i\neq j$. Then $\gs{n+1}(X,JY)=\gs{n+1}(w_i^\HH,w_j^\V)=0$ by Definition \ref{defi:parasasaki}. On the other hand, by Example \ref{ex:totally geodesic}, if $\sigma$ is the inclusion of the totally geodesic hyperplane orthogonal to $v$ at $x$, then its lift $\zeta_\sigma$ is a submanifold in $T^1\Hyp^{n+1}$ orthogonal to $\chi$ at every point and tangent to $X$ and $Y$. Then $d\omega(X,Y)=0$ by Remark \ref{rmk:curvature and bracket}.

\item Second, consider $X=w_i^\V$ and $Y=w_j^\V$, for $i\neq j$. Then again $\gs{n+1}(X,JY)=\gs{n+1}(w_i^\V,w_j^\HH)=0$. Here we can apply Example \ref{ex:spheres} instead, and see that there is a $n$-dimensional sphere in $\VP_{(x,v)}$ orthogonal to the fibers of $\mathrm{p}$ and tangent to $X$ and $Y$, whence $d\omega(X,Y)=0$ by Remark \ref{rmk:curvature and bracket}.

\item Third, consider $X=w_i^\HH$ and $Y=w_j^\V$, for $i\neq j$. Then $\gs{n+1}(X,JY)=\gs{n+1}(w_i^\HH,w_j^\HH)=0$ since $w_i$ and $w_j$ are orthogonal. Let us now apply Example \ref{ex:mixed}, for instance by taking $Q$ the geodesic going through $x$ with speed $w_i$. The normal bundle $\mathrm N^1 Q$ is a submanifold orthogonal to the fibers of $\mathrm{p}$ and tangent to $w_i^\HH$ and $w_j^\V$. So $d\omega(X,Y)=0$ by the usual argument.

\item Finally, we have to deal with the case $X=w_i^\HH$ and $Y=w_i^\V$. Here $\gs{n+1}(X,JY)=\gs{n+1}(w_i^\HH,w_i^\HH)=1$. For this computation, we may assume $n=1$, up to restricting to the totally geodesic 2-plane spanned by $v$ and $w$, which is a copy of $\Hyp^2$. Hence we will simply denote $w_i=w$, and moreover we can assume (up to changing the sign) that $w=x\boxtimes v$, where $\boxtimes$ denotes the Lorentzian cross product in $\R^{2,1}$. In other words, $(x,v,w)$ forms an oriented orthonormal triple in $\R^{2,1}$. 

Now, let us extend $X$ and $Y$ to globally defined vector fields on $T^1\Hyp^2$, by means of the assignment $(x,v)\mapsto (x\boxtimes v)^\HH$ and $(x,v)\mapsto (x\boxtimes v)^\V$. By this definition, $X$ and $Y$ are orthogonal to $\chi$; we claim that $[X,Y]=-\chi$. This will conclude the proof, since
\begin{equation*}
d\omega(X,Y)=\partial_X(\omega(Y))-\partial_Y(\omega(X))-\omega[X,Y]=-\omega[X,Y]=\gs{2}(\chi,\chi)=1~.
\end{equation*}
For the claim about the Lie bracket, let us use the hyperboloid model. Then $X=(x\boxtimes v,0)$ and $Y=(0,x\boxtimes v)$. We can consider $X$ and $Y$ as globally defined (by the same expressions) in the ambient space $\R^{2,1}\times\R^{2,1}$, so as to compute the Lie bracket in $\R^{2,1}\times\R^{2,1}$, which will remain tangential to $T^1\Hyp^2$ since $T^1\Hyp^2$ is a submanifold. Using the formula $[X,Y]=Jac_XY-Jac_YX$, where $Jac_X$ denotes the Jacobian of the vector field thought as a map from $\R^3$ to $\R^3$, we obtain
$$[X,Y]=(x\boxtimes(x\boxtimes v),0)-(0,(x\boxtimes v)\boxtimes v)=-(v,x)=-\chi_{(x,v)}$$
by the standard properties of the cross-product.
\end{itemize}
In summary, we have shown that $d\omega$ and $\mathrm{p}^*\Omega$ coincide on the basis $\{\chi,w_1^\HH,\ldots,w_n^\HH,w_1^\V,\ldots,w_n^\V\}$ of $T_{(x,v)}T^1\Hyp^{n+1}$, and therefore the desired identity holds.
\end{proof}

{We get as an immediate consequence the closedness of the fundamental form $\Omega$, a fact whose proof has been deferred from Section \ref{sec:parakahler metric GG}.
\begin{cor}\label{cor:omega closed}
The fundamental form $\Omega=\GG(\cdot,\JJ\cdot)$ is closed.
\end{cor}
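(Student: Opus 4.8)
The plan is to derive the closedness of $\Omega$ as an immediate formal consequence of the identity $d\omega=\mathrm p^*\Omega$ just established in Proposition \ref{prop:identity curvature form}, without any further geometric input. The only structural fact I need beyond that proposition is that $\mathrm p\colon T^1\Hyp^{n+1}\to\G{n+1}$ is a surjective submersion, which holds because it is the projection of a principal $\R$-bundle.

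First I would apply the exterior derivative to both sides of the identity $d\omega=\mathrm p^*\Omega$. On the left-hand side one gets $d(d\omega)=0$ since $d\circ d=0$, while on the right-hand side, using that exterior differentiation commutes with pull-back, one gets $d(\mathrm p^*\Omega)=\mathrm p^*(d\Omega)$. Therefore
$$\mathrm p^*(d\Omega)=0~.$$

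The second step is to conclude that $d\Omega=0$ from the vanishing of its pull-back. The point is that $\mathrm p^*$ is injective on differential forms, which follows from $\mathrm p$ being a submersion: for any $\ell\in\G{n+1}$, any point $(x,v)\in\mathrm p^{-1}(\ell)$, and any tangent vectors $X_1,\ldots,X_k\in T_\ell\G{n+1}$, surjectivity of $d\mathrm p$ at $(x,v)$ lets one choose lifts $\widetilde X_i\in T_{(x,v)}T^1\Hyp^{n+1}$ with $d\mathrm p(\widetilde X_i)=X_i$, so that $(d\Omega)(X_1,\ldots,X_k)=(\mathrm p^*d\Omega)(\widetilde X_1,\ldots,\widetilde X_k)=0$. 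As $\ell$ and the $X_i$ are arbitrary, $d\Omega=0$, that is, $\Omega$ is closed.

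I do not expect any genuine obstacle in this argument, as all the substantive work is already contained in Proposition \ref{prop:identity curvature form}. The only point deserving a brief justification is the injectivity of $\mathrm p^*$ on forms, which is the standard consequence of $\mathrm p$ being a surjective submersion recalled above.
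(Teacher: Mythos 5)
Your proof is correct and is essentially identical to the paper's: the paper likewise differentiates the identity $d\omega=\mathrm p^*\Omega$ from Proposition \ref{prop:identity curvature form} to get $\mathrm p^*(d\Omega)=0$ and then concludes $d\Omega=0$ from surjectivity of $\mathrm p$. Your explicit justification that $\mathrm p^*$ is injective on forms because $\mathrm p$ is a surjective \emph{submersion} is a slightly more careful phrasing of the same final step.
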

\begin{proof}
Using Proposition \ref{prop:identity curvature form} we have
$$\mathrm{p}^*(d\Omega)=d(\mathrm{p}^*\Omega)=d(d\omega)=0~.$$
Since $\mathrm{p}$ is surjective, it follows that $d\Omega=0$.
\end{proof}}

\subsection{{Lagrangian immersions}} We have now all the ingredients to relate the Gauss maps of immersed hypersurfaces in $\Hyp^{n+1}$ with the Lagrangian condition for the symplectic geometry of $\G{n+1}$. 

\begin{cor}\label{cor:lagrangian1}
Given an oriented manifold $M^n$ and an immersion $\sigma:M\to\Hyp^{n+1}$, $\nobreak{G_\sigma:M\to \G{n+1}}$ is a Lagrangian immersion. 
\end{cor}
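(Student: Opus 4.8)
The plan is to show directly that $G_\sigma^*\Omega=0$; since $G_\sigma$ is an immersion of an $n$-manifold into the $2n$-dimensional symplectic manifold $(\G{n+1},\Omega)$ by Proposition \ref{prop:gauss immersion}, and $n=\tfrac12\dim\G{n+1}$, the vanishing of $G_\sigma^*\Omega$ is exactly the statement that $G_\sigma$ is Lagrangian. First I would factor the Gauss map as $G_\sigma=\mathrm p\circ\zeta_\sigma$, so that $G_\sigma^*\Omega=\zeta_\sigma^*(\mathrm p^*\Omega)$. The key input is then Proposition \ref{prop:identity curvature form}, which gives $\mathrm p^*\Omega=d\omega$; hence $G_\sigma^*\Omega=\zeta_\sigma^*(d\omega)=d(\zeta_\sigma^*\omega)$, where the last equality uses that pull-back commutes with the exterior derivative.

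It then remains to observe that $\zeta_\sigma^*\omega=0$. This is immediate from the definition of the connection form $\omega(X)=\gs{n+1}(X,\chi)$ (Definition \ref{defi connection form}) together with the fact, established in Proposition \ref{prop:gauss immersion}, that the differential of $\zeta_\sigma$ takes values in $\chi^\perp$: for every $W\in T_pM$ one has $(\zeta_\sigma^*\omega)(W)=\gs{n+1}(d\zeta_\sigma(W),\chi_{\zeta_\sigma(p)})=0$. Equivalently, this is precisely the statement that $\zeta_\sigma$ is a flat section of the $G_\sigma$-pull-back bundle, as recorded in Proposition \ref{prop pullback flat}. Combining these observations, $G_\sigma^*\Omega=d(0)=0$, which would complete the argument.

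There is essentially no serious obstacle here, since the substantive computation was carried out in Proposition \ref{prop:identity curvature form}: the entire content of the corollary is the formal manipulation $G_\sigma^*\Omega=\zeta_\sigma^*(d\omega)=d(\zeta_\sigma^*\omega)$ combined with the vanishing of $\zeta_\sigma^*\omega$. The only point deserving a word of care is the orthogonality $d\zeta_\sigma(T_pM)\subset\chi^\perp$, but this has already been verified, and the dimension count promotes the isotropic condition $G_\sigma^*\Omega=0$ to the full Lagrangian condition.
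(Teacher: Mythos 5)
Your argument is correct and follows essentially the same route as the paper: both factor $G_\sigma=\mathrm p\circ\zeta_\sigma$, invoke Proposition \ref{prop:identity curvature form} to write $G_\sigma^*\Omega=\zeta_\sigma^*(d\omega)$, and conclude from the orthogonality $d\zeta_\sigma(T_pM)\subset\chi^\perp$ established in Proposition \ref{prop:gauss immersion}. Your phrasing $\zeta_\sigma^*(d\omega)=d(\zeta_\sigma^*\omega)=d(0)=0$ is just a cleaner packaging of the paper's appeal to the flat section of Proposition \ref{prop pullback flat} (i.e.\ the computation in Remark \ref{rmk:curvature and bracket}), and your explicit dimension count upgrading isotropic to Lagrangian is a welcome detail the paper leaves implicit.
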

\begin{proof}
By Proposition \ref{prop pullback flat}, the pull-back by $G_\sigma$ of the principal $\R$-bundle
$\mathrm{p}$ is flat {because} there exists {$\widehat G_\sigma=\zeta_\sigma:M\to T^1\Hyp^{n+1}$ orthogonal to $\chi$ such that $G_\sigma=\mathrm p\circ\widehat G_\sigma$. Hence $(\widehat G_\sigma)^*d\omega$ vanishes identically and $\widehat G_\sigma$ induces a flat section of $G_\sigma^*\mathrm p$.}  But by Proposition \ref{prop:identity curvature form}, $(\widehat G_\sigma)^*d\omega=(\widehat G_\sigma)^*(\mathrm{p}^*\Omega)=(G_\sigma)^*\Omega$, therefore $G_\sigma$ is Lagrangian.
\end{proof}

Observe that in Corollary \ref{cor:lagrangian1} we only use the flatness property of Proposition \ref{prop pullback flat}, and not the triviality of the  pull-back principal bundle. When $M$ is simply connected, we can partially reverse Corollary \ref{cor:lagrangian1}, showing that the Lagrangian condition is essentially the only local obstruction.

\begin{cor}\label{cor:lagrangian2}
Given an {orientable} simply connected manifold $M^n$ and a Lagrangian immersion $G:M\to \G{n+1}$, there exists an immersion $\zeta:M\to T^1\Hyp^{n+1}$ orthogonal to the fibers of $\mathrm{p}$ such that $G=\mathrm{p}\circ \zeta$. Moreover, if $\pi\circ \zeta:M\to \Hyp^{n+1}$ is an immersion, then $G$ coincides with its Gauss map.
\end{cor}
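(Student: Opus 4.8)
The plan is to pull back the principal $\R$-bundle $\mathrm p\colon T^1\Hyp^{n+1}\to \G{n+1}$, together with its connection $\omega$, by the Lagrangian immersion $G$, and to exploit the curvature identity of Proposition \ref{prop:identity curvature form} to show that the resulting bundle over $M$ is flat, hence trivial by simple-connectivity. A global flat section will then be precisely the desired map $\zeta$.

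First I would consider the pull-back bundle $G^*\mathrm p$ over $M$ with the pulled-back connection. By naturality, its curvature is the pull-back of the curvature $d\omega$ of $\omega$, and Proposition \ref{prop:identity curvature form} gives $d\omega=\mathrm p^*\Omega$; composing the projections, this curvature equals $G^*\Omega$, which vanishes since $G$ is Lagrangian. Thus the Ehresmann distribution of $G^*\mathrm p$, namely the pull-back of $\ker\omega=\chi^\perp$, is integrable, and through every point there passes a local flat section by the Frobenius theorem, as recalled in Remark \ref{rmk:curvature and bracket}. Because $M$ is simply connected and the structure group $\R$ is connected, the holonomy of this flat $\R$-bundle is trivial, so the local flat sections patch to a global flat section; equivalently, $G^*\mathrm p$ is a trivial flat bundle and admits a global flat section.

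Such a section is a map $\zeta\colon M\to T^1\Hyp^{n+1}$ with $\mathrm p\circ\zeta=G$ whose image is tangent to $\chi^\perp$, that is, orthogonal to the fibers of $\mathrm p$. To see that $\zeta$ is an immersion, note that $d\zeta$ takes values in $\chi^\perp$, where the restriction of $d\mathrm p$ is the isomorphism of Equation \eqref{eq dp ristretto e' iso}; since $d(\mathrm p\circ\zeta)=dG$ is injective and factors through this isomorphism, $d\zeta$ is injective as well.

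For the final assertion, suppose $\pi\circ\zeta\colon M\to\Hyp^{n+1}$ is an immersion. Then $\zeta$ is an immersion orthogonal to the fibers of $\mathrm p$ whose projection $\sigma:=\pi\circ\zeta$ is an immersion, so Proposition \ref{prop:gauss immersion converse} applies and $\zeta=\zeta_\sigma$ for a suitable orientation of $M$. By the very definition of the Gauss map, $G_\sigma=\mathrm p\circ\zeta_\sigma=\mathrm p\circ\zeta=G$, as claimed. The main obstacle is the passage from local to global flat sections: one must verify that the Lagrangian condition forces the vanishing of the pull-back curvature, and then invoke simple-connectivity to kill the holonomy; once flatness and triviality of $G^*\mathrm p$ are established, the remaining steps are immediate.
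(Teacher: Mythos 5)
Your proposal is correct and follows essentially the same route as the paper: the Lagrangian condition kills the curvature of the pull-back bundle via Proposition \ref{prop:identity curvature form} ($d\omega=\mathrm p^*\Omega$), Frobenius gives local flat sections (as in Remark \ref{rmk:curvature and bracket}), simple-connectivity makes the flat $\R$-bundle trivial so a global flat section yields $\zeta$, and Proposition \ref{prop:gauss immersion converse} handles the final assertion. Your explicit verification that $\zeta$ is an immersion, via the isomorphism of Equation \eqref{eq dp ristretto e' iso} and the injectivity of $dG$, is a detail the paper leaves implicit, and is a welcome addition.
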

\begin{proof}
Since $G$ is Lagrangian, by Proposition \ref{prop pullback flat} the $G$-pull-back bundle of $\mathrm{p}$ is flat, and it is therefore a trivial flat bundle since $M$ is simply connected. Hence it admits a global flat section, which provides the map $\zeta:M\to T^1\Hyp^{n+1}$ orthogonal to $\chi$. Assuming moreover that $\sigma:=\pi\circ \zeta$ is an immersion, by Proposition \ref{prop:gauss immersion converse}, $G=\mathrm{p}\circ \zeta$ is the Gauss map of $\sigma$.
\end{proof}

Clearly the map $\zeta$ is not uniquely determined, and the different choices differ by the action of $\varphi_t$. {Lemma \ref{lemma:desingularize for small t}, and the following corollary, show that (by post-composing with $\varphi_t$ if necessary) one can always find $\zeta$ such that $\pi\circ\zeta$ is \emph{locally} an immersion.

\begin{lemma} \label{lemma:desingularize for small t}
Let $M$ be a $n$-manifold and $\zeta:M\to T^1\Hyp^{n+1}$ be an immersion orthogonal to $\chi$.
Suppose that the differential of $\pi\circ\zeta$ is singular at $p\in M$. Then there exists $\epsilon>0$ such that the differential of $\pi\circ\varphi_t\circ\zeta$ at $p$ is non-singular for all $t\in(-\epsilon,\epsilon)\setminus\{0\}$.
\end{lemma}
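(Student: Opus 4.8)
The plan is to compute the differential of $\pi\circ\varphi_t\circ\zeta$ at $p$ explicitly in the hyperboloid model and reduce the statement to the non-vanishing of a single polynomial. Writing $\zeta(p)=(x,v)$ and using that $\zeta$ is orthogonal to $\chi$, the differential decomposes as $d_p\zeta(W)=A(W)^\HH+C(W)^\V$ for two linear maps $A,C\colon T_pM\to E$, where $E:=v^\perp\cap x^\perp\subset T_x\Hyp^{n+1}$ is the $n$-dimensional positive definite space on which both horizontal and vertical lifts are modelled; concretely $A=d(\pi\circ\zeta)$ and $C=dv$. Applying the formulas \eqref{eq:diff geoflow1} and \eqref{eq:diff geoflow2} for $d\varphi_t$ on horizontal and vertical lifts, and then the projection $d\pi$ (which sends $w^\HH\mapsto w$ and $w^\V\mapsto 0$), I would obtain
$$d_p(\pi\circ\varphi_t\circ\zeta)(W)=\cosh(t)\,A(W)+\sinh(t)\,C(W)~,$$
so that this differential is the linear map $\cosh(t)A+\sinh(t)C\colon T_pM\to E$ between spaces of equal dimension $n$.

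Since $\cosh(t)>0$, for $t\neq 0$ non-singularity of this map is equivalent to invertibility of $A+\tanh(t)\,C$; setting $s=\tanh(t)\in(-1,1)$, I am reduced to showing that $A+sC$ is invertible for all small $s\neq 0$. Choosing bases, $s\mapsto\det(A+sC)$ is a polynomial, which vanishes at $s=0$ by the hypothesis that $d_p(\pi\circ\zeta)=A$ is singular. The whole statement therefore follows once I show this polynomial is \emph{not} identically zero, for then $s=0$ is an isolated root and one can pick $\epsilon>0$ small enough that $\tanh(t)$ lies in a root-free punctured neighbourhood of $0$ whenever $0<|t|<\epsilon$. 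I expect this non-vanishing to be the only real obstacle: it is genuinely \emph{false} for arbitrary $A,C$ with $\ker A\cap\ker C=0$ (take $A,C$ for which $A+sC$ has constant rank $<n$), so the argument must exploit the extra structure coming from orthogonality to $\chi$.

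The key extra ingredient is that $\zeta^*\omega=0$, since $d\zeta$ lands in $\chi^\perp=\ker\omega$; hence $\zeta^*\mathrm p^*\Omega=\zeta^*d\omega=d(\zeta^*\omega)=0$ by Proposition \ref{prop:identity curvature form}, and computing the pull-back of $\Omega$ through the decomposition above yields the symmetry condition $\langle A(W_1),C(W_2)\rangle=\langle A(W_2),C(W_1)\rangle$ for all $W_1,W_2$. To conclude, I would argue by contradiction: if $\det(A+sC)\equiv 0$, choose $n+1$ distinct values $s_1,\dots,s_{n+1}$ and nonzero vectors $W_i\in\ker(A+s_iC)$, so that $A(W_i)=-s_iC(W_i)$. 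For $i\neq j$ the symmetry condition gives $-s_i\langle C(W_i),C(W_j)\rangle=-s_j\langle C(W_i),C(W_j)\rangle$, and since $s_i\neq s_j$ this forces $\langle C(W_i),C(W_j)\rangle=0$; moreover $C(W_i)\neq 0$, for $C(W_i)=0$ would give $A(W_i)=0$ and contradict $\ker A\cap\ker C=0$ (this is where immersivity of $\zeta$ enters). One thus produces $n+1$ nonzero pairwise orthogonal vectors in the $n$-dimensional space $E$, a contradiction, so $\det(A+sC)\not\equiv 0$. The heart of the matter is this orthogonality-plus-dimension count; everything else is the bookkeeping of the first two paragraphs.
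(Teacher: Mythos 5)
Your proof is correct, and it takes a genuinely different route from the paper's. The paper works with a basis $\{V_1,\dots,V_n\}$ of $T_pM$ adapted to the kernel (say of dimension $k$) of $d_p(\pi\circ\zeta)$, writing $d\zeta(V_i)=u_i^\V$ for $i\le k$ and $d\zeta(V_j)=w_j^\HH+u_j^\V$ for $j>k$; it uses the same Lagrangian input as you (namely $\zeta^*d\omega=0$ combined with $d\omega(X,Y)=\gs{n+1}(X,JY)$ on $\chi^\perp$), but only for the mixed pairs $i\le k<j$, to obtain $\langle u_i,w_j\rangle=0$, whence $\{u_1,\dots,u_k,w_{k+1},\dots,w_n\}$ is a basis of $v^\perp$ by positive definiteness; since $d(\pi\circ\varphi_t\circ\zeta)$ sends $V_i\mapsto\sinh(t)u_i$ and $V_j\mapsto\cosh(t)w_j+\sinh(t)u_j$, rescaling by $\sinh(t)$ or $\cosh(t)$ and openness of linear independence conclude. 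You instead encode everything in the polynomial $s\mapsto\det(A+sC)$ and rule out its identical vanishing by extracting kernel vectors at $n+1$ distinct parameters: the full symmetry $\langle A(W_1),C(W_2)\rangle=\langle A(W_2),C(W_1)\rangle$ (of which the paper only uses the mixed instances) forces $C(W_1),\dots,C(W_{n+1})$ to be pairwise orthogonal and nonzero (immersivity of $\zeta$ giving $\ker A\cap\ker C=0$) in the $n$-dimensional positive definite space $E$, a contradiction. The paper's argument is more economical and exhibits an explicit basis at $t=0$ whose independence persists for small $t$; yours proves strictly more, namely that $\det(A+\tanh(t)\,C)$ is a nonzero polynomial of degree at most $n$ in $\tanh(t)$, so the differential of $\pi\circ\varphi_t\circ\zeta$ at $p$ is singular for at most $n$ values of $t$ in all of $\R$, not merely non-singular in a punctured neighbourhood of $0$. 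Your side remark that $\ker A\cap\ker C=0$ alone cannot exclude $\det(A+sC)\equiv 0$ is also correct, and pinpoints why the Lagrangian condition is indispensable in either approach.
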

\begin{proof}
Let us denote $\zeta_t:=\varphi_t\circ\zeta$, $\sigma:=\pi\circ\zeta$, and $\sigma_t:=\pi\circ\zeta_t$. Assume also $\zeta(p)=(x,v)$. Let $\{V_1,\ldots, V_k\}$   be a basis of the kernel of $d_p\sigma$ and let us complete it to a basis $\{V_1,\ldots,V_n\}$ of $T_p M$. Hence if we denote $w_j:=d_p\sigma(V_j)$ for $j>k$, $\{w_{k+1},\ldots,w_n\}$ is a basis of the image of $d_p\sigma$. Exactly as in the proof of Proposition \ref{prop:gauss immersion converse}, we have $w_{k+1},\ldots,w_n\in v^\perp\subset T_x\Hyp^{n+1}$. Hence we have:
$$d\zeta(V_1)=u_1^\V,\ldots,d\zeta(V_k)=u_k^\V,d\zeta(V_{k+1})=w_{k+1}^\HH+u_{k+1}^\V,\ldots,d\zeta(V_n)=w_n^\HH+u_n^\V$$
for some $u_1,\ldots,u_n\in v^\perp$. 

On the one hand, since $\zeta$ is an immersion, $u_1,\ldots,u_k$ are linearly independent. On the other hand, $\zeta$ is orthogonal to $\chi$, hence by Remark \ref{rmk:curvature and bracket} we have $\zeta^*d\omega=0$. Using Equation \eqref{eq:sufficient}, it follows that:
$$g_{T^1\Hyp^{n+1}}(d\zeta(V_i),J\circ d\zeta(V_j))=0$$
for all {$i,j=1,\dots n$}. Applying this to any choice of {$i\leq k$ and $j>k$}, we find $\langle u_i,w_j\rangle=0$. Hence $\{u_1,\ldots,u_k,w_{k+1},\ldots,w_n\}$ is a basis of $v^\perp$. 

We are now ready to prove the statement. By Equations \eqref{eq:diff geoflow1} and \eqref{eq:diff geoflow2}, we have
$$d\sigma_t(V_i)=d\pi\circ d\varphi_t(u_i^\V)=\sinh(t)u_i$$
for $1\leq i\leq k$, while
$$d\sigma_t(V_j)=d\pi\circ d\varphi_t(w_j^\HH+u_j^\V)=\cosh(t)w_j+\sinh(t)u_j$$
for $k+1\leq j\leq n$. The proof will be over if we show that $\{d\sigma(V_1),\ldots,d\sigma(V_n)\}$ are linearly independent for $t\in(-\epsilon,\epsilon)$, $t\neq 0$. In light of the above expressions, dividing by $\sinh(t)$ (which is not zero if $t\neq 0$) or $\cosh(t)$, this is equivalent to showing that 
$$\{u_1,\ldots,u_k,w_{k+1}+\tanh(t)u_{k+1},\ldots,w_{n}+\tanh(t)u_{n}\}$$
are linearly independent for small $t$. This is true because we have proved above that $\{u_1,\ldots,u_k,w_{k+1},\ldots,w_n\}$ is a basis, and linear independence is an open condition.
\end{proof}

\begin{thmx}\label{cor: local integrability}
{Let $G\colon M^n \to \G{n+1}$ be an immersion. Then $G$ is Lagrangian if and only if for all $p\in M$ there exists a neighbourhood $U$ {of $p$} and an immersion $\sigma\colon U\to \Hyp^{n+1}$ such that $G_\sigma = G|_{U}$.}
\end{thmx}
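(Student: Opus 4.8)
The statement is an equivalence, and the two implications are of very different difficulty. The plan is to dispatch the ``only if'' direction immediately and to concentrate on the construction needed for the ``if'' direction. For the direction assuming local integrability, suppose that around each $p$ one has $G|_U=G_\sigma$ for an immersion $\sigma\colon U\to\Hyp^{n+1}$. The Lagrangian condition $G^*\Omega=0$ is pointwise, so it suffices to check it on each such $U$; but there $(G|_U)^*\Omega=G_\sigma^*\Omega=0$ by Corollary \ref{cor:lagrangian1}. Since these neighbourhoods cover $M$, it follows that $G$ is Lagrangian.

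For the converse, fix $p\in M$ and choose a simply connected neighbourhood $U$ of $p$, for instance a coordinate ball; such a $U$ is automatically orientable, so the hypotheses of the results I want to invoke are met. Applying Corollary \ref{cor:lagrangian2} to the Lagrangian immersion $G|_U$, I obtain an immersion $\zeta\colon U\to T^1\Hyp^{n+1}$ orthogonal to the fibres of $\mathrm p$ (equivalently, orthogonal to $\chi$) with $G|_U=\mathrm p\circ\zeta$. The only missing ingredient to identify $G|_U$ with a Gauss map via Proposition \ref{prop:gauss immersion converse} is that $\pi\circ\zeta$ be an immersion.

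This is precisely where the main, and essentially only genuine, difficulty lies: the flat section $\zeta$ need not project to an immersion in $\Hyp^{n+1}$, as its image may be ``vertical'' in the sense of Example \ref{ex:spheres}, in which case $\pi\circ\zeta$ becomes singular. To remedy this I would use the geodesic flow to desingularize. If $\pi\circ\zeta$ is already an immersion at $p$ there is nothing to do; otherwise, Lemma \ref{lemma:desingularize for small t} provides $\epsilon>0$ such that $\pi\circ\varphi_t\circ\zeta$ is non-singular at $p$ for every $t\in(-\epsilon,\epsilon)\setminus\{0\}$. I fix one such $t_0$ and set $\zeta'=\varphi_{t_0}\circ\zeta$. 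Since $\varphi_{t_0}$ preserves $\chi$ and acts isometrically for the para-Sasaki metric, by Lemma \ref{lemma:geodflow isometric} together with Equation \eqref{eq:diff geoflow3}, it preserves $\chi^\perp$, so $\zeta'$ is again orthogonal to the fibres of $\mathrm p$; and because $\mathrm p\circ\varphi_{t_0}=\mathrm p$, I still have $\mathrm p\circ\zeta'=G|_U$.

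By openness of the immersion condition, $\pi\circ\zeta'$ is then an immersion on a possibly smaller neighbourhood $U'\subseteq U$ of $p$. Setting $\sigma:=\pi\circ\zeta'|_{U'}$, Proposition \ref{prop:gauss immersion converse} shows that $\zeta'|_{U'}$ is the lift of $\sigma$ for a suitable orientation of $U'$, whence $G|_{U'}=\mathrm p\circ\zeta'=G_\sigma$. As $p$ was arbitrary, this establishes local integrability and completes the proof. I expect the desingularization step via Lemma \ref{lemma:desingularize for small t} to be the crux: the existence of a flat section orthogonal to $\chi$ is cheap once $M$ is simply connected, but turning it into the lift of an honest immersion in $\Hyp^{n+1}$ is exactly the subtlety that distinguishes local integrability from the mere Lagrangian condition.
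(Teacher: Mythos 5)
Your proposal is correct and follows essentially the same route as the paper's proof: the easy direction via Corollary \ref{cor:lagrangian1}, and the converse by taking a simply connected $U$, producing a flat section $\zeta$ orthogonal to the fibers via Corollary \ref{cor:lagrangian2}, desingularizing with the geodesic flow using Lemma \ref{lemma:desingularize for small t}, and concluding with Proposition \ref{prop:gauss immersion converse}. The only blemish is terminological: in your opening sentence the labels ``if''/``only if'' are swapped relative to the statement, though the argument itself makes clear which direction is which.
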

\begin{proof}
{The ``if'' part follows from Corollary \ref{cor:lagrangian1}. Conversely,} let $U$ be a simply connected neighbourhood of $p$. By Corollary \ref{cor:lagrangian2}, there exists an immersion $\zeta\colon U \to T^1\Hyp^{n+1}$ orthogonal to the fibers of $\mathrm{p}$ such that $G=\mathrm{p}\circ \zeta$. If the differential of $\pi\circ\zeta$ is non-singular at $p$, then, up to restricting $U$, we can assume $\sigma:=\pi\circ\zeta$ is an immersion of $U$ into $\Hyp^{n+1}$. By the second part of Corollary \ref{cor:lagrangian2}, $G|_U$ is the Gauss map of $\sigma$. If the differential of $\pi\circ\zeta$ is instead singular at $p$, by Lemma \ref{lemma:desingularize for small t} it suffices to replace $\zeta$ by $\zeta_t$ for small $t$ and we obtain the same conclusion.
\end{proof}

Let us now approach the problem of global integrability. We provide an example to show that in general {$\pi\circ\varphi_t\circ\zeta$ might fail to be \emph{globally} an immersion for \emph{all} $t\in\R$}, as we already mentioned after Proposition \ref{prop:gauss immersion converse}. 

\begin{example}
	\label{ex: Lagrangian not globally integrable}

	Let us construct a curve {$G:(-T,T)\to\G 2$} with the property of being locally integrable\footnote{As a matter of fact, any curve in $\G 2$ is locally integrable by Theorem \ref{cor: local integrability}, since the domain is simply connected and it is trivially Lagrangian. However, in this example, we will see by construction that $G$ is locally integrable.} but not globally integrable. 
	
	Fix $r>0$ {and a maximal geodesic $\ell$ in $\Hyp^2$}.
	{Let us consider a smooth curve $\sigma_+\colon (-\varepsilon,T)\to \Hyp^2$, for some small enough $\varepsilon$ and big enough $T$, so that: \begin{itemize}
		\item {$\sigma_+$} is an immersion and is parameterized by arclength;
		\item $(\sigma_+)|_{(-\varepsilon, \varepsilon)}$ lies on the $r$-cap equidistant from $\ell$, oriented in such a way that the induced unit normal vector field $(\nu_+)|_{(-\varepsilon, \varepsilon)}$ is directed towards $\ell$;
		\item $(\sigma_+)|_{(T_0, T)}$ lies on the metric circle $\{x\in\Hyp^2\,|\,d_{\Hyp^2}(x,x_0)=r\}$ for some $x_0\in\Hyp^2$ and some $\varepsilon<T_0<T$, oriented in such a way that the induced unit normal vector field $(\nu_+)|_{(T_0, T)}$ is directed towards $x_0$.
	\end{itemize}
	} 
	
	\begin{figure}[htbp]
\centering
\includegraphics[height=5.5cm]{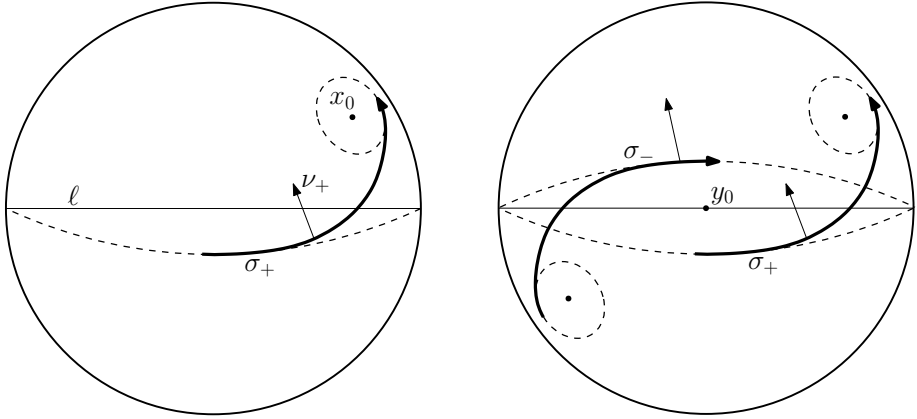} 

\caption{On the left, the construction of the curve $\sigma_+$, which is an arclenght parameterization of a portion of $r$-cap equidistant from $\ell$ for $s\in(-\varepsilon,\varepsilon)$, and of a circle of radius $r$ for $s\in (T_0,T)$. On the right, the curve $\sigma_-$ whose image is obtained by an order-two elliptic isometry from the image of $\sigma_+$.}\label{fig:counterexample1}
\end{figure}

See Figure \ref{fig:counterexample1}. 	By a simple computation, for instance using Equation \eqref{eq della discordia}, the curvature of $\sigma_+$ equals $\tanh(r)$ on $(-\varepsilon, \varepsilon)$ and $\frac 1 {\tanh(r)}$ on $(T_0, T)$. Hence by the intermediate value theorem, {the image of the curvature function $k\colon (-\varepsilon, T)\to \R$ contains the interval $[\tanh(r),\frac 1 {\tanh(r)}]$.}   An important consequence of this observation is that $(\sigma_+)_t$ fails to be an immersion when $t\geq r$. More precisely, if we denote $\zeta_{\sigma_+}$ the lift to $T^1\Hyp^2$ as usual, using Equation \eqref{eq:geodflow} analogously as for Equation \eqref{eq:normal evolution hyperboloid2}, we obtain that:
	$$d(\pi\circ\varphi_t\circ\zeta_{\sigma_+})(V)=(\cosh(t)-\sinh(t)k)d\sigma(V)~.$$
This shows that the differential of $\pi\circ\varphi_t\circ\zeta_{\sigma_+}$ at a point $s\in (-\varepsilon,T)$ vanishes if and only if 
\begin{equation}\label{eq:curvature and differential}
\tanh(t)=\frac 1 {k(s)}~.
\end{equation}
 Since the image of the function $k$ contains the interval $[\tanh(r),\frac 1 {\tanh(r)}]$, if $t\geq r$ then there exists $s$ such that Equation \eqref{eq:curvature and differential} is satisfied and therefore $\pi\circ\varphi_t\circ\zeta_{\sigma_+}$ is not an immersion at $s$.

Now, let $y_0\in \ell$ be the point at distance $r$ from $\sigma_+(0)$ and let ${R}_0\colon \Hyp^2\to \Hyp^2$ be the symmetry at $y_0$, i.e. ${R}_0$ is the isometry of $\Hyp^2$ such that ${R}_0(y_0)=y_0$ and $d_{y_0} {R}_0=-\mathrm{id}$. Define $\sigma_-\colon (-T, \varepsilon)\to \Hyp^2$ as 
	\[
	\sigma_-(s):=  {R}_0 (\sigma_+ (-s)).
	\] 
	As a result, the normal vector field $\nu_-$ of $\sigma_-$  is such that $d R_0 ({\nu_+} (s))= {-}{\nu_-} (-s)$ and the curvature of $\sigma_-$ takes any value in the interval $[-\frac 1 {\tanh(r)}, -{\tanh(r)}]$. Hence $(\sigma_-)_t$ fails to be an immersion for $t\leq -r$.
	
	Finally, consider the two lifts $\zeta_{\sigma_+}$ and $\zeta_{\sigma_-}$ in $T^1\Hyp^2$. By construction one has that for all $s\in (-\varepsilon,\varepsilon)$
	\[
	\varphi_r \circ \zeta_{\sigma_+} (s)= \varphi_{-r} \circ \zeta_{\sigma_-} (s).
	\]
	As a result, we can define our counterexample $\zeta\colon (-T, T)\to T^1\Hyp^2$ as
	\[
	\zeta(s)
	= \begin{cases}
	\varphi_r \circ \zeta_{\sigma_+} (s)\quad &\text{ if $s>-\varepsilon$}\\
	\varphi_{-r} \circ \zeta_{\sigma_-} (s) \quad &\text{ if $s <\varepsilon$}
	\end{cases}.\]
	By construction, we have that $\mathrm p \circ \zeta_{\sigma_+}= \mathrm p \circ \zeta|_{(-\varepsilon, T)}$ and $\mathrm p \circ \zeta_{\sigma_-}= \mathrm p \circ \zeta|_{(-T, \varepsilon)}$, therefore $\mathrm p \circ \zeta$ is an immersion into $\G{2}$ and clearly it is locally integrable. 
However, by the above discussion, $\pi\circ\varphi_t\circ\zeta$ fails to be an immersion for every $t\in\R$: for $t\geq 0$ because $\pi\circ\varphi_t\circ\zeta_{\sigma_+}$ has vanishing differential at some  $s\geq -\varepsilon$, and for $t\leq 0$ because the differential of $\pi\circ\varphi_t\circ\zeta_{\sigma_-}$ vanishes  at some  $s\leq \varepsilon$. See Figure \ref{fig:counterexample2}.
	\begin{figure}[htbp]
\centering
\includegraphics[height=5.5cm]{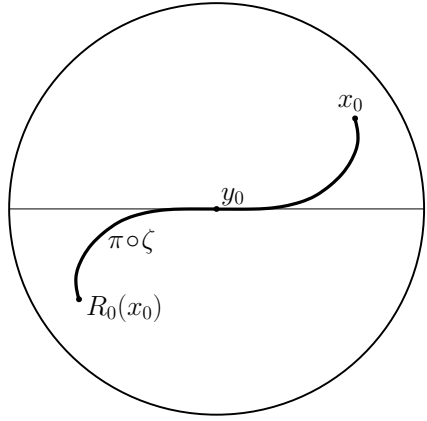} 
\caption{The curve $\zeta:(-T,T)\to T^1\Hyp^2$ projects to a map $\pi\circ\zeta$ into $\Hyp^2$ which is not an immersion, because it is constantly equal to $x_0$ for $s\in(T_0,T)$ and to $R_0(x_0)$ for $s\in(-T,-T_0)$. Moreover the curvature of the regular part takes all the values in $(-\infty,+\infty)$. For this reason, each immersion $\varphi_t\circ\zeta$ into $T^1\Hyp^2$ does not project to an immersion in $\Hyp^2$.}\label{fig:counterexample2}
\end{figure}	
\end{example}

Corollary \ref{cor:lagrangian2} and {Theorem \ref{cor: local integrability}} can be improved under the additional assumption that the immersion $G$ is Riemannian. More precisely, we provide an improved characterization of {immersions into $\G{n+1}$ that are} Gauss maps of immersions with small principal curvatures in terms of the Lagrangian and Riemannian condition, again for {when} $M$ is simply connected.

\begin{thmx}
\label{prop: riemannian global integrability}
Given a simply connected manifold $M^n$ and an {immersion} $G:M^n\to \G{n+1}$, $G$ is Riemannian and Lagrangian if and only if there exists an {immersion} $\sigma\colon M\to \Hyp^{n+1}$ with small principal curvatures such that $G_\sigma=G$. 

{If in addition $\sigma$ is complete, then it is a proper embedding, $G_\sigma$ is an embedding and $M$ is diffeomorphic to $\R^n$.}
\end{thmx}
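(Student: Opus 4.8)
The plan is to prove the equivalence by its two implications and then to read off the final addendum from the injectivity results of Section~\ref{sec:small princ curv}; the whole content is that the Riemannian hypothesis removes the pathology exhibited in Example~\ref{ex: Lagrangian not globally integrable}. For the easy implication, I would assume $\sigma\colon M\to\Hyp^{n+1}$ is an immersion with small principal curvatures and $G_\sigma=G$. Then $G$ is Lagrangian by Corollary~\ref{cor:lagrangian1}, and it is Riemannian by Proposition~\ref{prop: small curv sse riemannian}, which characterizes small principal curvatures precisely as the condition that the Gauss map be a Riemannian immersion.

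For the converse, suppose $G$ is Lagrangian and Riemannian. Since $M$ is simply connected, Corollary~\ref{cor:lagrangian2} produces a global immersion $\zeta\colon M\to T^1\Hyp^{n+1}$, orthogonal to the fibers of $\mathrm p$, with $G=\mathrm p\circ\zeta$. Writing $\zeta=(\sigma,\nu)$ with $\sigma=\pi\circ\zeta$, the key step is to show that $\sigma$ is \emph{already} an immersion, with no need to correct $\zeta$ by the geodesic flow. Arguing exactly as in the proof of Proposition~\ref{prop:gauss immersion converse}, orthogonality of $\zeta$ to $\chi$ forces both $d\sigma(W)$ and $d\nu(W)$ to lie in the $n$-dimensional positive-definite subspace $v^\perp\cap x^\perp\subset T_x\Hyp^{n+1}$, where $(x,v)=\zeta(p)$; consequently $d\zeta(W)=d\sigma(W)^\HH+d\nu(W)^\V$, and by Definition~\ref{defi:parasasaki} the first fundamental form of $G$ satisfies
$$\overline\I(W,W)=\gs{n+1}(d\zeta(W),d\zeta(W))=\langle d\sigma(W),d\sigma(W)\rangle-\langle d\nu(W),d\nu(W)\rangle~.$$
Since $\langle d\nu(W),d\nu(W)\rangle\geq 0$ and $G$ being Riemannian gives $\overline\I(W,W)>0$ for $W\neq 0$, we conclude $\langle d\sigma(W),d\sigma(W)\rangle>0$, hence $\sigma$ is an immersion. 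The second part of Corollary~\ref{cor:lagrangian2} then yields $G=G_\sigma$, and Proposition~\ref{prop: small curv sse riemannian} shows that $\sigma$ has small principal curvatures because $G_\sigma$ is Riemannian.

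The only genuinely delicate point is precisely this last observation. In the general Lagrangian (but non-Riemannian) situation, $\pi\circ\zeta$ may be singular at some points, and one is forced to replace $\zeta$ by $\varphi_t\circ\zeta$ through Lemma~\ref{lemma:desingularize for small t}, a procedure that is only local and, as Example~\ref{ex: Lagrangian not globally integrable} demonstrates, may fail to produce a global immersion for any single value of $t$. The Riemannian hypothesis sidesteps this difficulty outright, since it forces the horizontal part of $d\zeta$ to be injective at every point, so the single section provided by Corollary~\ref{cor:lagrangian2} already projects to an immersion.

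Finally, for the addendum I would assume in addition that the immersion $\sigma$ just constructed is complete. Then Proposition~\ref{prop injectivity} applies to $\sigma$ (which has small principal curvatures), giving that $\sigma$ is a proper embedding and that $M$ is diffeomorphic to $\R^n$, while Proposition~\ref{prop:gauss maps diffeo onto image} gives that $G=G_\sigma$ is an embedding. This requires no further argument beyond citing these two propositions.
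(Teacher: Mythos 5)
Your proof is correct and takes essentially the same route as the paper's: necessity via Corollary \ref{cor:lagrangian1} and Proposition \ref{prop: small curv sse riemannian}, sufficiency by taking the global section $\zeta$ from Corollary \ref{cor:lagrangian2} and using the Riemannian hypothesis to show $\pi\circ\zeta$ is already an immersion, and the addendum by citing Propositions \ref{prop injectivity} and \ref{prop:gauss maps diffeo onto image}. Your decomposition $\overline\I(W,W)=\langle d\sigma(W),d\sigma(W)\rangle-\langle d\nu(W),d\nu(W)\rangle$ is just the paper's argument phrased contrapositively (the paper notes that a vertical $d\zeta(X)=w^\V$ would force $\overline\I(X,X)=-\langle w,w\rangle\le 0$), so there is no gap.
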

\begin{proof}
We know from Corollary \ref{cor:lagrangian1} and Proposition \ref{prop: small curv sse riemannian} that the Riemannian and Lagrangian conditions {on $G$} are necessary.
To see that they are also sufficient, by Corollary \ref{cor:lagrangian2} there exists $\zeta\colon M \to \G{n+1}$ orthogonal to the fibers of $\mathrm{p}$ such that $\mathrm{p}\circ \zeta=G$. We claim that $\pi\circ\zeta$ is an immersion, which implies that $G=G_\sigma$ for $\sigma=\pi\circ\zeta$ by the second part of Corollary \ref{cor:lagrangian2}. Indeed, if $X\in T_p M$ is such that $d\zeta (X) \in \mathcal V_{\zeta(x)}=\ker (d\pi_{\zeta(x)}) $, then $d\zeta (X)=w^\V$ for some $w\in T_{\sigma(p)}\Hyp^{n+1}$. Hence by Definition \ref{defi:parasasaki} and the construction of the metric $\GG$,  $\GG (X,X)= -\langle w,w\rangle \le 0$: since $\GG$ is Riemannian, necessarily $w=0$ and therefore $X=0$. 

{By Proposition \ref{prop: small curv sse riemannian} $\sigma$ has small principal curvatures.
The ``in addition'' part follows by Proposition \ref{prop injectivity} and Proposition \ref{prop:gauss maps diffeo onto image}.}
\end{proof}

As another consequence of Proposition \ref{prop injectivity} and Proposition \ref{prop:gauss maps diffeo onto image}, we obtain the following result.

\begin{thmx}\label{cor G complete}
Let $M^n$ be a manifold. If $G\colon M\to \G {n+1}$ is a complete Riemannian and Lagrangian immersion, then $M$ is diffeomorphic to $\R^n$ and there exists a proper embedding $\sigma:M\to\Hyp^{n+1}$ with small principal curvatures such that $G=G_\sigma$.
\end{thmx}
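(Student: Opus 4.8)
The plan is to reduce to the simply connected case already settled in Theorem \ref{prop: riemannian global integrability} by passing to the universal cover, and then to recover simple connectedness of $M$ itself from the injectivity of the integrating Gauss map. Let $c\colon \widetilde M\to M$ be a universal covering and set $\widetilde G:=G\circ c\colon \widetilde M\to\G{n+1}$. Since $c$ is a local diffeomorphism, $\widetilde G$ is again an immersion, and it is Riemannian and Lagrangian because $\widetilde G^*\GG=c^*(G^*\GG)$ is positive definite and $\widetilde G^*\Omega=c^*(G^*\Omega)=0$. Moreover $\widetilde G$ is complete: its first fundamental form is $c^*\overline\I$, the pull-back of the complete metric $\overline\I$ under a covering map, hence complete.

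First I would apply Theorem \ref{prop: riemannian global integrability} to the simply connected manifold $\widetilde M$ to produce an immersion $\widetilde\sigma\colon \widetilde M\to\Hyp^{n+1}$ with small principal curvatures such that $G_{\widetilde\sigma}=\widetilde G$. The key technical point is to verify that $\widetilde\sigma$ is complete, so that the ``in addition'' part of that theorem applies. This follows from comparing first fundamental forms: by Equation \eqref{eq:fff gauss}, $\overline\I=\I-\III$ with $\III(W,W)=\I(BW,BW)\geq 0$, so $\overline\I\leq\I$ as quadratic forms and therefore $d_{\overline\I}\leq d_{\I}$. Any $\I$-Cauchy sequence is then $\overline\I$-Cauchy, hence $\overline\I$-convergent, hence convergent in the manifold topology; and a $\I$-Cauchy sequence converging topologically is $\I$-convergent. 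Thus $\I=\widetilde\sigma^*g_{\Hyp^{n+1}}$ is complete.

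By the ``in addition'' clause of Theorem \ref{prop: riemannian global integrability} (resting on Proposition \ref{prop injectivity} and Proposition \ref{prop:gauss maps diffeo onto image}), $\widetilde\sigma$ is then a proper embedding, $\widetilde M$ is diffeomorphic to $\R^n$, and $\widetilde G=G_{\widetilde\sigma}$ is an embedding, in particular injective. The final step is to deduce that $c$ itself is injective: if $c(x)=c(y)$ then $\widetilde G(x)=G(c(x))=G(c(y))=\widetilde G(y)$, so injectivity of $\widetilde G$ forces $x=y$. An injective covering map of the connected manifold $M$ is an open continuous bijection, hence a diffeomorphism; therefore $M$ is simply connected and $M\cong\widetilde M\cong\R^n$. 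Transporting $\widetilde\sigma$ through $c^{-1}$ yields a proper embedding $\sigma:=\widetilde\sigma\circ c^{-1}\colon M\to\Hyp^{n+1}$ with small principal curvatures satisfying $G_\sigma=G_{\widetilde\sigma}\circ c^{-1}=G$, as desired.

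I expect the main subtlety to be the completeness transfer, namely that completeness of $\overline\I$ implies completeness of $\I$ via the inequality $\overline\I\leq\I$: the hypothesis concerns completeness of the Gauss map metric $\overline\I$, whereas Proposition \ref{prop injectivity} requires completeness of the intrinsic metric $\I$ of $\sigma$. Everything else is an assembly of the already-established Theorem \ref{prop: riemannian global integrability}, Proposition \ref{prop injectivity} and Proposition \ref{prop:gauss maps diffeo onto image}, together with the elementary observation that injectivity of $\widetilde G$ trivializes the covering $c$.
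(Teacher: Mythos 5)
Your proof is correct and follows essentially the same route as the paper's: lift $G$ to the universal cover, apply Theorem \ref{prop: riemannian global integrability} to obtain $\widetilde\sigma$, transfer completeness from $\overline\I=\I-\III$ to $\I$ using that $\III$ is positive semi-definite, and use injectivity of $\widetilde G$ (resting on Proposition \ref{prop:gauss maps diffeo onto image}) to conclude the covering is trivial. The only difference is that you spell out the completeness-transfer and covering-triviality steps in more detail than the paper, which states them briefly.
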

\begin{proof}
Let us lift $G$ to the universal cover $\widetilde M$, obtaining a Riemannian and Lagrangian immersion $\widetilde G:\widetilde M\to\G{n+1}$ which is still complete. By Theorem \ref{prop: riemannian global integrability}, $\widetilde G$ is the Gauss map of an immersion $\sigma$ with small principal curvatures. We claim that $\sigma$ is complete. Indeed by Equation \eqref{eq:fff gauss} the first fundamental form of $\widetilde G$, which is complete by hypothesis, equals $\I-\III$, hence it is complete since $\III$ is positive semi-definite.

It follows from Proposition \ref{prop:gauss maps diffeo onto image} that $\widetilde G$ is injective. Hence $\widetilde M=M$ and $\widetilde G=G$, and therefore $G$ is the Gauss map of $\sigma$, which is complete. 
By the ``in addition'' part of Theorem \ref{prop: riemannian global integrability}, $\sigma$ is properly embedded and $M$ is diffeomorphic to $\R^n$.
\end{proof}

In summary, the Lagrangian condition is essentially the only \emph{local} obstruction to realizing an immersion $G:M\to \G{n+1}$ as the Gauss map  of an immersion into $\Hyp^{n+1}$, up to the subtlety that this might be an immersion only when lifted to $T^1\Hyp^{n+1}$. {This subtlety however never occurs in the small principal curvatures case.} 
{In the remainder of the paper, we will discuss the problem of characterizing immersions into $\G{n+1}$ which are Gauss maps of \emph{equivariant} immersions into $\Hyp^{n+1}$ with small principal curvatures.}

\section{Equivariant integrability: the Maslov class}\label{sec:equivariant integrability}

In this section, we provide the first characterization of \emph{equivariant} immersions in $\G{n+1}$ which arise as the Gauss maps of \emph{equivariant} immersions in $\Hyp^{n+1}$, in the Riemannian case. This is the content of Theorem \ref{teorema hol H baby}. We first try to motivate the problem,  introduce  the obstruction, namely the Maslov class, and study some of its properties. See for instance \cite{zbMATH01523513} for a discussion on the Maslov class in more general settings.

\subsection{Motivating examples}

{Given an $n$-manifold $M$, a representation $\rho:\pi_1(M)\to\Isom(\Hyp^{n+1})$, and a $\rho$-equivariant immersion $\widetilde\sigma:\widetilde M\to \Hyp^{n+1}$, it is immediate to see that the Gauss map $G_{\widetilde\sigma}:\widetilde M\to \G{n+1}$ is $\rho$-equivariant} {(recall also Remark \ref{rmk: Isom preserva Omega e J})}.
Moreover, if ${\widetilde\sigma}$ has small principal curvatures, it follows from the discussion of the previous sections that $G_{\widetilde\sigma}$ is a Lagrangian and Riemannian {immersion}. 

However, a $\rho$-equivariant Lagrangian immersion (even with the additional assumptions of being Riemannian and being an embedding) does not always arise as the Gauss map associated to a \emph{$\rho$-equivariant} immersion in $\Hyp^{n+1}$, as the following simple example shows for $n=1$.

\begin{example}\label{ex: global integrable non equivariant}
Let us {construct} a coordinate system for a portion of $\G{2}$. Let $\gamma:\R\to\Hyp^2$ be a geodesic parameterized by arclength, and let us define a map $\Psi:\R\times (0,\pi)\to\G{2}$ by defining $\Psi(t,\theta)$ as the oriented geodesic that intersects $\gamma$ at $\gamma(t)$ with an angle $\theta$ ({measured counterclockwise} with respect to the standard orientation of $\Hyp^2$). 
We can lift $\Psi$ to a map $\widehat\Psi:\R\times (0,\pi)\to T^1\Hyp^2$, which will however \emph{not} be orthogonal to the fibers of the projection $T^1\Hyp^2\to\G{2}$. The lift is simply defined as 
$$\widehat \Psi(t,\theta)=(\gamma(t),\cos(\theta)\gamma'(t)+\sin(\theta) w)~,$$ where $w$ is the vector {forming an angle $\frac \pi 2$ with $\gamma'(t)$}. We omitted the dependence of $w$ on $t$ since, in the hyperboloid model, $w$ is actually a constant vector in $\R^{2,1}$.  

Let us compute the pull-back of the metric $\GG$ on $\G{2}$ by the map $\Psi$. We have already observed in Example \ref{ex:spheres} that the restriction of $\GG$ on the image of $\theta\mapsto\Psi(t_0,\theta)$ is minus the standard metric of $\Sph^1$. Indeed in this simple case, $d\widehat\Psi_{(t,\theta)}(\partial_\theta)=(0,w)$ is in the vertical subspace 
$\VP$
and by Definition \ref{defi:parasasaki} its squared norm is $-1$. On the other hand, since the vector field $\cos(\theta)\gamma'(t)+\sin(\theta) w$ is parallel along $\gamma$, when we differentiate in $t$ we obtain, by applying the definition of horizontal lift:
\begin{equation}\label{eq:lift example n=1}
d\widehat\Psi_{(t,\theta)}(\partial_t)=\cos(\theta)(\gamma'(t))^\HH+\sin(\theta)w^\HH~.\end{equation}
Moreover, Equation \eqref{eq:lift example n=1} gives the decomposition of $d\widehat\Psi_{(t,\theta)}(\partial_t)$ in $T_{\widehat\Psi_{(t,\theta)}}T^1\Hyp^2={\mathrm{Span}(\chi)\oplus\chi^\perp}$ as in Equation \eqref{eq:direct sum}, since the first term is a multiple of the generator of the geodesic flow, and the second term is in $\HP$.
This shows, by definition of the metric $\GG$, that $d\Psi_{(t,\theta)}(\partial_t)$ has squared norm $\sin^2(\theta)$ and that $d\Psi_{(t,\theta)}(\partial_t)$ and $d\Psi_{(t,\theta)}(\partial_\theta)$ are orthogonal. In conclusion, we have showed:
$$\Psi^*\GG=-d\theta^2+\sin^2(\theta)dt^2~.$$

We are now ready to produce our example of $\rho$-equivariant embedding $G:\widetilde M\to\G{2}$ which is not $\rho$-integrable. Consider $M=\Sph^1$, $\widetilde M=\R$, and the representation $\rho:\Z\to\Isom(\Hyp^2)$ which is a hyperbolic translation along $\gamma$. The induced action on $\G{2}$ preserves the image of $\Psi$ and its generator acts on the $(t,\theta)$-coordinates simply by $(t,\theta)\mapsto (t+c,\theta)$. Hence the map
$$G:\R\to\G{2}\qquad G(s)=\Psi(s,\theta_0)$$
for some constant $\theta_0\in (0,\pi)$ is a $\rho$-equivariant Riemannian embedding by the above expression of $\Psi^*\GG$. Of course the Lagrangian condition is {trivially satisfied since $n=1$}. By Theorem \ref{prop: riemannian global integrability} $G$ coincides with the Gauss map $G_\sigma$ associated to some embedding $\sigma:\R\to\Hyp^2$ with small curvature. It is easy to see that any such embedding $\sigma$ is not $\rho$-equivariant unless $\theta_0=\frac \pi 2$, see Figure \ref{fig:example_dim2}.
\end{example}

\begin{figure}[htbp]
\centering
\begin{minipage}[c]{.5\textwidth}
\centering
\includegraphics[width=.6\textwidth]{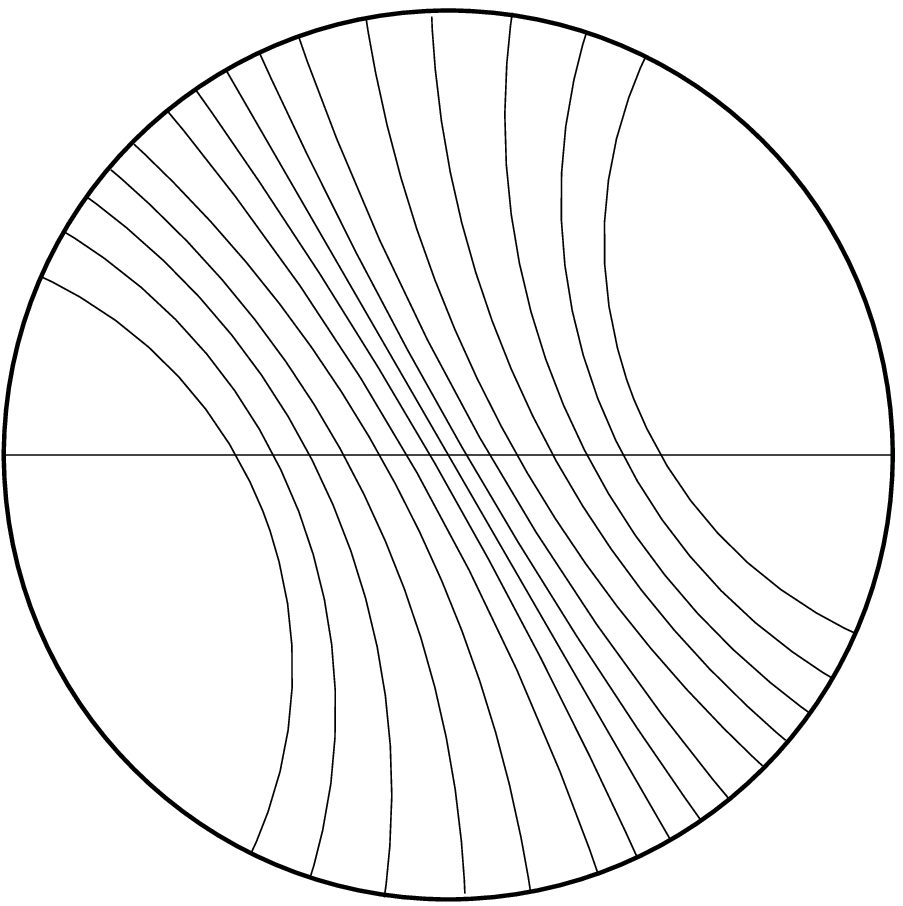} 
\end{minipage}%
\begin{minipage}[c]{.5\textwidth}
\centering
\includegraphics[width=.7\textwidth]{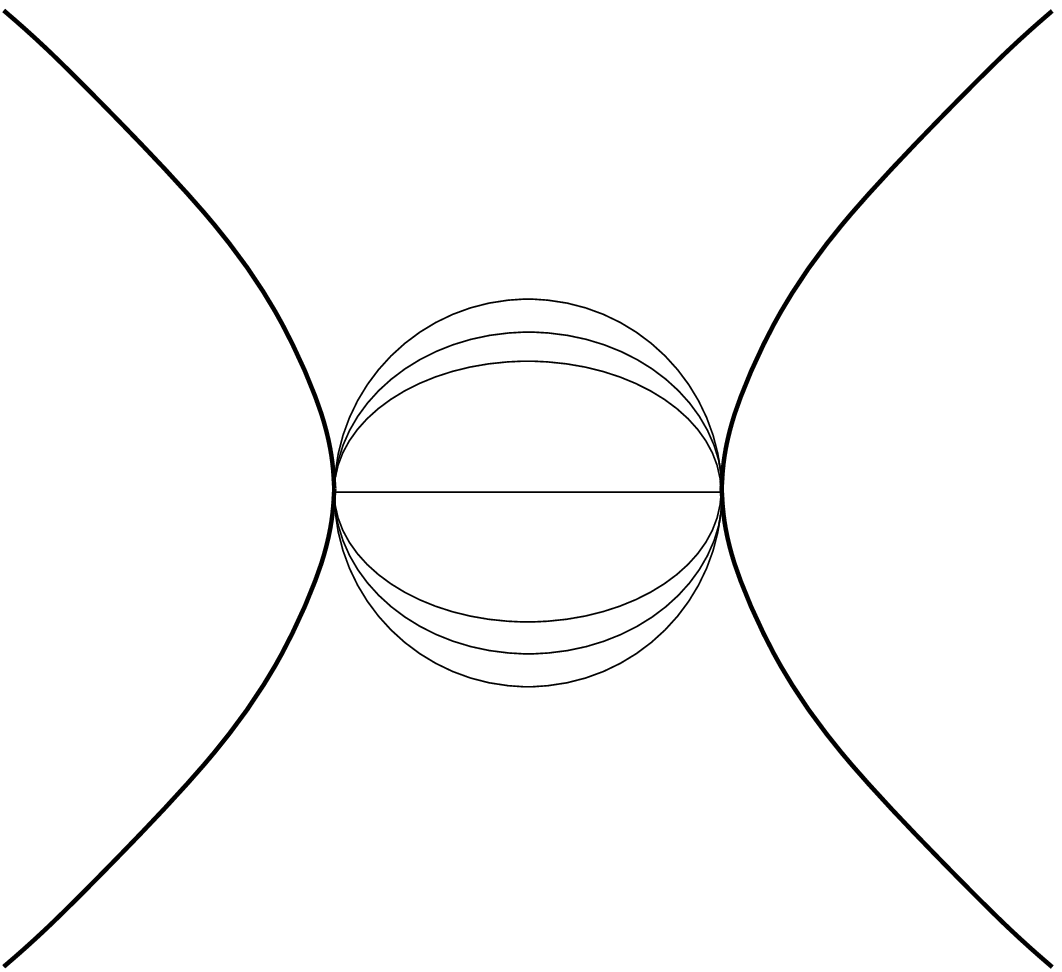} 
\end{minipage}%
\caption{On the left, the family of geodesics forming a fixed angle $\theta$ with the horizontal geodesic (in the Poincar\'e disc model of $\Hyp^2$). On the right, the metric $-d\theta^2+\sin^2(\theta)dt^2$ represents a portion of the Anti-de Sitter space of dimension 2. Here are pictured some lines defined by $\theta=c$, in the projective model of the Anti-de Sitter space.}\label{fig:example_dim2}
\end{figure}

{We also briefly provide an example of a locally integrable, but not globally integrable immersion in $\G{3}$ for $M$ not simply connected (lifting to the universal cover $\widetilde M$ this corresponds to $\rho$ being the trivial representation). This example in particular motivates Corollary \ref{cor hol H baby}, which is a direct consequence of Theorem \ref{teorema hol H baby}.}

\begin{example}\label{ex: global integrable non equivariant2}
First, let us consider a totally geodesic plane $\mathcal P$ in $\Hyp^3$ and an annulus $\mathcal A$ contained in $\mathcal P$. Let $c:\widetilde {\mathcal A}\to \mathcal A$ be the universal covering. Then $c$ is an immersion in $\Hyp^3$ with small principal curvatures (in fact, totally geodesic), and is clearly not injective. See Figure \ref{fig:annulus} on the left. Of course, in light of Proposition \ref{prop injectivity}, this is possible because the immersion $c$ is not complete.

Now, let us deform $\mathcal A$ in the following way. We cut $\mathcal A$ along a geodesic segment $s\subset\mathcal P$ to obtain a rectangle $\mathcal R$ having two (opposite) geodesic sides, say $r_1$ and $r_2$. Then we deform such rectangle to get an immersion $c':\mathcal R\to\Hyp^3$, so that one geodesic side remains unchanged (say $c'(r_1)=s$), while the other side $r_2$ is mapped to an $r$-cap equidistant from $\mathcal P$, for small $r$, in such a way that it projects to $s$ under the normal evolution. We can also arrange $c$ so that a neighbourhood of $r_1$ is mapped to $\mathcal P$, while a neighbourhood of $r_2$ is mapped to the $r$-cap equidistant from $\mathcal P$. See Figure \ref{fig:annulus} on the right.

By virtue of this construction, the Gauss map of $c'$ coincides on the edges $r_1$ and $r_2$ of $\mathcal R$, and therefore induces an immersion $G':\mathcal A\to\G{3}$. Clearly $G'$ is locally integrable, but not globally integrable.  In other words, the lift to the universal cover of $G'$ is a $\rho$-equivariant immersion of $\widetilde A$ to $\G 3$ which is not $\rho$-integrable, for $\rho$ the trivial representation.

\begin{figure}[htbp]
\centering
\includegraphics[height=4.5cm]{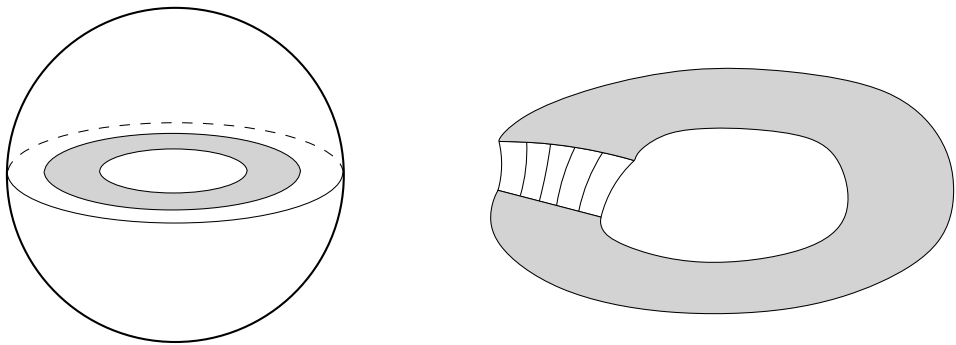} 

\caption{On the left, a totally geodesic annulus $\mathcal A$ in a plane $\mathcal P$. On the right, an embedded rectangle with the property that a neighbourhood of one side lies in $\mathcal P$, while a neighbourhood of the opposite side lies on an $r$-cap equidistant from  $\mathcal P$. Such rectangle induces an embedding of $\mathcal A$ in $\G{3}$ which is locally, but not globally, integrable.}\label{fig:annulus}
\end{figure}

\end{example}

Motivated by the previous examples, we introduce the relevant definition for our problem.

\begin{defi}\label{defi rhointegrable}
Given {an $n$-manifold $M$ and a} representation $\rho\colon \pi_1(M) \to \Isom(\Hyp^{n+1})$, a $\rho$-equivariant immersion $G\colon \widetilde M \to \G{n+1}$ is \emph{$\rho$-integrable} if there exists a $\rho$-equivariant immersion ${\widetilde\sigma}\colon \widetilde M \to \Hyp^{n+1}$ whose Gauss map is $G$.
\end{defi}

\subsection{Maslov class}

Let us now introduce the obstruction which will permit us to classify $\rho$-integrable Lagrangian immersions under the Riemannian assumption, namely the Maslov class. For this purpose, let $G:\widetilde M\to\G{n+1}$ be a Riemannian immersion. The
\emph{second fundamental form} of $G$ is a symmetric bilinear form on $M$ with values in the normal bundle of $G$, defined as
$$\overline\II(V,W)=({\mathbbm D}_{G_*V}(G_*W))^\perp$$
for vector fields $V,W$, where {$\mathbbm D$} denotes the ambient Levi-Civita connection {of $\GG$} and $\perp$ the projection to the normal subspace of $G$. {One can prove that $\overline\II(V,W)$ is a tensor, i.e. that it}  depends on the value of $V$ and $W$ {pointwise.} 
The
 \emph{mean curvature} is then 
 $$\overline{\mathrm H}=\frac{1}{n}\mathrm{tr}_{\overline\I}\overline\II~,$$
 that is, it is the trace of $\overline\II$ with respect to the first fundamental form $\overline\I$ of $G$, and is therefore a section of the normal bundle of $G$.

Consider now the 1-form on $\widetilde M$ given by $G^*(\Omega (\overline {\mathrm H}, \cdot ) )$. It will follow from Proposition \ref{Prop: formula H in G} {(see Corollary \ref{cor:maslov closed})} that this is a closed 1-form. Since $\Isom(\Hyp^{n+1})$ acts by automorphisms of the para-K\"ahler manifold $(\G{n+1}, \GG, \mathbb J, \Omega)$, if $G$ is $\rho$-equivariant, then the form $G^*(\Omega (\overline {\mathrm H}, \cdot ) )$ is $\pi_1(M)$-invariant: as a result, it defines a well-posed closed 1-form on $M$. Its cohomology class is the so-called Maslov class:

\begin{defi}\label{defi Maslov class}
Given an $n$-manifold $M$, a representation $\rho\colon \pi_1(M) \to \Isom(\Hyp^{n+1})$ and a $\rho$-equivariant Lagrangian and Riemannian {immersion} $G\colon \widetilde M \to \G{n+1}$, the
\emph{Maslov class} of $G$ is the cohomology class 
\[
\mu_G:=[G^*(\Omega (\overline {\mathrm H}, \cdot ) )]\in H_{dR}^1(M)~.
\]
\end{defi}

The main result of this section is the following, and it will be deduced as a {consequence of} Theorem \ref{Teorema hol H}.

\begin{thmx}
\label{teorema hol H baby}
Given an orientable $n$-manifold  $M$ and a representation $\rho\colon \pi_1(M) \to \Isom(\Hyp^{n+1})$, a $\rho$-equivariant  Riemannian and Lagrangian {immersion} $G\colon \widetilde M \to \G{n+1}$ is
$\rho$-integrable if and only if $\mu_G=0$ in $H_{dR}^1(M)$.
\end{thmx}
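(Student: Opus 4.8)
The plan is to reformulate $\rho$-integrability as the vanishing of the holonomy of a flat $\R$-bundle over $M$, and then to identify that holonomy with the periods of the Maslov class via formula \eqref{eq maslov intro}. This is the substance of the stronger statement (Theorem \ref{Teorema hol H}), from which Theorem \ref{teorema hol H baby} follows.

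First I would set up the flat bundle. Since $G$ is Lagrangian, $G^*\Omega=0$, so by Proposition \ref{prop:identity curvature form} the pull-back bundle $G^*\mathrm p$ over $\widetilde M$ is flat; as $\widetilde M$ is simply connected it is trivial, and by Corollary \ref{cor:lagrangian2} it admits a global flat section $\zeta\colon\widetilde M\to T^1\Hyp^{n+1}$ with $\mathrm p\circ\zeta=G$. Because $G$ is Riemannian, the argument in the proof of Theorem \ref{prop: riemannian global integrability} shows that $\sigma:=\pi\circ\zeta$ is an immersion with small principal curvatures and Gauss map $G$, although $\sigma$ need not be $\rho$-equivariant. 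Next I would define the holonomy: since $\Isom(\Hyp^{n+1})$ preserves all the relevant structure on $T^1\Hyp^{n+1}$ and commutes with the geodesic flow, for each $\alpha\in\pi_1(M)$ the map $\rho(\alpha^{-1})\cdot(\zeta\circ\alpha)$ is again a flat section projecting to $G$; as flat sections on the connected base differ by the $\R$-action, there is a unique $\tau(\alpha)\in\R$ with
$$\rho(\alpha^{-1})\cdot(\zeta\circ\alpha)=\varphi_{\tau(\alpha)}\circ\zeta~.$$
A direct check, using that $\rho$ commutes with $\varphi_t$, shows $\tau\colon\pi_1(M)\to\R$ is a homomorphism independent of the chosen flat section. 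I claim $G$ is $\rho$-integrable if and only if $\tau\equiv 0$: if $\tau\equiv 0$ then $\zeta$ is itself $\rho$-equivariant, so $\sigma=\pi\circ\zeta$ is a $\rho$-equivariant immersion with Gauss map $G$; conversely, if $\sigma$ is $\rho$-equivariant with $G_\sigma=G$, its lift $\zeta_\sigma$ is a $\rho$-equivariant flat section, forcing $\tau\equiv 0$.

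Then I would identify $\tau$ with the periods of the Maslov form. Since $\sigma\circ\alpha$ and $\rho(\alpha)\cdot\sigma$ both have Gauss map $G$ (using $\rho$-equivariance of $G$), their lifts are flat sections differing by the flow, so by Proposition \ref{prop: flusso geod e flusso normale} they are normal evolutions of one another; applying $\rho(\alpha)$ to the defining relation for $\tau$ and tracking the parameter gives $\sigma\circ\alpha=(\rho(\alpha)\cdot\sigma)_{\tau(\alpha)}$. Now Lemma \ref{lemma:evolution fsigma} gives $f_{X_t}=f_X-t$, while the isometry-invariance of the principal curvatures yields $f_{\rho(\alpha)\cdot\sigma}=f_\sigma$ and $f_{\sigma\circ\alpha}=f_\sigma\circ\alpha$; combining these produces the cocycle relation
$$f_\sigma\circ\alpha-f_\sigma=-\tau(\alpha)~,$$
a constant. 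Finally, Proposition \ref{Prop: formula H in G} gives $G^*(\Omega(\overline {\mathrm H},\cdot))=df_\sigma$ on $\widetilde M$, so integrating over a lift of a loop representing $\alpha$ computes the corresponding period of $\mu_G$ as $f_\sigma\circ\alpha-f_\sigma=-\tau(\alpha)$. Hence $\mu_G=0$ in $H^1_{dR}(M)$ if and only if $\tau\equiv 0$, which by the previous step is equivalent to $\rho$-integrability.

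The main obstacle I expect is the bookkeeping in the third step: one must argue cleanly that two immersions sharing a Gauss map are global normal evolutions of each other with a well-defined parameter, and that this parameter matches the $\R$-holonomy $\tau$ up to sign. Everything rests on the compatibility between the principal $\R$-bundle structure (the geodesic flow) and the normal evolution encoded in Proposition \ref{prop: flusso geod e flusso normale}, together with the de Rham duality between the period homomorphism of $\mu_G$ and the holonomy $\tau$.
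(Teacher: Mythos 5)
Your proposal is correct and follows essentially the same route as the paper: you reconstruct the stronger Theorem \ref{Teorema hol H} (the holonomy of the flat bundle $\mathrm p_G$ equals the periods of $\mu_G$, proved exactly via Proposition \ref{prop: flusso geod e flusso normale}, Lemma \ref{lemma:evolution fsigma} and Proposition \ref{Prop: formula H in G}) together with Lemma \ref{lemma:integrable iff trivial bundle}, your $\tau$ being $-\hol_G$ in the paper's sign convention. The only slip is cosmetic: $\sigma\circ\alpha$ and $\rho(\alpha)\circ\sigma$ share the Gauss map $\rho(\alpha)\circ G$ rather than $G$ itself, but your derivation of $\sigma\circ\alpha=(\rho(\alpha)\circ\sigma)_{\tau(\alpha)}$ from the defining relation of $\tau$ is unaffected.
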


We immediately obtain the following characterization of global integrability {for $\pi_1(M)\ne \{1\}$}.

\begin{corx}
\label{cor hol H baby}
Given an orientable $n$-manifold  $M$ and an {immersion} $G\colon M \to \G{n+1}$, $G$ is the Gauss map of an immersion $\sigma:M\to\Hyp^{n+1}$ of small principal curvatures if and only
if $G$ is Riemannian and Lagrangian and $\mu_G=0$ in $H_{dR}^1(M)$.
\end{corx}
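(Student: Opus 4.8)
The plan is to deduce this statement from Theorem \ref{teorema hol H baby} by specializing to the trivial representation. First I would record that the necessity of the Riemannian and Lagrangian conditions holds independently of any statement about the Maslov class: if $G=G_\sigma$ for some immersion $\sigma$ with small principal curvatures, then $G$ is Riemannian by Proposition \ref{prop: small curv sse riemannian} and Lagrangian by Corollary \ref{cor:lagrangian1}. Hence throughout the argument one may assume that $G$ is Riemannian and Lagrangian, so that its Maslov class $\mu_G\in H^1_{dR}(M)$ is well defined, and the equivalence reduces to the assertion that such a $G$ is the Gauss map of an immersion with small principal curvatures if and only if $\mu_G=0$.

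The key step is to take $\rho\colon \pi_1(M)\to\Isom(\Hyp^{n+1})$ to be the trivial representation and to consider the lift $\widetilde G=G\circ c\colon \widetilde M\to\G{n+1}$, where $c\colon \widetilde M\to M$ is the universal covering. Since $\rho$ is trivial, $\widetilde G$ is $\pi_1(M)$-invariant and hence $\rho$-equivariant, and it is Riemannian and Lagrangian because these are local conditions inherited from $G$. Moreover the $\pi_1(M)$-invariant $1$-form defining the Maslov class of $\widetilde G$ is exactly the pull-back by $c$ of $G^*(\Omega(\overline{\mathrm H},\cdot))$, so the Maslov class of $\widetilde G$ as in Definition \ref{defi Maslov class} coincides with the class $\mu_G$ attached directly to $G$.

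Next I would match the two notions of integrability. For $\rho$ trivial, a $\rho$-equivariant immersion $\widetilde\sigma\colon\widetilde M\to\Hyp^{n+1}$ is $\pi_1(M)$-invariant and therefore descends to an immersion $\sigma\colon M\to\Hyp^{n+1}$ with $G_\sigma=G$; conversely any immersion $\sigma\colon M\to\Hyp^{n+1}$ with $G_\sigma=G$ lifts to such an equivariant $\widetilde\sigma$. Thus $\rho$-integrability of $\widetilde G$ is equivalent to $G$ being the Gauss map of a globally defined immersion of $M$ into $\Hyp^{n+1}$. Applying Theorem \ref{teorema hol H baby} to this trivial $\rho$ then gives that such a $\sigma$ exists if and only if $\mu_G=0$, and Proposition \ref{prop: small curv sse riemannian} ensures that, $G$ being Riemannian, this $\sigma$ automatically has small principal curvatures.

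I do not expect a serious obstacle here, as all the analytic content is already contained in Theorem \ref{teorema hol H baby}. The only points requiring care are the bookkeeping that the Maslov class and the integrability notion descend correctly under the trivial representation, and that the small-principal-curvatures condition is matched with the Riemannian condition through Proposition \ref{prop: small curv sse riemannian}.
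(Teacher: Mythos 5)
Your proposal is correct and follows essentially the same route as the paper: specialize Theorem \ref{teorema hol H baby} to the trivial representation via the universal covering, checking that the Maslov class and the notion of integrability descend, and use Proposition \ref{prop: small curv sse riemannian} to match the Riemannian condition with small principal curvatures. The paper's proof additionally routes through Theorem \ref{prop: riemannian global integrability} to first produce an immersion $\widetilde\sigma$ of $\widetilde M$ whose descent is the question, but this is only a minor phrasing difference from your direct application of the $\rho$-integrability criterion.
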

\begin{proof}
{Denote $\rho$ the trivial representation. Given $G:M\to\G{n+1}$, precomposing with the universal covering map we obtain an immersion $\widetilde G:\widetilde M\to\G{n+1}$ which is $\rho$-equivariant. Observe that $\widetilde G$ is the Gauss map of some immersion $\widetilde\sigma:\widetilde M\to \Hyp^{n+1}$ by Theorem \ref{prop: riemannian global integrability}. Then $G$ is the Gauss map of some immersion in $\Hyp^{n+1}$ if and only if $\widetilde\sigma$ descends to the quotient $M$, i.e. it is $\rho$-integrable. Hence this is equivalent to the vanishing of the Maslov class by Theorem \ref{teorema hol H baby}.}
\end{proof}

\subsection{Mean curvature of Gauss maps}

Recall that, given an embedding $\sigma:M\to\Hyp^{n+1}$ with small principal curvatures, we introduced in \eqref{eq:aux function} the function $f_\sigma:M\to\R$ which is the mean of the hyperbolic arctangents of the principal curvatures of $\sigma$. This function is strictly related to the mean curvature of the Gauss map of $\sigma$, as in the following proposition.

\begin{prop}
\label{Prop: formula H in G}
Let $M^n$ be an oriented manifold, $\sigma:M\to\Hyp^{n+1}$ an embedding with small principal curvatures, and $G_\sigma:M\to\G{n+1}$ its Gauss map. Then 
$$G_\sigma^*(\Omega (\overline {\mathrm H}, \cdot ) )=d (f_\sigma)= d\left({\frac{1}{n}}\sum_{i=1}^n\arctanh\lambda_i\right)~,$$
where $\lambda_1,\ldots,\lambda_n$ denote the principal curvatures of $\sigma$.
\end{prop}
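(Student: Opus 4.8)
The plan is to compute both sides in a frame diagonalizing the shape operator and to reduce the identity to the Codazzi equation. First I would fix $p\in M$ and choose a local orthonormal frame $e_1,\dots,e_n$ of $(M,\I)$ made of principal directions, so that $Be_i=\lambda_i e_i$, and set $w_i=d\sigma(e_i)$, an orthonormal frame of $v^\perp\subset T_{\sigma(p)}\Hyp^{n+1}$ with $v=\nu(p)$. By Equation \eqref{eq:differential lift} the lift satisfies $d\zeta_\sigma(e_i)=w_i^\HH-\lambda_i w_i^\V$, and since $d\mathrm p|_{\chi^\perp}$ is an isomorphism I identify $dG_\sigma(e_i)$ with $T_i:=w_i^\HH-\lambda_i w_i^\V\in\chi^\perp$. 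Definition \ref{defi:parasasaki} gives $\GG(T_i,T_j)=(1-\lambda_i^2)\delta_{ij}$ (recovering Equation \eqref{eq:fff gauss}); and since $G_\sigma$ is Lagrangian (Corollary \ref{cor:lagrangian1}), the para-complex structure maps the tangent space to the normal space. Explicitly, by Equation \eqref{eq:defiJ2}, $N_i:=\JJ T_i$ corresponds to $w_i^\V-\lambda_i w_i^\HH$, with $\GG(N_i,N_j)=-(1-\lambda_i^2)\delta_{ij}$, so $\{N_i\}$ is a frame of the normal bundle of $G_\sigma$.

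Next I would reduce the statement to a single connection computation. Evaluating the $1$-form on $e_k$ and using $\Omega=\GG(\cdot,\JJ\cdot)$, $\JJ T_k=N_k$, and $\overline{\mathrm H}=\tfrac1n\sum_i(1-\lambda_i^2)^{-1}\overline\II(e_i,e_i)$, one gets
\[
G_\sigma^*\big(\Omega(\overline{\mathrm H},\cdot)\big)(e_k)=\GG(\overline{\mathrm H},N_k)=\frac1n\sum_i\frac{1}{1-\lambda_i^2}\,\GG\big(\mathbbm D_{T_i}T_i,\,N_k\big),
\]
where the normal projection in $\overline\II$ is dropped because $N_k$ is normal to $G_\sigma$. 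So it suffices to compute $\GG(\mathbbm D_{T_i}T_i,N_k)$. Here I would transfer the computation to $T^1\Hyp^{n+1}$: the map $\mathrm p$ is a pseudo-Riemannian submersion (the distribution $\chi^\perp$ maps $\gs{n+1}$-isometrically onto $(\G{n+1},\GG)$), so for horizontal lifts the horizontal part of the total-space covariant derivative projects to the base derivative, whence $\GG(\mathbbm D_{T_i}T_i,N_k)$ equals the corresponding pairing computed on the lift $\zeta_\sigma(M)$ inside $T^1\Hyp^{n+1}$. By Remark \ref{rmk other metric2} I may use the metric $\widehat g_{T^1\Hyp^{n+1}}$, the restriction of the flat product metric on $\R^{n+1,1}\times\R^{n+1,1}$: its Levi-Civita connection is the tangential projection of the flat ambient derivative, and pairing with the tangent vector $N_k$ allows me to replace the projected derivative by the plain ambient one.

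Writing $T_i=(w_i,-\lambda_i w_i)$ and $N_k=(-\lambda_k w_k,w_k)$ and differentiating along $e_i$, using that $\langle\partial_{e_i}w_i,w_k\rangle=\I(\nabla_{e_i}e_i,e_k)=:\Gamma_{ii}^k$ (the components of $\partial_{e_i}w_i$ along $\nu$ and along $\sigma(p)$ drop since $w_k$ is orthogonal to both), I expect to obtain
\[
\GG(\mathbbm D_{T_i}T_i,N_k)=(\partial_{e_i}\lambda_i)\,\delta_{ik}+(\lambda_i-\lambda_k)\,\Gamma_{ii}^k.
\]
The main obstacle — and the decisive input — is handling the off-diagonal terms $(\lambda_i-\lambda_k)\Gamma_{ii}^k$, which must reconstruct the derivatives $\partial_{e_k}\lambda_i$ (for $i\neq k$) appearing in $df_\sigma$. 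This is exactly the Codazzi equation for $\sigma$ in the constant-curvature space $\Hyp^{n+1}$, namely $(\nabla_{e_k}B)e_i=(\nabla_{e_i}B)e_k$; taking its $e_i$-component gives $\partial_{e_k}\lambda_i=(\lambda_k-\lambda_i)\Gamma_{ik}^i$, and orthonormality ($\Gamma_{ik}^i=-\Gamma_{ii}^k$) turns this into $\partial_{e_k}\lambda_i=(\lambda_i-\lambda_k)\Gamma_{ii}^k$. Substituting into the previous sum collapses it to
\[
G_\sigma^*\big(\Omega(\overline{\mathrm H},\cdot)\big)(e_k)=\frac1n\sum_i\frac{\partial_{e_k}\lambda_i}{1-\lambda_i^2}=\partial_{e_k}f_\sigma=df_\sigma(e_k),
\]
which is the claim. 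Finally, since the principal curvatures need not be individually smooth at points of higher multiplicity, I would carry out the above on the open dense set where the $\lambda_i$ are simple (hence locally smooth with smooth principal frames) and conclude on all of $M$ by continuity, using that both $G_\sigma^*(\Omega(\overline{\mathrm H},\cdot))$ and $df_\sigma$ are smooth $1$-forms (Remark \ref{rmk fsigma smooth}).
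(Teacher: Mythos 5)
Your proposal is, in substance, the paper's own proof: the paper likewise evaluates $\GG(\mathbbm D_{\epsilon_i}\epsilon_i,\JJ\epsilon_k)$ in a local eigenframe of the shape operator (normalized for $\overline\I$ rather than for $\I$, which merely moves the factors $(1-\lambda_i^2)^{-1}$ from the trace into the frame), transfers the computation to $T^1\Hyp^{n+1}$ with the metric $\widehat g_{T^1\Hyp^{n+1}}$ of Remark \ref{rmk other metric2} so that the Levi-Civita connection becomes the tangential projection of the flat connection of $\R^{n+1,1}\times\R^{n+1,1}$ --- your appeal to pseudo-Riemannian submersion theory is exactly what the paper proves by hand in Lemma \ref{Lemma: curvatura media in G 1} --- reaches the same intermediate identity $(\lambda_i-\lambda_k)\,\I(\nabla_{e_i}e_i,e_k)+(\partial_{e_i}\lambda_i)\,\I(e_i,e_k)$, and closes the off-diagonal terms with the Codazzi equation precisely as you do.

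The one flawed step is your final regularization: the set where the principal curvatures are \emph{simple} is in general not dense --- it is empty, for instance, when $\sigma$ is umbilic on an open set (e.g.\ totally geodesic; note $M$ is not assumed compact here, so such examples are legitimate), and then your continuity argument has nothing to start from. The standard repair is to work on the open dense set where the \emph{number of distinct} principal curvatures is locally constant: there the $\lambda_i$ are smooth, each eigendistribution is smooth and admits a local smooth orthonormal frame, and your computation goes through verbatim even with repeated eigenvalues (when $\lambda_i=\lambda_k$ with $i\neq k$ the off-diagonal term $(\lambda_i-\lambda_k)\Gamma_{ii}^k$ vanishes, and the same Codazzi pairing shows $\partial_{e_k}\lambda_i=0$, so the bookkeeping is unchanged); one then concludes on all of $M$ by continuity of the two $1$-forms, $f_\sigma$ being smooth everywhere by Remark \ref{rmk fsigma smooth}. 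It is worth noting that the paper glosses over this point entirely, simply picking an eigenframe without discussing its smoothness, so your instinct to address the issue is sound --- only the density claim needs correcting.
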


The essential step in the proof of Proposition \ref{Prop: formula H in G} is the following computation for the mean curvature vector of the Gauss map $G_\sigma$:
\begin{equation} \label{eq:formula H in G}
\overline{\mathrm H}=-\JJ(dG_\sigma(\overline \nabla f_\sigma))~,
\end{equation}
where $\overline \nabla$ denotes the gradient with respect to the first fundamental form $\overline \I$ of $G_\sigma$. 
Indeed, once Equation \eqref{eq:formula H in G} is established, Proposition \ref{Prop: formula H in G} follows immediately since
$$\Omega(\overline{\mathrm H},dG_\sigma(V))=-\GG(\JJ(\overline{\mathrm H}),dG_\sigma(V))=\GG(dG_\sigma(\overline \nabla f_\sigma),dG_\sigma(V))=df_\sigma(V)~.$$

{The computations leading to Equation \eqref{eq:formula H in G} will be done in $T^1 \Hyp^{n+1}$ equipped with the metric $\widehat g_{T^1 \Hyp^{n+1}}$ defined in Remark \ref{rmk other metric2}, which is the restriction of the flat pseudo-Riemannian metric \eqref{eq:metric minkxmink} of $\R^{n+1,1}\times \R^{n+1,1}$ to $T^1\Hyp^{n+1}$, seen as a submanifold as in \eqref{eq:modelT}. This approach is actually very useful: the Levi-Civita connection of $\widehat g_{T^1 \Hyp^{n+1}}$ on $T^1\Hyp^{n+1}$, that we denote by
 $\widehat D$}, {will be} just the normal projection of the flat connection $\mathrm d$ of $\R^{n+1,1}\times \R^{n+1,1}$ to $T^1\Hyp^{n+1}$.

{Indeed}, the following lemma will be useful to compute the Levi-Civita connection {$\mathbbm D$} of $\G{n+1}$. Given a vector $X\in T_\ell\G{n+1}$ and $(x,v)\in \mathrm{p}^{-1}(\ell)$, we define the \emph{horizontal lift} of $X$ at   $(x,v)$ as the unique vector $\widetilde X\in T_{(x,v)}T^1\Hyp^{n+1}$ such that 
\begin{equation}\label{eq:hor lift}
\widetilde X\in \chi_{(x,v)}^\perp \qquad\text{and}\qquad d\mathrm p(\widetilde X)=X~.\end{equation}
For a vector field $X$ on an open set $U$ of $\G{n+1}$, we will also refer to the  vector field $\widetilde X$ on $\mathrm{p}^{-1}(U)$, defined by the conditions  \eqref{eq:hor lift}, as the \emph{horizontal lift} of the vector field $X$.

\begin{lemma}
\label{Lemma: curvatura media in G 1}
Given two  vector fields $X,Y$ on $\GG$, 
\[
\mathbbm D_XY=d\mathrm p(\widehat D_{\widetilde X}\widetilde Y)
\]
\end{lemma}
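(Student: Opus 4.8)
The plan is to recognize $\mathrm p\colon (T^1\Hyp^{n+1},\widehat g_{T^1\Hyp^{n+1}})\to(\G{n+1},\GG)$ as a pseudo-Riemannian submersion with one-dimensional non-degenerate fibres, and to verify that the prescription $\nabla_X Y:=d\mathrm p(\widehat D_{\widetilde X}\widetilde Y)$ defines the Levi-Civita connection of $\GG$, so that $\nabla=\mathbbm D$ by uniqueness. Write $\widehat g:=\widehat g_{T^1\Hyp^{n+1}}$ for brevity. First I would record the submersion picture: by Remark \ref{rmk other metric2} and Remark \ref{rmk other metric1}, the restriction of $\widehat g$ to $\chi^\perp=\HP\oplus\VP$ agrees with the restriction of the para-Sasaki metric, so $d\mathrm p|_{\chi^\perp}$ is an isometry onto $(T\G{n+1},\GG)$; moreover $\widehat g(\chi,\chi)=2\neq 0$, so the vertical line $\mathrm{Span}(\chi)=\ker d\mathrm p$ is non-degenerate and $T^1\Hyp^{n+1}=\mathrm{Span}(\chi)\oplus\chi^\perp$ is an orthogonal splitting. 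Since $d\mathrm p$ annihilates $\chi$, one may replace $\widehat D_{\widetilde X}\widetilde Y$ by its $\chi^\perp$-component in the statement without changing $d\mathrm p(\widehat D_{\widetilde X}\widetilde Y)$, where $\widetilde X$ denotes the horizontal lift defined by \eqref{eq:hor lift}.

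The first point to settle is that $\nabla_X Y$ is well-defined, i.e. that $d\mathrm p_{(x,v)}(\widehat D_{\widetilde X}\widetilde Y)$ is independent of the chosen preimage $(x,v)\in\mathrm p^{-1}(\ell)$. For this I would note that the horizontal lift $\widetilde X$ is invariant under the geodesic flow: since $\varphi_t$ preserves $\chi^\perp$ and satisfies $\mathrm p\circ\varphi_t=\mathrm p$, the vector $(\varphi_t)_*\widetilde X_{(x,v)}$ lies in $\chi^\perp_{\varphi_t(x,v)}$ and projects to $X$, hence equals $\widetilde X_{\varphi_t(x,v)}$. Because $\varphi_t$ is an isometry of $\widehat g$ (Lemma \ref{lemma:geodflow isometric}), it preserves $\widehat D$, so $\widehat D_{\widetilde X}\widetilde Y$ is $\varphi_t$-invariant as well; combined with $\mathrm p\circ\varphi_t=\mathrm p$ this gives the desired independence. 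The same elementary computations, using $\widetilde{fX}=(f\circ\mathrm p)\widetilde X$ and $\widetilde X(g\circ\mathrm p)=(Xg)\circ\mathrm p$ (both valid since $\widetilde X$ is $\mathrm p$-related to $X$), show that $\nabla$ is $C^\infty(\G{n+1})$-linear in $X$ and satisfies the Leibniz rule in $Y$, hence is a genuine affine connection.

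It then remains to check that $\nabla$ is torsion-free and metric for $\GG$. Torsion-freeness follows from that of $\widehat D$: as $\widetilde X,\widetilde Y$ are $\mathrm p$-related to $X,Y$, the bracket $[\widetilde X,\widetilde Y]$ is $\mathrm p$-related to $[X,Y]$, whence $\nabla_X Y-\nabla_Y X-[X,Y]=d\mathrm p(\widehat D_{\widetilde X}\widetilde Y-\widehat D_{\widetilde Y}\widetilde X)-[X,Y]=d\mathrm p[\widetilde X,\widetilde Y]-[X,Y]=0$. For metric compatibility I would use that $\GG(Y,Z)\circ\mathrm p=\widehat g(\widetilde Y,\widetilde Z)$ (since $\widetilde Y,\widetilde Z\in\chi^\perp$ and $d\mathrm p|_{\chi^\perp}$ is an isometry): differentiating along $\widetilde X$ and using that $\widehat D$ is metric gives $\widetilde X\,\widehat g(\widetilde Y,\widetilde Z)=\widehat g(\widehat D_{\widetilde X}\widetilde Y,\widetilde Z)+\widehat g(\widetilde Y,\widehat D_{\widetilde X}\widetilde Z)$, and because $\widetilde Y,\widetilde Z\perp\chi$ the $\chi$-components of $\widehat D_{\widetilde X}\widetilde Y$ and $\widehat D_{\widetilde X}\widetilde Z$ drop out, so both sides descend under $\mathrm p$ to $X\,\GG(Y,Z)=\GG(\nabla_X Y,Z)+\GG(Y,\nabla_X Z)$. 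Thus $\nabla$ is the Levi-Civita connection of $\GG$, which is the claim. I expect the main obstacle to be precisely the well-definedness of the second paragraph, namely ensuring that the entire construction is invariant under the geodesic flow; once the submersion picture is in place, the torsion and metric verifications are routine.
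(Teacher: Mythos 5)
Your proposal is correct and follows essentially the same route as the paper: both arguments verify that $d\mathrm p(\widehat D_{\widetilde X}\widetilde Y)$ is well-posed via the invariance of horizontal lifts and of $\widehat D$ under the geodesic flow (which acts by isometries of $\widehat g_{T^1\Hyp^{n+1}}$, a small adaptation of Lemma \ref{lemma:geodflow isometric} noted in Remarks \ref{rmk other metric1} and \ref{rmk other metric2}), then check $C^\infty$-linearity, the Leibniz rule, torsion-freeness via $d\mathrm p([\widetilde X,\widetilde Y])=[X,Y]$, and metric compatibility using that the $\chi$-component of $\widehat D_{\widetilde X}\widetilde Y$ is killed when paired against vectors in $\chi^\perp$, concluding by uniqueness of the Levi-Civita connection. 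Your explicit framing of $\mathrm p$ as a pseudo-Riemannian submersion with non-degenerate fibres is only a cosmetic repackaging of what the paper does implicitly.
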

\begin{proof}
{By the well-known characterization of the Levi-Civita connection, }it is sufficient to prove that the expression
{$\mathbbm A_XY:=d\mathrm p(\widehat D_{\widetilde X}\widetilde Y)$}
is a {well-posed linear connection which is torsion-free and compatible with the metric of $\GG$. We remark that this is not obvious because, although the metric of $\GG$ is the restriction of the metric $\widehat g_{T^1 \Hyp^{n+1}}$ to $\chi^\perp$, there is no {flat} section of the bundle projection $p:T^1\Hyp^{n+1}\to\G{n+1}$, hence $\G{n+1}$ cannot be seen as an isometrically embedded submanifold of $T^1 \Hyp^{n+1}$.}

First, {observe that} the expression $(\mathbbm A_XY)_{|\ell}=(d_{(x,v)}\mathrm p)(\widehat D_{\widetilde X}\widetilde Y)$ does not depend on the choice of the point $(x,v)\in \mathrm{p}^{-1}(\ell)$. Indeed, given two points $(x_1,v_1)$ and $(x_2,v_2)$ in $\mathrm{p}^{-1}(\ell)$, there exists $t$ such that $(x_2,v_2)=\varphi_t(x_1,v_1)$. By a small adaptation of Lemma \ref{lemma:geodflow isometric}, the geodesic flow $\varphi_t$ acts by isometries of the metric $\widehat g_{T^1\Hyp^{n+1}}$ (see also Remarks \ref{rmk other metric1} and \ref{rmk other metric2}), hence it also preserves the horizontal lifts $\widetilde X$ and $\widetilde Y$ and the Levi-Civita connection $\widehat D$. Hence $d\mathrm p ( \widehat D_{\widetilde X} \widetilde Y)= \mathbbm A_X Y$ is a well-defined vector field on $\G{n+1}$ whose horizontal lift is the projection of $\widehat D_{\widetilde X} \widetilde Y $ to $\chi^\bot$.

{We} check that $\mathbbm A$ is a linear connection{. It} is immediate to check the additivity in $X$ and $Y$. Moreover we have the {$C^\infty$-}linearity in $X$ since:
$$\mathbbm A_{fX} Y= d \mathrm{p} \left( \widehat D_{(f\circ \mathrm{p})\widetilde X} \widetilde Y \right) = d\mathrm p \left( (f\circ \mathrm{p}) \hat D_{\widetilde X} \widetilde Y \right)= f (\mathbbm A_X Y)~,$$
and the Leibnitz rule in $Y$, for:
$$\mathbbm A_X (fY)= d\mathrm p \left( \partial_{\widetilde X} (f\circ \mathrm{p})\ \widetilde Y+ (f\circ \mathrm{p}) \widehat D_{\widetilde X} \widetilde Y\right)= \partial_X f\ Y + f (\mathbbm A_X Y)~.$$
The connection $\mathbbm A$ is torsion-free: 
$$	
    \mathbbm A_X Y - \mathbbm A_Y X =d \mathrm p (\widehat D_ {\widetilde X} \widetilde Y) - d\mathrm  p( \widehat D_{\widetilde Y} \widetilde X)= d\mathrm p ([\widetilde X, \widetilde Y])= [X,Y]~.
$$
Finally, we show that $\mathbbm A$ is compatible with the metric $\GG$:
\begin{align*}
        \GG(\mathbbm A_X Y, Z) + \GG(Y, \mathbbm A_X Z)  &= \widehat g_{T^1 \Hyp^{n+1} } (\widetilde{\mathbbm A_X Y}, \widetilde Z) + \widehat g_{T^1 \Hyp^{n+1} } (\widetilde Y, \widetilde{\mathbbm A_X Z})=\\
        &= \widehat g_{T^1 \Hyp^{n+1} } ( \widehat D_{\widetilde X} \widetilde Y, \widetilde Z) + \widehat g_{T^1 \Hyp^{n+1} } (\widetilde Y, \widehat D_{\widetilde X} \widetilde Z) =\\
        &= \partial_{\widetilde X} ( \widehat g_{T^1 \Hyp^{n+1} } (\widetilde Y, \widetilde Z)  )= \partial_X (\GG(Y,Z))~,    \end{align*}
      where in the first line we used the definition of $\GG$, and in the second line the fact that the horizontal lift of $\mathbbm A_X Y$ is the orthogonal projection (with kernel spanned by $\chi$) of $\widehat D_{\widetilde X}\widetilde Y$.
 \end{proof}

We are now ready to provide the proof of Proposition \ref{Prop: formula H in G}.

\begin{proof}[Proof of Proposition \ref{Prop: formula H in G}]
As already observed after Equation \eqref{eq:formula H in G}, it suffices to prove that 
$\overline{\mathrm H}=-\JJ(dG_\sigma(\overline\nabla f_\sigma))$. So we shall compute the mean curvature vector of $G_\sigma$ in $\G{n+1}$. For this purpose, let $\{e_1,\ldots,e_n\}$ be a local frame on $M$ which is orthonormal with respect to the first fundamental form $\overline \I=G_\sigma^*\GG$. To simplify the notation, let us denote $\epsilon_i:=dG_\sigma(e_i)$.
 Then $\{\JJ\epsilon_1,\ldots,\JJ\epsilon_n\}$ is an orthonormal basis for the normal bundle of $G_\sigma$, on which the metric $\GG$ is negative definite since $G_\sigma$ is Riemannian. The mean curvature vector can be computed as:
$$\overline{\mathrm H}=\frac{1}{n}\sum_{i=1}^n \overline \II(\epsilon_i,\epsilon_i)=-\frac{1}{n}\sum_{i=1}^n\sum_{k=1}^n \GG(\overline \II(\epsilon_i,\epsilon_i),\JJ\epsilon_k)\JJ\epsilon_k=-\frac{1}{n}\sum_{i=1}^n\sum_{k=1}^n \GG(\mathbbm D_{\epsilon_i}\epsilon_i,\JJ\epsilon_k)\JJ\epsilon_k~,$$
where in the last equality we used that $\overline \II(V,W)$ is the normal projection of $\mathbbm D_V W$. 

Let us now apply this expression to a particular {$\overline \I$-}orthonormal frame $\{e_1,\ldots,e_n\}$ obtained in the following way. Pick a local {$\I$-}orthogonal frame on $M$ of eigenvectors for the shape operator $B$ of $\sigma$, and normalize each of them so as to have unit norm for $\overline\I$. Hence each $e_i$ is an eigenvector of $B$, whose corresponding eigenvalue $\lambda_i$ are the principal curvatures of $\sigma$. We claim that, with this choice, $\GG(\mathbbm D_{\epsilon_i}\epsilon_i,\JJ\epsilon_k)=d(\arctanh\lambda_i)(e_k)$. This will conclude the proof, for
$$\overline{\mathrm H}=\frac{1}{n}\JJ\left(\sum_{i=1}^n\sum_{k=1}^n d(\arctanh\lambda_i)(e_k)\epsilon_k\right)=\JJ\left(\sum_{k=1}^n \partial_{e_k}f_\sigma\ dG_\sigma(e_k)\right)=\JJ(dG_\sigma(\overline \nabla f_\sigma)))~.$$

To show the claim, we will first use Lemma \ref{Lemma: curvatura media in G 1} to get 
$$\GG(\mathbbm D_{\epsilon_i}\epsilon_i,\JJ\epsilon_k)=\widehat g_{T^1\Hyp^{n+1}}(\widehat D_{\widetilde\epsilon_i}\widetilde\epsilon_i,J\widetilde \epsilon_k)~,$$
where $\widehat D$ is the Levi-Civita connection of $\widehat g_{T^1\Hyp^{n+1}}$ {and $\widetilde\epsilon_i$ is the horizontal lift of $\epsilon_i$}. 
As in Equation \eqref{eq: differential of sigma tilde}, we can write $$d\zeta_\sigma(e_i)=(d\sigma(e_i),-\lambda_id\sigma(e_i))$$
and the Levi-Civita connection $\widehat D$ is the normal projection with respect to the metric \eqref{eq:metric minkxmink} of the flat connection $\mathrm d$ of $\R^{n+1,1}\times\R^{n+1,1}$. Hence we can compute: 
\begin{align*}
\GG(\mathbbm D_{\epsilon_i}\epsilon_i,\JJ\epsilon_k)&=-\lambda_k\langle \mathrm d_{d\sigma(e_i)}d\sigma(e_i),d\sigma(e_k)\rangle+\lambda_i\langle \mathrm d_{d\sigma(e_i)}d\sigma(e_i),d\sigma(e_k)\rangle+\partial_{e_i} \lambda_i\langle d\sigma(e_i),d\sigma(e_k)\rangle\\
&=(\lambda_i-\lambda_k)\ \I(\nabla_{e_i}e_i,e_k)+(\partial_{e_i} \lambda_i)\ \I(e_i,e_k)~.
\end{align*}
We recall that $g$ denotes the first fundamental form of $\sigma$, and $\nabla$ its Levi-Civita connection, and in the last equality we used that the Levi-Civita connection of $\Hyp^{n+1}$ is the  projection to the hyperboloid in Minkowski space $\R^{n+1,1}$ of the ambient flat connection.

Now, when $i=k$ we obtained the desired result:
$$\GG(\mathbbm D_{\epsilon_i}\epsilon_i,\JJ\epsilon_i)=\frac{\partial_{e_i} \lambda_i}{1-\lambda_i^2}=d(\arctanh\lambda_i)(e_i)$$
since $e_i$ is a unit vector for the metric $\overline\I$, hence using the expression $\overline\I=\I-\III$ from Equation \eqref{eq:fff gauss} its {squared} norm for the metric $\I$ is $(1-\lambda_i^2)^{-1}$. When $i\neq k$, the latter term disappears since $\{e_1,\ldots,e_n\}$ is an orthogonal frame for $g$, and we are thus left with showing that 
$$(\lambda_i-\lambda_k)\ \I(\nabla_{e_i}e_i,e_k)=d(\arctanh\lambda_i)(e_k)~.$$

For this purpose, using the compatibility of $\nabla$ with the metric, namely $\partial_{ e_i} (\I( e_i,  e_k))=\I(\nabla_{ e_i} e_i,  e_k )+\I( e_i,  \nabla_{ e_i} e_k)$, that $\I(e_i,e_k)=0$, and that $\nabla$ is torsion-free, we get:
$$(\lambda_i-\lambda_k)\ \I(\nabla_{ e_i}  e_i,  e_k) = - (\lambda_i-\lambda_k)\ \I( e_i, \nabla_{ e_i}  e_k)
    = \lambda_k\ \I( e_i,  \nabla_{ e_i} e_k) - \lambda_i\ \I( e_i,  \nabla_{ e_k} e_i) -\lambda_i\ \I( e_i, [ e_i, e_k ])~.$$

Now, 
recall that the Codazzi equation for $\sigma$ is $d^\nabla B=0$. 
Applying it to the vector fields $e_i$ and $e_k$, we obtain
$$d^\nabla B( e_i,  e_k)= \nabla_{ e_i} (\lambda_k  e_k) - \nabla_{ e_k} (\lambda_i  e_i)- B([ e_i,  e_k])=0,$$
from which we derive
\begin{equation}
\label{eq: lemma2 curvatura media}
 \lambda_k \nabla_{e_i}e_k - \lambda_i \nabla_{e_k}e_i= (\partial_{ e_k} \lambda_i) e_i -(\partial_{ e_i} \lambda_k) e_k  + B([ e_i,  e_k])~.
\end{equation}
Using Equation \eqref{eq: lemma2 curvatura media} in the previous expression, we finally obtain:

\begin{align*}
    (\lambda_i-\lambda_k)\ \I(\nabla_{ e_i}  e_i,  e_k) =
    & (\partial_{ e_k}\lambda_i)\ \I( e_i,  e_i) - (\partial_{ e_i}\lambda_k)\ \I( e_i,  e_k)+ \I( e_i, B[ e_i,  e_k] )- \I(B( e_i), [ e_i,  e_k])\\
    = &  \frac{\partial_{ e_k} \lambda_i}{1- \lambda_i^2}=d(\arctanh \lambda_i)(e_k)
\end{align*}
where the cancellations from the first to the second line are due to the fact that $B$ is $\I$-self adjoint and that $\I(e_i,e_k)=0$. This concludes the proof.
\end{proof}

\begin{cor}\label{cor:maslov closed}
Given an $n$-manifold $M$, a representation $\rho\colon \pi_1(M) \to \Isom(\Hyp^{n+1})$ and a $\rho$-equivariant Lagrangian and Riemannian {immersion} $G\colon \widetilde M \to \G{n+1}$, the Maslov class $\mu_G$ is a well-defined cohomology class in $H^1_{dR}(M,\R)$.
\end{cor}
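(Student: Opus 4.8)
The plan is to establish the two properties that together make $\mu_G$ well-defined: that the $1$-form $G^*(\Omega(\overline{\mathrm H},\cdot))$ on $\widetilde M$ is closed, and that it is invariant under the deck action of $\pi_1(M)$, so that it descends to a closed $1$-form on $M$ whose de Rham class is the Maslov class.

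First I would prove closedness, which is the main point and where Proposition \ref{Prop: formula H in G} enters. Since being closed is a local condition, it suffices to check it in a neighbourhood of an arbitrary point $p\in\widetilde M$. Because $G$ is Lagrangian, Theorem \ref{cor: local integrability} provides a neighbourhood $U$ of $p$ and an immersion $\sigma\colon U\to\Hyp^{n+1}$ with $G_\sigma=G|_U$; up to shrinking $U$ I may take $\sigma$ to be an embedding. As $G$ is Riemannian, Proposition \ref{prop: small curv sse riemannian} guarantees that $\sigma$ has small principal curvatures, so Proposition \ref{Prop: formula H in G} applies and yields
$$G^*(\Omega(\overline{\mathrm H},\cdot))|_U = G_\sigma^*(\Omega(\overline{\mathrm H},\cdot)) = df_\sigma~.$$
Thus the form is exact, hence closed, on each such $U$, and therefore closed on all of $\widetilde M$.

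Next I would verify the $\pi_1(M)$-invariance. For $\alpha\in\pi_1(M)$ the equivariance $G\circ\alpha=\rho(\alpha)\circ G$ holds, and by Remark \ref{rmk: Isom preserva Omega e J} the isometry $\rho(\alpha)$ preserves $\GG$, $\JJ$ and $\Omega$, hence also the Levi-Civita connection $\mathbbm D$ used to define $\overline{\mathrm H}$. Consequently the mean curvature vector is equivariant, $\overline{\mathrm H}_{\alpha(p)}=d\rho(\alpha)(\overline{\mathrm H}_p)$, and a direct pull-back computation using that $\rho(\alpha)$ preserves $\Omega$ gives $\alpha^*\big(G^*(\Omega(\overline{\mathrm H},\cdot))\big)=G^*(\Omega(\overline{\mathrm H},\cdot))$, as anticipated in the discussion preceding Definition \ref{defi Maslov class}.

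Finally, a closed $\pi_1(M)$-invariant $1$-form on the universal cover $\widetilde M$ descends to a well-defined closed $1$-form on $M$, whose cohomology class is $\mu_G\in H^1_{dR}(M,\R)$, completing the argument. The only delicate point is the local step: one must ensure that the local integrating immersion $\sigma$ produced by Theorem \ref{cor: local integrability} genuinely has small principal curvatures, so that Proposition \ref{Prop: formula H in G} is applicable. This is exactly where the Riemannian hypothesis on $G$ is used, via Proposition \ref{prop: small curv sse riemannian}.
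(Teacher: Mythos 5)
Your proof is correct, and it differs from the paper's own argument in one structural choice: where you localize, the paper globalizes. The paper's proof invokes Theorem \ref{prop: riemannian global integrability} on the simply connected manifold $\widetilde M$ to produce a single (in general non-equivariant) immersion $\sigma\colon\widetilde M\to\Hyp^{n+2-1}$, i.e.\ $\sigma\colon\widetilde M\to\Hyp^{n+1}$, with $G=G_\sigma$, so that Proposition \ref{Prop: formula H in G} gives $G^*(\Omega(\overline{\mathrm H},\cdot))=df_\sigma$ \emph{globally} on $\widetilde M$: the form is globally exact there, hence closed, and $\rho$-equivariance gives the descent to $M$. You instead use local integrability (Theorem \ref{cor: local integrability}) together with Proposition \ref{prop: small curv sse riemannian} to apply Proposition \ref{Prop: formula H in G} on small neighbourhoods, concluding local exactness and hence closedness; this is equally valid, since closedness is a local condition and, as you correctly emphasize, \emph{any} local integrating immersion automatically has small principal curvatures because $G$ is Riemannian, so Proposition \ref{Prop: formula H in G} applies regardless of which local $\sigma$ Theorem \ref{cor: local integrability} hands you. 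What each approach buys: your route is marginally more economical, as it never needs the global flat-section argument behind Theorem \ref{prop: riemannian global integrability}, while the paper's route yields the stronger conclusion that the form is exact on all of $\widetilde M$ with a globally defined primitive $f_\sigma$ — which is precisely what is exploited immediately afterwards in the proof of Theorem \ref{Teorema hol H}, where the holonomy is computed as $f_\sigma(\alpha(p))-f_\sigma(p)$. Your explicit verification of the $\pi_1(M)$-invariance via Remark \ref{rmk: Isom preserva Omega e J} and the equivariance of $\overline{\mathrm H}$ spells out a step the paper only sketches in the discussion preceding Definition \ref{defi Maslov class}.
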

\begin{proof}
{By Theorem \ref{prop: riemannian global integrability}, $G$ is the Gauss map of a (in general non equivariant) immersion $\sigma:\widetilde M\to\Hyp^{n+1}$. By Proposition \ref{Prop: formula H in G}, the 1-form $G^*(\Omega (\overline {\mathrm H}, \cdot ) )$ on $\widetilde M$ is exact, and $\rho$-equivariant, hence it induces a closed 1-form on $M$ whose cohomology class is $\mu_G$ as in Definition \ref{defi Maslov class}.}
\end{proof}

\subsection{Holonomy of flat principal bundles}\label{sec:hol flat princ bdles}

An immediate consequence of Proposition \ref{Prop: formula H in G} is that the vanishing of the Maslov class is a necessary condition for a $\rho$-equivariant Lagrangian and Riemannian embedding $G:\widetilde M\to \G{n+1}$ to be $\rho$-integrable (Definition \ref{defi rhointegrable}). Indeed, if $\widetilde\sigma:\widetilde M\to\Hyp^{n+1}$ is a $\rho$-equivariant embedding with $G_{\widetilde\sigma}=G$ (hence necessarily with small principal curvatures), then the function $f_{\widetilde\sigma}$ descends to a well-defined function on $M$, hence by Proposition \ref{Prop: formula H in G} {$G^*(\Omega (\overline {\mathrm H}, \cdot ) )$} is an exact 1-form, i.e. the Maslov class $\mu_{G_{\widetilde\sigma}}$ vanishes. We will now see that this condition is also sufficient, which will be a consequence of a more general result, {Theorem \ref{Teorema hol H}}.

Let $G:\widetilde M\to\G{n+1}$ be a $\rho$-equivariant Lagrangian and Riemannian embedding. We have already used that the $G$-pull-back bundle $\widetilde {\mathrm{p}}_{G}\colon \widetilde P \to \widetilde M$ of $\mathrm p:T^1\Hyp^{n+1}\to\G{n+1}$ is a flat trivial $\R$-principal bundle over $\widetilde M$, {namely, it is isomorphic, as a flat principal bundle, to the trivial bundle $\widetilde M\times \R\to \widetilde M$ with flat sections $\widetilde M\times \{\ast\}$}. Moreover, {$G$ being $\rho$-equivariant,} the fundamental group $\pi_1(M)$ acts freely and properly discontinously on $\widetilde P$, thus inducing a flat $\R$-principal bundle structure $\mathrm p_G\colon P \to M$, where $P$ is the quotient of $\widetilde P$ by the action of $\pi_1(M)$. However the bundle $\mathrm p_G\colon P\to M$ is not trivial in general. The obstruction to triviality is represented by the \emph{holonomy} of the bundle, which can be defined, in our setting, as follows.

\begin{defi}\label{defi holonomy}
Let $P\to M$ be a flat principal $\R$-bundle that is isomorphic to the quotient of the trivial bundle $\widetilde M\times\R\to\widetilde M$ by an equivariant (left) action of $\pi_1(M)$.  The \emph{holonomy representation} is the representation $\hol:\pi_1(M)\to\R$ such that  the action of $\pi_1(M)$ is expressed by:
$$\alpha\cdot(m,s)=(\alpha\cdot m,\hol(\alpha)+ s)~.$$
\end{defi}

\begin{remark}\label{rmk: alternative definition of hol}
	Fix $p\in M$ and $\alpha$ a closed $C^1$ loop based at $p$. Then pick a horizontal lift $\widehat\alpha$ to the total space of $\mathrm p_G$, namely with $\frac{d\widehat \alpha}{dt}$ orthogonal to the fibers, so that $\mathrm p\circ  \widehat \alpha=\alpha$. (The lift is uniquely determined by its initial point in $\mathrm{p}_G^{-1}(p)$.)  It follows from Definition \ref{defi holonomy} that
	\[
	\hol_G(\alpha) \cdot\widehat\alpha(1)=  \widehat \alpha(0) .
	\]
	In the identification $\pi_1(M)=\pi_1(M, [p])$, this allows to give an alternative definition of $\hol_G$ through homotopy classes of closed paths in $M$.
\end{remark}

\begin{remark}
We remark that in general, for flat principal $G$-bundles, the holonomy representation is only defined up to conjugacy, but in out case $G=\R$ is abelian and therefore $\hol$ is uniquely determined by the isomorphism class of the flat principal bundle.

Also observe that, since $\R$ is {abelian}, $\hol_G$ induces a map
from $H_1(M, \Z)$ to $\R$,
where $H_1(M,\Z)$ is the first homology group of $M$ and we are using that there is a canonical isomorphism between $H_1(M, \Z)$ and the abelianization of the fundamental group of $M$ in a point. Equivalently, $\hol_G$ is identified to an element of $H^1(M, \R)$.
\end{remark}

We can interpret the holonomy of the principal bundle $\mathrm p_G$ in terms of the geometry of $\Hyp^{n+1}$. 
{Global flat sections of the trivial bundle  $\widetilde{\mathrm{p}}_{G}\colon \widetilde P \to \widetilde M$ correspond to {Riemannian} embeddings $\zeta:\widetilde M\to T^1\Hyp^{n+1}$ as in Corollaries \ref{cor:lagrangian1} and \ref{cor:lagrangian2}.}
By Theorem \ref{prop: riemannian global integrability}, such a $\zeta$ is the lift to $T^1\Hyp^{n+1}$ of an embedding $\sigma:\widetilde M\to\Hyp^{n+1}$ with small principal curvatures. 

Now, let $\alpha\in\pi_1(M)$. {By equivariance of $G$, namely $G\circ\alpha=\rho(\alpha)\circ G$, it follows that $\mathrm p \circ\zeta\circ \alpha= \mathrm p \circ \rho(\alpha)\circ \zeta$, hence $\rho(\alpha)\circ\zeta\circ\alpha^{-1}:\widetilde M\to T^1\Hyp^{n+1}$ provides another flat section of the pull-back bundle $\widetilde {\mathrm{p}}_{G}$. Therefore} there exists $t_\alpha\in\R$ such that
 \begin{equation}\label{eq:equivariance lifts}
 \varphi_{t_\alpha}\circ\zeta=\rho(\alpha)\circ\zeta\circ\alpha^{-1}~.
 \end{equation}
Then the value $t_\alpha$ is precisely the holonomy of the quotient bundle $\mathrm p_G$, namely the group representation
\[
\hol_G\colon \pi_1(M)\to \R\qquad \hol_G(\alpha)=t_\alpha
\]
A direct consequence of this discussion is the following:

\begin{lemma}\label{lemma:integrable iff trivial bundle}
Given an $n$-manifold $M$ and a representation $\rho\colon \pi_1(M) \to \Isom(\Hyp^{n+1})$, a $\rho$-equivariant Lagrangian and Riemannian embedding $G\colon \widetilde M \to \G{n+1}$ is $\rho$-integrable if and only if the $\R$-principal flat bundle $\mathrm p_G$ is trivial.
\end{lemma}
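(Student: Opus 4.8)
The plan is to translate each side of the claimed equivalence into the existence of a suitable section of the principal bundle, and then to match these two descriptions using the equivariance relation \eqref{eq:equivariance lifts} established just above the statement.

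First I would recall that, by definition, the flat bundle $\mathrm p_G\colon P\to M$ is trivial precisely when it admits a \emph{global} flat section. Since $P$ is the quotient of $\widetilde P$ by the action of $\pi_1(M)$ induced by $\rho$, such a section is equivalent to the datum of a $\pi_1(M)$-invariant flat section of the pull-back bundle $\widetilde{\mathrm p}_G$. Under the identification of flat sections of $\widetilde{\mathrm p}_G$ with maps $\zeta\colon\widetilde M\to T^1\Hyp^{n+1}$ orthogonal to $\chi$ and satisfying $\mathrm p\circ\zeta=G$ (Corollaries \ref{cor:lagrangian1} and \ref{cor:lagrangian2}), the invariance condition reads $\rho(\alpha)\circ\zeta\circ\alpha^{-1}=\zeta$ for all $\alpha\in\pi_1(M)$, that is, $\zeta$ is $\rho$-equivariant.

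Next I would invoke \eqref{eq:equivariance lifts}: for an arbitrary flat section $\zeta$ one has $\rho(\alpha)\circ\zeta\circ\alpha^{-1}=\varphi_{\hol_G(\alpha)}\circ\zeta$. Since the geodesic flow acts freely on $T^1\Hyp^{n+1}$, the equality $\varphi_{\hol_G(\alpha)}\circ\zeta=\zeta$ forces $\hol_G(\alpha)=0$, and conversely $\hol_G\equiv 0$ makes $\zeta$ equivariant. This is consistent with the fact that $\hol_G$ is independent of the chosen flat section, because replacing $\zeta$ by $\varphi_c\circ\zeta$ leaves the values $t_\alpha$ unchanged, the geodesic flow commuting with $\rho(\alpha)$ by Lemma \ref{lemma:geodflow isometric}. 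Thus triviality of $\mathrm p_G$ is equivalent to the existence of a $\rho$-equivariant flat section $\zeta$.

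It remains to match this with $\rho$-integrability, which is the essentially formal step. If $\zeta$ is a $\rho$-equivariant flat section, then by Theorem \ref{prop: riemannian global integrability} the map $\sigma:=\pi\circ\zeta$ is an embedding with small principal curvatures whose lift is $\zeta$ and whose Gauss map is $G$; equivariance of $\pi$ gives $\sigma\circ\alpha=\pi\circ\rho(\alpha)\circ\zeta=\rho(\alpha)\circ\sigma$, so $\sigma$ is $\rho$-equivariant and $G$ is $\rho$-integrable. Conversely, if $G=G_{\widetilde\sigma}$ for a $\rho$-equivariant $\widetilde\sigma$, then $\zeta_{\widetilde\sigma}$ is orthogonal to $\chi$ (Proposition \ref{prop:gauss immersion}), projects to $G$, and is $\rho$-equivariant because $\rho(\alpha)$ is an orientation-preserving isometry and hence carries the compatible unit normal to the compatible unit normal; this is the desired equivariant flat section. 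I do not expect any genuine obstacle here: the substantive content has been front-loaded into \eqref{eq:equivariance lifts} and Theorem \ref{prop: riemannian global integrability}, and what is left is bookkeeping of equivariance together with the freeness of the geodesic flow.
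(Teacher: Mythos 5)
Your proof is correct and takes essentially the same route as the paper: triviality of $\mathrm p_G$ is identified with the vanishing of $\hol_G$ via Equation \eqref{eq:equivariance lifts}, and equivariance of a flat section $\zeta$ is matched with equivariance of $\sigma=\pi\circ\zeta$ using Theorem \ref{prop: riemannian global integrability}. Your extra remarks — freeness of the geodesic flow forcing $t_\alpha=0$, and the independence of $\hol_G$ from the chosen flat section — are points the paper leaves implicit, so nothing further is needed.
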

\begin{proof}
The bundle $\mathrm p_G$ is trivial if and only if its holonomy $\hol_G$ vanishes identically, that is, if and only if $t_\alpha=0$ for every $\alpha\in\pi_1(M)$. By the above construction, this is equivalent to the condition that $\zeta\circ\alpha=\rho(\alpha)\circ\zeta$ for all $\alpha$, which {is equivalent to} $\sigma\circ\alpha=\rho(\alpha)\circ\sigma$, {namely} that $\sigma$ is $\rho$-equivariant.
\end{proof}

We are ready to prove the following.

\begin{theorem}
\label{Teorema hol H}
Given an $n$-manifold $M$, a representation $\rho\colon \pi_1(M) \to \Isom(\Hyp^{n+1})$ and a $\rho$-equivariant Lagrangian and Riemannian embedding $G\colon \widetilde M \to \G{n+1}$, the holonomy of $\mathrm p_G$ is given by
\[
\hol_G(\alpha)=\int_{\alpha} \mu_G.
\]
for all $\alpha \in \pi_1(M)$.
\end{theorem}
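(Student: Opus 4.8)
The plan is to exploit the explicit primitive of the Maslov form furnished by Proposition \ref{Prop: formula H in G}. Since $\widetilde M$ is simply connected and $G$ is Riemannian and Lagrangian, Theorem \ref{prop: riemannian global integrability} produces an immersion $\sigma\colon\widetilde M\to\Hyp^{n+1}$ with small principal curvatures and $G_\sigma=G$; let $\zeta=\zeta_\sigma$ be its lift and $f_\sigma$ the smooth function of Equation \eqref{eq:aux function}. By Proposition \ref{Prop: formula H in G} we have $G^*(\Omega(\overline{\mathrm H},\cdot))=df_\sigma$ on $\widetilde M$, so $f_\sigma$ is a primitive of the pull-back to $\widetilde M$ of the closed $1$-form on $M$ representing $\mu_G$. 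Consequently, lifting a loop $\alpha$ to a path $\widetilde\alpha$ in $\widetilde M$ from a basepoint $\widetilde p$ to $\alpha\cdot\widetilde p$ and using the de Rham description of the period gives
\[
\int_\alpha \mu_G=\int_{\widetilde\alpha} df_\sigma=f_\sigma(\alpha\cdot\widetilde p)-f_\sigma(\widetilde p)~.
\]

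The crux is then to identify the right-hand side with $\hol_G(\alpha)=t_\alpha$. First I would observe that $\rho(\alpha)\circ\zeta\circ\alpha^{-1}$ is the lift of the immersion $\sigma^\alpha:=\rho(\alpha)\circ\sigma\circ\alpha^{-1}$, which has the same Gauss map $G$ by equivariance of $G$. By the defining relation \eqref{eq:equivariance lifts} of the holonomy together with Proposition \ref{prop: flusso geod e flusso normale}, we get $\zeta_{\sigma^\alpha}=\varphi_{t_\alpha}\circ\zeta_\sigma=\zeta_{\sigma_{t_\alpha}}$, where $\sigma_{t_\alpha}$ is the normal evolution of $\sigma$ at time $t_\alpha$ (an immersion by Corollary \ref{cor:equidistant is immersed}). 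Since a lift determines the immersion via $\pi$, this yields the key identity $\sigma^\alpha=\sigma_{t_\alpha}$: the failure of $\sigma$ to be $\rho$-equivariant is exactly a normal evolution by the holonomy time $t_\alpha$.

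Finally I would apply the function $f$ to this identity. On one hand, $f_{\sigma_{t_\alpha}}=f_\sigma-t_\alpha$ by Lemma \ref{lemma:evolution fsigma}. On the other hand, since $\rho(\alpha)$ is an isometry and $\alpha^{-1}$ a diffeomorphism, the principal curvatures of $\sigma^\alpha$ at a point coincide with those of $\sigma$ at its preimage, whence $f_{\sigma^\alpha}=f_\sigma\circ\alpha^{-1}$. Combining the two gives $f_\sigma\circ\alpha^{-1}=f_\sigma-t_\alpha$, and evaluating at $\alpha\cdot\widetilde p$ produces $f_\sigma(\alpha\cdot\widetilde p)-f_\sigma(\widetilde p)=t_\alpha=\hol_G(\alpha)$; together with the displayed period formula this proves $\int_\alpha\mu_G=\hol_G(\alpha)$.

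The main obstacle I anticipate is bookkeeping rather than conceptual: one must check carefully that $\rho(\alpha)\circ\zeta\circ\alpha^{-1}$ is genuinely the lift $\zeta_{\sigma^\alpha}$, with the orientation and unit normal matching (here the orientability of $M$, so that deck transformations preserve orientation, is what makes $\sigma^\alpha$ a bona fide oriented immersion), and that the sign conventions in Definition \ref{defi holonomy}, in \eqref{eq:equivariance lifts}, and in the de Rham pairing all combine consistently. Once the identity $\sigma^\alpha=\sigma_{t_\alpha}$ is established, the remainder is a direct application of Lemma \ref{lemma:evolution fsigma} and Proposition \ref{Prop: formula H in G}.
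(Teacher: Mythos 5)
Your proposal is correct and takes essentially the same route as the paper's own proof: both use the equivariance relation \eqref{eq:equivariance lifts} together with Proposition \ref{prop: flusso geod e flusso normale} to identify the failure of $\rho$-equivariance of $\sigma$ with a normal evolution by the holonomy time $t_\alpha$, then combine Lemma \ref{lemma:evolution fsigma} and the isometric invariance of $f_\sigma$ to get $f_\sigma(\alpha\cdot\widetilde p)-f_\sigma(\widetilde p)=t_\alpha$, and conclude via Proposition \ref{Prop: formula H in G} and the de Rham pairing. The only cosmetic difference is that you phrase the key identity as $\rho(\alpha)\circ\sigma\circ\alpha^{-1}=\sigma_{t_\alpha}$, whereas the paper writes the equivalent relation $(\sigma\circ\alpha)_{t_\alpha}=\rho(\alpha)\circ\sigma$.
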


Observe that Theorem \ref{teorema hol H baby} follows immediately from Theorem \ref{Teorema hol H} since, by the standard de Rham Theorem, there exists an isomorphism 
\begin{equation*}
    \begin{split}
        H_{dR}^1(M, \R) &\xrightarrow{\sim} H^1(M, \R) \\
        \eta &\mapsto \bigg( \xi \mapsto \int_\xi \eta \bigg),
    \end{split}
\end{equation*}
hence $\hol_G\equiv 0$ if and only if $\mu_G=0$. 

\begin{proof}[Proof of Theorem \ref{Teorema hol H}]
Let $\zeta:\widetilde M\to T^1\Hyp^{n+1}$ be a map {such} that $\mathrm{p}\circ \zeta=G$, so as to induce a global section of the pull-back bundle $\widetilde {\mathrm p}_G$. Then by Equation \eqref{eq:equivariance lifts} the holonomy $t_\alpha=\hol_G(\alpha)$ satisfies $\varphi_{t_\alpha}\circ\zeta\circ\alpha=\rho(\alpha)\circ\zeta$. By Proposition \ref{prop: flusso geod e flusso normale}, this gives the following equivariance relation for $\sigma=\pi\circ\zeta$:
$$(\sigma\circ\alpha)_{t_\alpha}=\rho(\alpha)\circ\sigma~.$$
{Let now $f_\sigma$ denote the mean of the hyperbolic arctangents of the principal curvatures, as in Equation \eqref{eq:aux function}}. Lemma \ref{lemma:evolution fsigma} and the fact that $\rho(\alpha)$ acts isometrically imply:
$$f_{\sigma\circ\alpha}=f_\sigma+t_\alpha~.$$
Now, by Proposition \ref{Prop: formula H in G} and the definition of the Maslov class, we have:
$$\int_\alpha \mu_G=\int_\alpha df_\sigma=f_\sigma(\alpha(p))-f_\sigma(p)=t_\alpha$$
for any point $p\in M$. This concludes the proof.
\end{proof}

\subsection{Minimal Lagrangian immersions}

We prove here two direct corollaries of Theorem \ref{teorema hol H baby}. Let us first recall the definition of minimal Lagrangian (Riemannian) immersions.

\begin{defi}
A {Riemannian} immersion of an $n$-manifold into $\G{n+1}$ is \emph{minimal Lagrangian} if:
\begin{itemize}
\item  its mean curvature vector vanishes identically;
\item it is Lagrangian with respect to the symplectic form $\Omega$.
\end{itemize}
\end{defi}

Our first corollary is essentially a consequence of Theorem \ref{teorema hol H baby}.

\begin{cor} \label{cor:minimal is rho integrable}
Let $M^n$ be a closed {orientable} manifold  and  $\rho:\pi_1(M)\to\Isom(\Hyp^{n+1})$ a representation. If $G:\widetilde M\to\G{n+1}$ is a $\rho$-equivariant Riemannian minimal Lagrangian immersion, then $G$ is the Gauss map of a $\rho$-equivariant embedding $\widetilde\sigma:\widetilde M\to \Hyp^{n+1}$ with small principal curvatures such that
$$f_{\widetilde\sigma}={\frac{1}{n}}\sum_{i=1}^n\arctanh\lambda_i=0~.$$
In particular, $\rho$ is a nearly-Fuchsian representation and $G$ is an embedding.
\end{cor}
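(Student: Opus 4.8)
The plan is to combine the trivial vanishing of the Maslov class in the minimal case with the first characterization of $\rho$-integrability (Theorem \ref{teorema hol H baby}), and then to normalize the resulting immersion by a suitable normal evolution so that $f_{\widetilde\sigma}$ becomes identically zero. First, since $G$ is minimal its mean curvature vector $\overline{\mathrm H}$ vanishes identically, so the representing $1$-form $G^*(\Omega(\overline{\mathrm H},\cdot))$ is identically zero and hence $\mu_G=0$ in $H^1_{dR}(M)$. By Theorem \ref{teorema hol H baby}, $G$ is therefore $\rho$-integrable: there exists a $\rho$-equivariant immersion $\widetilde\sigma^0\colon\widetilde M\to\Hyp^{n+1}$ with $G_{\widetilde\sigma^0}=G$. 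Because $G$ is Riemannian, Proposition \ref{prop: small curv sse riemannian} guarantees that $\widetilde\sigma^0$ has small principal curvatures.

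Next I would normalize $\widetilde\sigma^0$. Applying Proposition \ref{Prop: formula H in G} together with $\overline{\mathrm H}=0$ gives $df_{\widetilde\sigma^0}=G^*(\Omega(\overline{\mathrm H},\cdot))=0$, so $f_{\widetilde\sigma^0}$ is constant, equal to some $c\in\R$ (note that $f_{\widetilde\sigma^0}$ is $\pi_1(M)$-invariant by equivariance, since $\Isom(\Hyp^{n+1})$ preserves principal curvatures, so it descends to the connected manifold $M$). I then replace $\widetilde\sigma^0$ by its normal evolution $\widetilde\sigma:=(\widetilde\sigma^0)_c$ at time $c$. Since orientation-preserving isometries map normal evolutions to normal evolutions, $\widetilde\sigma$ is again $\rho$-equivariant; it has small principal curvatures by Corollary \ref{cor:equidistant is immersed}, its Gauss map is still $G$ by Corollary \ref{prop:gauss map invariant normal evo}, and by Lemma \ref{lemma:evolution fsigma} one has $f_{\widetilde\sigma}=f_{\widetilde\sigma^0}-c\equiv 0$, as required.

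Finally, to upgrade to an embedding, I would observe that the first fundamental form of $\widetilde\sigma$ is $\pi_1(M)$-invariant, hence descends to a Riemannian metric on the closed manifold $M$, which is automatically complete; therefore $\widetilde\sigma$ is a complete immersion with small principal curvatures. Proposition \ref{prop injectivity} then shows that $\widetilde\sigma$ is a proper embedding and $\widetilde M\cong\R^n$, while Proposition \ref{prop:gauss maps diffeo onto image} shows that $G=G_{\widetilde\sigma}$ is an embedding. Since we have exhibited a $\rho$-equivariant immersion with small principal curvatures, $\rho$ is nearly-Fuchsian by Definition \ref{defi:nearly fuch rep}.

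The one point deserving care is the reduction to $f_{\widetilde\sigma}=0$: minimality alone only yields that $f$ is \emph{constant}, and it is the normal-evolution shift (legitimate precisely because the Gauss map is invariant under normal evolution and small principal curvatures persist) that pins this constant to zero. Everything else is a direct application of the results already established, so I do not expect a genuine obstacle beyond assembling these ingredients in the correct order.
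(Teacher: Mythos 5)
Your proof is correct and takes essentially the same route as the paper's: vanishing of $\mu_G$ combined with Theorem \ref{teorema hol H baby} to obtain a $\rho$-equivariant integrating immersion with small principal curvatures (via Proposition \ref{prop: small curv sse riemannian}), Proposition \ref{Prop: formula H in G} and Lemma \ref{lemma:evolution fsigma} to normalize $f_{\widetilde\sigma}$ to zero by a normal evolution, and completeness-by-cocompactness feeding into Propositions \ref{prop injectivity} and \ref{prop:gauss maps diffeo onto image} for the embedding conclusions. The only difference is that you make explicit several details the paper leaves implicit (equivariance of the normal evolution, descent of $f_{\widetilde\sigma^0}$ and of the first fundamental form to the closed quotient $M$), all of which are correct.
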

We remark that if $n=2$, then the condition $f_{\widetilde\sigma}=0$ is equivalent to $\lambda_1+\lambda_2=0$ since $\arctanh$ is an odd {and injective} function. That is, in this case $\widetilde\sigma$ is a \emph{minimal} embedding in $\Hyp^3$. 
\begin{proof}
Suppose $G$ is a $\rho$-equivariant minimal Lagrangian immersion. Since its mean curvature vector vanishes identically, we have $\mu_G=0$ and therefore $G$ is $\rho$-integrable by Theorem \ref{teorema hol H baby}. That is, there exists a $\rho$-equivariant immersion $\widetilde\sigma:\widetilde M\to\Hyp^{n+1}$ such that $G=G_{\widetilde\sigma}$. By Proposition \ref{prop: small curv sse riemannian}, $\widetilde\sigma$ has small principal curvatures, hence $\rho$ is nearly-Fuchsian. By Proposition \ref{Prop: formula H in G}, we have that $f_{\widetilde\sigma}$ is constant. By Lemma \ref{lemma:evolution fsigma}, up to taking the normal evolution, we can find $\widetilde\sigma$ such that $f_{\widetilde\sigma}$ vanishes identically.

Finally, $\widetilde \sigma$ is complete by cocompactness, and therefore both $\widetilde\sigma$ and $G$ are embeddings by Proposition \ref{prop injectivity} and Proposition \ref{prop:gauss maps diffeo onto image}.  
\end{proof}

The following is a uniqueness result for $\rho$-equivariant minimal Lagrangian immersions.

\begin{corx}\label{cor:uniqueness min lag}
Given a closed {orientable} manifold $M^n$ and a representation $\rho:\pi_1(M)\to\Isom(\Hyp^{n+1})$, there exists at most one $\rho$-equivariant Riemannian minimal Lagrangian immersion $G:\widetilde M\to\G{n+1}$ up to reparametrization. If such a $G$ exists, then $\rho$ is nearly-Fuchsian and $G$ {induces a {minimal Lagrangian} embedding of $M$ in $\mathcal G_\rho$}.
\end{corx}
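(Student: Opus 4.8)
The plan is to reduce the entire statement to a uniqueness property for hypersurfaces with $f\equiv 0$ in the associated nearly-Fuchsian manifold, and then to establish the latter by a geometric touching (maximum principle) argument. First I would invoke Corollary \ref{cor:minimal is rho integrable}: any $\rho$-equivariant Riemannian minimal Lagrangian immersion $G\colon\widetilde M\to\G{n+1}$ is the Gauss map of a $\rho$-equivariant embedding $\widetilde\sigma\colon\widetilde M\to\Hyp^{n+1}$ with small principal curvatures and $f_{\widetilde\sigma}=0$; this already yields that $\rho$ is nearly-Fuchsian and that $G$ is an embedding, while Corollary \ref{cor:embedding in Grho} provides the induced embedding of $M$ into $\mathcal G_\rho$, which is Lagrangian and minimal because both conditions descend to the quotient. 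By Remark \ref{rmk:embedding in the quotient} the embedding $\widetilde\sigma$ descends to a closed embedded hypersurface $S=\sigma(M)$ in the nearly-Fuchsian manifold $N=\faktor{\Hyp^{n+1}}{\rho(\pi_1(M))}$ with $f_\sigma\equiv 0$. Since the Gauss map is invariant under the normal evolution (Corollary \ref{prop:gauss map invariant normal evo}) and $f_{\sigma_t}=f_\sigma-t$ (Lemma \ref{lemma:evolution fsigma}), the normalization $f_\sigma=0$ selects a unique leaf in each normal-evolution family; hence it suffices to show that $N$ contains \emph{at most one} closed embedded hypersurface with small principal curvatures and $f\equiv 0$. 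Indeed, two such immersions $G_1,G_2$ producing the same hypersurface $S$ have lifts $\widetilde\sigma_1,\widetilde\sigma_2$ with equal image, so they differ by a $\rho$-equivariant diffeomorphism $\phi$ of $\widetilde M$ descending to $M$, whence $G_2=G_1\circ\phi$.

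For the uniqueness of $S$, suppose $S_1$ and $S_2$ are two such hypersurfaces, with equivariant embeddings $\widetilde\sigma_1,\widetilde\sigma_2$. The key tool is the equidistant foliation determined by $S_1$: by Proposition \ref{prop:action free prop disc0} the geodesics orthogonal to $\widetilde\sigma_1(\widetilde M)$ foliate $\Hyp^{n+1}$, so the normal evolution $(m,t)\mapsto\widetilde\sigma_{1,t}(m)$ is a $\rho$-equivariant diffeomorphism $\widetilde M\times\R\xrightarrow{\sim}\Hyp^{n+1}$ (each $\widetilde\sigma_{1,t}$ being an immersion with small principal curvatures by Corollary \ref{cor:equidistant is immersed}), and descends to a foliation $\{S_{1,t}\}_{t\in\R}$ of $N$ whose leaves satisfy $f_{S_{1,t}}\equiv -t$. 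I would then consider the smooth function $\tau\colon S_2\to\R$ assigning to $x$ the leaf parameter with $x\in S_{1,\tau(x)}$, and let $t_+=\max\tau$ and $t_-=\min\tau$ be attained at points $p,q\in S_2$, which exist by compactness of $S_2$.

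The heart of the proof is the pointwise comparison at these extremal points. At $p$ the hypersurface $S_2$ is tangent to the leaf $S_{1,t_+}$ and lies on the side of smaller leaf parameter; writing both hypersurfaces locally as graphs over the common tangent plane and using the common increasing-$t$ unit normal, the side condition forces an inequality between the Hessians and hence between the shape operators, $B_{S_2}(p)\le B_{S_{1,t_+}}(p)$. Since the eigenvalues of symmetric operators are monotone under this ordering and $\arctanh$ is increasing, this gives $0=f_{S_2}(p)\le f_{S_{1,t_+}}(p)=-t_+$, i.e.\ $t_+\le 0$ (note that $f_{S_2}=0$ independently of the normal). The symmetric computation at $q$ yields $t_-\ge 0$, and as $t_-\le t_+$ trivially, I conclude $t_-=t_+=0$, so $\tau\equiv 0$ and $S_2\subseteq S_{1,0}=S_1$; equality of the two closed connected hypersurfaces follows. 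I expect the main obstacle to be the careful bookkeeping of orientations and signs in this comparison — ensuring that the normals of the leaves and the ``lies below'' condition really produce $B_{S_2}\le B_{\text{leaf}}$, so that the induced inequality on $f$ points in the direction that closes the argument. Everything else reduces to direct applications of the results established above.
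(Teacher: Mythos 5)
Your proposal is correct and takes essentially the same approach as the paper: reduce via Corollary \ref{cor:minimal is rho integrable} (and Corollary \ref{cor:embedding in Grho}, Remark \ref{rmk:embedding in the quotient}) to the uniqueness of a closed hypersurface with $f\equiv 0$ in the nearly-Fuchsian quotient, and then run a touching argument at the extremal points against the equidistant family of the first hypersurface, using the shape-operator inequality at a tangency, eigenvalue monotonicity (Weyl) and the monotonicity of $\arctanh$ together with $f_{\sigma_t}=f_\sigma-t$ to force both extremal values to vanish. Your leaf parameter $\tau$ is exactly the paper's signed distance function $r$ restricted to the second hypersurface, so the two arguments coincide up to phrasing.
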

\begin{proof}
Suppose that $G$ and $G'$ are $\rho$-equivariant minimal Lagrangian immersions. By Corollary \ref{cor:minimal is rho integrable}, there exist $\rho$-equivariant embeddings $\widetilde\sigma,\widetilde\sigma':\widetilde M\to\Hyp^{n+1}$ {with small principal curvatures} such that $G=G_{\widetilde\sigma}$ and $G'=G_{\widetilde\sigma'}$, with $f_{\widetilde\sigma}=f_{\widetilde\sigma'}=0$.  {Moreover, $G$ and $G'$ induce embeddings in  $\mathcal G_\rho$ by Corollary \ref{cor:embedding in Grho}.}

By Remark \ref{rmk:embedding in the quotient}, both $\sigma$ and $\sigma'$ induce embeddings of {$M$ in the nearly-Fuchsian manifold $\faktor{\Hyp^{n+1}}{\rho(\pi_1(M))}$; let us denote with $\Sigma$ and $\Sigma'$ the corresponding images}. We claim that $\Sigma=\Sigma'$, which implies the uniqueness in the statement. 

To see this, consider the signed distance from $\Sigma$, which is a proper function 
$$r:\faktor{\Hyp^{n+1}}{\rho(\pi_1(M))}\to\R~.$$
 Since $\Sigma'$ is closed, $r|_{\Sigma'}$ admits a maximum value $r_{\max}$ achieved at some point $x_{\max}\in\Sigma'$.   This means that at the point $x_{\max}$, $\Sigma'$ is tangent to a hypersurface $\Sigma_{r_{\max}}$ at signed distance $r_{\max}$ from $\Sigma$, and $\Sigma'$ is contained in the side of $\Sigma_{r_{\max}}$ where $r$ is decreasing. This implies that, if $B'$ denotes the shape operator of $\Sigma'$ and $B_{r_{\max}}$ that of $\Sigma_{r_{\max}}$, both computed with respect to the unit normal vector pointing to the side of increasing $r$, then $B_{r_{\max}}-B'$ is positive semi-definite at $x_{\max}$.
 
Let us now denote by $\lambda_1,\ldots,\lambda_n$ the eigenvalues of $B_{r_{\max}}$ and $\lambda'_1,\ldots,\lambda_n'$ those of $B'$. Let us moreover assume that $\lambda_1\leq \ldots\leq\lambda_n$ and similarly for the $\lambda_i'$. By Weyl's monotonicity theorem, $\lambda_i\geq \lambda_i'$ at $x_{\max}$ for $i=1,\ldots,n$. Since $\arctanh$ is a monotone increasing function, this implies that
$$\sum_{i=1}^n\arctanh\lambda_i(x_{\max})\geq \sum_{i=1}^n\arctanh\lambda_i'(x_{\max})~.$$
Since $f_{\widetilde\sigma'}=0$, the right-hand side vanishes. On the other hand, since $f_{\widetilde\sigma}=0$, from Lemma \ref{lemma:evolution fsigma} the left-hand side is identically equal to $-r_{\max}$. Hence $r_{\max}\leq 0$. Repeating the same argument replacing the maximum point of $r$ on $\Sigma'$ by the minimum point, one shows $r_{\min}\geq 0$. Hence $r|_{\Sigma'}$ vanishes identically, which proves that $\Sigma=\Sigma'$ and thus concludes the proof.
\end{proof}

\section{Equivariant integrability: Hamiltonian symplectomorphisms}\label{sec:hamiltonian}

In this section we will provide the second characterization of $\rho$-integrability, in the case of a nearly-Fuchsian representation $\rho:\pi_1(M)\to\Isom(\Hyp^{n+1})$. We first introduce the terminology and state the result (Theorem \ref{thm:second char ham}); then we introduce the so-called \emph{Lagrangian Flux map} which will play a central role in the proof of Theorem \ref{thm:second char ham}.

\subsection{Hamiltonian group and nearly-Fuchsian manifolds}
We will restrict hereafter to the case of nearly-Fuchsian representations $\rho:\pi_1(M)\to\Isom(\Hyp^{n+1})$. 
Let $G:\widetilde M\to\G{n+1}$ be a $\rho$-integrable immersion as in Definition \ref{defi rhointegrable}. Since $\rho$ is nearly-Fuchsian, we showed in Corollary \ref{cor:embedding in Grho} that $G$ induces an embedded submanifold in the para-K\"ahler manifold $\mathcal G_\rho$, defined in Definition \ref{defi quotient Grho}. This motivates the following definition in the spirit of Definition \ref{defi rhointegrable}.

\begin{defi}\label{defi rhointegrable submfd}
Given a closed {orientable} $n$-manifold $M$ and a nearly-Fuchsian representation $\rho:\pi_1(M)\to\Isom(\Hyp^{n+1})$,
 an embedding 
 $M\to \mathcal G_\rho$ is $\rho$-\emph{integrable} if it is induced in the quotient from {a $\rho$-integrable} embedding ${G}\colon \widetilde M\to \G{n+1}$.
Similarly, an embedded submanifold $\mathcal L\subset \mathcal G_\rho$ is $\rho$-\emph{integrable} if it is the image of a $\rho$-integrable embedding.
\end{defi}

Theorem \ref{thm:second char ham} below gives a description of the set of $\rho$-integrable submanifolds $\mathcal L\subset \mathcal G_\rho$ which are induced by immersions $G$ with small principal curvatures. Clearly, {as we have previously shown}, a necessary condition on $\mathcal L$ is that of being Lagrangian and Riemannian. To state the theorem, we need to recall the notion of Hamiltonian symplectomorphism.

\begin{defi}\label{defi Hamc}
Given a symplectic manifold $(\mathcal X,\Omega)$, a compactly supported symplectomorphism $\Phi$ is \emph{Hamiltonian} if there exists a compactly supported smooth function $F_\bullet:\mathcal X\times[0,1]\to\R$
 such that $\Phi=\Phi_1$, where $\Phi_{s_0}$ is the flow at time $s_0$ of the (time-dependent) vector field $X_s$ defined by:
\begin{equation}\label{eq:hamiltonian function}
dF_s=\Omega(X_s,\cdot)~.
\end{equation}
The isotopy  $\Phi_\bullet:\mathcal X\times [0,1]\to \mathcal X$ is called \emph{Hamiltonian isotopy}.
\end{defi} 

\begin{remark}\label{rmk:hamiltonian and cartan}
 If $\Phi_\bullet$ is a Hamiltonian isotopy as in Definition \ref{defi Hamc}, then $\Phi_s$ is a symplectomorphism for every $s\in[0,1]$. Indeed
$$\mathcal L_{X_s}\Omega=\iota_{X_s}d\Omega+d(\iota_{X_s}\Omega)=0$$
as a consequence of Cartan's formula and Equation \eqref{eq:hamiltonian function}, and $\Phi_s$ is clearly Hamiltonian. 
\end{remark}

Compactly supported Hamiltonian symplectomorphisms form a group which we will denote by $\Ham_c(\mathcal X,\Omega)$.

{The aim of this section is to prove the following result.}

\begin{thmx}\label{thm:second char ham}
Let $M$ be a closed {orientable} $n$-manifold, $\rho:\pi_1(M)\to\Isom(\Hyp^{n+1})$ be a nearly-Fuchsian representation and $\mathcal L\subset\mathcal G_\rho$ a Riemannian $\rho$-integrable submanifold. Then a Riemannian submanifold $\mathcal L'$ is $\rho$-integrable if and only if there exists $\Phi\in \Ham_c(\mathcal G_\rho,\Omega)$ such that $\Phi(\mathcal L)=\mathcal L'$.
\end{thmx}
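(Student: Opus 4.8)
The strategy is to translate $\rho$-integrability into a statement about the flat $\R$-principal bundle $\mathrm p_G$ and then to compare the two submanifolds by means of the \emph{Lagrangian flux}. Recall from Lemma \ref{lemma:integrable iff trivial bundle} that a $\rho$-equivariant Lagrangian Riemannian embedding is $\rho$-integrable precisely when the holonomy $\hol_G\colon\pi_1(M)\to\R$ of $\mathrm p_G$ vanishes, equivalently when the class it defines in $H^1(M,\R)$ is zero. The central tool is Proposition \ref{prop: flux = diff holonomy}, which I would establish first, adapting the Lagrangian flux of \cite{solomon} to the present setting: given any smooth Lagrangian isotopy $\{\mathcal L_s\}_{s\in[0,1]}$ in $\mathcal G_\rho$ with $\mathcal L_0=\mathcal L$, the difference $\hol_{G_s}-\hol_{G_0}$ of holonomies is de Rham dual to the flux $\Flux(\{\mathcal L_u\}_{u\in[0,s]})\in H^1(M,\R)$, obtained by integrating in time the closed $1$-forms $\alpha_u=\iota_u^*(\iota_{X_u}\Omega)$, where $X_u$ is the variation field of the isotopy. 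Both implications will be read off from this identity.

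For the implication $(\Leftarrow)$, suppose $\mathcal L'=\Phi(\mathcal L)$ where $\Phi=\Phi_1$ is the time-one map of a Hamiltonian isotopy $\Phi_\bullet$ generated by $F_\bullet$ as in Definition \ref{defi Hamc}. Setting $\mathcal L_s=\Phi_s(\mathcal L)$ gives a Lagrangian isotopy, and since $\iota_{X_s}\Omega=dF_s$ its generating $1$-form is $\alpha_s=\iota_s^*dF_s=d(F_s\circ\iota_s)$, hence exact. Therefore the flux vanishes, and by Proposition \ref{prop: flux = diff holonomy} we obtain $\hol_{G'}=\hol_G=0$, so that $\mathcal L'$ is $\rho$-integrable by Lemma \ref{lemma:integrable iff trivial bundle}. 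This computation uses neither the Riemannian hypothesis nor anything beyond the fact that $\mathcal L'$ is Lagrangian, which holds since $\Phi$ is a symplectomorphism.

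For the harder implication $(\Rightarrow)$, I would build an explicit Lagrangian isotopy in $\mathcal G_\rho$ joining $\mathcal L$ to $\mathcal L'$ by deforming hypersurfaces in the nearly-Fuchsian manifold $N=\Hyp^{n+1}/\rho(\pi_1(M))$. By Definition \ref{defi rhointegrable submfd} and Proposition \ref{prop: small curv sse riemannian}, both $\mathcal L$ and $\mathcal L'$ are Gauss maps of $\rho$-equivariant embeddings $\widetilde\sigma_0,\widetilde\sigma_1$ with small principal curvatures, descending to embedded hypersurfaces $\Sigma_0,\Sigma_1\subset N$ by Corollary \ref{cor:embedding in Grho}. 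Using that the Gauss map is invariant under normal evolution (Corollary \ref{prop:gauss map invariant normal evo}) while the principal curvatures evolve as $\tanh(\mu_i-t)$ (Lemma \ref{lemma:evolution fsigma}), a large normal displacement makes $\Sigma_0,\Sigma_1$ strictly convex, keeping their principal curvatures in $(-1,1)$ and, crucially, without changing $\mathcal L,\mathcal L'$. I may therefore assume $\Sigma_0,\Sigma_1$ convex, and then invoke Lemma \ref{lemma:convex interpolation} to produce a family of $\rho$-equivariant convex embeddings $\widetilde\sigma_s$ with small principal curvatures interpolating $\widetilde\sigma_0$ and $\widetilde\sigma_1$. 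Their Gauss maps $\mathcal L_s=G_{\widetilde\sigma_s}$ form a Lagrangian isotopy from $\mathcal L$ to $\mathcal L'$, and each $\mathcal L_s$, being the Gauss map of a $\rho$-equivariant immersion, is $\rho$-integrable, so $\hol_{G_s}\equiv 0$. By Proposition \ref{prop: flux = diff holonomy} the flux over $[0,s]$ vanishes for every $s$; differentiating in $s$ shows that each instantaneous class $[\alpha_s]\in H^1(M,\R)$ is zero, i.e. $\alpha_s=dh_s$ for a smooth family of functions $h_s$ on $\mathcal L_s$. Thus the isotopy is \emph{exact}, and an extension argument, using a Weinstein tubular neighborhood of the compact family $\bigcup_s\mathcal L_s$ to extend the $h_s$ to a compactly supported time-dependent Hamiltonian $F_\bullet$ on $\mathcal G_\rho$, yields $\Phi=\Phi_1\in\Ham_c(\mathcal G_\rho,\Omega)$ with $\Phi(\mathcal L)=\mathcal L'$.

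The main obstacle is Lemma \ref{lemma:convex interpolation}: constructing an equivariant interpolation through convex hypersurfaces whose principal curvatures stay in $(-1,1)$ and whose images remain properly embedded in $N$. Convexity is what makes the interpolation tractable, since convex hypersurfaces in $\Hyp^{n+1}$ are graphs over their boundary data, their hyperbolic Gauss maps being diffeomorphisms onto the disc $\Omega_+$ by Proposition \ref{prop:gauss maps diffeo onto image}; one can then interpolate horospherical support functions and control the resulting second fundamental forms. This step is delicate precisely because smallness of the principal curvatures is not a convex condition in general. A secondary technical point is arranging the Hamiltonian to be compactly supported, which rests on the compactness of $M$, hence of the swept region, together with a uniform tubular neighborhood for the isotopy.
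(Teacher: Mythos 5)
Your overall architecture coincides with the paper's proof: the ``if'' direction is essentially verbatim the paper's argument (the flux of a Hamiltonian isotopy is exact, Proposition \ref{prop: flux = diff holonomy} transfers this to equality of holonomies, and Lemma \ref{lemma:integrable iff trivial bundle} concludes, with no Riemannian hypothesis needed), and the ``only if'' direction likewise proceeds by convexifying via normal evolution, interpolating through convex hypersurfaces, deducing vanishing of the instantaneous flux classes from the integrability of every slice, and upgrading the resulting exact Lagrangian isotopy to a compactly supported ambient Hamiltonian isotopy --- including, implicitly, the tangency argument that the Hamiltonian field of the extended $F_s$ differs from the isotopy field by a vector field tangent to $\mathcal L_s$.

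The genuine gap is in your treatment of Lemma \ref{lemma:convex interpolation}, which you correctly identify as the crux but then (a) strengthen unnecessarily and (b) leave unproven. You require the interpolating convex hypersurfaces to keep their principal curvatures in $(-1,1)$; the paper's lemma makes no such claim, and the proof of the theorem does not need it: once the slices are convex, their Gauss maps are embeddings into $\mathcal G_\rho$ by the convex analogue of the diffeomorphism statement (Lemma \ref{lemma:convex gauss map diffeo} --- note that Proposition \ref{prop:gauss maps diffeo onto image}, which you cite for this, is stated for complete immersions with small principal curvatures, not for convex hypersurfaces), and $\rho$-integrability of each slice, hence triviality of each holonomy $\hol_{\Upsilon_s}$, follows simply from its being the Gauss map of a $\rho$-equivariant immersion; the intermediate $\mathcal L_s$ need not be Riemannian at all. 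Dropping the curvature requirement is precisely what makes the lemma tractable: the paper proves it by showing that the signed distance functions $r_0,r_1$ to the convexified hypersurfaces are convex functions on the region between them (via the Hessian identity $\nabla^2 f(X,Y)=-\|Df\|\,\II(X,Y)$ on level sets) and taking the zero level sets of $f_s=(1-s)r_0+sr_1$, which are automatically convex, embedded (since $Df_s$ never vanishes, as $\nu_0=-\nu_1$ is impossible by the endpoint behaviour of the orthogonal geodesics), and foliate the region. Your alternative --- interpolating horospherical support functions and ``controlling the resulting second fundamental forms'' --- is exactly the step that would fail or at least demand substantial new work, since preservation of convexity (let alone smallness of curvatures) under linear interpolation of horospherical support data is not established; as written, the hard direction of the theorem rests on this unproven claim.
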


Of course, although not stated in Theorem \ref{thm:second char ham}, both $\mathcal L$ and $\mathcal L'$ are necessarily Lagrangian as a consequence of Corollary \ref{cor:lagrangian1}.

\subsection{The Lagrangian Flux}

We shall now define the Flux map for Lagrangian submanifolds, which was introduced in \cite{solomon}, and relate it to the holonomy of $\R$-principal bundles.

\begin{defi}\label{defi:flux}
Let $(\mathcal X,\Omega)$ be a symplectic manifold and let $\Upsilon_\bullet:M\times[0,1]\to\mathcal X$ be a smooth map such that each $\Upsilon_t$ is a Lagrangian embedding of $M$. Then we define:
$$\Flux(\Upsilon_\bullet)=\int_0^1 \Upsilon_s^*(\Omega(X_s,\cdot))ds\in H^1_{dR}(M,\R)~,$$
where
$$X_{s_0}(\Upsilon_{s_0}(p))=\left.\frac{d}{ds}\right|_{s=s_0}\Upsilon_s(p)\in T_{\Upsilon_{s_0}(p)}\mathcal X~.$$
\end{defi}

Observe that by Cartan's formula the integrand $\Upsilon_s^*(\Omega(X_s,\cdot))$ is a closed 1-form for every $s$, hence $\Flux(\Upsilon_\bullet)$ is well-defined as a cohomology class in $H^1_{dR}(M,\R)$. 

Now, let $\mathcal L$ be a Lagrangian embedded submanifold in $\mathcal G_\rho$, which is induced by a $\rho$-equivariant immersion $G:\widetilde M\to\G{n+1}$. Recall that in Section \ref{sec:hol flat princ bdles} we defined the principal $\R$-bundle $\mathrm p_G$ as the quotient of $\widetilde{\mathrm p}_G=G^*\mathrm p$ by the action of $\pi_1(M)$. Moreover in Theorem \ref{Teorema hol H} we computed the holonomy 
$$\hol_G:\pi_1(M)\to\R$$
of $\mathrm p_G$. 
The key relation between Lagrangian flux and $\hol_{G}$ is stated in the following proposition.

\begin{prop} 
\label{prop: flux = diff holonomy}
Let $M$ be a closed {orientable} $n$-manifold and $\rho:\pi_1(M)\to\Isom(\Hyp^{n+1})$ be a nearly-Fuchsian representation. If $\Upsilon_s$ is as in Definition \ref{defi:flux} and $\Upsilon_0(M)=\mathcal L$, $\Upsilon_1(M)=\mathcal L'$, then
\[
\hol_{\Upsilon_1} (\alpha) - \hol_{\Upsilon_0} (\alpha)=\int_\alpha\Flux (\Upsilon_\bullet)~.
\]
In particular, $\Flux (\Upsilon_\bullet)  (\alpha)$ depends uniquely on the endpoints of $\Upsilon_\bullet$.
\end{prop}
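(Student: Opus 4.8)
The plan is to fix a class $\alpha\in\pi_1(M)$ and reduce the whole statement to a single application of Stokes' theorem on a square, exploiting the fundamental identity $d\omega=\mathrm p^*\Omega$ of Proposition~\ref{prop:identity curvature form}. First I would lift the isotopy to the universal cover, obtaining a smooth family of $\rho$-equivariant Lagrangian immersions $\widetilde\Upsilon_s\colon\widetilde M\to\G{n+1}$. Since each $\widetilde\Upsilon_s$ is Lagrangian and $\widetilde M$ is simply connected, Corollary~\ref{cor:lagrangian2} provides a flat lift $\zeta_s\colon\widetilde M\to T^1\Hyp^{n+1}$ (orthogonal to the fibers of $\mathrm p$, so that $\zeta_s^*\omega=0$) with $\mathrm p\circ\zeta_s=\widetilde\Upsilon_s$. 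These lifts can be taken jointly smooth in $s$ by fixing a smooth family of initial values $\zeta_s(m_0)$ over a basepoint $m_0$ and extending by flatness (parallel transport, well defined on the simply connected $\widetilde M$). By the equivariance relation~\eqref{eq:equivariance lifts}, for each $s$ there is a real number $h_s:=\hol_{\Upsilon_s}(\alpha)$ with $\varphi_{h_s}\circ\zeta_s=\rho(\alpha)\circ\zeta_s\circ\alpha^{-1}$, and this is exactly the holonomy whose variation in $s$ I want to control.

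Next I would choose a lift $\tilde\alpha\colon[0,1]\to\widetilde M$ of a loop representing $\alpha$, so that $\tilde\alpha(1)=\alpha\cdot\tilde\alpha(0)$, and set $\widehat F(\theta,s)=\zeta_s(\tilde\alpha(\theta))$ and $F=\mathrm p\circ\widehat F$ on the square $R=[0,1]_\theta\times[0,1]_s$. Applying Stokes' theorem to $\widehat F^*\omega$ and using $d\omega=\mathrm p^*\Omega$ gives
\[
\int_R F^*\Omega=\int_R\widehat F^*(d\omega)=\int_{\partial R}\widehat F^*\omega.
\]
The left-hand side is where the flux appears: since $F$ factors through a surface one has $F^*\Omega=\Omega(\partial_\theta F,\partial_s F)\,d\theta\wedge ds$, and recognizing $\partial_s F$ as the lift of the flux vector field $X_s$ while $\partial_\theta F=d\widetilde\Upsilon_s(\tilde\alpha')$, a short computation identifies this integral with $-\int_\alpha\Flux(\Upsilon_\bullet)$, using that $\widetilde\Upsilon_s^*(\Omega(X_s,\cdot))$ descends to $M$ and that $\tilde\alpha$ projects to $\alpha$.

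The boundary term is the heart of the argument. The edges $s=0$ and $s=1$ contribute nothing: there $\widehat F$ moves in the $\theta$-direction along $\zeta_s\circ\tilde\alpha$, which is orthogonal to $\chi$ by flatness of $\zeta_s$, so $\omega(\partial_\theta\widehat F)=\gs{n+1}(\partial_\theta\widehat F,\chi)=0$. Only the vertical edges $\theta=0,1$ survive, yielding $\int_{\partial R}\widehat F^*\omega=\int_0^1\big(\omega(\tfrac{d}{ds}\zeta_s(\tilde\alpha(1)))-\omega(\tfrac{d}{ds}\zeta_s(\tilde\alpha(0)))\big)\,ds$. To evaluate it I would substitute $\zeta_s(\tilde\alpha(1))=\varphi_{-h_s}\big(\rho(\alpha)\,\zeta_s(\tilde\alpha(0))\big)$ from~\eqref{eq:equivariance lifts} and differentiate in $s$: the flow-time derivative produces $-\dot h_s\,\chi$, on which $\omega$ evaluates to $-\dot h_s$ since $\omega(\chi)=1$, while the remaining term $d\varphi_{-h_s}\big(d\rho(\alpha)\tfrac{d}{ds}\zeta_s(\tilde\alpha(0))\big)$ satisfies $\omega(\,\cdot\,)=\omega\big(\tfrac{d}{ds}\zeta_s(\tilde\alpha(0))\big)$ because $\omega$ is invariant under the geodesic flow and under $\Isom(\Hyp^{n+1})$, both preserving $\gs{n+1}$ and $\chi$ (Lemma~\ref{lemma:geodflow isometric} and Equation~\eqref{eq:diff geoflow3}). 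The two copies of $\omega\big(\tfrac{d}{ds}\zeta_s(\tilde\alpha(0))\big)$ cancel, leaving $\int_0^1(-\dot h_s)\,ds=-(h_1-h_0)$. Equating with the left-hand side gives $\hol_{\Upsilon_1}(\alpha)-\hol_{\Upsilon_0}(\alpha)=\int_\alpha\Flux(\Upsilon_\bullet)$, and the ``in particular'' clause is immediate since the right-hand side depends only on $\Upsilon_0$ and $\Upsilon_1$.

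I expect the main obstacle to be technical rather than conceptual: securing the jointly smooth family of flat sections $\zeta_s$ and keeping the orientation bookkeeping on $\partial R$ consistent, so that the signs of the cylinder integral and of the flux match the conventions of Definition~\ref{defi:flux}. The conceptual engine is entirely contained in $d\omega=\mathrm p^*\Omega$, which converts the symplectic area swept by the Lagrangian isotopy into a difference of connection-theoretic holonomies.
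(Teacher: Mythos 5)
Your argument is correct, and it runs on the same engine as the paper's proof --- Stokes' theorem on a two-dimensional domain swept by the isotopy over the loop, converted by the identity $d\omega=\mathrm p^*\Omega$ of Proposition \ref{prop:identity curvature form} --- but the implementation is genuinely different. The paper stays downstairs: it defines $\Theta(s,t)=\Upsilon_s(\alpha(t))$ on a cylinder $[0,1]\times S^1$, lifts it through the bundle by an \emph{arbitrary} smooth section (which exists trivially since the fibre $\R$ is contractible), and then extracts the holonomies from the two boundary circles via the separately proved Proposition \ref{prop: holonomy as integral of loops}, which says $\hol_G(\alpha)=\int_{\overline\alpha}\omega$ for \emph{any} lift $\overline\alpha$ of the loop; with that lemma in hand the main proof is three lines. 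You instead work upstairs on a square in the universal cover with \emph{flat} lifts $\zeta_s$ (Corollary \ref{cor:lagrangian2}), kill the horizontal edges by flatness, and recover the holonomy difference by differentiating the equivariance relation \eqref{eq:equivariance lifts} in $s$, using $\omega(\chi)=1$ and the invariance of $\omega$ under $\varphi_t$ and $\Isom(\Hyp^{n+1})$; your sign bookkeeping ($\int_R F^*\Omega=-\int_\alpha\Flux(\Upsilon_\bullet)$ against $\int_{\partial R}\widehat F^*\omega=-(h_1-h_0)$ with the orientation $d\theta\wedge ds$) checks out. What each approach buys: the paper's arbitrary-section trick makes the bundle-theoretic content reusable and sidesteps all parameter-dependence issues, whereas your route is self-contained (no analogue of Proposition \ref{prop: holonomy as integral of loops} is needed) at the cost of two technical points you correctly flag but only sketch --- joint smoothness of $(m,s)\mapsto\zeta_s(m)$, which does follow from smooth dependence on parameters for the parallel-transport ODE fiberwise in $s$ (note the pulled-back bundle over $\widetilde M\times[0,1]$ is \emph{not} flat, its curvature being exactly the flux, so flat sections must be taken for each fixed $s$), and smoothness of $s\mapsto h_s$, which follows since the $\R$-action of $\varphi_t$ is free and the fibre coordinate depends smoothly on the data. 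Neither is a gap, but the paper's proof is structured precisely so that these points never arise.
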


To prove Proposition \ref{prop: flux = diff holonomy}, we will make use of the following expression for the holonomy representation.

\begin{prop}
\label{prop: holonomy as integral of loops}
Let $G\colon \widetilde M\to\G{n+1}$ be a $\rho$-equivariant Lagrangian embedding and $\mathrm p_G$ be the associated $\R$-principal bundle over $M$. If $\alpha:[0,1]\to M$ is a smooth loop and $\overline\alpha$ a smooth loop in the total space of $\mathrm p_G$ such that $\alpha= \mathrm p_G (\overline \alpha)$, then
\[
\hol_G(\alpha)= \int_{\overline \alpha} \omega
\]
where $\omega$ is the principal connection of $\mathrm p_G$.
\end{prop}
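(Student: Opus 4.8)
The plan is to compare the given loop $\overline\alpha$ with a \emph{horizontal} lift of $\alpha$ and to read off the holonomy from the resulting displacement along the fibers, exploiting that the connection form $\omega$ measures precisely this vertical displacement. As a preliminary remark, I would note that the connection $\omega$ on $\mathrm p_G$ is the descent of the connection form of Definition \ref{defi connection form}: it is $\pi_1(M)$-invariant by $\rho$-equivariance of $G$ together with the $\Isom(\Hyp^{n+1})$-invariance of the para-Sasaki metric and of $\chi$ (Remark \ref{rmk: Isom preserva Omega e J} and Lemma \ref{lemma:geodflow isometric}). In particular $\omega(\chi)\equiv 1$ for $\chi$ the infinitesimal generator of the $\R$-action on the total space of $\mathrm p_G$, and $R_s^*\omega=\omega$ for every $s\in\R$, where $R_s$ denotes the action of $s$, i.e. $R_s(q)=s\cdot q$.

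Next I would introduce the horizontal lift $\widehat\alpha\colon[0,1]\to P$ of $\alpha$ with $\widehat\alpha(0)=\overline\alpha(0)$, characterized by $\tfrac{d\widehat\alpha}{dt}\in\ker\omega$ and $\mathrm p_G\circ\widehat\alpha=\alpha$, as in Remark \ref{rmk: alternative definition of hol}. Since $\overline\alpha$ and $\widehat\alpha$ project to the same path and share their initial point, there is a unique smooth function $s\colon[0,1]\to\R$ with $s(0)=0$ such that $\overline\alpha(t)=R_{s(t)}(\widehat\alpha(t))$. Differentiating yields $\overline\alpha'(t)=s'(t)\,\chi_{\overline\alpha(t)}+dR_{s(t)}(\widehat\alpha'(t))$; applying $\omega$ and using $\omega(\chi)=1$, the invariance $R_s^*\omega=\omega$, and the horizontality $\omega(\widehat\alpha'(t))=0$, I obtain $\omega(\overline\alpha'(t))=s'(t)$. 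Integrating, $\int_{\overline\alpha}\omega=\int_0^1 s'(t)\,dt=s(1)$.

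It then remains to identify $s(1)$ with $\hol_G(\alpha)$. Because $\overline\alpha$ is a loop, $\overline\alpha(1)=\overline\alpha(0)=\widehat\alpha(0)$, and $\overline\alpha(1)=R_{s(1)}(\widehat\alpha(1))$ forces $\widehat\alpha(1)=R_{-s(1)}(\widehat\alpha(0))$. On the other hand, the relation $\hol_G(\alpha)\cdot\widehat\alpha(1)=\widehat\alpha(0)$ of Remark \ref{rmk: alternative definition of hol} reads $\widehat\alpha(1)=R_{-\hol_G(\alpha)}(\widehat\alpha(0))$. Since the $\R$-action is free, comparing the two expressions gives $s(1)=\hol_G(\alpha)$, whence $\int_{\overline\alpha}\omega=\hol_G(\alpha)$, as desired.

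The only delicate points are bookkeeping ones: matching the sign convention for $\hol_G$ fixed in Definition \ref{defi holonomy} and Remark \ref{rmk: alternative definition of hol}, and verifying the compatibility of the splitting $\overline\alpha'=s'\chi+dR_s(\widehat\alpha')$ with the $\R$-invariance of $\omega$. I expect no substantial obstacle beyond these; indeed the computation also confirms a posteriori that $\int_{\overline\alpha}\omega$ is independent of the choice of loop $\overline\alpha$ over $\alpha$, consistently with the flatness of $\mathrm p_G$.
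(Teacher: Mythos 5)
Your proof is correct, and it takes a genuinely different route from the paper's. You compare $\overline\alpha$ with the horizontal lift $\widehat\alpha$ fiberwise, writing $\overline\alpha(t)=s(t)\cdot\widehat\alpha(t)$ and showing $\omega(\overline\alpha'(t))=s'(t)$ from $\omega(\chi)=1$, $R_s^*\omega=\omega$ (valid because $\R$ is abelian, so $\mathrm{Ad}$ is trivial) and horizontality; then the loop condition plus the freeness of the action pin down $s(1)=\hol_G(\alpha)$ via Remark \ref{rmk: alternative definition of hol}. The paper instead argues by Stokes' theorem: it builds a map $f\colon[0,1]^2\to P$ interpolating between $\overline\alpha$ and $\widehat\alpha$, with one edge constant and the opposite edge a vertical segment in the fiber $\mathrm p_G^{-1}(x_0)$, and uses flatness of $\mathrm p_G$ (i.e.\ $f^*d\omega=0$, which ultimately comes from the Lagrangian condition through $d\omega=\mathrm p^*\Omega$, Proposition \ref{prop:identity curvature form}) to reduce $\int_{\overline\alpha}\omega$ to the contribution $-\hol_G(\alpha)$ of the vertical edge, the horizontal edge integrating to zero. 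What each buys: your argument is more elementary and more general --- it avoids choosing a homotopy square and invoking Stokes, and it isolates a fact true for \emph{any} connection on a principal $\R$-bundle, flat or not, namely that $\int_{\overline\alpha}\omega$ equals the vertical displacement between $\overline\alpha(1)$ and the endpoint of the horizontal lift; flatness enters your proof only indirectly, through the identification of that displacement with the holonomy of Definition \ref{defi holonomy} in Remark \ref{rmk: alternative definition of hol}. The paper's version, by contrast, makes the use of flatness explicit and rehearses exactly the Stokes-on-a-square computation that is reused immediately afterwards in the proof of Proposition \ref{prop: flux = diff holonomy}, so the two proofs there run in parallel. Your closing consistency check (independence of $\int_{\overline\alpha}\omega$ from the choice of $\overline\alpha$ over $\alpha$) is also sound, since the right-hand side $\hol_G(\alpha)$ manifestly does not depend on $\overline\alpha$.
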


\begin{proof}
Say $\alpha(0)=\alpha(1)= x_0$. {Recalling Remark \ref{rmk: alternative definition of hol}},
let $\widehat \alpha$ be the horizontal lift of $\alpha$ starting at $\overline \alpha(0)$. We apply Stokes' theorem. Define {a smooth map $f$ from $[0,1]\times [0,1]$ to the total space of $\mathrm p_G$} so that 
\begin{itemize}
    \item $f(x,0)= \overline \alpha(x)$,
    \item $f(x,1)= \widehat \alpha(x)$, 
    \item {$f(0, y)\equiv \overline \alpha(0)=\overline\alpha(1)$},
    \item {$y\mapsto f(1, y)$  parametrizes the interval from $\overline \alpha(1)$ to $\widehat \alpha(1)$ in $\mathrm p_G^{-1} (x_0)\approx \R$.}
\end{itemize}
{By Stokes' Theorem and the flatness of $\mathrm p_G$}, one gets that  
\begin{align*}
0= \int_{[0,1]\times [0,1]} f^* d\omega&=
\int_{\overline \alpha} \omega + \int_{f(1, \cdot)} \omega - \int_{\widehat \alpha} \omega - \int_{f(0, \cdot)} \omega\\
&=\int_{\overline \alpha} \omega + \int_{f(1, \cdot)} \omega.
\end{align*}

By Remark \ref{rmk: alternative definition of hol},  $\widehat \alpha(1)= (-\hol_G(\alpha)) \cdot \overline\alpha(1)$. 
Since $\omega= g_{T^1 \Hyp^{n+1}}(\chi, \cdot)$ and $f(1, \cdot)$ is contained in {$\mathrm p_G^{-1} (x_0)$}, one gets that
\[
\int_{f(1, \cdot)} \omega=- \hol_G(\alpha)
\]
and the proof follows.
\end{proof}

\begin{proof}[Proof of Proposition \ref{prop: flux = diff holonomy}]
Define 
$\Theta:[0,1]\times S^1\to M$ by
$\Theta(s,t)=\Upsilon_s(\alpha(t))$. Since the bundle $\mathrm p_G$ has contractible fibre, there always exists a smooth global section. In particular, there exists $\overline\Theta$ such that $\Theta=\mathrm p_G\circ \overline\Theta$. 
By Proposition \ref{prop: holonomy as integral of loops}, recalling that $d\omega=\mathrm{p}^* \Omega$, and applying Stokes' Theorem, we obtain:
$$\hol_{\Upsilon_1} (\alpha) - \hol_{\Upsilon_0}(\alpha)=\int_{\overline\Theta(1, \cdot)} \omega - \int_{\overline\Theta(0, \cdot)} \omega=\int_{[0,1]\times S^1}\overline\Theta^*d\omega=\int_{[0,1]\times S^1}\Theta^*\Omega$$
and the last term equals  $\int_\alpha\Flux(\Upsilon_\bullet)$.
 \end{proof}

We conclude this section by proving one (easy) implication of Theorem \ref{thm:second char ham}. As mentioned in the introduction, this implication does not need the hypothesis that $\mathcal L$ and $\mathcal L'$ are Riemannian. 

\begin{proof}[Proof of the ``if'' part of Theorem \ref{thm:second char ham}]
Suppose there exists a Hamiltonian symplectomorphism $\Phi=\Phi_1$, endpoint of a Hamiltonian isotopy $\Phi_\bullet$, such that $\Phi(\mathcal L)=\mathcal L'$. Then define the map $\Upsilon_\bullet:M\times[0,1]\to\mathcal G_\rho$ in such a way that $\Upsilon_0:M\to\mathcal G$ is an embedding with image $\mathcal L$ and
$$\Upsilon_s=\Phi_s\circ \Upsilon_0~.$$
By Remark \ref{rmk:hamiltonian and cartan}, $\Phi_s$ is a (Hamiltonian) symplectomorphism for every $s\in[0,1]$, hence $\Upsilon_s$ is a Lagrangian embedding for all $s$. We claim that $\Flux(\Upsilon_\bullet)$ vanishes in $H^1_{dR}(M,\R)$. Indeed, for every $s$ we have
$$\Upsilon_s^*(\Omega(X_s,\cdot))=\Upsilon_s^*dF_s=df_s$$
by Equation \eqref{eq:hamiltonian function}, where $X_s$ is the vector field generating the Hamiltonian isotopy (and hence $\Upsilon_\bullet$) and $f_s=F_s\circ\Upsilon_s$. Therefore 
$\int_0^1\Upsilon_s^*(\Omega(X_s,\cdot))ds$ is exact, namely $\Flux(\Upsilon_\bullet)=0$.

Using Proposition \ref{prop: flux = diff holonomy}, we have $\hol_{\Upsilon_0}  = \hol_{\Upsilon_1}$. By Lemma \ref{lemma:integrable iff trivial bundle}, this shows that $\mathcal L$ is $\rho$-integrable if and only if $\mathcal L'$ is $\rho$-integrable, and this concludes the proof of the first implication in Theorem \ref{thm:second char ham}.
\end{proof}

\subsection{Conclusion of Theorem \ref{thm:second char ham}} 
 
We are left with the other implication in  Theorem \ref{thm:second char ham}.  Given two Riemannian $\rho$-integrable submanifolds $\mathcal L,\mathcal L'\subset \mathcal G_\rho$, we shall produce $\Phi\in\Ham_c(\mathcal G_\rho,\Omega)$ mapping $\mathcal L$ to $\mathcal L'$. We remark here that the results and methods of \cite{solomon} use stronger topological hypothesis, hence do not apply under our assumptions.

Roughly speaking, the idea is to reduce the problem to finding a deformation in the nearly-Fuchsian manifold $\faktor{\Hyp^{n+1}}{\rho(\pi_1(M))}$ which interpolates between two hypersurfaces of small principal curvatures corresponding to $\mathcal L$ to $\mathcal L'$. For technical reasons, it will be easier to deal with {convex} hypersurfaces  that we defined in Definition \ref{defi:convex immersion}}

\begin{lemma} \label{lemma:convex gauss map diffeo}
Let $M^n$ be a closed oriented manifold, $\rho:\pi_1(M)\to\Isom(\Hyp^{n+1})$ be a nearly-Fuchsian representation and $\widetilde\sigma:\widetilde M\to\Hyp^{n+1}$ be a $\rho$-equivariant embedding. If $\widetilde\sigma$ is convex, then the Gauss map $G_{\widetilde\sigma}^+$ is an {equivariant} diffeomorphism between $\widetilde M$ and the connected component $\Omega_+$ of $\partial\Hyp^{n+1}\setminus\Lambda_\rho$.
\end{lemma}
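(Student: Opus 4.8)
The plan is to establish that $G^+_{\widetilde\sigma}$ is a local diffeomorphism, that its image is exactly $\Omega_+$, and then to upgrade local to global by comparing with a reference immersion of small principal curvatures. Equivariance is free: since $\Isom(\Hyp^{n+1})$ acts naturally on $\G{n+1}$ and on $\partial\Hyp^{n+1}$ and commutes with the Gauss map construction (Remark \ref{rmk: Isom preserva Omega e J}), the relation $\widetilde\sigma\circ\alpha=\rho(\alpha)\circ\widetilde\sigma$ gives $G^+_{\widetilde\sigma}\circ\alpha=\rho(\alpha)\circ G^+_{\widetilde\sigma}$ for all $\alpha\in\pi_1(M)$.

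First I would show that $G^+_{\widetilde\sigma}$ is a local diffeomorphism. Exactly as in the proof of Proposition \ref{prop:gauss maps diffeo onto image}, differentiating the limit defining $G^+$ yields $dG^+_{\widetilde\sigma}=d\widetilde\sigma\circ(\mathrm{id}-B)$, a computation that never uses the small curvature hypothesis. Convexity means that $B$ is negative semi-definite, so $\mathrm{id}-B$ has all eigenvalues $\ge 1$ and is invertible; since $\widetilde\sigma$ is an immersion, $dG^+_{\widetilde\sigma}$ is injective, and as $\dim\widetilde M=\dim\partial\Hyp^{n+1}=n$, it follows that $G^+_{\widetilde\sigma}$ is a local diffeomorphism. (Convexity is used precisely to exclude the eigenvalue $1$ of $B$; this is exactly where one of the two Gauss maps is singled out.)

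Next I would prove that the image lands in $\Omega_+$. Being $\rho$-equivariant over the closed manifold $M$, the first fundamental form of $\widetilde\sigma$ descends to $M$, hence is complete; moreover, by cocompactness $\widetilde\sigma$ is at bounded distance from a small-curvature $\rho$-equivariant embedding $\widetilde\sigma_0$ (which exists because $\rho$ is nearly-Fuchsian), so $\widetilde\sigma$ is a proper embedding with the same ideal boundary $\Lambda_\rho$. Its convex side $\overline C$ (the region towards $-\nu$) is then a geodesically convex set, with ideal boundary $\overline{\Omega_-}$ for the labelling of $\Omega_\pm$ fixed by the orientation of $M$. For each $p$, the totally geodesic hyperplane $P_p$ tangent to $\widetilde\sigma$ at $\widetilde\sigma(p)$ is a supporting hyperplane of $\overline C$, so $\overline C$, and in particular $\Lambda_\rho$ and $\Omega_-$, lie in the closed half-space on the $-\nu$ side and are disjoint from the open ideal disc $D_p^+$ cut out by $\partial P_p$ on the $+\nu$ side. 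The forward normal ray at $\widetilde\sigma(p)$ meets $P_p$ orthogonally into the $+\nu$ side, so $G^+_{\widetilde\sigma}(p)\in D_p^+$, whence $G^+_{\widetilde\sigma}(p)\in\partial\Hyp^{n+1}\setminus(\Lambda_\rho\cup\Omega_-)=\Omega_+$. This is the tangent-hyperplane version of the argument in Proposition \ref{prop:action free prop disc}, as announced in Figure \ref{fig:limit2}.

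Finally I would globalize by comparison. By Propositions \ref{prop:gauss maps diffeo onto image} and \ref{prop:action free prop disc}, the map $G^+_{\widetilde\sigma_0}\colon\widetilde M\to\Omega_+$ is an equivariant diffeomorphism, so it identifies $\Omega_+/\rho$ with $M=\widetilde M/\rho$; in particular the $\rho$-action on $\Omega_+$ is free, properly discontinuous and cocompact. Setting $F:=G^+_{\widetilde\sigma}\circ (G^+_{\widetilde\sigma_0})^{-1}\colon\Omega_+\to\Omega_+$, the first two steps make $F$ a well-defined $\rho$-equivariant local diffeomorphism, hence it descends to a local diffeomorphism $\bar F\colon M\to M$ of the closed manifold $M$, which is automatically a covering map; since $\Omega_+$ is simply connected, its lift $F$ must be single-sheeted, i.e. a diffeomorphism. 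Therefore $G^+_{\widetilde\sigma}=F\circ G^+_{\widetilde\sigma_0}$ is a diffeomorphism of $\widetilde M$ onto $\Omega_+$. I expect the main obstacle to be the second step: identifying the convex side of $\widetilde\sigma$ with a geodesically convex region with ideal boundary $\overline{\Omega_-}$ and checking that tangent hyperplanes support it, which rests on the properness and bounded-distance comparison with $\widetilde\sigma_0$; once this is in place, separating the forward endpoint from $\Lambda_\rho$ reduces to the elementary fact that a geodesic orthogonal to a totally geodesic hyperplane has its endpoints in the interiors of the two complementary ideal discs.
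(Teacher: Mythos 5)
Your proof is correct, but it reaches the global statement by a different mechanism than the paper. The paper's proof is entirely geometric: it first notes (as you do) that $dG^+_{\widetilde\sigma}=d\widetilde\sigma\circ(\mathrm{id}-B)$ is invertible when $B$ is negative semi-definite, then repeats the tangent-horosphere argument of Proposition \ref{prop:gauss maps diffeo onto image} to get injectivity (two tangent horospheres sharing their point at infinity must coincide), and obtains surjectivity onto $\Omega_+$ by repeating Proposition \ref{prop:action free prop disc}: every $x\in\Omega_+$ is reached by shrinking horoballs centered at $x$ until first tangency with the image, while points of $\Lambda_\rho$ are excluded using tangent $r$-caps with $r=0$ --- exactly your supporting hyperplanes. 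You keep the derivative computation and the supporting-hyperplane exclusion, but replace both the injectivity and the surjectivity arguments with covering-space theory: once $G^+_{\widetilde\sigma}(\widetilde M)\subseteq\Omega_+$ is known, the comparison map $F=G^+_{\widetilde\sigma}\circ(G^+_{\widetilde\sigma_0})^{-1}$ is an equivariant local diffeomorphism of $\Omega_+$, descends to a covering of the closed manifold $M$ (a local diffeomorphism with compact source is proper, hence a covering), and simple connectivity of $\Omega_+$ forces it to be single-sheeted, since a lift of the universal covering across another covering with simply connected total space is a homeomorphism. This is sound, and it buys a cleaner globalization at the cost of invoking Propositions \ref{prop:gauss maps diffeo onto image} and \ref{prop:action free prop disc} for the reference embedding $\widetilde\sigma_0$, which the paper has available anyway, so there is no circularity. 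The one thin spot is your assertion that the convex side has ideal boundary $\overline{\Omega_-}$: the bounded-distance comparison and the fact that $\widetilde\sigma(\widetilde M)$ separates $\Hyp^{n+1}$ show that exactly one of $\Omega_\pm$ lies in the ideal boundary of each complementary component, but identifying \emph{which} one needs a coorientation argument --- for instance, $\widetilde\sigma$ and $\widetilde\sigma_0$ descend to homotopic embeddings of $M$ in the aspherical quotient, diffeomorphic to $M\times\R$, so their orientation-compatible normals point toward the same end. You flag this as the main obstacle, and the paper's own proof is equally terse on the side-determination (it is implicit in its tangent-horosphere step), so this is a one-sentence repair rather than a flaw in the strategy.
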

\begin{proof}
By the same argument as in Section \ref{sec:nearly fuchsian} (see the discussion between Proposition \ref{prop:action free prop disc0} and Proposition \ref{prop:action free prop disc}), $\widetilde\sigma$ extends to a continuous injective map of the visual boundary of $\widetilde M $ with image $\Lambda_\rho$. We can now repeat wordly the argument of Proposition \ref{prop:gauss maps diffeo onto image} to show that, if $B$ is negative semi-definite, then $G_{\widetilde\sigma}^+$ is a diffeomorphism onto its image. To show that {$G_{\widetilde\sigma}^+(\widetilde M)=\Omega_+$}, we repeat instead the proof of Proposition \ref{prop:action free prop disc}. More precisely, one first shows (using tangent horospheres) that every $x\in\Omega_+$ is in the image of $G_{\widetilde\sigma}^+$. Then, by continuity, it suffices to show that every $x\in\Lambda_\rho$ is not on the image of $G_{\widetilde\sigma}^+$. To see this, the last paragraph of the proof of Proposition \ref{prop:action free prop disc} applies unchanged, and when considering tangent $r$-caps we can even take $r=0$, that is, replace $r$-caps by totally geodesic hyperplanes. See Figure \ref{fig:limit2}.
\end{proof}
 
\begin{lemma}\label{lemma:convex interpolation}
Let $M^n$ be a closed oriented manifold and $\rho:\pi_1(M)\to\Isom(\Hyp^{n+1})$ be a nearly-Fuchsian representation. Given two closed hypersurfaces $\Sigma_0$ and $\Sigma_1$ of small principal curvatures in the nearly-Fuchsian manifold $\faktor{\Hyp^{n+1}}{\rho(\pi_1(M))}$, there exists an isotopy $$\upsilon_\bullet\colon M\times[0,1]\to\faktor{\Hyp^{n+1}}{\rho(\pi_1(M))}$$ such that:
\begin{itemize}
\item $\upsilon_s$ is a convex embedding for all $s\in[0,1]$;
\item $\upsilon_0(M)$ is a hypersurface equidistant from $\Sigma_0$;
\item $\upsilon_1(M)$ is a hypersurface equidistant from $\Sigma_1$.
\end{itemize}
\end{lemma}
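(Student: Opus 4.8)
The plan is to exploit the normal evolution $\upsilon_t = \sigma_t$ (Definition \ref{defi normal evolution}) to deform $\Sigma_0$ and $\Sigma_1$ into \emph{convex} hypersurfaces, and then connect the two resulting convex hypersurfaces by a linear-in-time interpolation of their radial graph functions over the common ideal boundary component $\Omega_+$. The key observation, already extracted from the proof of Proposition \ref{prop:action free prop disc0}, is that the principal curvatures of the normal evolution $\sigma_t$ of an immersion with small principal curvatures evolve as $\tanh(\mu_i - t)$, so that for $t$ sufficiently large the hypersurface becomes (strictly) convex. I would first lift $\Sigma_0,\Sigma_1$ to $\rho$-equivariant embeddings $\widetilde\sigma_0,\widetilde\sigma_1\colon\widetilde M\to\Hyp^{n+1}$ with small principal curvatures; completeness holds by cocompactness, hence they are proper embeddings by Proposition \ref{prop injectivity}.

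\textbf{Making the endpoints convex.} By the computation in Lemma \ref{lemma:evolution fsigma} (Equation \eqref{eq della discordia}), the principal curvatures of $(\widetilde\sigma_i)_t$ are $\tanh(\mu_j^{(i)}-t)$ where $\mu_j^{(i)}=\arctanh(\lambda_j^{(i)})$. By cocompactness these $\mu_j^{(i)}$ are uniformly bounded, so there exists $T>0$ such that for $t\geq T$ all principal curvatures of $(\widetilde\sigma_i)_t$ are strictly negative, i.e. $(\widetilde\sigma_i)_T$ is a strictly convex embedding. These convex hypersurfaces $(\widetilde\sigma_i)_T$ descend to embedded convex hypersurfaces in $\faktor{\Hyp^{n+1}}{\rho(\pi_1(M))}$ equidistant from $\Sigma_i$, which will be the endpoints $\upsilon_0(M)$ and $\upsilon_1(M)$ of the desired isotopy. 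The normal evolution $s\mapsto(\widetilde\sigma_i)_{sT}$, $s\in[0,1]$, furnishes the first and last pieces of the interpolation; it remains only to connect the two convex endpoints by convex hypersurfaces.

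\textbf{Interpolating between the two convex hypersurfaces.} Here I would invoke Lemma \ref{lemma:convex gauss map diffeo}: for a convex equivariant embedding, the hyperbolic Gauss map $G^+$ is an equivariant diffeomorphism onto $\Omega_+$. Consequently each convex hypersurface $(\widetilde\sigma_i)_T$ can be written, via its $G^+$, as an equivariant \emph{radial graph} over $\Omega_+$; concretely, parametrize $\Omega_+$ and express the hypersurface as a height (support-type) function with respect to, say, a foliation by $r$-caps or by its distance from a fixed reference convex hypersurface. The plan is to interpolate linearly the two graph functions $h_0,h_1$, setting $h_s=(1-s)h_0+sh_1$, and to check that each $h_s$ still defines a strictly convex equivariant embedding. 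Since the endpoints are strictly convex and convexity (negative-definiteness of the shape operator) is an open condition expressed by a second-order differential inequality that is \emph{convex} in the relevant jet variables, a convex combination of two solutions of the strict inequality again satisfies it; this is where I would carry out the estimate, using the explicit second fundamental form of a radial graph. Equivariance is preserved because both $h_0,h_1$ and the reference structure are $\rho$-equivariant, and the convex combination of equivariant functions is equivariant. Concatenating the three pieces (normal evolution of $\Sigma_0$ to convex, the convex interpolation, and the reverse normal evolution to $\Sigma_1$'s convex representative) and reparametrizing in $s\in[0,1]$ yields the required isotopy $\upsilon_\bullet$.

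\textbf{Main obstacle.} The delicate point is the middle step: verifying that the linear interpolation $h_s$ stays strictly convex for all $s\in[0,1]$, not merely at the endpoints. The naive hope that convexity of the shape operator is preserved under convex combinations of graph functions is not automatic, because the shape operator depends nonlinearly (through the induced metric and the normalization of the normal vector) on the jets of $h$. I expect that one must either choose the ambient ``graph'' coordinates carefully — e.g. using the support function with respect to horospheres, for which the convexity condition becomes a genuinely convex constraint on the Hessian — or else replace the linear interpolation by a more robust construction (for instance, taking for each $s$ the boundary of an appropriate convex combination of the convex regions bounded by the two hypersurfaces, which is automatically convex as an intersection/Minkowski-type operation of convex sets). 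The cleanest route is likely the latter: the geodesically convex sides of two convex hypersurfaces can be interpolated through a family of convex sets (e.g. via the signed-distance functions, whose sublevel sets are convex for convex hypersurfaces), and the boundaries of these convex sets give the desired family $\upsilon_s$ of convex embeddings.
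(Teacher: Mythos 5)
Your endpoint step is the paper's: Equation \eqref{eq della discordia} shows that the normal evolution turns the equidistant hypersurfaces convex for large $t$, and these descend to the quotient; so far so good. The issue is the middle step, and as written your proposal does not contain a proof of it. Your primary route, linear interpolation $h_s=(1-s)h_0+sh_1$ of radial graph functions over $\Omega_+$, fails for exactly the reason you flag yourself: the shape operator of a radial graph depends nonlinearly on the jet of $h$, so strict convexity of $h_0$ and $h_1$ does not pass to $h_s$, and nothing in the proposal repairs this. Your fallback --- interpolating the \emph{signed-distance functions} --- is indeed the paper's actual method (the paper sets $f_s=(1-s)r_0+sr_1$ and takes $\upsilon_s(M)=\{f_s=0\}$), but you state it in one sentence resting on two unverified claims, and those two verifications are where essentially all the content of the paper's proof lies.

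First, convexity of $r_0,r_1$ \emph{as functions} is not automatic from convexity of the hypersurfaces: the paper must first normalize the configuration (after passing to equidistants, arrange $\Sigma_0\cap\Sigma_1=\emptyset$, $\Sigma_1$ in the concave side of $\Sigma_0$, and every equidistant from $\Sigma_1$ meeting $\Sigma_0$ convex) and then prove $\nabla^2 r_i\geq 0$ on the region $\mathcal A$ between them via the identity $\nabla^2 f(X,Y)=-\|Df\|\,\II(X,Y)$ on level sets (Equation \eqref{eq:hessian and II}); the distance function is convex exactly where \emph{all} intermediate level sets are convex, so your blanket statement that sublevel sets of the signed distance from a convex hypersurface are convex needs this arrangement. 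Second, and more seriously, you never check that $\{f_s=0\}$ is a smooth embedded hypersurface: this requires $Df_s\neq 0$, which, since $\|Dr_0\|=\|Dr_1\|=1$, can fail only at $s=\tfrac{1}{2}$ at a point where $\nu_0=-\nu_1$. Ruling this out is precisely where the nearly-Fuchsian hypothesis enters: the normal geodesics of both hypersurfaces have forward endpoint in $\Omega_+$ and backward endpoint in $\Omega_-$ (proof of Proposition \ref{prop:action free prop disc}), so $\nu_0=-\nu_1$ is impossible. Without this, your ``family of convex sets'' can degenerate --- in $\Hyp^{n+1}$ there is no Minkowski-type combination of convex bodies, and boundaries of intersections of convex sets need not be smooth --- so the isotopy cannot be produced by soft convexity arguments alone. (One must also still parametrize the level sets as an isotopy, e.g.\ by flowing along $DF/\|DF\|^2$ for $F=r_0/(r_0-r_1)$, or via the Gauss-map identification of Lemma \ref{lemma:convex gauss map diffeo}, a point the paper makes explicit and your sketch leaves implicit.)
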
 
\begin{proof}
First of all, let us observe that we can find hypersurfaces equidistant from $\Sigma_0$ and $\Sigma_1$ which are convex. Indeed, by Corollary \ref{cor:equidistant is immersed} and Remark \ref{rmk:embedding in the quotient}, $t$-equidistant hypersurfaces are embedded for all $t\in\R$. Moreover, by compactness, the principal curvatures of $\Sigma_0$ and $\Sigma_1$ are in $(-\epsilon,\epsilon)$ for some $0<\epsilon<1$, and applying Equation \eqref{eq della discordia} we may find  $t_0$ such that the principal curvatures of the $t$-equidistant hypersurfaces are negative for $t\geq t_0$ -- namely, the equidistant hypersurfaces are convex.

Abusing notation, up to taking equidistant hypersurfaces as explained above, we will now assume that $\Sigma_0$ and $\Sigma_1$ are convex, and our goal is to produce $\upsilon_\bullet$ such that $\upsilon_s$ is a convex embedding for all $s\in[0,1]$, $\upsilon_0(M)=\Sigma_0$ and $\upsilon_1(M)=\Sigma_1$. Up to replacing $\Sigma_0$ and $\Sigma_1$ again {with equidistant hypersurfaces}, we {can} also assume that $\Sigma_0\cap\Sigma_1=\emptyset$, that $\Sigma_1$ is in the concave side of $\Sigma_0$, and that the equidistant surfaces from $\Sigma_1$ which intersect $\Sigma_0$ are all convex. We call $\mathcal A$ the region of $\faktor{\Hyp^{n+1}}{\rho(\pi_1(M))}$ bounded by $\Sigma_0$ and containing $\Sigma_1$.

Let us now consider the (signed) distance functions $r_0$ and $r_1$ from $\Sigma_0$ and $\Sigma_1$ respectively, chosen in such a way that both $r_0$ and $r_1$ are positive functions on the concave side of $\Sigma_0$ and $\Sigma_1$ respectively. Again by Corollary \ref{cor:equidistant is immersed} and Remark \ref{rmk:embedding in the quotient}, these functions are smooth and have nonsingular differential everywhere. Let us denote by $\nu_i$ the gradient of $r_i$. {The vector field $\nu_i$} has unit norm and is tangent to the orthogonal foliations of $\Sigma_i$ which have been described in the proof of Proposition \ref{prop:action free prop disc}. (Proposition \ref{prop:action free prop disc} describes the foliation in the universal cover, but it clearly descends to the quotient $\faktor{\Hyp^{n+1}}{\rho(\pi_1(M))}$.)

We claim that both $r_i$'s are convex functions in the region $\mathcal A$,  i.e. that their Hessians are positive semi-definite, as a consequence of the fact that the level sets of $r_i$ in $\mathcal A$ are all convex. Recall that the Riemannian Hessian of a smooth function $f:\mathcal A\to\R$ is the symmetric 2-tensor defined as 
\begin{equation}
\label{eq: def hessiano}
\nabla^2f(X,Y)=\partial_X (\partial_Y f)-\partial_{D_XY}f~,
\end{equation}
where $X,Y$ are local vector fields and $D$ is the ambient Levi-Civita connection as usual. Clearly $\nabla^2r_i(\nu_i,\nu_i)=0$ since $r_i$ is linear along the integral curves of $\nu_i$ and such integral curves, which are the leaves of the orthogonal foliation described above, are geodesics. Moroever, if $X$ is a vector field tangent to the level sets of $r_i$, then $\nabla^2r_i(X,\nu_i)=0$: indeed the first term {in the RHS of Equation \eqref{eq: def hessiano}} vanishes  because $r_i$ is linear along the integral curves, and the second term as well, because $D_{X}\nu_i=-B_i(X)$ is tangential to the level sets of $r_i$ and thus $\partial_{D_{X}\nu_i}r_i=0$.

To conclude that $\nabla^2r_i$ is positive semi-definite, it remains to show that $\nabla^2r_i(X,X)\geq 0$ for all $X$ tangent to the level sets. 
It is more instructive to perform this computation in the general setting of a smooth function  $f:\mathcal A\to\R$. Since the unit normal vector field to the level set of $f$ is $\nu=\frac{Df}{\|Df\|}$,{with $Df$ being the gradient of $f$, for all $X,Y$ vector fields tangent to the fibers, we} get that:
\begin{equation}\label{eq:hessian and II}
\nabla^2f(X,Y)=-\partial_{D_XY}f=-\langle D_XY,\nu\rangle\partial_\nu f=-\|Df\|\II(X,Y)~,
\end{equation}
where in the last step we used that $\partial_\nu f=\langle Df,\nu\rangle=\|Df\|$, and $\II$ denotes the second fundamental form of the level sets of $f$. When $f=r_i$, in the region $\mathcal A$ the level sets of $r_i$ are convex, hence {$\II$ is negative semi-definite and} $\nabla^2r_i(X,X)\geq 0$.

We remark that Equation \eqref{eq:hessian and II} also shows that, if $f$ is a convex function, then its level sets are convex hypersurfaces as long as $Df\neq 0$. We shall now apply this remark to the zero set of the function $f_s=(1-s)r_0+sr_1$ for $s\in[0,1]$. The differential of $f_s$ never vanishes, for $\|Dr_0\|=\|Dr_1\|=1$, hence $Df_s=0$ is only possible for $s=\frac 1 2$ if $Dr_0=-Dr_1$: nevertheless, this cannot happen  since the geodesics with initial vector $Dr_0=\nu_0$ and $Dr_1=\nu_1$ both have final endpoint in $\Omega_+$ and initial endpoint in $\Omega_-$ by (the proof of) Proposition \ref{prop:action free prop disc}. Hence $\{f_s=0\}$ is an embedded hypersurface for all $s$. Observe moreover that
$$\{f_s=0\}=\bigg\{\frac{r_0}{r_0-r_1}=s\bigg\}~.$$
Since $\Sigma_0\cap \Sigma_1=\emptyset$, $r_0-r_1$ never vanishes, and this shows that the hypersurfaces $\{f_s=0\}$ provide a foliation of the region between $\Sigma_0$ and $\Sigma_1$, which is contained in $\mathcal A$. Since both $r_0$ and $r_1$ are convex functions in $\mathcal A$, 
$$\nabla^2f_s(X,X)=(1-s)\nabla^2r_0(X,X)+s\nabla^2r_1(X,X)\geq 0$$
for every $X$, hence $f_s$ is convex. As remarked just after Equation \eqref{eq:hessian and II}, since $\|Df_s\|\neq 0$, $\{f_s=0\}$ is a convex hypersurface.

It is not hard now to produce $\upsilon_\bullet:M\times[0,1]\to\faktor{\Hyp^{n+1}}{\rho(\pi_1(M))}$ such that $\upsilon_s(M)=\{f_s=0\}$. For instance one can flow along the vector field $\frac{DF}{\|DF\|^2}$ where $F=\frac{r_0}{r_0-r_1}$. Alternatively one can apply Lemma \ref{lemma:convex gauss map diffeo} to infer that the Gauss maps $G^+_{\widetilde\sigma}$ in the universal cover  induce diffeomorphisms of each hypersurface $\{f_s=0\}$ with $\faktor{\Omega_+}{\rho(\pi_1(M))}\cong M$, and define $\upsilon_s$ as the inverse map.
\end{proof}

\begin{proof}[Proof of the ``only if'' part of Theorem \ref{thm:second char ham}]
Suppose $\mathcal L$ and $\mathcal L'$ are $\rho$-integrable Riemannian submanifolds in $\mathcal G_\rho$. Then there exists hypersurfaces $\Sigma$ and $\Sigma'$ in $\faktor{\Hyp^{n+1}}{\rho(\pi_1(M))}$ whose Gauss map image induce $\mathcal L$ and $\mathcal L'$ respectively. We now apply Lemma \ref{lemma:convex interpolation} and find $\upsilon_\bullet$ such that $\upsilon_s$ is convex for every $s$, and the images of $\upsilon_0$ and $\upsilon_1$ are equidistant hypersurfaces from $\Sigma$ and $\Sigma'$ respectively. Define 
$\Upsilon_\bullet:M\times[0,1]\to\mathcal G_\rho$ so that $\Upsilon_s$ is the map into $\mathcal G_\rho$ induced  by the Gauss map of the lifts on the universal cover $\widetilde\upsilon_s:\widetilde M\to\Hyp^{n+1}$. As a consequence of Lemma \ref{lemma:convex gauss map diffeo} the Gauss map of each $\widetilde\upsilon_s$ is an embedding with image in $\Omega_+\times\partial\Hyp^{n+1}\setminus\Delta$ in $\G{n+1}$. 
Repeating the same argument of Remark \ref{rmk:embedding in the quotient}, $\Upsilon_s:M\to\mathcal G_\rho$ is an embedding for every $s$. As a particular case, by Lemma \ref{prop:gauss map invariant normal evo}, $\Upsilon_0(M)=\mathcal L$ and $\Upsilon_1(M)=\mathcal L'$.

By construction for every $s\in[0,1]$ the image of $\Upsilon_s$ is a $\rho$-integrable embedded submanifold in $\mathcal G_\rho$. Let us denote by $\mathcal L_s$ such submanifold. It follows (Lemma \ref{lemma:integrable iff trivial bundle}) that {$\hol_{\Upsilon_s}$} is trivial for all $s$.
By Proposition \ref{prop: flux = diff holonomy}, together with the definition of $\Flux$, we have that
$$\int_0^{s}\Upsilon_r^*(\Omega(X_r,\cdot))dr=0\in H^1_{dR}(M,\R)$$
for all $s$, where $X_s$ is the vector field generating $\Upsilon_s$. Hence necessarily the cohomology class of $\Upsilon_s^*(\Omega(X_s,\cdot))$  in $H^1_{dR}(M,\R)$ is trivial for all $s$. We can therefore find a smooth function $f_\bullet:M\times[0,1]\to\R$ such that $df_s=\Upsilon_s^*(\Omega(X_s,\cdot))$ for all $s$. Pushing forward $f_s$ by means of $\Upsilon_s$, we have defined smooth functions on $\mathcal L_s$ whose differential equals $\Omega(X_s,\cdot)$. Let us extend them to $F_\bullet:\mathcal G_\rho\to\R$ so that $F_\bullet$ is compactly supported and $F_s\circ \Upsilon_s=f_s$. 

Let $\widehat X_s$ be the symplectic gradient of $F_s$, namely
$$dF_s=\Omega(\widehat X_s,\cdot)~,$$
and let $\Phi_s$ be the flow generated by $\widehat X_s$. From
$d(F_s\circ \Upsilon_s)=df_s$
we see that 
$$\Omega(\widehat X_s,d\Upsilon_s(V))=\Omega(X_s,d\Upsilon_s(V))$$
for all $V\in T_p M$. This implies that $\Omega(\widehat X_s-X_s,\cdot)$ vanishes identically along the Lagrangian submanifold $\mathcal L_s$. By non-degeneracy of the symplectic form $\Omega$, $\widehat X_s-X_s$ is tangential to  $\mathcal L_s$. Therefore 
$\Phi_s\circ\Upsilon_0$ and $\Upsilon_s$ differ by pre-composition with a diffeomorphism $\phi_s$ of $M$ (which is indeed obtained as the flow on $M$ of the vector field $\Upsilon_s^*(\widehat X_s-X_s)$). This shows that $\Phi_s(\mathcal L)=\mathcal L_s$. In particular, $\Phi=\Phi_1$ is the desired compactly supported Hamiltonian symplectomorphism of $\mathcal G_\rho$ such that $\Phi(\mathcal L)=\mathcal L'$. 
\end{proof}

\appendix

\section{Evolution by geometric flows}\label{app:geometric flows}

The aim of this Appendix is to provide {a relationship between certain geometric flows for hypersurfaces in $\Hyp^{n+1}$ and their induced flows in $T^1 \Hyp^{n+1}$ and in $\G{n+1}$.}

Let $M=M^n$ be an oriented manifold. Let $\sigma \colon M \times (-\varepsilon,\varepsilon) \to \Hyp^{n+1}$ be a smooth map such that $\sigma_t= \sigma(\cdot, t)$ is an immersion with small principal curvatures for all $t$, and let $\nu=\nu(x,t)$ be the normal vector field.

\begin{prop} 
\label{Prop: flows on G and T1Hn}
Let $f\colon M\times(-\varepsilon, \varepsilon)\to \R$ be a smooth map such that 
\[
\frac{d}{dt} \sigma_t  = f_t \nu_t,
\]
{and let $\zeta_t:=\zeta_{\sigma_t}:M\to T^1\Hyp^{n+1}$ be the lift to $T^1\Hyp^{n+1}$, $G_t:=G_{\sigma_t}:M\to\G{n+1}$ be the Gauss map.} Then,
\begin{align}
        \label{eq flusso in T1}
        \frac {d}{dt} \zeta_t &= - d\zeta_t ( B_t ( \overline \nabla^t f_t) ) - J  ( d\zeta_t (\overline \nabla^t f_t)) + f_t \chi \\
        \label{eq flusso in G}
        \frac {d}{dt} G_{t} &= - d G_{t} ( B_t ( \overline \nabla^t f_t) ) - \mathbb J   (d G_{t} (\overline \nabla^t f_t) )
\end{align}
where $\overline \nabla^t f_t$ is the gradient of $f_t$ with respect to the first fundamental form $\overline\I_t=G_t^*\GG$ {and $B_t$ is the shape operator of $\sigma_t$.}
\end{prop}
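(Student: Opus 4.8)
The plan is to carry out a direct computation in the hyperboloid model, where $\zeta_t(p)=(\sigma_t(p),\nu_t(p))$ is viewed as a curve in $\R^{n+1,1}\times\R^{n+1,1}$, so that $\frac{d}{dt}\zeta_t$ is simply the ambient derivative. The first and main step is to determine $\frac{d}{dt}\nu_t$. Writing $\sigma=\sigma(p,t)$ and differentiating in $t$ the three defining relations $\langle\nu,\nu\rangle=1$, $\langle\sigma,\nu\rangle=0$ and $\langle\nu,d\sigma(W)\rangle=0$, while using the hypothesis $\partial_t\sigma=f\nu$ together with the commutation of mixed partials $\partial_t(d\sigma(W))=d(f\nu)(W)=(\partial_W f)\,\nu+f\,d\nu(W)$, I expect to obtain $\langle\partial_t\nu,\nu\rangle=0$, $\langle\partial_t\nu,\sigma\rangle=-f$ and $\langle\partial_t\nu,d\sigma(W)\rangle=-\partial_W f$ for all $W$. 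Decomposing $\partial_t\nu$ in the frame given by $\sigma$, $\nu$ and the tangential directions, these identities should yield $\partial_t\nu=f\sigma-d\sigma(\nabla^\I f)$, where $\nabla^\I f$ denotes the gradient of $f_t$ with respect to the first fundamental form $\I_t=\sigma_t^*g_{\Hyp^{n+1}}$.

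Assembling, and recalling that $\chi_{(x,v)}=(v,x)$ (Lemma \ref{lemma:generator geoflow hor lift}) and that vertical lifts are $w^\V=(0,w)$, I then get $\frac{d}{dt}\zeta_t=(f\nu,\partial_t\nu)=f\,\chi-(d\sigma(\nabla^\I f))^\V$. It remains to match this with the right-hand side of \eqref{eq flusso in T1}. Using the expression $d\zeta_t(W)=(d\sigma(W))^\HH-(d\sigma(B_tW))^\V$ from Equation \eqref{eq:differential lift} and the fact that $J$ exchanges the horizontal and vertical lifts (Equation \eqref{eq:defiJ2}), a short computation of $-d\zeta_t(B_t Y)-J(d\zeta_t(Y))$, with $Y=\overline\nabla^t f_t$, shows that the two horizontal contributions cancel and the result equals $-(d\sigma((\mathrm{id}-B_t^2)Y))^\V$. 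The two expressions for $\frac{d}{dt}\zeta_t$ then agree provided $\nabla^\I f=(\mathrm{id}-B_t^2)\,\overline\nabla^t f_t$; this is precisely the change-of-gradient identity dictated by $\overline\I=\I-\III=\I((\mathrm{id}-B_t^2)\cdot,\cdot)$ (Equation \eqref{eq:fff gauss} together with the $\I$-self-adjointness of $B_t$), which I would verify by testing both gradients against an arbitrary tangent vector.

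Finally, Equation \eqref{eq flusso in G} follows from \eqref{eq flusso in T1} by applying $d\mathrm p$: since $G_t=\mathrm p\circ\zeta_t$ one has $\frac{d}{dt}G_t=d\mathrm p(\frac{d}{dt}\zeta_t)$, and then $d\mathrm p(\chi)=0$ kills the last term, $d\mathrm p\circ d\zeta_t=dG_t$, and $d\mathrm p\circ J=\JJ\circ d\mathrm p$ on $\chi^\perp$ by the very construction of $\JJ$ (Lemma \ref{lemma:paracomplex structure}).

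I expect the only genuinely delicate point to be the first step, the computation of $\partial_t\nu$: one must keep careful track of which first fundamental form the gradient refers to, since the tangential part of $\partial_t\nu$ is naturally expressed through the $\I$-gradient, whereas the statement is phrased in terms of the $\overline\I$-gradient. The subsequent cancellation of the horizontal terms and the gradient conversion are then essentially formal.
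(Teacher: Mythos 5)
Your proposal is correct, and its overall skeleton matches the paper's proof: both begin by computing the time derivative of the normal field, then establish \eqref{eq flusso in T1}, and finally deduce \eqref{eq flusso in G} by applying $d\mathrm p$, using $d\mathrm p(\chi)=0$, $d\mathrm p\circ d\zeta_t=dG_t$, and the fact that $\JJ$ is the push-forward of $J$ on $\chi^\perp$ (which applies because $d\zeta_t(\overline\nabla^t f_t)\in\chi^\perp$). The middle step, however, is genuinely different. The paper fixes a local $\overline\I_t$-orthonormal frame $\{e_{t,k}\}$ diagonalizing $B_t$ and verifies \eqref{eq flusso in T1} by checking that both sides have the same pairings against the orthonormal basis $\{d\zeta_t(e_{t,k}),\chi,Jd\zeta_t(e_{t,k})\}$, absorbing the change of gradient into the identities $\I_t(\nabla^t f_t,e_{t,k})=\partial_{e_{t,k}}f_t=\overline\I_t(\overline\nabla^t f_t,e_{t,k})$, where $\nabla^t$ denotes the $\I_t$-gradient. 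You instead decompose the right-hand side in the splitting into horizontal and vertical lifts, observe that the horizontal contributions cancel so that $-d\zeta_t(B_tY)-J(d\zeta_t(Y))=-\big(d\sigma_t((\mathrm{id}-B_t^2)Y)\big)^\V$, and reduce the whole identity to the gradient-conversion formula $\nabla^t f_t=(\mathrm{id}-B_t^2)\,\overline\nabla^t f_t$, which is correct by $\overline\I=\I-\III=\I((\mathrm{id}-B_t^2)\cdot,\cdot)$ and the $\I$-self-adjointness of $B_t$. Both routes are sound: the paper's eigenframe computation yields the coordinates explicitly, while yours is frame-free and isolates the precise role of Equation \eqref{eq:fff gauss}. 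One further point in your favor: your ambient computation $\partial_t\nu_t=f_t\sigma_t-d\sigma_t(\nabla^t f_t)$ correctly records the component $f_t\sigma_t$ normal to $\Hyp^{n+1}$, which combines with $f_t\nu_t$ in the first slot to produce the term $f_t\chi$ via $\chi=(\nu,\sigma)$; the paper's Equation \eqref{eq appendix 2} displays only the covariant part $D_{\frac{d\sigma_t}{dt}}\nu_t$ in the second slot, so your bookkeeping is the more precise one at exactly the step you flagged as delicate.
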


{As a preliminary step to prove Proposition \ref{Prop: flows on G and T1Hn}, we  compute} the variation in time of the normal vector field. Recalling that $D$ denotes the Levi-Civita connection on $\Hyp^{n+1}$, we show:

\begin{equation}\label{eq appendix 1}
D_{\frac{d\sigma_t}{dt}  } {\nu_t}= - d\sigma( \nabla^t f_t)~,
\end{equation}
where now $\nabla^t$ denotes the gradient with respect to the first fundamental form $\I_t$ of $\sigma_t$.
On the one hand, by metric compatibility,
\[
\inner{D_{\frac{d\sigma_t}{dt} }\nu_t, \nu_t }= \frac 1 2 \partial_{\frac{d\sigma_t}{dt} } \inner{\nu_t,\nu_t}= 0
\]
hence $D_{\frac{d\sigma_t}{dt} }\nu_t$ is tangent to the hypersurface.

On the other hand, let $X$ be any vector field over $M$. Since $X$ and $\frac{\partial}{\partial t}$ commute on $M\times(-\varepsilon,\varepsilon)$, we have that:
\begin{align*}
    \inner{D_{\frac{d\sigma_t}{dt} } \nu_t, d\sigma_t(X)}&= \partial_{\frac{d\sigma_t}{dt} } \inner{\nu_t, d\sigma_t (X)} - \inner{\nu_t, D_{ \frac{d\sigma_t}{dt}  } (d\sigma_t (X) )}\\
    &= 0 - \inner{\nu_t, D_{ \frac{d\sigma_t}{dt}  } (d\sigma_t (X) )}\\
    &=-\inner{\nu_t, D_{d\sigma_t(X)} (f_t \nu_t) }\\
    &=- X( f_t) -f_t \inner{\nu_t, D_{d\sigma_t(X)} \nu_t}\\
    &=-X(f_t) =- \I_t(\nabla^t f_t, X)~.
\end{align*}
This shows Equation \eqref{eq appendix 1}. As a result, in the hyperboloid model \eqref{eq:modelT} we have:
\begin{equation}\label{eq appendix 2}
\frac{d}{dt} \zeta_t = \bigg(\frac d {dt} \sigma_t, D_{\frac{d \sigma_t}{dt}} \nu\bigg)= (f_t \nu_t, - d\sigma_t (\nabla^t f_t))
\end{equation}

\begin{proof}[Proof of Proposition \ref{Prop: flows on G and T1Hn}]
Let $e_{t,1}, \dots, e_{t,n}$ be a local $\overline{\I}_t$-orthonormal frame diagonalizing $B_t$, so $B_t(e_{t,k})= \lambda_{t,k} e_{t,k}$, and recall that $\chi$ denotes as usual the infinitesimal generator of the geodesic flow on $T^1 \Hyp^{n+1}$.
By definition of $g_{T^1 \Hyp^{n+1}}$ and of $J$ 
\begin{equation}
\label{eq: base T1 Hn}
\big(d\zeta_{t}( e_{t,1}), \dots, d\zeta_{t}(e_{t,n}), \chi, J d\zeta_{t}( e_{t,1}), \dots, Jd \zeta_{t}( e_{t,n})\big)
\end{equation}
defines at each point of the image an orthonormal basis for the tangent space of $T^1 \Hyp^{n+1}$, with the former $n+1$ vectors having norm $1$ and the latter $n$ vectors having norm $-1$.

We prove Equation \eqref{eq flusso in T1}, then Equation \eqref{eq flusso in G} follows after observing that
\[
\frac {d}{dt} G_{t} = (dG_{t})\bigg(\frac \partial {\partial t}\bigg)= (d\mathrm p \circ d\zeta_t)\bigg(\frac {\partial} {\partial t} \bigg)= d\mathrm p \bigg(\frac{d}{dt} \zeta_t\bigg).
\]
We show that LHS and RHS of \eqref{eq flusso in T1} have the same coordinates with respect to the basis \eqref{eq: base T1 Hn}. By  Equations \eqref{eq: differential of sigma tilde} and \eqref{eq appendix 2},
\begin{align*}
    g_{T^1 \Hyp^{n+1} } \bigg( \frac d {dt} \zeta_t, J d\zeta_t ( e_{t,k}) \bigg) &= f_t \inner{\nu_t, - d\sigma_t (B_t (e_{t,k})) } - \inner{- d\sigma_t (\nabla^t f_t), d\sigma_t (e_{t,k})} \\
    &= \inner{  d\sigma_t (\nabla^t f_t), d\sigma_t (e_{t,k})}= \partial_{e_{t,k}} f_t \\
    & 
    = \overline \I_t (\overline {\nabla}^t f_t,  e_{t,k})
    = g_{T^1 \Hyp^{n+1}} (- J d\zeta_t (\overline {\nabla}^t f_t), J d\zeta_t (e_{t,k}) )~.
\end{align*}
Similarly, recalling that $B_t$ is self-adjoint with respect to both $\I_t$ and $\overline \I_t$, one has 
\begin{align*}
    g_{T^1 \Hyp^{n+1}} \bigg( \frac d {dt}\zeta_t, d\zeta_t (e_{t,k}) \bigg)&= \inner{f_t \nu_t, d\sigma_t (e_{t,k})} - \inner{- d\sigma_t(\nabla^t f_t), - d\sigma_t (B_t ( e_{t,k} )) }  \\
    &= - \inner{d\sigma_t (\nabla^t f_t),  d\sigma_t (B_t(e_{t,k}))}\\
    &= - \I_t (\nabla^t f_t,  B_t(e_{t,k}))
    = - \overline{\I}_t (\overline{\nabla}^t f_t,  B_t(e_{t,k}))\\
    &= -\overline{\I}_t (B_t( \overline {\nabla}^t f_t), e_{t,k})
    = g_{T^1 \Hyp^{n+1}} (- d\zeta_t (B_t (\overline {\nabla}^t f_t) ), d\zeta_t (e_{t,k}) )~.
\end{align*}
Finally, 
\begin{align*}
    g_{T^1 \Hyp^{n+1}} \bigg(\frac d {dt} \zeta_t, \chi\bigg)= f_t\inner{\nu_t,\nu_t}= f_t  = g_{T^1 \Hyp^{n+1}}(f_t\chi, \chi)
\end{align*}
and the proof follows.
\end{proof}

An interesting corollary of Proposition \ref{Prop: flows on G and T1Hn} involves mean curvature {flow}. Directly by Proposition \ref{Prop: formula H in G}, one has the following.

\begin{cor}
The  flow in $\Hyp^{n+1}$ defined by
\[
\frac d {dt} \sigma_t = \frac 1 n \sum_{k=1}^n \arctanh(\lambda_{t,k}),
\]
on hypersurfaces of small principal curvatures, induces in $\G{n+1}$ the mean curvature flow up to a horizontal factor, namely
\[
\frac d {dt} G_{t} =\mathrm{\overline H_t} +   B_t (\mathbb J (\mathrm{\overline H_t})) .
\]
\end{cor}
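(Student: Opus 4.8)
The plan is to read the claimed identity directly off Equation \eqref{eq flusso in G} of Proposition \ref{Prop: flows on G and T1Hn}, once we recognise that the prescribed flow is exactly the normal flow whose speed function is $f_{\sigma_t}$. Interpreting the (mildly abbreviated) flow equation in the statement as $\frac{d}{dt}\sigma_t = f_t\,\nu_t$ with $f_t = \frac{1}{n}\sum_{k=1}^n\arctanh(\lambda_{t,k}) = f_{\sigma_t}$, each $\sigma_t$ has small principal curvatures, so Proposition \ref{Prop: flows on G and T1Hn} applies and yields
\[
\frac{d}{dt}G_t = -\,dG_t(B_t(\overline\nabla^t f_t)) - \JJ(dG_t(\overline\nabla^t f_t))~.
\]
The entire proof then reduces to rewriting the two summands on the right using the formula for the mean curvature vector of a Gauss map.

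For the second summand, Equation \eqref{eq:formula H in G} of Proposition \ref{Prop: formula H in G}, applied to $\sigma_t$, says precisely that $\overline{\mathrm H}_t = -\JJ(dG_t(\overline\nabla^t f_t))$, so this term \emph{is} $\overline{\mathrm H}_t$. For the first summand, I would apply $\JJ$ to this last identity and use $\JJ^2 = \mathbbm 1$ to get $dG_t(\overline\nabla^t f_t) = -\JJ(\overline{\mathrm H}_t)$. Substituting into $-dG_t(B_t(\overline\nabla^t f_t))$ and adopting the convention that the shape operator acts on a tangent vector $dG_t(V)$ of $G_t(M)$ by $B_t(dG_t(V)) := dG_t(B_t V)$, one obtains $-dG_t(B_t(\overline\nabla^t f_t)) = B_t(-dG_t(\overline\nabla^t f_t)) = B_t(\JJ(\overline{\mathrm H}_t))$. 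Adding the two contributions gives $\frac{d}{dt}G_t = \overline{\mathrm H}_t + B_t(\JJ(\overline{\mathrm H}_t))$, as claimed.

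Because the computation is a one-line substitution, there is no genuine obstacle; the only points requiring care are bookkeeping. First, I must ensure that $B_t(\JJ(\overline{\mathrm H}_t))$ is well-posed, i.e. that $\JJ(\overline{\mathrm H}_t)$ is tangent to $G_t(M)$ so that $B_t$ may act on it: this holds because $G_t$ is Lagrangian (Corollary \ref{cor:lagrangian1}) and, for a Lagrangian immersion in the para-K\"ahler manifold $\G{n+1}$, the relation $\Omega = \GG(\cdot,\JJ\cdot)$ forces $\JJ$ to interchange the tangent and normal bundles; in any case this is re-proved by the very identity $dG_t(\overline\nabla^t f_t) = -\JJ(\overline{\mathrm H}_t)$, which exhibits $\JJ(\overline{\mathrm H}_t)$ as $-dG_t(\overline\nabla^t f_t)\in dG_t(T M)$. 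Second, I would track the signs carefully through the two applications of $\JJ$, since that is the only place an error could slip in.
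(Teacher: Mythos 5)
Your proposal is correct and follows essentially the same route as the paper, which deduces the corollary ``directly'' by substituting the mean curvature formula $\overline{\mathrm H}_t=-\JJ(dG_t(\overline\nabla^t f_t))$ from Proposition \ref{Prop: formula H in G} (Equation \eqref{eq:formula H in G}) into the evolution equation \eqref{eq flusso in G} of Proposition \ref{Prop: flows on G and T1Hn} with $f_t=f_{\sigma_t}$. Your sign bookkeeping is right, and your remark that $\JJ(\overline{\mathrm H}_t)=-dG_t(\overline\nabla^t f_t)$ is tangential (so that $B_t$ may act on it, consistently with the Lagrangian property that $\JJ$ swaps tangent and normal bundles) correctly supplies the one implicit convention in the paper's statement.
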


\bibliographystyle{alpha}
\bibliographystyle{ieeetr}
\bibliography{eesbiblio.bib}

\end{document}